\theoremstyle{plain}
\newtheorem{thm}{\protect\theoremname}[section]
\theoremstyle{plain}
\newtheorem{lem}[thm]{\protect\lemmaname}
\theoremstyle{remark}
\theoremstyle{plain}
\newtheorem{cor}[thm]{\protect\corollaryname}
\theoremstyle{plain}
\theoremstyle{definition}
\theoremstyle{plain}
\newtheorem{prop}[thm]{\protect\propositionname}
\providecommand{\corollaryname}{Corollary}
\providecommand{\lemmaname}{Lemma}
\providecommand{\remarkname}{Remark}
\providecommand{\theoremname}{Theorem}
\providecommand{\conjecturename}{Conjecture}
\providecommand{\definitionname}{Definition}
\providecommand{\propositionname}{Proposition}
\newcommand{\Span}{\text{span}}
\newcommand{\argmax}{\text{argmax}}
\newcommand{\R}{\mathbb{R}}
\newcommand{\N}{\mathbb{N}}
\newcommand{\E}{\mathbb{E}}
\newcommand{\Vhat}{\widehat{V}}
\newcommand{\pistar}{\pi^\star}
\newcommand{\one}{\mathbf{1}}
\newcommand{\zero}{\mathbf{0}}
\newcommand{\ind}{\mathbb{I}}
\newcommand{\pihat}{\widehat{\pi}}
\newcommand{\rhohat}{\widehat{\rho}}
\newcommand{\A}{\mathcal{A}}
\renewcommand{\S}{\mathcal{S}}
\DeclareMathOperator*{\Clim}{\text{C-lim}}
\newcommand{\Tb}{\mathsf{B}}
\newcommand{\PiMR}{\Pi^{\textnormal{MR}}}
\newcommand{\PiMD}{\Pi^{\textnormal{MD}}}
\global\long\def\infnorm#1{\left\Vert #1\right\Vert _{\infty}}%
\global\long\def\infinfnorm#1{\left\Vert #1\right\Vert _{\infty \to \infty}}%
\global\long\def\onenorm#1{\left\Vert #1\right\Vert _{1}}%
\global\long\def\spannorm#1{\left\Vert #1\right\Vert _{\textnormal{sp}}}%
\global\long\def\norm#1{\left\Vert #1\right\Vert _{}}%
\global\long\def\tspannorm#1{\Vert #1\Vert _{\textnormal{sp}}}%
\global\long\def\tinfnorm#1{\Vert #1\Vert _{\infty}}%
\newcommand{\Tshift}{\overline{\mathcal{T}}}
\newcommand{\T}{\mathcal{T}}
\newcommand{\Trans}{\mathcal{U}}
\newcommand{\Rec}{\mathcal{R}}
\newcommand{\That}{\widehat{\mathcal{T}}}
\newcommand{\mingap}{\Delta}
\newcommand{\Tdrop}{\mathsf{T}_{\textnormal{drop}}}
\newcommand{\pit}{\widetilde{\pi}}
\renewcommand{\L}{\mathcal{L}}
\newcommand{\Lshift}{\overline{\mathcal{L}}}
\newcommand{\Lhat}{\widehat{\mathcal{L}}}
\newcommand{\pushright}[1]{%
  \ifmeasuring@#1\else
    \omit\hfill$\displaystyle#1$%
  \fi\ignorespaces}
\date{}
\title{Faster Fixed-Point Methods for Multichain MDPs}
\author{Matthew Zurek}
\author{Yudong Chen}
\affil{Department of Computer Sciences, University of Wisconsin-Madison\\\vspace{0.8em}
\texttt{matthew.zurek@wisc.edu}\quad\texttt{yudongchen@cs.wisc.edu}}
\begin{document}

\maketitle

\begin{abstract}
  We study value-iteration (VI) algorithms for solving general (a.k.a.\ multichain) Markov decision processes (MDPs) under the average-reward criterion, a fundamental but theoretically challenging setting. Beyond the difficulties inherent to all average-reward problems posed by the lack of contractivity and non-uniqueness of solutions to the Bellman operator, in the multichain setting an optimal policy must solve the navigation subproblem of steering towards the best connected component, in addition to optimizing long-run performance within each component. We develop algorithms which better solve this navigational subproblem in order to achieve faster convergence for multichain MDPs, obtaining improved rates of convergence and sharper measures of complexity relative to prior work. Many key components of our results are of potential independent interest, including novel connections between average-reward and discounted problems, optimal fixed-point methods for discounted VI which extend to general Banach spaces, new sublinear convergence rates for the discounted value error, and refined suboptimality decompositions for multichain MDPs. Overall our results yield faster convergence rates for discounted and average-reward problems and expand the theoretical foundations of VI approaches. 
\end{abstract}

\section{Introduction}
Markov decision processes (MDPs) are the canonical framework for modeling sequential decision-making problems, and sit at the core of reinforcement learning (RL), operations research, and control theory. Planning algorithms for solving MDPs therefore play a fundamental role in several fields. Among planning techniques for MDPs, value-iteration (VI) style methods, which are based upon solving the Bellman equation, are among the simplest and most fundamental, and also serve as key primitives within or templates for countless modern RL algorithms.

In this paper we study MDPs with the average-reward criterion, where the objective is to optimize long-run/steady-state performance. Despite its foundational importance for infinite-horizon problems, the average-reward setting is less well understood from a theoretical perspective due to its complexity.
Compared to the discounted setting, where the Bellman operator is strongly contractive, the average-reward Bellman operator is merely nonexpansive and has non-unique solutions. 
In particular we focus on the unrestricted setting of general (a.k.a.\ multichain) MDPs, which poses particular analytical challenges, due in part to the facts that
all optimal policies may induce multiple recurrent classes and so the optimal average-reward may depend on the initial state, and that multiple Bellman/optimality equations with different behaviors are needed. In more intuitive terms, one key challenge is that relative to communicating MDPs (where all states are accessible from one another via some sequence of actions), multichain MDPs may feature multiple inescapable regions of states that differ in the maximum performance achievable within said regions, and so algorithms for solving multichain MDPs must solve the ``naviagtion/transient'' subproblem of reaching the best possible such region, in addition to the ``recurrent'' subproblem of optimizing long-run performance within a region.

Some recent work has obtained nonasymptotic convergence guarantees for VI methods in multichain average-reward MDPs, addressing some of these complexities by making connections to the discounted setting or employing Halpern iteration, a fixed-point method with nonasymptotic convergence properties even for nonexpansive operators. However, past work fails to adequately capture and adapt to the difficulty of the navigation subproblem:
previous algorithms have performance bounds which degrade with excessively large complexity parameters, which are large even for easier communicating MDPs or only measure the time needed to solve the navigation problem exactly and yield vacuous guarantees before this point. Overall, algorithms from prior work fail to achieve optimal performance for general average-reward MDPs.

In this paper we address these issues, developing faster algorithms in terms of sharper measures of the complexity of multichain MDPs. To achieve this main goal, we develop many intermediate results of independent interest. These include new relationships between average-reward and discounted RL objectives, optimal algorithms for discounted value iteration (and general fixed-point problems in Banach spaces), algorithms with new sublinear $O(1/n)$ rates for reducing the discounted value error, and new suboptimality formulas for multichain MDPs which highlight the roles of both Bellman optimality equations.
Beyond our algorithmic improvements, this collection of results advances the theoretical foundations of VI methods and multichain MDPs.

\subsection{Problem setup}
\label{sec:problem_setup}
A Markov decision process is a tuple $(\S, \A, P, r)$ where $\S, \A$ denote the state and action spaces, respectively, which are assumed to be finite, $P : \S \times \A \to \Delta(\S)$ is the transition kernel where $\Delta(\S)$ denotes the probability simplex over $\S$, and $r \in [0,1]^{\S \times \A}$ is the reward function. A (randomized Markovian) policy is a mapping $\pi : \S \to \Delta(\A)$, and we let $\PiMR$ denote the set of all such policies. A deterministic policy $\pi$ has $\pi(s)$ with all probability mass on one action for all $s \in \S$, in which case we also treat $\pi$ as a mapping $\S \to \A$, and we denote the set of all deterministic policies by $\PiMD$. 
For any initial state $s_0 \in \S$ and any policy $\pi$, we let $\E^\pi_{s_0}$ denote the expectation with respect to the induced distribution over trajectories $(s_0, A_0, S_1, A_1, \dots)$ where $A_t \sim \pi(S_t)$ and $S_{t+1} \sim P(\cdot \mid S_t, A_t)$.
Let $R_t = r(S_t, A_t)$. Fixing some $\gamma \in [0,1)$, the discounted value function $V_\gamma^\pi \in \R^\S$ of a policy $\pi$ is $V^\pi_\gamma(s) = \E_s^\pi [\sum_{t=0}^\infty \gamma^t R_t]$. Define the optimal value function $V_\gamma^\star = \sup_{\pi \in \PiMR} V_\gamma^\pi$ (where the supremum is taken elementwise). The gain $\rho^\pi \in [0,1]^\S$ of a policy $\pi$ is $\rho^\pi(s) = \lim_{T \to \infty} \frac{1}{T}\E_s^\pi [\sum_{t=0}^{T-1} R_t]$. The optimal gain $\rho^\star \in [0,1]^\S$ is defined as $\rho^\star = \sup_{\pi \in \PiMR} \rho^\pi$. The bias function $h^\pi \in \R^\S$ of a policy $\pi$ is $h^\pi(s) = \Clim_{T \to \infty} \E_{s}^\pi [\sum_{t=0}^{T-1} (R_t - \rho^\pi(S_t))]$, where $\Clim$ denotes the Cesaro limit. A policy $\pi$ is Blackwell-optimal if there exists some $\overline{\gamma} < 1$ such that for all $\gamma \geq \overline{\gamma}$ we have $V^{\pi}_\gamma  = V_\gamma^\star$; at least one such policy always exists and we denote it $\pistar$. Any policy $\pi$ induces a Markov chain over $\S$, and we let $P_\pi$ denote its transition matrix. We let $H_{P_\pi}$ denote the Drazin inverse of $I - P_\pi$. We collect some of its properties in Appendix \ref{sec:drazin_inverses_etc} and also refer to \citet[Appendix A]{puterman_markov_1994}. The suboptimality of $\pi$ is $\infnorm{\rho^\pi - \rho^\star}$.
An MDP is communicating if for any pair $s, s'\in \S$, $s'$ is accessible from $s$, meaning there exists some $\pi$ and some $k \geq 1$ such that $\E_{s}^\pi \ind(S_k = s') > 0$. An MDP is weakly communicating if it consists of a set of states $\S_c$ such that $s'$ is accessible from $s$ for all $s, s' \in \S_c$, and a set of states $\S_t = \S \setminus \S_c$ which are transient under all policies. An MDP is general (which we use interchangeably with multichain) if there are no restrictions. We refer to \cite[Chapter 8.3]{puterman_markov_1994} for more on MDP classifications. In weakly communicating MDPs $\rho^\star$ is equal across all states (a constant vector).

\paragraph{Fixed-point methods} For a general operator $\L : X \to X$ where $X$ is a Banach space with norm $\norm{\cdot}$, we say that $\L$ is $\alpha$-Lipschitz if for all $x,x' \in X$, $\norm{\L(x) - \L(x')} \leq \alpha \norm{x - x'}$. $\L$ is \mbox{($\alpha$-)}contractive if $\alpha < 1$, and $\L$ is nonexpansive if $\alpha = 1$. For an initial point $x_0\in X$, Picard iteration generates the sequence $(x_t)_{t=0}^\infty$ by $x_{t+1} = \L(x_t)$, and Halpern iteration generates the sequence $x_{t+1} = (1-\beta_{t+1})x_0 + \beta_{t+1} \L(x_t)$ for some sequence $(\beta_t)_{t=1}^\infty$ where each $\beta_t \in [0,1]$.

\paragraph{Bellman operators} For any policy $\pi$ the policy projection matrix $M^\pi$ is an $\S$-by-$(\S \times \A)$ matrix such that for any $Q \in \R^{\S \times \A} $ and $s \in \S$, $(M^\pi Q)(s) = \sum_{a\in \A} \pi(a \mid s)Q(s,a) $. The maximization operator $M : \R^{\S \times \A} \to \R^\S$ has $(MQ)(s) = \max_{a \in \A} Q(s,a)$. We often treat $P$ as a $(\S \times \A)$-by-$\S$ matrix with $P_{sa,s'} = P(s' \mid s,a)$, and with this definition we have $P_\pi = M^\pi P$.
For any $V\in \R^\S$ and any $\gamma \in [0,1)$, the $\gamma$-discounted Bellman operator $\T_\gamma : \R^\S \to \R^\S$ is defined as $\T_\gamma(V) = M(r + \gamma P V)$. The average-reward Bellman operator is $\T := \T_{1}$, that is, $\T(V) = M(r + P V)$. For a vector $V \in \R^\S$, the discounted and average-reward fixed-point errors (a.k.a.\ Bellman errors) are $\infnorm{\T_\gamma(V) - V}$ and $\infnorm{\T(V) - V - \rho^\star}$, respectively, and the discounted value error is $\infnorm{V - V_\gamma^\star}$. For a policy $\pi$, the discounted and average-reward Bellman evaluation operators are $\T_\gamma^\pi(V) = M^\pi(r + \gamma PV)$ and $\T^\pi(V) = M^\pi(r + P V)$, respectively. $\T_\gamma, \T^\pi_\gamma$ are $\gamma$-contractive, and $\T, \T^\pi$ are nonexpansive, all with respect to $\infnorm{\cdot}$. $V_\gamma^\star$ is the unique fixed-point of $\T_\gamma$.

\paragraph{Bellman equations}
Let $\rho, h \in \R^\S$.
The (standard/unmodified) \textit{multichain optimality conditions} are
\begin{subequations}
\label{eq:unmod_bellman_main}
    \begin{align} 
    \max_{a\in \A} P_{sa} \rho &= \rho(s) ~\forall s \in \S ,\label{eq:unmod_bellman_1} \\
   \max_{a \in \A :  P_{sa} \rho= \rho(s)} r(s,a)+P_{sa}h &= \rho(s) + h(s) ~\forall s \in \S .\label{eq:unmod_bellman_2}
\end{align}
\end{subequations}
The necessity for two optimality equations is unique to the multichain setting, in particular the fact that optimality requires solving both the transient/navigational subproblem, of steering towards the optimal recurrent class (expressed in equation~\eqref{eq:unmod_bellman_1}), as well as the recurrent subproblem of attaining long-run optimality within each such class. The complications introduced by the restricted maximum of~\eqref{eq:unmod_bellman_2} motivate the introduction of the 
\textit{modified multichain optimality conditions}
\begin{subequations}
\label{eq:mod_bellman_main}
    \begin{align}
    \max_{a\in \A} P_{sa} \rho &= \rho(s) ~\forall s \in \S ,\label{eq:mod_bellman_1} \\
   \max_{a\in \A} r(s,a)+P_{sa}h &= \rho(s) + h(s) ~\forall s \in \S. \label{eq:mod_bellman_2}
\end{align}
\end{subequations}
These can be written in vectorized form as
$M(P\rho)= \rho $ and $\T(h) = M(r + P h) = \rho + h$.
Solutions always exist to both equations: for any Blackwell-optimal policy $\pistar$, $(\rho^\star, h^{\pistar})$ satisfies the unmodified Bellman equations, and there exists some $M > 0$ sufficiently large so that $(\rho^\star, h^{\pistar} + M \rho^\star)$ satisfies both the modified and unmodified Bellman equations \citep[Proposition 9.1.1]{puterman_markov_1994}. All solutions $(\rho, h)$ to the unmodified equations~\eqref{eq:unmod_bellman_1} and~\eqref{eq:unmod_bellman_2} have $\rho = \rho^\star$. However, a solution $(\rho, h)$ to the modified equations~\eqref{eq:mod_bellman_1} and~\eqref{eq:mod_bellman_2} do not necessarily have $\rho = \rho^\star$ (see \cite[Example 5.1.1]{bertsekas_approximate_2018}). A sufficient condition for $\rho= \rho^\star$ is that there exists a single policy $\pi$ satisfying both maximums simultaneously, meaning $M^\pi(P\rho) = M(P \rho)$ and $M^\pi(r + Ph) = M(r + Ph)$ \citep[Theorem 9.1.2ab]{puterman_markov_1994}. Potentially of independent interest, in Lemma \ref{lem:simultaneous_argmax_equiv_unmod_soln} we show that a solution $(\rho, h)$ of the modified optimality equations admits such a simultaneously maximizing policy if and only if $(\rho, h)$ also satisfies the unmodified optimality equations.

\paragraph{Complexity parameters} For any policy $\pi \in \PiMR$ we define $\Rec^\pi$ to be the set of states which are recurrent in the Markov chain $P_\pi$ and we let $\Trans^\pi = \S \setminus \Rec^\pi$ be the set of transient states. We define the \textit{transient time} parameter $\Tb^\pi$ of policy $\pi$ to be the maximum (over all starting states) expected amount of time spent by $\pi$ in transient states, that is $\max_{s_0} \E^\pi_{s_0} \tau_{\Rec^\pi}$, where $\tau_{\Rec^\pi} = \inf \{t \geq 0 : S_t \in \Rec^\pi\}$ is the hitting time of the set $\Rec^\pi$. We define the \textit{bounded transient time parameter} of the MDP $P$ to be $\Tb = \max_{\pi \in \PiMD} \Tb^\pi$ \citep{zurek_span-based_2025}.
We define the \textit{minimum gain-optimality gap} $\mingap = \min\left\{ \rho^\star(s) - P_{sa}\rho^\star :  \rho^\star(s) - P_{sa}\rho^\star > 0, s \in \S, a \in \A\right\}$ \citep{lee_optimal_2024}.
For $v \in \R^\S$ we define the span seminorm $\tspannorm{v} = \max_{s \in \S}v(s) - \min_{s \in \S} v(s)$.

\subsection{Prior work and their limitations}
In this subsection we provide a detailed description of results from recent work which are directly comparable with our main theorems. However, since the full set of our results intersects with several different areas, we provide more extensive related work in Appendix \ref{sec:related_work_expanded}, including references to VI analyses for multichain MDPs which give only asymptotic guarantees, and more information on Halpern iteration.
Here we focus on the two most relevant prior works, which are the first to obtain nonasymptotic guarantees for general average-reward MDPs with VI approaches.

\cite{zurek_span-based_2025} focus on statistical complexity of average-reward RL but develop a reduction from average-reward (gain) suboptimality to discounted suboptimality, which implies the following:
\begin{prop}
\label{prop:DMDP_baseline}
	Suppose that $n \geq 4$. Set $\gamma$ so that $\frac{1}{1-\gamma} =\frac{n}{2 \log(n)}$. Then for the policy $\pihat$ such that $\T_\gamma^{\pihat}(V_n) = \T_\gamma(V_n)$ where $V_n = \T_\gamma^{(n)}(\zero)$, we have
    \begin{align*}
        \tinfnorm{\rho^{\pihat}- \rho^\star} \leq   2\frac{3\Tb + 3\tspannorm{h^{\pistar}} + 2}{n}\log(n).
    \end{align*}
\end{prop}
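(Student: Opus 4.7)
The plan is to assemble two ingredients and then substitute the specified $\gamma$: the reduction from gain suboptimality to discounted suboptimality developed in \cite{zurek_span-based_2025}, and the classical contraction bound for discounted value iteration.

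First, I would invoke the reduction of \cite{zurek_span-based_2025}, which states (in a simplified form sufficient for our purposes) that for any $V \in \R^\S$ and greedy policy $\pihat$ (i.e., $\T_\gamma^{\pihat}(V) = \T_\gamma(V)$), the gain suboptimality admits a bound whose dominant term has the form
\begin{equation*}
\tinfnorm{\rho^{\pihat} - \rho^\star} \leq (1-\gamma)\bigl(3\Tb + 3\tspannorm{h^{\pistar}} + 2\bigr) + \frac{C}{1-\gamma}\tinfnorm{\T_\gamma V - V}
\end{equation*}
for some absolute constant $C$. The first term captures the multichain-specific bias incurred by using the $\gamma$-discounted objective to optimize a gain objective; intuitively it is controlled by a Laurent-series analysis around $\gamma=1$ where $\Tb$ handles navigation-time quantities (hitting times of recurrent components) and $\tspannorm{h^{\pistar}}$ handles within-component relative values. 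The second term is the standard one-step policy-improvement loss incurred when using a greedy policy.

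Second, I would bound the discounted Bellman error of $V_n = \T_\gamma^{(n)}(\zero)$ using $\gamma$-contractivity of $\T_\gamma$ in $\infnorm{\cdot}$ (together with $\tinfnorm{\T_\gamma(\zero)} \leq 1$ since rewards lie in $[0,1]$):
\begin{equation*}
\tinfnorm{\T_\gamma V_n - V_n} = \tinfnorm{\T_\gamma^{(n+1)}(\zero) - \T_\gamma^{(n)}(\zero)} \leq \gamma^n \tinfnorm{\T_\gamma(\zero)} \leq \gamma^n.
\end{equation*}
Substituting $\gamma = 1 - 2\log(n)/n$ and using the elementary inequality $\gamma^n \leq e^{-n(1-\gamma)} = n^{-2}$, the second term above becomes at most $\frac{C}{2n\log n}$, which is of lower order than the first term $(1-\gamma)\bigl(3\Tb + 3\tspannorm{h^{\pistar}} + 2\bigr) = \frac{2\log n}{n}\bigl(3\Tb + 3\tspannorm{h^{\pistar}} + 2\bigr)$ (for $n\ge 4$) and can be absorbed into the additive constant. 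This yields the claimed bound $2\frac{3\Tb + 3\tspannorm{h^{\pistar}} + 2}{n}\log(n)$.

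The step I expect to be the main obstacle is establishing the reduction itself with the stated coefficients. Unpacking it requires a careful multichain Laurent expansion of $V_\gamma^\star$ around $\gamma=1$, tight bounds on $\tspannorm{V_\gamma^\star - \rho^\star/(1-\gamma)}$ in terms of $\Tb$ and $\tspannorm{h^{\pistar}}$, and careful relations between the greedy-policy evaluation $V_\gamma^{\pihat}$ and $V_\gamma^\star$; given the reduction, the remaining ingredients are standard.
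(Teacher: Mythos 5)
Your overall strategy is the same as the paper's: both proofs consist of (a) the geometric contraction of $n$ Picard steps of $\T_\gamma$, (b) the average-to-discounted reduction from \cite{zurek_span-based_2025} with the constant $3\Tb + 3\tspannorm{h^{\pistar}} + 2$, and (c) the substitution $\frac{1}{1-\gamma} = \frac{n}{2\log n}$ together with $\gamma^n \leq e^{-n(1-\gamma)} = n^{-2}$. The difference is in which intermediate error you feed into the reduction. The paper controls the discounted \emph{value} suboptimality of the greedy policy: it uses the Singh--Yee bound $\infnorm{V_\gamma^{\pihat} - V_\gamma^\star} \leq \frac{2}{1-\gamma}\infnorm{V_n - V_\gamma^\star} \leq \frac{2}{(1-\gamma)^2}\gamma^n$, shows that the chosen $\gamma$ makes this quantity at most $1$, and then invokes the reduction in the form $\rho^{\pihat} \geq \rho^\star - (1-\gamma)(3\Tb + 3\tspannorm{h^{\pistar}} + 2)\one$, where the ``$+2$'' is precisely what absorbs that $\leq 1$ value suboptimality. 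You instead posit a reduction stated in terms of the fixed-point error with an \emph{additional} additive term $\frac{C}{1-\gamma}\tinfnorm{\T_\gamma V - V}$ sitting outside the parenthesis.

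This creates a small but real gap in your last step: with the given $\gamma$, the first term of your hypothesized bound, $(1-\gamma)(3\Tb + 3\tspannorm{h^{\pistar}}+2) = \frac{2\log n}{n}(3\Tb + 3\tspannorm{h^{\pistar}}+2)$, already equals the claimed bound exactly, so there is no slack left into which the strictly positive second term $\frac{C}{2n\log n}$ can be ``absorbed.'' To close this you need the residual error to live \emph{inside} the constant, as in the paper: the reduction's constant is really of the form $3\Tb + 3\tspannorm{h^{\pistar}} + 1 + \varepsilon$ with $\varepsilon$ the discounted value suboptimality, and the whole point of the computation $\frac{2}{(1-\gamma)^2}\gamma^n \leq 1$ (equivalently $n \geq \frac{1}{1-\gamma}\log\frac{2}{(1-\gamma)^2}$, which the choice of $\gamma$ guarantees for $n \geq 4$) is to certify $\varepsilon \leq 1$ and hence the ``$+2$.'' Your contraction estimate $\tinfnorm{\T_\gamma V_n - V_n} \leq \gamma^n$ is fine and easily converts to the value-error bound the paper needs, so the fix is only a matter of restating the borrowed reduction in its correct form; but as written, the final inequality does not follow.
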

See Appendix \ref{sec:proof_of_dmdp_baseline} for its proof, which follows easily from \cite{zurek_span-based_2025}.
A key shortcoming is that $\Tb$ is nonzero and potentially very large even in MDPs where $\rho^\star$ is constant, thus failing to adapt to situations when the navigation problem is trivial. Also the rate scales worse than $O(1/n)$.

\citet[Theorem 2]{lee_optimal_2024}, based on Halpern iteration, obtains the following guarantees:
\begin{prop}
    \label{prop:lee_AMDP_thm}
    Let $(\rho^\star, h)$ be a solution to both the modified and unmodified Bellman equations.\footnote{The statement of \cite[Theorem 2]{lee_optimal_2024} only requires $h$ to be a modified Bellman equation solution, but their definition requires the existence of a simultaneously argmaxing policy, which we show in Lemma \ref{lem:simultaneous_argmax_equiv_unmod_soln} is equivalent to $h$ also satisfying the unmodified equations.}
    There exists an algorithm that, for any input $V_0 \in \R^\S$, if $n > K$ where $K = \frac{3\infnorm{r}+12\infnorm{V_0 - h} + 3\infnorm{\rho^\star}}{\mingap}$
    after $n$ iterations produces output $V_n \in \R^\S$ such that
    \begin{align*}
        \tinfnorm{\rho^{\pihat}- \rho^\star} \leq \infnorm{\T(V_n) - V_n - \rho^\star} \leq \frac{8}{n+1}\infnorm{V_0 - h} + \frac{K}{n+1}\infnorm{\rho^\star}
    \end{align*}
    where $\pihat$ satisfies $\T^{\pihat}(V_n) = \T(V_n)$.
\end{prop}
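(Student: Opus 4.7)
The plan is to analyze an anchored (Halpern) iteration of the form $V_{t+1} = \frac{1}{t+2}V_0 + \frac{t+1}{t+2}\T(V_t)$ applied directly to the average-reward Bellman operator $\T$. Because $\T$ itself has no fixed point when $\rho^\star \neq 0$, the analysis is carried out for the shifted operator $\Tshift(V) := \T(V) - \rho^\star$, which remains nonexpansive in $\infnorm{\cdot}$ (subtracting a fixed vector preserves nonexpansiveness) and satisfies $\Tshift(h) = h$ by the second modified Bellman equation. The hypothesis that $(\rho^\star, h)$ solves both the modified and unmodified equations is used here so that $h$ is simultaneously a fixed point of $\Tshift$ and compatible with the navigation equation~\eqref{eq:unmod_bellman_1}.

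First, I would invoke the $\infnorm{\cdot}$-norm version of the Halpern convergence theorem for nonexpansive operators with a fixed point: the iterates satisfy $\infnorm{W_n - \Tshift(W_n)} \leq \frac{C\,\infnorm{V_0 - h}}{n+1}$ for a universal constant $C$. The sharp constant in Hilbert space is $2$, but in general Banach spaces a larger constant is necessary, and the value $8$ appearing in the proposition is consistent with known Banach-space Halpern bounds for this family of schemes. Applied to $\Tshift$ with anchor $V_0$ and fixed point $h$, this accounts for the leading $\frac{8\infnorm{V_0 - h}}{n+1}$ term.

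The main obstacle is that the algorithm cannot literally run Halpern on $\Tshift$ since $\rho^\star$ is unknown: it must iterate $\T$ and effectively recover $\rho^\star$ from the residuals. When $\rho^\star$ is constant (e.g.\ weakly communicating MDPs), the shift-invariance $\T(V + c\one) = \T(V) + c\one$ makes Halpern on $\T$ equivalent to Halpern on $\Tshift$ up to a known telescoping translation, so the analysis passes through unchanged. In the multichain case, this shift-invariance fails for non-constant $\rho^\star$, and the two iterations genuinely diverge. The key technical step is controlling this discrepancy using the minimum gain-optimality gap $\mingap$: once the iterate's residual drops below $\mingap$, the outer maximum in $\T$ is forced to select actions satisfying $P_{sa}\rho^\star = \rho^\star(s)$, i.e.\ to respect the navigation equation~\eqref{eq:unmod_bellman_1}, from which point the iteration behaves identically to one on $\Tshift$. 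The threshold $n > K$ is precisely the number of iterations required to first push the accuracy below $\mingap$, and the excess term $\frac{K}{n+1}\infnorm{\rho^\star}$ absorbs the pre-threshold misalignment cost from the navigation subproblem; this is where all three pieces of $K$ (depending on $\infnorm{r}$, $\infnorm{V_0 - h}$, and $\infnorm{\rho^\star}$) enter, as they bound the worst-case residual before the navigation structure is identified.

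Finally, the policy implication $\tinfnorm{\rho^{\pihat} - \rho^\star} \leq \infnorm{\T(V_n) - V_n - \rho^\star}$ follows a standard Cesaro-averaging argument: write $\T^{\pihat}(V_n) = \T(V_n) = V_n + \rho^\star + \epsilon_n$ with $\infnorm{\epsilon_n} \leq \infnorm{\T(V_n) - V_n - \rho^\star}$, iterate $P_{\pihat}$ so that the $V_n$ contributions telescope, and take the Cesaro limit to identify $\rho^{\pihat}$ with the $P_{\pihat}$-Cesaro projection of $\rho^\star + \epsilon_n$; since this projection preserves $\rho^\star$ up to an $\infnorm{\epsilon_n}$ error, the inequality follows. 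The hardest ingredient throughout is the third paragraph: correctly accounting for how much residual accuracy is required to pin down the navigation structure in the multichain case, since before that threshold the greedy policy can misroute trajectories by arbitrary amounts and cost up to $\infnorm{\rho^\star}$ in gain suboptimality.
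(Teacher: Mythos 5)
First, note that the paper does not actually prove Proposition \ref{prop:lee_AMDP_thm}; it is quoted from \citet[Theorem 2]{lee_optimal_2024} (the paper only adds the footnote clarifying the hypothesis on $h$ via Lemma \ref{lem:simultaneous_argmax_equiv_unmod_soln}). So your sketch must be judged against what a complete proof of that theorem requires, and on that standard it has two genuine gaps, both at the point you yourself identify as the crux.

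The first gap is the mechanism by which $n > K$ resolves the navigation subproblem. You claim that ``once the iterate's residual drops below $\mingap$, the outer maximum in $\T$ is forced to select actions satisfying $P_{sa}\rho^\star = \rho^\star(s)$.'' This is not the right condition: a small fixed-point residual does not constrain the argmax toward gain-preserving actions (indeed, even the exact solution $h$ of the modified equations, with residual zero, can admit greedy actions that drop the gain; this is exactly why the paper needs the restricted commutativity machinery and why Algorithm \ref{alg:app_shifted_halpern} initializes its Halpern phase at $x_n \approx n\rho^\star + h$). What actually forces gain preservation is that the Halpern iterates grow linearly along $\rho^\star$, i.e.\ $V_t \approx \Lambda_t \rho^\star + (\text{bounded})$, so that for a non-gain-preserving $(s,a)$ the argmax comparison picks up a deficit of at least $\Lambda_t \mingap$, which eventually dominates the bounded terms of size $\infnorm{r} + \infnorm{V_0 - h} + \infnorm{\rho^\star}$ --- this is where $K$ comes from (compare Lemma \ref{lem:implicit_gain_optimality}). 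You assert rather than derive this alignment, and you likewise assert that the pre-threshold misalignment ``absorbs'' into $\frac{K}{n+1}\infnorm{\rho^\star}$; tracking that accumulation through the coupled recursion is the actual technical content of the theorem.

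The second gap is the last inequality $\tinfnorm{\rho^{\pihat}-\rho^\star} \leq \infnorm{\T(V_n)-V_n-\rho^\star}$. Your Cesaro argument gives $\rho^{\pihat} - \rho^\star = (P_{\pihat}^\infty - I)\rho^\star + P_{\pihat}^\infty \epsilon_n$, and the claim that ``this projection preserves $\rho^\star$ up to an $\infnorm{\epsilon_n}$ error'' silently discards the first term, which equals $H_{P_{\pihat}}(P_{\pihat}\rho^\star - \rho^\star)$ (Lemma \ref{lem:AMDP_performance_diff_lemma}) and can be as large as $\infnorm{\rho^\star}$ in a multichain MDP whenever $\pihat$ misroutes. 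That term vanishes only if $P_{\pihat}\rho^\star = \rho^\star$, which is not a consequence of the residual bound but of the $n > K$ alignment argument above; the two gaps are therefore the same missing ingredient. The overall architecture of your proposal (Halpern on $\T$, coupling to $\Tshift$, a $\mingap$ threshold, Cesaro identification of $\rho^{\pihat}$) matches the cited proof, but the step that distinguishes the multichain case from the weakly communicating one is precisely the step left unproved.
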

Note the $\tinfnorm{r}, \tinfnorm{\rho^\star}$ terms are generally $\Theta(1)$ for the scaling used in this paper. The leading term involves the quantity $K$ which introduces a dependence on the potentially very large quantity $\frac{\tinfnorm{V_0 - h}}{\mingap}$, which is essentially the number of iterations required to estimate $\rho^\star$ accurately enough to perfectly solve the navigation problem.
Overall, (for a no-prior-knowledge initialization $V_0 = \zero$) these two results yield incomparable rates $O((\Tb + \tspannorm{h^{\pistar}})\frac{\log n}{n})$ and $O(\infnorm{h} (1 + \frac{1}{\mingap})\frac{1}{n})$, and both seem unable to adequately address the complexity of solving the navigation problem, especially when the iteration budget is not large enough to solve it perfectly. (Also as we show in Theorem \ref{thm:large_h_mod_sp_example}, $\tinfnorm{h}$ may be much larger than $\tspannorm{h^{\pistar}}$ and introduce an additional $1/\mingap$ dependence.) 

\subsection{Our contributions}

Here we summarize some of our main contributions. 
Lemma \ref{lem:AMDP_performance_diff_lemma} and Corollary \ref{cor:AMDP_performance_diff_cor} relate policy suboptimality to both Bellman equations, and motivate the introduction of a refined complexity measure~\eqref{eq:tdrop_pi_defn}, whose properties we analyze. 
We identify new conditions that enable us to develop convergent VI methods, for both $\T^\pi$ (Corollary \ref{cor:policy_eval_halpern}) and $\T$ (Algorithm \ref{alg:app_shifted_halpern} and Theorems \ref{thm:app_shifted_halpern_main} and \ref{thm:app_shifted_halpern_large_iters}), leading to optimal nonasymptotic rates and dependencies on the refined complexity measure. 
We develop new average-to-discounted reductions (Lemma \ref{lem:DMDP_red_short}). 
Theorem \ref{thm:optimal_contractive_fixed_point_iter} demonstrates the optimality of Algorithm \ref{alg:halpern_then_picard} for general contractive fixed-point problems in Banach spaces, leading to improvements for the discounted operator $\T_\gamma$.
Lemma \ref{lem:initialization_proc} shows that the discounted value error $\tinfnorm{V_\gamma^\star - V_t}$ can be reduced at a sublinear $O(1/t)$ rate, even when $\gamma$ is close to 1, rather than $O(\gamma^t)$. 
These are combined to develop an algorithm based on discounted VI for multichain MDPs (Theorem \ref{thm:DMDP_based_result_main}) with different and improved convergence properties.

\section{Sensitivity analysis for multichain MDPs}
\label{sec:sensitivity_analysis}
In order to achieve our goal of developing algorithms for multichain MDPs that have the sharpest dependence on the navigation subproblem's difficulty and give nonvacuous performance without solving it perfectly, we first analyze how the degree of error in an approximate solution to the Bellman optimality equations relates to the suboptimality of a policy constructed from this solution.

\begin{lem}
\label{lem:AMDP_performance_diff_lemma}
    For any policy $\pi$ and any $h \in \R^{\S}$ we have
    \begin{align*}
        \rho^\pi - \rho^\star = H_{P_\pi} (P_\pi \rho^\star - \rho^\star)  + P_\pi^\infty (r_\pi + P_\pi h - \rho^\star - h).
    \end{align*}
\end{lem}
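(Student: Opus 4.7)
The plan is to start from the identity $\rho^\pi = P_\pi^\infty r_\pi$ (which follows directly from $P_\pi^\infty = \Clim_{T \to \infty} \frac{1}{T}\sum_{t=0}^{T-1} P_\pi^t$ and the definition of the gain), and then manipulate $\rho^\pi - \rho^\star$ by inserting and removing terms involving $\rho^\star$ and $h$ using standard Drazin-inverse identities from the appendix.

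First I would write
\[
\rho^\pi - \rho^\star = P_\pi^\infty r_\pi - \rho^\star = P_\pi^\infty (r_\pi - \rho^\star) - (I - P_\pi^\infty)\rho^\star.
\]
For the second summand, the key identity $I - P_\pi^\infty = H_{P_\pi}(I - P_\pi) = (I-P_\pi)H_{P_\pi}$, which is one of the defining properties of the Drazin inverse collected in Appendix~\ref{sec:drazin_inverses_etc}, immediately gives
\[
-(I - P_\pi^\infty)\rho^\star = H_{P_\pi}(P_\pi \rho^\star - \rho^\star),
\]
producing the first term in the desired expression.

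For the first summand, I would use the fact that $P_\pi^\infty$ is the stationary projection, so $P_\pi^\infty P_\pi = P_\pi^\infty$, which implies $P_\pi^\infty(P_\pi h - h) = 0$ for every $h \in \R^{\S}$. Adding this zero quantity gives
\[
P_\pi^\infty(r_\pi - \rho^\star) = P_\pi^\infty(r_\pi + P_\pi h - \rho^\star - h),
\]
which is exactly the second term in the statement. Combining the two rewrites yields the claimed decomposition.

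The main obstacle is really just bookkeeping: identifying which of the standard Drazin-inverse identities to apply and recognizing that the $h$-dependent term can be inserted freely because it lies in the kernel of $P_\pi^\infty(I - P_\pi)$. Nothing in this plan requires anything beyond the identities $\rho^\pi = P_\pi^\infty r_\pi$, $I - P_\pi^\infty = H_{P_\pi}(I - P_\pi)$, and $P_\pi^\infty P_\pi = P_\pi^\infty$, all of which are standard.
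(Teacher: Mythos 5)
Your proof is correct and follows essentially the same route as the paper's: both decompose $\rho^\pi - \rho^\star$ into the same two pieces, apply $H_{P_\pi}(I-P_\pi) = I - P_\pi^\infty$ to one piece, and insert the zero quantity $P_\pi^\infty(P_\pi h - h) = 0$ into the other. The only cosmetic difference is that the paper first proves the identity for a general vector $\rho$ in place of $\rho^\star$ and then specializes.
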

The first and second terms on the right-hand side above are related to the first and second modified Bellman equations~\eqref{eq:mod_bellman_1} and~\eqref{eq:mod_bellman_2}, respectively. If $\tinfnorm{P_\pi \rho^\star - \rho^\star} = 0$, which is always the case when $\rho^\star$ is constant (such as when $P$ is weakly communicating), then the first term vanishes and we essentially recover a standard result \citep[Theorem 8.5.5]{puterman_markov_1994}. However, in general this demonstrates the importance of satisfying the first Bellman equation~\eqref{eq:mod_bellman_1_vec}. The main theorem of \cite{lee_optimal_2024} for general MDPs only provides a guarantee once the number of iterations is sufficiently large that the output policy $\pi$ would satisfy $\tinfnorm{P_\pi \rho^\star - \rho^\star} = 0$, which incurs a dependence on the potentially large parameter $1/\mingap$ and hides the role of the first Bellman equation.

Generally the vector $(P_\pi \rho^\star - \rho^\star)$ has many zero entries for reasons explained shortly, implying some entries of the matrix $H_{P_\pi}$ (the Drazin inverse of $I - P_\pi$) are irrelevant. (See Lemma \ref{lem:interpreting_Hpi} for more on $H_{P_\pi}$'s entries.) This directly motivates the definition of our sharp complexity parameter $\Tdrop^\pi$:
\begin{align}
    \Tdrop^\pi = \max_{s \in \S} \E_s^\pi \left[\sum_{t=0}^\infty \ind\left(P_{S_t, A_t}\rho^\star < \rho^\star(S_t)\right) \right]. \label{eq:tdrop_pi_defn}
\end{align}
This is equivalently the expected-total-reward value function of policy $\pi$ for the indicator reward function $\overline{r}(s,a) = \ind\{P_{sa}\rho^\star < \rho^\star(s)\}$. This connection is used in an essential way for certain proofs. In words, $\Tdrop^\pi$ measures the maximum (over starting states) amount of time spent by $\pi$ taking ``gain-dropping'' actions. We also define $\Tdrop = \max_{\pi \in \PiMD} \Tdrop^\pi$ (note that $\Tdrop$ could equivalently be defined as the optimal expected-total-reward value function for the reward $\overline{r}$), which similarly is the maximum amount of time spent taking ``gain-dropping'' actions by any policy, from any starting state. (By standard results \cite[Chapter 7]{puterman_markov_1994}, even allowing history-dependent randomized policies, the maximum is attained by some $\pi \in \PiMD$.) We call $\Tdrop$ the \emph{gain-dropping time}. These parameters are essentially defined in the sharpest way so that following bound holds:

\begin{cor}
\label{cor:AMDP_performance_diff_cor}
    For any policy $\pi$ and any vector $h \in \R^\S$,
    \begin{align*}
        \infnorm{\rho^\pi - \rho^\star} \leq \Tdrop^\pi \infnorm{P_\pi \rho^\star - \rho^\star} + \infnorm{\T^\pi(h) - h - \rho^\star}.
    \end{align*}
\end{cor}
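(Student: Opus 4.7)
My plan is to combine Lemma \ref{lem:AMDP_performance_diff_lemma} with a probabilistic interpretation of the Drazin-inverse term. Taking $\infnorm{\cdot}$ of the decomposition in that lemma and applying the triangle inequality yields
\[
\infnorm{\rho^\pi - \rho^\star} \leq \infnorm{H_{P_\pi}(P_\pi\rho^\star - \rho^\star)} + \infnorm{P_\pi^\infty(\T^\pi(h) - h - \rho^\star)}.
\]
The second term requires essentially no work: $P_\pi^\infty$ is a row-stochastic matrix (as the Cesaro limit of transition matrices), so $\infinfnorm{P_\pi^\infty} \leq 1$ and this term is immediately bounded by $\infnorm{\T^\pi(h) - h - \rho^\star}$, giving the second summand in the corollary.

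The substantive step is to show $\infnorm{H_{P_\pi}(P_\pi\rho^\star - \rho^\star)} \leq \Tdrop^\pi \cdot \infnorm{P_\pi\rho^\star - \rho^\star}$. Set $v := P_\pi\rho^\star - \rho^\star$. Two structural facts about $v$ will drive the bound. First, the unmodified first Bellman equation~\eqref{eq:unmod_bellman_1} for $\rho^\star$ gives $(P_\pi\rho^\star)(s) = \sum_a \pi(a\mid s) P_{sa}\rho^\star \leq \max_a P_{sa}\rho^\star = \rho^\star(s)$, so $v \leq 0$ entrywise. Second, $P_\pi^\infty P_\pi = P_\pi^\infty$ forces $P_\pi^\infty v = 0$, placing $v$ in the subspace on which the Drazin inverse admits the probabilistic representation $H_{P_\pi}v(s_0) = \E_{s_0}^\pi\bigl[\sum_{t=0}^\infty v(S_t)\bigr]$ (understood in a Cesaro/expected-total-reward sense when $P_\pi$ is periodic). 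I would justify this representation using the identification of $H_{P_\pi}$ with the deviation matrix, invoking the Drazin-inverse facts collected in Appendix \ref{sec:drazin_inverses_etc}.

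With the representation in hand, introduce $u(s,a) := \rho^\star(s) - P_{sa}\rho^\star \geq 0$ so that $-v(s) = (M^\pi u)(s)$, and use the tower property with $A_t \sim \pi(S_t)$ to rewrite
\[
|H_{P_\pi}v(s_0)| = \E_{s_0}^\pi\left[\sum_{t=0}^\infty u(S_t,A_t)\right] = \E_{s_0}^\pi\left[\sum_{t=0}^\infty u(S_t,A_t)\,\ind(u(S_t,A_t) > 0)\right],
\]
where the last equality uses $u \geq 0$. For (the typical use case of) deterministic $\pi$, $u(S_t,A_t) = u(S_t,\pi(S_t)) = -v(S_t)$ is pointwise bounded by $\infnorm{v} = \infnorm{P_\pi\rho^\star - \rho^\star}$, and the indicator coincides exactly with the one defining $\Tdrop^\pi$. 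Pulling the norm out and maximizing over $s_0$ yields
\[
\infnorm{H_{P_\pi}v} \leq \infnorm{P_\pi\rho^\star - \rho^\star} \cdot \max_{s_0}\E_{s_0}^\pi\left[\sum_{t=0}^\infty \ind(P_{S_t,A_t}\rho^\star < \rho^\star(S_t))\right] = \infnorm{P_\pi\rho^\star - \rho^\star}\cdot \Tdrop^\pi,
\]
which combined with the easy second term closes out the corollary. The main obstacle I anticipate is making the series representation of $H_{P_\pi} v$ rigorous in the general multichain (possibly periodic) setting, where $\sum_t P_\pi^t v$ need not converge as an ordinary sum; this is precisely where the paper's remark that $\Tdrop^\pi$ is the expected-total-reward value function for the indicator reward $\overline{r}$ becomes essential, since viewing $c(s,a) := u(s,a)$ as a nonnegative auxiliary reward gives a well-defined expected-total-reward value function equal to $-H_{P_\pi}v$ and sidesteps any delicate summability issues.
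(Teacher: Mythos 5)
Your proof is correct and takes essentially the same approach as the paper's: both start from Lemma \ref{lem:AMDP_performance_diff_lemma}, dispatch the $P_\pi^\infty$ term via row-stochasticity, and then bound the Drazin term by interpreting $H_{P_\pi}$ as an expected visit-count operator supported on $P_\pi$-transient states (Lemma \ref{lem:interpreting_Hpi}) and identifying the resulting sum with $\Tdrop^\pi$; you phrase this in trajectory/expectation language while the paper phrases it entry-by-entry on the matrix $H_{P_\pi}$, but the underlying argument is identical. Your caveat about deterministic $\pi$ applies equally to the paper's proof (Lemma \ref{lem:interpreting_Hpi} is stated for $\pi\in\PiMD$), and the series-convergence concern you flag is exactly what Lemma \ref{lem:interpreting_Hpi} resolves via the Cesaro-limit-equals-ordinary-limit argument for transient states.
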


While the condition $P_{sa}\rho^\star < \rho^\star(s)$ in~\eqref{eq:tdrop_pi_defn} appears very quantitative, it is actually closely related to the MDP's topological structure: such ``non-gain-preserving'' state-action pairs $(s,a)$ are necessarily on the ``boundary'' of some strongly-connected MDP component in the sense that they have nonzero probability of transitioning to some $s'$ such that there is no policy which can eventually reach $s$ from $s'$ with probability $1$ (since if there were, we would have $\rho^\star(s') = \rho^\star(s)$). Since many state-action pairs cannot be on such a boundary (including all state-action pairs which are recurrent under any policy), we must have $P_{sa}\rho^\star = \rho^\star(s)$ for some state-action pairs. (Note we always have $\rho^\star(s) \geq P_{sa}\rho^\star$.)

As shown in Lemma \ref{lem:finiteness_of_tdrop}, $\Tdrop$ is always finite in finite MDPs. We always have $\Tdrop \leq \Tb$ and $\Tdrop \leq \frac{1}{\mingap}$, as shown in Lemmas \ref{lem:Tdrop_lessthan_Tb} and \ref{lem:Tdrop_lessthan_mingap}, respectively, the latter of these facts due to an interesting Markov-inequality-like argument. 
$\Tdrop = 0$ if $\rho^\star$ is constant, whereas $\Tb$ may still be arbitrarily large in this case. We also show examples where $\Tdrop$ is arbitrarily smaller than $1/\mingap$ in Theorem \ref{thm:large_h_mod_sp_example}.

\section{Value iteration for multichain average-reward MDPs}
\label{sec:undiscounted_VI}
\subsection{Coupling-based analysis}
\label{sec:coupling_based_analysis}
A key requirement for the convergence of Halpern iteration is that the operator must possess some fixed point. This is not the case for average-reward MDPs, where $\T$ instead satisfies $\T(h) = h + \rho^\star$ for $h$ satisfying~\eqref{eq:mod_bellman_2}, which implies $\T$ has no fixed point (unless $\rho^\star = \zero$), but the \textit{shifted operator} $\Tshift (x):= \T(x) - \rho^\star$ \textit{does} have a fixed point. 
The gain $\rho^\star$ is generally unknown so we cannot explicitly apply the shifted operator $\Tshift$. However, as has been observed in prior work (e.g. \cite{bravo_stochastic_2024}), when $\rho^\star$ is a multiple of the all-one vector $\one$ then it commutes with $\T$ in the sense that $\T(x + \alpha \rho^\star) = \T(x) + \alpha \rho^\star$ for all $\alpha \in \R$ (since it is a standard fact that $\T$ commutes with $\one$ in this sense). This commutativity property is sufficient for obtaining the same guarantees as if the shift $\rho^\star$ were known, since then the unshifted iterates generated with $\T$ can be related to the hypothetical sequence of iterates which would have been generated with the appropriately shifted operator $\Tshift$.

We observe this style of coupling argument can be abstracted to general fixed-point problems involving ``unshifted'' operators without fixed points. In particular we can address the \textit{policy evaluation} setting, where for some policy $\pi$ we seek $h$ such that $\T^\pi(h)\approx h + \rho^\pi$, where $\T^\pi$ is the Bellman \textit{consistency}/\textit{evaluation} operator. Suboptimal policies $\pi$ generally have non-constant gain $\rho^\pi$, even for MDPs where $\rho^\star$ is state-independent (a multiple of $\one$), such as weakly communicating MDPs. Thus prior arguments (such as those within \cite{bravo_stochastic_2024}) are insufficient to analyze Halpern iteration applied to $\T^\pi$, but since this operator satisfies the key commutativity property $\T^\pi(h + \alpha \rho^\pi) = \T^\pi(h) + \alpha \rho^\pi$, a coupling-based analysis still works.
\begin{lem}
\label{lem:fixed_point_method_coupling}
    Fix $\rho, x_0 \in X$ and suppose $\L : X \to X$ satisfies $\L(x + \alpha \rho) = \L(x) + \alpha \rho$ for all $x \in X$ and $\alpha \in \R$. Define the shifted operator $\Lshift : X \to X$ by $\Lshift(x) = \L(x) - \rho$. Fix some sequence $(\beta_t)_{t=1}^\infty \in \R^\N $. Then defining the sequences $(x_t)_{t=0}^\infty$ and $(y_t)_{t=0}^\infty$ by $y_0 = x_0$ and 
    \begin{align*}
        x_{t+1} = (1-\beta_{t+1}) x_0 + \beta_{t+1} \L(x_t) \qquad \text{ and } \qquad y_{t+1} = (1-\beta_{t+1}) y_0 + \beta_{t+1} \Lshift(y_t),
    \end{align*}
    and letting $\Lambda_t = \sum_{i = 1}^t \prod_{j=i}^t \beta_j$, we have for all $t \geq 0$ that
    \begin{align*}
        x_t = y_t + \Lambda_t \rho.
    \end{align*}
\end{lem}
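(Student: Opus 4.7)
The plan is to proceed by induction on $t$, exploiting the commutativity property $\L(x + \alpha \rho) = \L(x) + \alpha \rho$ to transfer shifts through $\L$ at each step, so that the difference $x_t - y_t$ accumulates a controllable multiple of $\rho$ whose coefficient turns out to satisfy exactly the recursion defining $\Lambda_t$.

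For the base case $t = 0$, I would interpret $\Lambda_0 = \sum_{i=1}^{0} \prod_{j=i}^0 \beta_j = 0$ as an empty sum, so the identity reduces to $x_0 = y_0$, which holds by the definition $y_0 = x_0$. For the inductive step, assume $x_t = y_t + \Lambda_t \rho$. Applying the commutativity hypothesis gives
\begin{align*}
    \L(x_t) = \L(y_t + \Lambda_t \rho) = \L(y_t) + \Lambda_t \rho,
\end{align*}
so plugging into the Halpern recursion for $x_{t+1}$ and using $y_0 = x_0$,
\begin{align*}
    x_{t+1} - y_{t+1} &= \beta_{t+1}\bigl(\L(y_t) + \Lambda_t \rho\bigr) - \beta_{t+1}\bigl(\L(y_t) - \rho\bigr) = \beta_{t+1}(\Lambda_t + 1)\rho.
\end{align*}

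It then remains to verify that $\Lambda_{t+1} = \beta_{t+1}(\Lambda_t + 1)$. Splitting the defining sum of $\Lambda_{t+1}$ by isolating the $i = t+1$ term (which equals $\beta_{t+1}$) and factoring $\beta_{t+1}$ out of the remaining products $\prod_{j=i}^{t+1}\beta_j$ for $i \leq t$ yields $\Lambda_{t+1} = \beta_{t+1} + \beta_{t+1}\Lambda_t$, closing the induction.

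The only subtlety is the bookkeeping in the definition of $\Lambda_t$: one must adopt the empty-product/empty-sum conventions so that the identity for $\Lambda_{t+1}$ matches the coefficient produced by the inductive step, and verify it agrees with the intended value at $t=0$. Beyond this, no contractivity, boundedness, or norm structure on $X$ is needed --- the argument is purely algebraic and uses only the commutativity of $\L$ with shifts along $\rho$.
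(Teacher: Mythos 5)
Your proof is correct and follows essentially the same inductive argument as the paper: base case via the empty-sum convention $\Lambda_0 = 0$, inductive step using the commutativity hypothesis to push $\Lambda_t \rho$ through $\L$, and closing by verifying the recursion $\Lambda_{t+1} = \beta_{t+1}(\Lambda_t + 1)$. The only superficial difference is that you subtract the two recursions directly (cancelling the $(1-\beta_{t+1})x_0$ terms), while the paper rewrites $x_{t+1}$ step by step into $y_{t+1} + \beta_{t+1}(\Lambda_t+1)\rho$; these are the same computation.
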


Combining with an analysis of Halpern iteration specialized to affine operators \citep[Theorem 4.6]{contreras_optimal_2022} for an improved constant, we obtain the following result for finding fixed points of $\T^\pi - \rho^\pi$, the (shifted) Bellman \textit{policy evaluation} operator.
\begin{cor}
\label{cor:policy_eval_halpern}
    Fix $h_0 \in \R^\S$, fix some policy $\pi \in \PiMR$, and let $\T^{\pi}$ be the Bellman consistency/evaluation operator for policy $\pi$. Then generating the sequence $(h_t)_{t=0}^\infty$ by $h_{t+1} = (1-\beta_{t+1}) h_0 + \beta_{t+1} \T^\pi(h_{t})$ where $\beta_t = 1-\frac{1}{t+1}$, we have for all $t \geq 0$ that
    \begin{align*}
        \infnorm{\T^\pi(h_t) - h_t - \rho^\pi} \leq \frac{2}{t+1}\infnorm{h_0 - h^{\pi}}.
    \end{align*}
\end{cor}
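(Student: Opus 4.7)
The plan is to reduce the corollary to Lemma~\ref{lem:fixed_point_method_coupling} combined with the affine-case convergence rate for Halpern iteration, exploiting the fact that $\T^\pi$ is affine, nonexpansive, and commutes with shifts by $\rho^\pi$.

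First I would verify the hypotheses needed to invoke Lemma~\ref{lem:fixed_point_method_coupling} with $\L = \T^\pi$ and $\rho = \rho^\pi$. Writing $\T^\pi(h) = r_\pi + P_\pi h$ makes clear that $\T^\pi$ is an affine map. The required commutativity $\T^\pi(h + \alpha \rho^\pi) = \T^\pi(h) + \alpha \rho^\pi$ follows from the standard identity $P_\pi \rho^\pi = \rho^\pi$ (i.e.\ $\rho^\pi$ is invariant under $P_\pi$), since $\T^\pi(h + \alpha \rho^\pi) = r_\pi + P_\pi h + \alpha P_\pi \rho^\pi = \T^\pi(h) + \alpha \rho^\pi$. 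Define the shifted operator $\bevalshift(x) = \T^\pi(x) - \rho^\pi$; by the Bellman consistency/Poisson equation for $\pi$, $h^\pi$ is a fixed point of $\bevalshift$.

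Next, let $(y_t)_{t\ge 0}$ be the Halpern iterates for $\bevalshift$ started at $y_0 = h_0$ with the same $\beta_t = 1 - \frac{1}{t+1}$. By Lemma~\ref{lem:fixed_point_method_coupling} we have $h_t = y_t + \Lambda_t \rho^\pi$ for all $t$. The key observation is that the Bellman error of $h_t$ equals the fixed-point residual of $y_t$: using commutativity,
\begin{align*}
\T^\pi(h_t) - h_t - \rho^\pi
  &= \T^\pi(y_t + \Lambda_t \rho^\pi) - (y_t + \Lambda_t \rho^\pi) - \rho^\pi \\
  &= \T^\pi(y_t) + \Lambda_t \rho^\pi - y_t - \Lambda_t \rho^\pi - \rho^\pi \\
  &= \bevalshift(y_t) - y_t,
\end{align*}
so the $\Lambda_t \rho^\pi$ shift cancels and it suffices to bound $\infnorm{\bevalshift(y_t) - y_t}$.

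Finally, since $\bevalshift$ is an affine, $\infnorm{\cdot}$-nonexpansive operator with fixed point $h^\pi$, the specialized affine Halpern bound \citep[Theorem 4.6]{contreras_optimal_2022} applied with $\beta_t = \frac{t}{t+1}$ yields
\[
\infnorm{\bevalshift(y_t) - y_t} \le \frac{2}{t+1}\infnorm{y_0 - h^\pi} = \frac{2}{t+1}\infnorm{h_0 - h^\pi},
\]
which combined with the residual identity above gives the claimed bound. The only non-routine ingredient is the coupling identity plus commutativity cancellation; once those are in place the affine Halpern rate is a black box, so I do not anticipate a genuine obstacle beyond carefully stating the commutativity property and checking that $h^\pi$ is indeed a fixed point of $\bevalshift$.
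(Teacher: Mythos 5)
Your proposal is correct and follows essentially the same route as the paper's proof: it invokes Lemma~\ref{lem:fixed_point_method_coupling} with $\rho = \rho^\pi$ via the identity $P_\pi \rho^\pi = \rho^\pi$, uses the Poisson equation to identify $h^\pi$ as the fixed point of the shifted operator, cancels the $\Lambda_t \rho^\pi$ shift in the residual, and applies the affine Halpern rate of \citet[Theorem 4.6]{contreras_optimal_2022}. No gaps.
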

This exactly matches the lower bound \cite[Theorem 4]{lee_optimal_2024}, meaning that this rate for the fixed-point error of $\T^\pi - \rho^\pi$ is unimprovable for any value-iteration-style method satisfying a certain natural subspace condition (see \cite{lee_optimal_2024} for details).\footnote{While the lower bound \cite[Theorem 4]{lee_optimal_2024} is stated for the \textit{optimality} operator $\T - \rho^\star$, the hard instance has $|\A|=1$ and so this operator is equal to $\T^\pi - \rho^\pi$ for $\pi$ being the unique possible policy.}

\subsection{Leveraging restricted commutativity}
\label{sec:VI_restricted_commutativity}
However, the commutativity property $\L(x + \alpha \rho) = \L(x) + \alpha \rho$ used in Lemma \ref{lem:fixed_point_method_coupling} does not generally hold for the Bellman optimality operator $\T$ in general MDPs. However, $\T$ satisfies a certain \textit{restricted commutativity} property: for $h$ satisfying \textit{both} the modified~\eqref{eq:mod_bellman_2} and unmodified~\eqref{eq:unmod_bellman_2} Bellman equations, we have $\T(h + \alpha \rho^\star) = \T(h) + \alpha \rho^\star = h + (\alpha + 1)\rho^\star$ for all $\alpha \in \R$. (See Lemma \ref{lem:restricted_commutativity_property}.) Our next results show that this condition is actually sufficient to develop value-iteration-based algorithms for solving general MDPs, by using Algorithm \ref{alg:app_shifted_halpern}.

\begin{algorithm}[H]
\caption{Approximately Shifted Halpern Iteration} \label{alg:app_shifted_halpern}
\begin{algorithmic}[1]
\Require Number of iterations $n$ for each phase, initial point $h_0$
\State Let $x_0 = h_0$ \algorithmiccomment{gain estimation and warm-start phase}
\For{$t = 0, \dots, n-1$}
\State $x_{t+1} = \T(x_t)$
\EndFor
\State Let $z_0 = x_{n}$, form gain estimate $\rhohat = \frac{1}{n} \left(x_{n} - x_0 \right)$, form $\That$ as $\That(z) := \T(z) - \rhohat$ \label{alg:gain_estimation_step}
\For{$t = 0, \dots, n-1$} \algorithmiccomment{Halpern iteration phase; begins with $x_n$}
\State $z_{t+1} = (1-\beta_{t+1}) z_0 + \beta_{t+1} \That(z_t)$ where $\beta_t = 1 - \frac{2}{t+2}$
\EndFor
\State \Return policy $\pihat$ such that $\pihat(s) = \argmax_{a \in \A} r(s,a) + P_{sa} z_n$ 
\end{algorithmic}
\end{algorithm}

Algorithm \ref{alg:app_shifted_halpern} has two phases, for a total of $2n$ iterations. First, $n$ Picard steps are performed to obtain $x_n = \T^{(n)}(h_0)$, and this $x_n$ is used both to form a gain estimate $\rhohat = \frac{1}{n}(x_0 - h_0)$ and to initialize the next phase of the algorithm. The second phase of the algorithm is a standard Halpern iteration, but with an approximately shifted operator $\That:= \T - \rhohat$ formed using the gain estimate. The quality of the gain estimate $\rhohat$ is ensured by the following fact:\footnote{While a similar statement appears as \cite[Theorem 9.4.1]{puterman_markov_1994}, due to some ambiguity in the $h$ for which it holds, we provide a proof and discuss the details in Appendix \ref{sec:mod_bellman_issues}.}
\begin{lem}
\label{lem:rho_estimation_lemma_specific}
    Suppose $h$ satisfies both the modified~\eqref{eq:mod_bellman_2} and unmodified~\eqref{eq:unmod_bellman_2} Bellman equations. Then for any $h_0 \in \R^\S$ and any integer $n \geq 0$, we have $\infnorm{\T^{(n)}(h_0) - n \rho^\star - h} \leq \infnorm{h_0 - h}$.
\end{lem}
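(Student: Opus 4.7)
The plan is to reduce the claim to a combination of two facts: the restricted commutativity property (Lemma \ref{lem:restricted_commutativity_property}) applied iteratively to produce an exact formula for $\T^{(n)}(h)$, followed by nonexpansiveness of $\T$ in $\infnorm{\cdot}$ to transfer this to $\T^{(n)}(h_0)$.

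First, I would show by induction on $n$ that $\T^{(n)}(h) = h + n \rho^\star$. The base case $n=0$ is trivial, and the case $n=1$ is just the modified optimality equation~\eqref{eq:mod_bellman_2} rewritten as $\T(h) = h + \rho^\star$. For the inductive step, assume $\T^{(k)}(h) = h + k \rho^\star$. Then
\begin{align*}
\T^{(k+1)}(h) = \T(h + k \rho^\star) = \T(h) + k \rho^\star = h + (k+1)\rho^\star,
\end{align*}
where the middle equality uses Lemma \ref{lem:restricted_commutativity_property} with $\alpha = k$, whose hypothesis that $h$ satisfies both Bellman equations is granted in the statement.

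Second, since $\T$ is nonexpansive with respect to $\infnorm{\cdot}$, so is $\T^{(n)}$, and therefore
\begin{align*}
\infnorm{\T^{(n)}(h_0) - n \rho^\star - h} = \infnorm{\T^{(n)}(h_0) - \T^{(n)}(h)} \leq \infnorm{h_0 - h},
\end{align*}
using the identity just established to rewrite $n\rho^\star + h = \T^{(n)}(h)$. This gives the claim.

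There is essentially no serious obstacle, since both tools have been established earlier; the only delicate point is verifying that restricted commutativity can indeed be iterated, which works because it is stated for all $\alpha \in \R$ and so applies at each step of the induction without requiring $h + k \rho^\star$ to itself satisfy both Bellman equations.
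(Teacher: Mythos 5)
Your proof is correct and takes essentially the same approach as the paper: apply Lemma \ref{lem:restricted_commutativity_property} iteratively to establish $\T^{(n)}(h) = h + n\rho^\star$, then invoke nonexpansiveness of $\T^{(n)}$. The paper phrases the first step as a triangle-inequality split with one term vanishing, while you establish the identity directly by induction before substituting, but the mathematical content is identical.
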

Therefore $x_n$ is close to $n \rho^\star$, and thus $\rhohat$ approaches $\rho^\star$ as $n$ increases. This fact alone is sufficient for the success of the second phase of the algorithm in finding a point $z_n$ with small fixed-point residual $\infnorm{\T(z_n) - \rho^\star - z_n}$. 
However as shown in Corollary \ref{cor:AMDP_performance_diff_cor}, finding such a $z_n$, and taking the greedy policy $\pihat$ such that $\T^{\pihat}(z_n) = \T(z_n)$, is insufficient for achieving small suboptimality $\infnorm{\rho^{\pihat} - \rho^\star}$; we must also control $\infnorm{P_{\pihat} \rho^\star - \rho^\star}$, which is related to the navigation subproblem. This is why it is essential to initialize the second phase with $x_n$: since $x_n$ is closely aligned with $n\rho^\star$, it is possible to show that all subsequent iterates remain aligned with $n\rho^\star$, and hence $\pihat$ will be chosen in a way which approximately maximizes $P_{\pihat} \rho^\star$ (and the elementwise maximum of this quantity over all policies is $\rho^\star$). In summary, the first phase of the above algorithm, and in particular the initialization which is close in direction to $n\rho^\star$, is essential to how our algorithm solves the navigation subproblem. These ideas lead to the following theorem.

\begin{thm}
    \label{thm:app_shifted_halpern_main}
    Let $h$ be any vector satisfying both the modified~\eqref{eq:mod_bellman_2} and unmodified~\eqref{eq:unmod_bellman_2} equations (with $\rho = \rho^\star$). For all $n \geq 1$, Algorithm \ref{alg:app_shifted_halpern} with inputs $n, h_0$ returns $z_n$ and a policy $\pihat$ such that
    \begin{align}
        \infnorm{\rho^\star - \rho^{\pihat}} &\leq \frac{\frac{10}{3}\Tdrop + 13 + \frac{35}{n} + \frac{20}{n^2}}{n}\infnorm{h_0 - h}\label{eq:thm:app_shifted_halpern_main_pol_subopt_bd}
    \end{align}
    and
    \begin{align}
        \infnorm{\T(z_n) - \rho^\star - z_n}\leq \frac{13 + \frac{35}{n} + \frac{20}{n^2}}{n}\infnorm{h_0 - h}. \label{eq:thm:app_shifted_halpern_main_bell_err_bd}
    \end{align}
\end{thm}
The bound~\eqref{eq:thm:app_shifted_halpern_main_bell_err_bd} outperforms the fixed-point error convergence rate from \cite[Theorem 2]{lee_optimal_2024} by a factor of $O(1+1/\Delta)$, and matches (up to constants) the lower bound of $\Omega(\infnorm{h_0 - h}/n)$ shown by \cite[Theorem 3]{lee_optimal_2024} for the fixed-point residual in the weakly communicating setting. Therefore, Theorem \ref{thm:app_shifted_halpern_main} demonstrates a surprising finding that general MDPs are no harder than weakly communicating MDPs in terms of fixed-point error, disproving a conjecture in \cite{lee_optimal_2024}. This result holds for all $n \geq 1$, whereas \cite[Theorem 2]{lee_optimal_2024} only holds for $n \geq C\frac{1 + \tinfnorm{h_0 - h}}{\mingap}$ for some constant $C$. The bound~\eqref{eq:thm:app_shifted_halpern_main_pol_subopt_bd} also obtains an improved rate of $O(\frac{\Tdrop + 1}{n} \tinfnorm{h_0 - h})$ for the policy suboptimality, whereas \cite[Theorem 2]{lee_optimal_2024} achieves a rate of $O(\frac{1 + 1/\mingap}{n} \tinfnorm{h_0 - h})$ (which is worse since $\Tdrop \leq 1/\mingap$ as shown in Lemma \ref{lem:Tdrop_lessthan_mingap}). Furthermore, under a large-$n$ setting similar to that of \cite[Theorem 2]{lee_optimal_2024}, we obtain a refined guarantee for the policy suboptimality without any algorithmic changes:
\begin{thm}
    \label{thm:app_shifted_halpern_large_iters}
    Let $h$ be any vector satisfying both the modified~\eqref{eq:mod_bellman_2} and unmodified~\eqref{eq:unmod_bellman_2} equations (with $\rho = \rho^\star$). For all $n \geq \frac{4\tinfnorm{h_0 - h}}{\mingap}$, Algorithm \ref{alg:app_shifted_halpern} with inputs $n, h_0$ returns a policy $\pihat$ such that
    \begin{align*}
        \infnorm{\rho^\star - \rho^{\pihat}} \leq \frac{13 + \frac{35}{n} + \frac{20}{n^2}}{n}\infnorm{h_0 - h}.
    \end{align*}
\end{thm}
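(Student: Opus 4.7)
The plan is to piggyback on the proof of Theorem~\ref{thm:app_shifted_halpern_main}, which must already control, as a separate intermediate quantity, the ``navigation residual'' $\tinfnorm{P_{\pihat}\rho^\star - \rho^\star}$ associated with the output policy. The coefficient $\tfrac{10}{3}\Tdrop/n$ in Theorem~\ref{thm:app_shifted_halpern_main} enters the Corollary~\ref{cor:AMDP_performance_diff_cor} decomposition through $\Tdrop^{\pihat} \le \Tdrop$ multiplying a bound of the form
\[
\tinfnorm{P_{\pihat}\rho^\star - \rho^\star} \;\le\; \frac{10/3}{n}\,\tinfnorm{h_0 - h},
\]
produced by the alignment argument of Section~\ref{sec:VI_restricted_commutativity}: the first phase of Algorithm~\ref{alg:app_shifted_halpern} ensures $z_0 = x_n$ lies close to $n\rho^\star + h$ (by Lemma~\ref{lem:rho_estimation_lemma_specific}), and the Halpern iteration together with the restricted commutativity property keeps the subsequent iterates $z_t$ aligned with $n\rho^\star$, so that the greedy extraction from $z_n$ nearly maximizes $P_{sa}\rho^\star$ at every state.

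The key observation for Theorem~\ref{thm:app_shifted_halpern_large_iters} is that once $n$ is large enough that this intermediate bound dips below $\mingap$, the navigation subproblem is actually solved \emph{exactly}. Concretely, under $n \ge 4\,\tinfnorm{h_0 - h}/\mingap$ we have
\[
\tinfnorm{P_{\pihat}\rho^\star - \rho^\star} \;\le\; \frac{10/3}{n}\,\tinfnorm{h_0 - h} \;\le\; \tfrac{10}{12}\mingap \;<\; \mingap.
\]
By the definition of $\mingap$, every entry of the nonnegative vector $\rho^\star - P_{\pihat}\rho^\star$ (nonnegative because $\rho^\star$ satisfies $\max_a P_{sa}\rho^\star = \rho^\star(s)$) is either $0$ or at least $\mingap$, so its $\infty$-norm lies in $\{0\} \cup [\mingap, \infty)$. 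Strictly falling below $\mingap$ forces it to be $0$, and hence $P_{\pihat}\rho^\star = \rho^\star$.

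Plugging this equality back into Corollary~\ref{cor:AMDP_performance_diff_cor} kills the entire $\Tdrop$ contribution, leaving
\[
\tinfnorm{\rho^\star - \rho^{\pihat}} \;\le\; \tinfnorm{\T^{\pihat}(h) - h - \rho^\star},
\]
and the bound on this Bellman residual already established in the proof of Theorem~\ref{thm:app_shifted_halpern_main}---the source of the $(13 + 35/n + 20/n^2)/n$ terms---yields the claim. The main obstacle is simply bookkeeping: one needs to verify that the proof of Theorem~\ref{thm:app_shifted_halpern_main} genuinely factors $\tinfnorm{\rho^\star - \rho^{\pihat}}$ through the two summands of Corollary~\ref{cor:AMDP_performance_diff_cor} in a way that isolates the navigation residual with the specific constant $10/3$, rather than folding it into a joint bound; no new estimates beyond those already produced for Theorem~\ref{thm:app_shifted_halpern_main} are required.
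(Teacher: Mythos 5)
Your proposal is correct and is essentially identical to the paper's own proof: the paper isolates the bound $\tinfnorm{P_{\pihat}\rho^\star - \rho^\star} \leq \frac{10}{3}\tinfnorm{h_0-h}/n$ as a standalone lemma, uses $n \geq 4\tinfnorm{h_0-h}/\mingap$ to force this below $\mingap$ and hence to zero (noting, as your argument implicitly requires, that $\pihat$ is deterministic so each entry of $\rho^\star - P_{\pihat}\rho^\star$ is either $0$ or at least $\mingap$), and then applies Corollary~\ref{cor:AMDP_performance_diff_cor} with only the fixed-point-residual term surviving. The only slip is notational: the surviving term is $\tinfnorm{\T^{\pihat}(z_n) - z_n - \rho^\star}$ evaluated at the output iterate $z_n$, not at the Bellman solution $h$.
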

Since Theorem \ref{thm:app_shifted_halpern_large_iters} matches the best-known convergence rate for policy suboptimality in weakly communicating MDPs (e.g. \cite[Corollary 2]{lee_optimal_2024}), this result suggests that general MDPs are also no more difficult than weakly communicating MDPs in terms of policy suboptimality, for a regime of sufficiently large $n$. We note however that to the best of our knowledge the only existing lower bound \cite[Theorem 3]{lee_optimal_2024} applies specifically to fixed-point error, and showing a similar lower bound for the policy suboptimality is an interesting open question. Understanding whether there is a gap between the optimal convergence rates for policy suboptimality and for fixed-point error outside of this large-$n$ regime is another open question.

Finally we further discuss the restricted commutativity property. Lemma \ref{lem:restricted_commutativity_property} shows the property $\T(h + \alpha \rho^\star) = h + (\alpha+1) \rho^\star$ (for all $\alpha \in \R$) holds if and only if $h$ solves both the modified and unmodified Bellman equations. Actually most of the proof can be performed in the abstract setting of a general nonexpansive ``unshifted'' operator $\L$ where $\L(x^\star + \alpha \rho) = x^\star + (\alpha +1)\rho$ for some $\rho, x^\star \in X$ and all $\alpha \in \R$ (which implies $x^\star$ is a fixed point of the ``shifted'' operator $\L - \rho$). See Theorem \ref{thm:abstract_halpern_main}, which shows that we can find $z_n$ with small shifted fixed-point error $\norm{\L(z_n) -\rho - z_n}$; however, there does not seem to be a generic analogue of bounding $\tinfnorm{P_{\pihat} \rho^\star - \rho^\star}$.

\section{Discounted value iteration}
\label{sec:discounted_VI}
\subsection{Discounted reduction}
Now we develop algorithms for solving general average-reward MDPs via a discounted VI approach. We first develop a new reduction lemma, bounding the average-reward suboptimality $ \tinfnorm{\rho^\pi - \rho^\star}$ in terms of the discounted fixed-point error $\tinfnorm{\T_\gamma(V) - V}$ and our complexity parameters. 
\begin{lem}
\label{lem:DMDP_red_short}
    Suppose for some $V \in \R^{\S}$ that policy $\pi$ is greedy with respect to $r + \gamma P V$, that is, $\T_\gamma^\pi(V) = \T_\gamma(V)$, and that $\gamma \geq \frac{1}{2}$. Then, letting $h$ satisfy the modified and unmodified Bellman equations and letting $M = \min\big\{ \tspannorm{h}, \tspannorm{h^{\pistar}} + \Tdrop + \tspannorm{h^{\pistar}}\Tdrop\big\}$, we have
    \begin{align*}
        \infnorm{\rho^\pi - \rho^\star} \leq (\Tdrop^\pi + 1) \left[(1-\gamma)\Big(4+7M\Big) + 16\tinfnorm{\T_\gamma(V) - V} \right].
    \end{align*}
\end{lem}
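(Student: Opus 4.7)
The plan is to apply Corollary~\ref{cor:AMDP_performance_diff_cor} with the given $h$, which reduces the claim to bounding
\[
A := \tinfnorm{P_\pi\rho^\star - \rho^\star} \quad \text{and} \quad B := \tinfnorm{\T^\pi(h) - h - \rho^\star}
\]
each by $(1-\gamma)(4+7M) + 16\epsilon$, where $\epsilon := \tinfnorm{\T_\gamma(V) - V}$; the lemma then follows since $\Tdrop^\pi A + B \leq (\Tdrop^\pi + 1)\bigl[(1-\gamma)(4+7M) + 16\epsilon\bigr]$. The quantity $A$ captures navigational suboptimality, while $B$ captures recurrent suboptimality relative to $h$.

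The central tool is the greedy inequality $\T_\gamma^\pi(V) = \T_\gamma(V) \geq \T_\gamma^{\pi^\dagger}(V)$, where $\pi^\dagger$ is a deterministic policy that simultaneously maximizes both Bellman equations with $h$ (which exists by Lemma~\ref{lem:simultaneous_argmax_equiv_unmod_soln}, as $h$ satisfies both equations). Substituting the decomposition $V = \frac{1}{1-\gamma}\rho^\star + h + e$ (with $e := V - \frac{1}{1-\gamma}\rho^\star - h$) into the rearranged componentwise form $\gamma(P_{s,\pi(s)} - P_{s,\pi^\dagger(s)})V \geq r(s,\pi^\dagger(s)) - r(s,\pi(s))$ and using $P_{\pi^\dagger}\rho^\star = \rho^\star$, the leading $\frac{1}{1-\gamma}(\rho^\star - P_\pi\rho^\star)$ term emerges naturally; solving for it and using $\gamma\geq 1/2$, $|r_\pi - r_{\pi^\dagger}|\leq 1$, and $|(P_{s,a} - P_{s,a'})w|\leq\tspannorm{w}$ yields the bound on $A$. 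For $B$, I rewrite the greedy inequality using the identity $r(s,\pi^\dagger(s)) = h(s) + \rho^\star(s) - P_{s,\pi^\dagger(s)}h$ from the second Bellman equation for $\pi^\dagger$ to obtain $\T^\pi(h)(s) - h(s) - \rho^\star(s) \geq (P_{s,\pi^\dagger(s)} - P_{s,\pi(s)})(\gamma V - h)$; since $\T^\pi(h) - h - \rho^\star \leq 0$, the same decomposition applied to $\gamma V - h$ (noting $\rho^\star - P_\pi\rho^\star \geq 0$ contributes favorably) produces the target bound on $B$ modulo the control of $\tspannorm{e}$.

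Controlling $\tspannorm{e} = \tspannorm{V - W}$ where $W := \frac{1}{1-\gamma}\rho^\star + h$ is the remaining technical step: a direct calculation shows $\tspannorm{\T_\gamma(W) - W}$ is of order $(1-\gamma)\tspannorm{h}$ (using that $h$ satisfies both Bellman equations, and that for $\gamma$ sufficiently close to $1$ the max in $\T_\gamma(W)$ is attained at actions maximizing $P_{sa}\rho^\star$), so telescoping via $V - W = (V-\T_\gamma V) + (\T_\gamma V - \T_\gamma W) + (\T_\gamma W - W)$ combined with span-norm contractivity of $\T_\gamma$ yields a bound on $\tspannorm{e}$ in terms of $\epsilon$, $(1-\gamma)$, and $\tspannorm{h}$. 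The alternative bound $M \leq \tspannorm{h^{\pistar}} + \Tdrop + \tspannorm{h^{\pistar}}\Tdrop$ follows from a separate construction exhibiting an $h$ with $\tspannorm{h}$ so bounded, built from $h^{\pistar}$ via an additive shift aligned with $\rho^\star$ and scaled by the gain-dropping time. The main obstacle is balancing $\tspannorm{e}$'s contribution against the target $16\epsilon$ coefficient in the final bound, since a naive substitution produces an undesired $\frac{\epsilon}{1-\gamma}$ term; resolving this likely requires either a finer span-based analysis that separates contributions along $\rho^\star$ from those along $h$, or a joint bookkeeping between the $A$- and $B$-bounds that exploits cancellations arising only in the combined expression $\Tdrop^\pi A + B$.
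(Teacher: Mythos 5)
Your proposal has a genuine gap in bounding the term $B := \tinfnorm{\T^\pi(h) - h - \rho^\star}$, and you correctly flag it yourself at the end. Following your computation, $B$ is controlled by roughly $(1-\gamma)\tspannorm{h} + \gamma\tspannorm{e}$ where $e = V - \frac{1}{1-\gamma}\rho^\star - h$; since the best available control is $\tinfnorm{V - V_\gamma^\star}\leq \frac{\epsilon}{1-\gamma}$ (with $\epsilon := \tinfnorm{\T_\gamma(V)-V}$), the quantity $\gamma\tspannorm{e}$ is genuinely of order $\frac{\epsilon}{1-\gamma}$, and unlike your $A$ bound, the $B$ bound carries no compensating $(1-\gamma)$ prefactor. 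Your speculative remedy of exploiting cancellations between $\Tdrop^\pi A$ and $B$ does not obviously close this: the favorable $-\frac{\gamma}{1-\gamma}(\rho^\star(s) - e_s^\top P_\pi\rho^\star)$ term in the pointwise $B$ bound vanishes precisely at states where $\pi$ is gain-preserving, which may well include the state achieving the max of $|B|$.

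The root cause is the vector fed into Corollary \ref{cor:AMDP_performance_diff_cor}. You substitute the exact Bellman solution $h$, which forces you to measure how well $\pi$ (chosen greedily from the approximate $V$) performs against the exact $h$ --- an inherently poorly-conditioned comparison. The paper instead substitutes $V$ itself into the corollary (valid since the corollary holds for any vector), which reduces the task to bounding $\tinfnorm{\T^\pi(V) - V - \rho^\star}$. The identity $\T^\pi(V) = \T_\gamma^\pi(V) + (1-\gamma)P_\pi V = \T_\gamma(V) + (1-\gamma)P_\pi V$ then expresses the average-reward residual as the discounted fixed-point error plus $(1-\gamma)P_\pi V - \rho^\star = (1-\gamma)P_\pi(V - \tfrac{1}{1-\gamma}\rho^\star) - (\rho^\star - P_\pi\rho^\star)$, so every occurrence of $\tinfnorm{V - \tfrac{1}{1-\gamma}\rho^\star}$ automatically comes with a $(1-\gamma)$ prefactor that cancels the $\tfrac{1}{1-\gamma}$ blowup. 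This is exactly what Lemmas \ref{lem:DMDP_red_fp_err_bound}, \ref{lem:DMDP_red_gain_pres_bound}, and \ref{lem:DMDP_red_pe_fp_err_bound} accomplish. Your analysis of the $A$ term already works for the same structural reason (the $\tfrac{1}{1-\gamma}(\rho^\star - P_\pi\rho^\star)$ term supplies the $(1-\gamma)$ factor after rearranging), so replacing $h$ by $V$ in the corollary and running the analogous argument for the residual term repairs the proof.
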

See the slightly more general Lemma \ref{lem:DMDP_red_main}, from which Lemma \ref{lem:DMDP_red_short} follows immediately. Fixing some iteration budget $n$, Lemma \ref{lem:DMDP_red_short} suggests that to get a $O(1/n)$ rate, we must set the effective horizon $1/(1-\gamma)$ to be at least order $\Omega(n)$. Next it remains to control the discounted fixed-point error term $\tinfnorm{\T_\gamma(V) - V}$, which is challenging with this large choice of effective horizon.

\subsection{Faster fixed-point error convergence}
As part of our first step in minimizing the discounted fixed-point error, we develop a result of independent interest: a simple algorithm which achieves an optimal fixed-point error convergence rate (up to constants) for $\gamma<1$ contractive operators in general normed spaces. The algorithm simply deploys standard Halpern iteration for approximately $\frac{1}{1-\gamma}$ steps, then switches to Picard iteration. While the analysis is trivial, to the best of our knowledge an algorithm obtaining such guarantees was not previously known despite prior work on related problems, and we find it surprising that such a simple algorithm and analysis achieves optimality.

\begin{algorithm}[H]
\caption{Halpern-Then-Picard} \label{alg:halpern_then_picard}
\begin{algorithmic}[1]
\Require Initial point $x_0$, contraction factor $\gamma < 1$, $\gamma$-contractive operator $\L$
\State Let $E = \big \lfloor \frac{1}{1-\gamma} \big \rfloor-1$
\For{$t = 0, \dots, E-1$} \algorithmiccomment{Halpern iteration for first $\approx \frac{1}{1-\gamma}$ steps}
\State $x_{t+1} = (1-\beta_{t+1}) x_0 + \beta_{t+1} \L(x_t)$ where $\beta_t = 1 - \frac{2}{t+2}$
\EndFor
\For{$t = E, \dots$} \algorithmiccomment{switch to Picard iteration after $\approx \frac{1}{1-\gamma}$ steps}
\State $x_{t+1} = \L(x_t)$
\EndFor
\end{algorithmic}
\end{algorithm}

\begin{thm}
    \label{thm:optimal_contractive_fixed_point_iter}
    Let $\L : X \to X$ be a $\gamma$-contractive operator with respect to some norm $\norm{\cdot}$, and let $x^\star$ be its fixed point. Letting $(x_t)_{t \in \N}$ be the sequence of iterates generated by Algorithm \ref{alg:halpern_then_picard}, we have
    \begin{align*}
        \norm{\L(x_t) - x_t} \leq \begin{cases}
            \frac{4}{t+1} \norm{x_0 - x^\star} & t \leq E \\
            8 (1-\gamma)\gamma^{t-E} \norm{x_0 - x^\star} & t > E
        \end{cases} ~~~\leq~ 8e \frac{\gamma^t}{\sum_{i=0}^t \gamma^i}\norm{x_0 - x^\star} .
    \end{align*}
\end{thm}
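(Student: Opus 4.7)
The plan is to analyze the two phases of Algorithm \ref{alg:halpern_then_picard} separately, obtaining the two cases of the displayed bound, and then verify that both cases are dominated by the combined expression $8e \gamma^t/\sum_{i=0}^t \gamma^i$.

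For Phase 1 ($t \leq E$), since any $\gamma$-contractive operator is in particular nonexpansive, I would invoke the standard Halpern iteration rate (e.g.\ Lieder-style) for the schedule $\beta_t = 1 - 2/(t+2)$, which yields
\begin{align*}
\norm{\L(x_t) - x_t} \leq \frac{4}{t+1} \norm{x_0 - x^\star}.
\end{align*}
This gives the first case of the theorem essentially for free. For Phase 2 ($t > E$), the key observation is that Picard iteration of a contractive operator contracts the fixed-point residual at rate $\gamma$, since
\begin{align*}
\norm{\L(x_{t+1}) - x_{t+1}} = \norm{\L(\L(x_t)) - \L(x_t)} \leq \gamma \norm{\L(x_t) - x_t}.
\end{align*}
Iterating this from $t = E$ and combining with the Phase 1 bound evaluated at $t = E$ gives $\norm{\L(x_t) - x_t} \leq \gamma^{t-E} \cdot \frac{4}{E+1} \norm{x_0 - x^\star}$. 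To match the claimed form, it suffices to show $4/(E+1) \leq 8(1-\gamma)$, i.e.\ $E + 1 \geq 1/(2(1-\gamma))$. Since $E + 1 = \lfloor 1/(1-\gamma) \rfloor$ and $1/(1-\gamma) \geq 1$, the elementary inequality $\lfloor y \rfloor \geq y/2$ for $y \geq 1$ delivers this.

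For the unified bound, the key analytic lemma I would establish is $\gamma^E \geq 1/e$. This follows from $E \leq 1/(1-\gamma) - 1 = \gamma/(1-\gamma)$ together with the standard inequality $\ln \gamma \geq -(1-\gamma)/\gamma$, which multiplied together give $E \ln \gamma \geq -1$. Given this, for Phase 1 one rewrites
\begin{align*}
\frac{4}{t+1} = \frac{4\gamma^t}{\sum_{i=0}^t \gamma^i} \cdot \frac{\sum_{i=0}^t \gamma^i}{(t+1)\gamma^t}
\end{align*}
and bounds the second factor via $\frac{1}{t+1}\sum_{i=0}^t \gamma^{i-t} \leq \gamma^{-t} \leq \gamma^{-E} \leq e$, valid for $t \leq E$. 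For Phase 2 one rewrites $8(1-\gamma)\gamma^{t-E} = 8\gamma^t \cdot \gamma^{-E}(1-\gamma^{t+1})/\sum_{i=0}^t \gamma^i$ using the geometric-series identity $(1-\gamma)\sum_{i=0}^t \gamma^i = 1 - \gamma^{t+1}$, and then applies $\gamma^{-E}(1-\gamma^{t+1}) \leq \gamma^{-E} \leq e$. The main obstacle is this last bookkeeping step: the underlying ideas are elementary, but extracting the constant $e$ cleanly through the floor function defining $E$ and verifying the combined bound uniformly across both phases (including edge cases such as $E = 0$ when $\gamma < 1/2$, where Phase 1 is empty and one must check the trivial estimate $\norm{\L(x_0) - x_0} \leq 2\norm{x_0 - x^\star} \leq 8(1-\gamma)\norm{x_0 - x^\star}$) requires careful case analysis.
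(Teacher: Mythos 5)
Your proposal is correct and follows essentially the same route as the paper's proof: the standard Halpern rate $\frac{4}{t+1}\norm{x_0-x^\star}$ for the first phase, the observation that Picard iteration contracts the residual by $\gamma$ per step in the second phase, the bound $\gamma^{-E}\leq e$ (which the paper derives by showing $\sup_{\gamma\in(0,1)}\gamma^{-\frac{1}{1-\gamma}+1}=e$ via a monotonicity argument, whereas you use the equivalent and slightly more elementary inequality $\log\gamma\geq-(1-\gamma)/\gamma$), and the floor estimate $\lfloor\frac{1}{1-\gamma}\rfloor\geq\frac{1}{2}\frac{1}{1-\gamma}$ to absorb the discretization into the constant $8$. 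The only differences are in bookkeeping order, and all steps check out.
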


\cite{park_exact_2022} show a lower bound of $(1+\gamma)\frac{\gamma^t}{\sum_{i=0}^t \gamma^i}\infnorm{x_0 - x^\star}$ for the fixed-point error (in the Hilbert space setting, meaning the lower bound may be improvable for our Banach space setting), which is matched by Algorithm \ref{alg:halpern_then_picard} up to a constant of $ 8e/(1+\gamma)$.

We can immediately apply Algorithm \ref{alg:halpern_then_picard} with the discounted Bellman operator $\T_\gamma$ to minimize the discounted fixed-point error $\tinfnorm{\T_\gamma(V) - V}$. With this particular operator, \cite{lee_accelerating_2023} show a lower bound on the discounted fixed-point error of $\frac{\gamma^t}{\sum_{i=0}^t \gamma^i}\infnorm{V_0 - V_\gamma^\star}$ (for a certain natural class of algorithms), implying the optimality of Algorithm \ref{alg:halpern_then_picard} up to a factor of $8e$.
\cite{lee_accelerating_2023} also presents an algorithm for minimizing the discounted fixed-point error, and while their algorithm achieves a slightly improved constant relative to Theorem \ref{thm:optimal_contractive_fixed_point_iter} (with $\L = \T_\gamma$) for certain initializations, their guarantee does not match the $\frac{\gamma^t}{\sum_{i=0}^t \gamma^i}\infnorm{V_0 - V_\gamma^\star}$ lower bound up to constants for all initializations $V_0$ (including the initializations required in the next subsection), 
and hence our Algorithm \ref{alg:halpern_then_picard} is the first to do so. See Appendix \ref{sec:related_work_expanded_lee_comparison} for more on comparing with \cite{lee_accelerating_2023}.

\subsection{Faster value error convergence}
Algorithm \ref{alg:halpern_then_picard} alone (applied to $\T_\gamma$) is insufficient to obtain the desired bound on the discounted fixed-point error $\tinfnorm{\T_\gamma(V) - V}$: the error bound depends on the initial \emph{value error} $\tinfnorm{V_\gamma^\star - V_0}$, and with a generic initialization like $V_0 = \zero$, this value error can generally be $\Omega(1/(1-\gamma)) = \Omega(n)$ with our large choice of effective horizon. Hence after $O(n)$ iterations Algorithm \ref{alg:halpern_then_picard} would only lead to a bound like $\tinfnorm{\T_\gamma(V) - V} \leq O(n/n) = O(1)$, whereas we desire this term to be $O(1/n)$. 

To fix this problem, we show that by using \textit{undiscounted} value iterations and a re-normalization step, we can reduce the value error at a sublinear $O(1/t)$ rate:
\begin{lem}
\label{lem:initialization_proc}
    Fix $\gamma \in (0,1)$. Let $h$ be a solution to both the modified and unmodified Bellman equations. Then for any integer $t$ such that $0 < t \leq \frac{1}{1-\gamma}$, we have
    \begin{align*}
        \infnorm{\frac{1}{t(1-\gamma)}\T^{(t)}(\zero) - V_\gamma^\star} \leq 2 \frac{\min\Big\{\spannorm{h}, \spannorm{h^{\pistar}} +  \Tdrop +  \spannorm{h^{\pistar}} \Tdrop \Big\} }{t} \frac{1}{1-\gamma}.
    \end{align*}
\end{lem}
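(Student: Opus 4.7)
The plan is to compare $V_t := \T^{(t)}(\zero)$ to $t\rho^\star + h$ and simultaneously compare $V_\gamma^\star$ to $\tfrac{\rho^\star}{1-\gamma} + h$, using a common ``target'' vector $h$ satisfying both the modified and unmodified Bellman equations so that the two estimates combine cleanly. The first comparison is already supplied by Lemma \ref{lem:rho_estimation_lemma_specific}: $\infnorm{V_t - t\rho^\star - h} \leq \infnorm{h}$.

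For the second, I would show $\infnorm{V_\gamma^\star - \tfrac{\rho^\star}{1-\gamma} - h} \leq \infnorm{h}$. Set $v = \tfrac{\rho^\star}{1-\gamma} + h$ and use $\gamma$-contractivity of $\T_\gamma$, which reduces the task to bounding $\infnorm{\T_\gamma(v) - v} \leq (1-\gamma)\infnorm{h}$. Rewriting $\gamma P_{sa}v = P_{sa}h + \tfrac{\gamma}{1-\gamma}P_{sa}\rho^\star - (1-\gamma) P_{sa}h$ inside the max exposes a useful separation of terms. For the lower bound, plug in a deterministic policy $\pi^\circ$ from Lemma \ref{lem:simultaneous_argmax_equiv_unmod_soln} that simultaneously achieves both Bellman maxima, so $P_{\pi^\circ}\rho^\star = \rho^\star$ and $r_{\pi^\circ} + P_{\pi^\circ}h = \rho^\star + h$; this yields $\T_\gamma(v) \geq v - (1-\gamma) P_{\pi^\circ} h$. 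For the upper bound, combine the pointwise inequality $P_{sa}\rho^\star \leq \rho^\star(s)$ with the modified Bellman equation and $\max(A+B) \leq \max A + \max B$ to obtain $\T_\gamma(v) - v \leq (1-\gamma)\max_a(-P_{sa}h)\one$. Both bounds have magnitude at most $(1-\gamma)\infnorm{h}$.

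Combining through the algebraic identity
\begin{align*}
\frac{V_t}{t(1-\gamma)} - V_\gamma^\star = \frac{V_t - t\rho^\star - h}{t(1-\gamma)} + h \cdot \frac{1 - t(1-\gamma)}{t(1-\gamma)} - \left(V_\gamma^\star - \tfrac{\rho^\star}{1-\gamma} - h\right),
\end{align*}
and using the hypothesis $t(1-\gamma) \leq 1$ (which makes the middle coefficient nonnegative), the triangle inequality produces $\infnorm{V_t/(t(1-\gamma)) - V_\gamma^\star} \leq 2\infnorm{h}/(t(1-\gamma))$. Since $h + c\one$ also satisfies both Bellman equations for every constant $c$, optimizing the shift replaces $\infnorm{h}$ with $\tfrac{1}{2}\spannorm{h}$, yielding the first term of the minimum.

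For the second term $\spannorm{h^{\pistar}} + \Tdrop + \spannorm{h^{\pistar}}\Tdrop$, I would exhibit a specific solution of both Bellman equations of the form $h = h^{\pistar} + M \rho^\star$ with $M \leq \Tdrop(1 + \spannorm{h^{\pistar}})$, so that $\spannorm{h} \leq \spannorm{h^{\pistar}} + M\spannorm{\rho^\star} \leq \spannorm{h^{\pistar}} + M$ gives the stated bound. The main obstacle is controlling $M$: requiring the modified Bellman equation at each non-gain-preserving $(s,a)$ reduces to $r(s,a) + P_{sa}h^{\pistar} - h^{\pistar}(s) - \rho^\star(s) \leq M(\rho^\star(s) - P_{sa}\rho^\star)$, and the naive pointwise bound yields only $M \lesssim (1 + \spannorm{h^{\pistar}})/\mingap$. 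Sharpening to the claimed $\Tdrop$-based bound likely requires the expected-total-reward interpretation of $\Tdrop$ (analogous to the proof that $\Tdrop \leq 1/\mingap$ in Lemma \ref{lem:Tdrop_lessthan_mingap}), aggregating the inequalities along trajectories rather than bounding each one pointwise.
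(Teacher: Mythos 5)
Your treatment of the first branch of the minimum is correct and is a genuinely different (and arguably cleaner) route than the paper's: you reuse Lemma \ref{lem:rho_estimation_lemma_specific} with $h_0=\zero$ for the Picard iterates, and you bound $\infnorm{V_\gamma^\star - \frac{1}{1-\gamma}\rho^\star - h}$ by computing the fixed-point residual of $v=\frac{1}{1-\gamma}\rho^\star+h$ directly and invoking $\gamma$-contractivity, whereas the paper proves the two pieces separately as Lemma \ref{lem:picard_gain_bound_mod} (a sandwich argument via monotonicity and the restricted commutativity of $\T$) and Lemma \ref{lem:vstar_rhostar_mod_bellman} (a policy-wise Neumann-series argument). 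Your algebraic recombination and the shift optimization $\min_c\infnorm{h-c\one}=\frac12\spannorm{h}$ are both valid, and in fact give the first branch with constant $1$ rather than $2$.

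The second branch is where the proposal breaks. You propose to deduce it from the first branch by exhibiting a solution of both Bellman equations of the form $h = h^{\pistar}+M\rho^\star$ with $M \leq \Tdrop(1+\spannorm{h^{\pistar}})$. No such solution exists in general: the paper's Theorem \ref{thm:large_h_mod_sp_example} constructs an MDP with $\Tdrop = 1$ and $\spannorm{h^{\pistar}}=1$ in which \emph{every} solution of the form $h^{\pistar}+c\rho^\star$ to the modified equations requires $c \geq 1/\mingap = 1/\varepsilon$, which is arbitrarily larger than $\Tdrop(1+\spannorm{h^{\pistar}})=2$; indeed the whole point of the second branch of the minimum (as discussed after Theorem \ref{thm:DMDP_based_result_main} and in Appendix \ref{sec:constructing_mod_bellman_solutions}) is to escape exactly this $1/\mingap$ blow-up, so it cannot be recovered by routing through $\spannorm{h}$ for a solution of that form. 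The paper instead proves the $\Tdrop$-branch by two direct arguments that never pass through a modified-Bellman solution: Lemma \ref{lem:Picard_gain_bound_tdrop} expands $\T^{(t)}(\zero)$ along the sequence of greedy policies $\pi_1,\dots,\pi_t$, bounds $r_{\pi_k} \leq \rho^\star + (I-P_{\pi_k})h^{\pistar} + (1+\spannorm{h^{\pistar}})\overline{r}_{\pi_k}$ using the \emph{unmodified} equation, and controls the accumulated indicator rewards by $\Tdrop$ via the expected-total-reward characterization (Lemma \ref{lem:etr_connection_bound}); and Lemma \ref{lem:vstar_rhostar_unmod_bellman} bounds $\infnorm{V_\gamma^\star-\frac{1}{1-\gamma}\rho^\star}$ by controlling $(I-\gamma P_{\pistar_\gamma})^{-1}e_Y \leq \Tdrop$ for the set $Y$ of gain-dropping states of $\pistar_\gamma$. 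Your instinct that a trajectory-level aggregation is needed is right, but it must be applied to the iterates and to $V_\gamma^\star$ themselves, not to the construction of a small-span solution $h$; as written, the second branch of the lemma is not established.
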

\citet[Theorem 3]{goyal_first-order_2023} show a lower bound that $\infnorm{V_t - V^\star} \geq \frac{\gamma^t}{1-\gamma}$ for any $V_t$ produced by an algorithm 
satisfying a certain iterate span condition.\footnote{\cite[Theorem 3]{goyal_first-order_2023} actually states a weaker result, but as we show in Theorem \ref{thm:goyal_lb_strengthened}, their analysis can be strengthened to obtain the claimed lower bound.}
This geometric $\gamma^t$ rate is vacuous in situations where $\gamma$ is very close to $1$ (including our setting where $\gamma = 1 - 1/n$ and hence $\gamma^n \approx e^{-1} = \Omega(1)$). However, the hard instance in their lower bound depends on the iteration count $t$ and has $\tspannorm{h} = \tspannorm{h^{\pistar}} \geq t$ (and also $\Tdrop = 0$). If we instead restrict the complexity of the instance (as measured by the complexity parameters) as the iteration count $t$ is increased, their hard instance becomes inadmissible, and we find that a sublinear $O(1/t)$ rate is possible.

In particular, with our choice of effective horizon $1/(1-\gamma) = n$, we can use $t = n$ steps to produce an initialization which has value error independent of $n$ (but depends on the complexity parameters in Lemma \ref{lem:initialization_proc}). Generally when the iteration budget is larger than the effective horizon, we can use this warm-start procedure to reduce the value error rapidly before switching to Algorithm \ref{alg:halpern_then_picard} to get an improved guarantee for solving discounted MDPs. This ``warm-start'' phase bears an interesting similarity to the first phase of Algorithm \ref{alg:app_shifted_halpern}.
\begin{algorithm}[H]
\caption{Warm-Start Halpern-Then-Picard} \label{alg:warm_start_halpern_then_picard}
\begin{algorithmic}[1]
\Require Discount factor $\gamma < 1$, number of iterations $n$ such that $n \geq  \big\lfloor \frac{1}{1-\gamma} \big \rfloor $
\State Let $x_0 = \zero$, $E' = \big\lfloor \frac{1}{1-\gamma} \big \rfloor$
\For{$t = 0, \dots, E'-1$} \algorithmiccomment{initialization phase}
\State $x_{t+1} = \T(x_t)$
\EndFor
\State Obtain $V$ as the $(n-E')$th iterate from Algorithm \ref{alg:halpern_then_picard} with inputs $x_{E'}, \gamma, \T_\gamma$
\State \Return $V$ and policy $\pihat$ such that $\pihat(s) = \argmax_{a \in \A} r(s,a) + \gamma P_{sa} V$ 
\end{algorithmic}
\end{algorithm}

\begin{thm}
\label{thm:warm_start_halpern_then_picard}
    For all $n \geq E' = \big\lfloor \frac{1}{1-\gamma} \big \rfloor $, 
    Algorithm \ref{alg:warm_start_halpern_then_picard} returns $V$ such that
    \begin{align*}
        \infnorm{\T_\gamma(V) - V} \leq~ \frac{8e}{\gamma} \frac{\gamma^{n-E}}{\sum_{i=0}^{n-E} \gamma^i}M  \stackrel{(\star)}{\leq} \frac{8e}{\gamma}\left(1 + \frac{e}{\gamma} \right) \frac{\gamma^{n}}{\sum_{i=0}^{n} \gamma^i}M
    \end{align*}
    where $M = \min\left\{\tspannorm{h}, \tspannorm{h^{\pistar}} +  \Tdrop +  \tspannorm{h^{\pistar}} \Tdrop \right\}$, $h$ satisfies the modified and unmodified Bellman equations, and $(\star)$ holds if $n \geq 2 E' - 1$.
\end{thm}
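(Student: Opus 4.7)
The plan is to decompose Algorithm \ref{alg:warm_start_halpern_then_picard} into its two phases and analyze each using existing results. The warm-start phase performs $E'=\lfloor 1/(1-\gamma)\rfloor$ undiscounted Bellman steps from $\zero$ to produce $x_{E'}=\T^{(E')}(\zero)$; the second phase feeds the appropriately rescaled iterate into Algorithm \ref{alg:halpern_then_picard} with $\T_\gamma$ for the remaining $n-E'$ iterations. The two key ingredients are Lemma \ref{lem:initialization_proc} (value error of the warm-start) and Theorem \ref{thm:optimal_contractive_fixed_point_iter} (fixed-point error of Halpern-then-Picard).

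First I would apply Lemma \ref{lem:initialization_proc} with $t=E'$, which is valid since $E'\leq 1/(1-\gamma)$. Using $E'(1-\gamma)\in(\gamma,1]$, immediate from $E'=\lfloor 1/(1-\gamma)\rfloor$, this gives the horizon-independent warm-start bound
\[
\infnorm{\tfrac{1}{E'(1-\gamma)}\, x_{E'} - V_\gamma^\star} \leq \frac{2M}{E'(1-\gamma)} \leq \frac{2M}{\gamma}.
\]
Since $\T_\gamma$ is $\gamma$-contractive under $\infnorm{\cdot}$, invoking Theorem \ref{thm:optimal_contractive_fixed_point_iter} with $\L=\T_\gamma$, this initialization, and $n-E'$ iterations yields
\[
\infnorm{\T_\gamma(V) - V} \leq 8e\cdot \frac{\gamma^{n-E'}}{\sum_{i=0}^{n-E'}\gamma^i}\cdot \frac{2M}{\gamma}.
\]
Reindexing by $E=E'-1$ (so $n-E'=n-E-1$) and simplifying constants then produces the first stated bound.

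For the inequality $(\star)$, I would use the identity $\gamma^{-E}\sum_{i=0}^n\gamma^i - \sum_{i=0}^{n-E}\gamma^i = \gamma^{-E}\sum_{i=0}^{E-1}\gamma^i$ to obtain
\[
\frac{\gamma^{n-E}/\sum_{i=0}^{n-E}\gamma^i}{\gamma^n/\sum_{i=0}^n\gamma^i} = 1 + \gamma^{-E}\cdot\frac{\sum_{i=0}^{E-1}\gamma^i}{\sum_{i=0}^{n-E}\gamma^i}.
\]
Under $n\geq 2E'-1 = 2E+1$ we have $E-1\leq n-E$, making the last fraction at most $1$; and $E+1\leq 1/(1-\gamma)$ yields $E(1-\gamma)/\gamma\leq 1$, hence $\gamma^{-E}\leq e^{E(1-\gamma)/\gamma}\leq e\leq e/\gamma$. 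Combined, this gives the factor $1+e/\gamma$ claimed.

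The main obstacle is a minor bookkeeping mismatch: the pseudocode passes the unnormalized $x_{E'}$ into Algorithm \ref{alg:halpern_then_picard}, whereas Lemma \ref{lem:initialization_proc}'s guarantee is for the rescaled iterate $\tfrac{1}{E'(1-\gamma)}x_{E'}$. I expect this to be reconciled either by interpreting the rescaling as implicit in the algorithm, or by carrying the additional slack $(1-E'(1-\gamma))\tinfnorm{V_\gamma^\star}\leq 1$ through a triangle inequality and absorbing it into the final constant. The remainder of the argument is routine algebraic manipulation.
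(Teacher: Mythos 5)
Your proposal is correct and follows essentially the same route as the paper's proof: Lemma \ref{lem:initialization_proc} with $t=E'$ gives the warm-start value-error bound $\tfrac{2M}{\gamma}$, Theorem \ref{thm:optimal_contractive_fixed_point_iter} applied to $\T_\gamma$ for the remaining $n-E'$ steps gives $\tfrac{16e}{\gamma}\tfrac{\gamma^{n-E'}}{\sum_{i=0}^{n-E'}\gamma^i}M$, and $(\star)$ is obtained by the same geometric-sum comparison using $n\ge 2E'-1$ and $\gamma^{-E'}\le e/\gamma$ via~\eqref{eq:gamma_eff_hzn_ineq}. The normalization mismatch you flag (the algorithm feeds the unrescaled $x_{E'}$ into Algorithm \ref{alg:halpern_then_picard} while Lemma \ref{lem:initialization_proc} bounds the rescaled iterate) is real but is silently glossed over in the paper's own proof as well, and likewise the residual gap between the derived $16e/\gamma$ (with index $n-E'$) and the stated $8e/\gamma$ (with index $n-E$) is an inconsistency already present in the paper, not a defect introduced by your argument.
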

Algorithm \ref{alg:warm_start_halpern_then_picard} thus uses three phases: $E'$ steps of undiscounted Picard iteration, $E$ steps of discounted Halpern iteration, and then finally discounted Picard iteration.
Combining this with our reduction result, Lemma \ref{lem:DMDP_red_short}, leads immediately to our final result on solving multichain average-reward MDPs.
\begin{thm}
\label{thm:DMDP_based_result_main}
    Fix an integer $n \geq 2$. Set $\gamma = 1 - \frac{1}{n}$ and run Algorithm \ref{alg:warm_start_halpern_then_picard} with inputs $\gamma, 2n$. Let $h$ satisfy the modified and unmodified Bellman equations. Then the output policy $\pihat$ satisfies
    \begin{align*}
        \rho^\star - \rho^{\pihat} \leq \left(\Tdrop + 1 \right) \frac{71\min\left\{\spannorm{h}, \spannorm{h^{\pistar}} +  \Tdrop +  \spannorm{h^{\pistar}} \Tdrop \right\} + 2}{n-1}  .
    \end{align*}
\end{thm}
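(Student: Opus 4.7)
The plan is to simply chain the reduction in Lemma~\ref{lem:DMDP_red_short} with the discounted fixed-point error bound in Theorem~\ref{thm:warm_start_halpern_then_picard}, then absorb the parameters $\gamma = 1 - 1/n$ and iteration count $2n$ into explicit constants. Since $\pihat$ is, by construction in Algorithm~\ref{alg:warm_start_halpern_then_picard}, greedy with respect to $r + \gamma P V$, and since $\gamma = 1 - 1/n \geq 1/2$ holds for any $n \geq 2$, Lemma~\ref{lem:DMDP_red_short} applies and gives
\begin{align*}
\infnorm{\rho^{\pihat} - \rho^\star} \leq (\Tdrop^{\pihat} + 1)\left[(1-\gamma)(4 + 7M) + 16 \infnorm{\T_\gamma(V) - V}\right],
\end{align*}
where $M = \min\{\spannorm{h}, \spannorm{h^{\pistar}} + \Tdrop + \spannorm{h^{\pistar}}\Tdrop\}$. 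Bounding $\Tdrop^{\pihat} \leq \Tdrop$ and using $1 - \gamma = 1/n$ makes the first term contribute $(\Tdrop + 1)(4 + 7M)/n$ to the final bound.

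Next I would verify the hypotheses of Theorem~\ref{thm:warm_start_halpern_then_picard} for the total iteration count $2n$. We have $E' = \lfloor 1/(1-\gamma) \rfloor = n$, so the required condition $2n \geq E'$ holds, and the sharper condition $2n \geq 2E' - 1 = 2n-1$ that triggers the $(\star)$-branch of the bound also holds. Thus Theorem~\ref{thm:warm_start_halpern_then_picard} yields
\begin{align*}
\infnorm{\T_\gamma(V) - V} \leq \frac{8e}{\gamma}\left(1 + \frac{e}{\gamma}\right) \frac{\gamma^{2n}}{\sum_{i=0}^{2n}\gamma^i} M.
\end{align*}
The closed-form sum gives $\sum_{i=0}^{2n}\gamma^i = (1 - \gamma^{2n+1})/(1-\gamma) = n(1 - \gamma^{2n+1})$, so the ratio $\gamma^{2n}/\sum_{i=0}^{2n}\gamma^i$ equals $\gamma^{2n}/[n(1 - \gamma^{2n+1})]$. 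Using $\gamma = 1 - 1/n$ with $n \geq 2$, standard inequalities bound $\gamma^{2n} \leq e^{-2}$ and $1 - \gamma^{2n+1} \geq 1 - e^{-2}$, and $1/\gamma \leq 2$, so this ratio is bounded by a universal constant divided by $n$. Combining with the prefactor $16$ from the reduction, the total contribution of the discounted error term is bounded by $C/(n-1)$ times $M$ for some explicit numerical constant $C$ (using $n/(n-1) \leq 2$ for $n \geq 2$ to rewrite $1/n$ as $1/(n-1)$ with a factor of $2$ loss).

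Finally I would collect the two contributions: the $(1-\gamma)$ part contributes $(4 + 7M)/n \leq (4 + 7M)/(n-1)$, and the discounted-error part contributes a $C \cdot M/(n-1)$ term after the reduction's factor of $16$. Adding these and factoring out $(\Tdrop + 1)$ produces a bound of the form $(\Tdrop + 1)(a + b M)/(n-1)$ where $a$ and $b$ are numerical constants; tuning these (recognizing $4 + 7M \leq 7M + 4$ and merging with the larger $M$-proportional constant from the discounted term) yields the claimed $(71 M + 2)/(n-1)$.

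The main obstacle is purely bookkeeping: propagating the constants $8e$, $e/\gamma$, and the factor $16$ from the reduction through the ratio $\gamma^{2n}/\sum_{i=0}^{2n}\gamma^i$ so as to reach the specific constant $71$. There is nothing conceptually hard — both $\Tdrop^{\pihat} \leq \Tdrop$ and the hypothesis checks are immediate, and the sublinear-rate machinery of Algorithm~\ref{alg:warm_start_halpern_then_picard} is what ensures the discounted error shrinks like $M/n$ even though $1/(1-\gamma) = n$. The only care needed is to use the $(\star)$ form of Theorem~\ref{thm:warm_start_halpern_then_picard} (which requires $2n \geq 2E' - 1$) so that the final bound is stated in terms of $n$ rather than $n - E'$.
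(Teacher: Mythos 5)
Your high-level structure is correct (initialize via undiscounted Picard steps, bound the resulting value error via Lemma~\ref{lem:initialization_proc}, then bound the discounted fixed-point error and feed it into the reduction lemma), and indeed the paper follows that pipeline. But the claim that chaining Theorem~\ref{thm:warm_start_halpern_then_picard} with Lemma~\ref{lem:DMDP_red_short} "yields the claimed $(71M+2)/(n-1)$ after tuning" is not correct: those two packaged statements are quantitatively too loose, and there is no way to tune them down to $71$.

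Concretely, the $(\star)$-form of Theorem~\ref{thm:warm_start_halpern_then_picard} carries the prefactor $\frac{8e}{\gamma}\bigl(1 + \frac{e}{\gamma}\bigr)$, which for $\gamma = 1-1/n \geq 1/2$ is already on the order of $80$--$280$, and the ratio $\gamma^{2n}/\sum_{i=0}^{2n}\gamma^i$ is about $e^{-2}/(n(1-e^{-2})) \approx 0.16/n$. Multiplying by $16$ from Lemma~\ref{lem:DMDP_red_short} gives a leading constant several hundred, well above $71$. (Even the non-$(\star)$ form of Theorem~\ref{thm:warm_start_halpern_then_picard}, which is what actually applies here, still has $8e/\gamma$ multiplying a ratio, and chaining through $16$ lands around $200M/n$, not $64M/n$.) The paper's proof avoids this loss by two sharper choices: first, it invokes the intermediate inequality~\eqref{eq:integer_eff_hzn_bound} from the \emph{proof} of Theorem~\ref{thm:optimal_contractive_fixed_point_iter} directly, which is valid precisely because $\frac{1}{1-\gamma}=n$ is an integer and gives $\infnorm{\T_\gamma(V)-V}\le 4(1-\gamma)\gamma^{t-E}\cdot 2M = \frac{8M}{n+1}$ with a constant of $4$ rather than $8e$ times a ratio; second, it uses the $\frac{2+6\gamma}{\gamma}$ form of Lemma~\ref{lem:DMDP_red_main} (bounded by $\frac{8n}{n-1}$) rather than the simplified $16$ from Lemma~\ref{lem:DMDP_red_short}. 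These two refinements are what make the arithmetic $\frac{8n}{n-1}\cdot\frac{8M}{n+1}=\frac{64M}{n-1/n}$ come out, and summing with $\frac{7M}{n}$ and $\frac{2}{n-1}$ closes the proof. So this is not pure bookkeeping: your approach needs to descend into the proof of Theorem~\ref{thm:optimal_contractive_fixed_point_iter} (to exploit the integer-horizon case) and use the finer form of the reduction, or else the claimed constant cannot be reached.
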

Theorem \ref{thm:DMDP_based_result_main} only makes use of the first two phases of Algorithm \ref{alg:warm_start_halpern_then_picard}, but as we show in Theorem \ref{thm:DMDP_result_better_constant}, the constant in Theorem \ref{thm:DMDP_based_result_main} could be improved by utilizing the third phase to rapidly decrease fixed-point error.
One interesting advantage of Theorem \ref{thm:DMDP_based_result_main} over Theorem \ref{thm:app_shifted_halpern_main} is that it obtains one guarantee independent of $\tspannorm{h}$, which as we discuss in Appendix \ref{sec:constructing_mod_bellman_solutions}, can hide a dependence on $\frac{1}{\mingap}$ even when $\Tdrop$ and $\tspannorm{h^{\pistar}}$, the span of a Blackwell optimal policy, are small.

\section{Conclusion and limitations}
\label{sec:conclusion}
In this paper we designed faster VI algorithms for solving multichain average-reward MDPs, and along the way 
developed a collection of results advancing the theoretical foundations of VI methods and multichain MDPs. One limitation of our results is that the Bellman operators may not be exactly computable, and even if they are, it may be preferable to leverage stochastic evaluations to speed up computations; we believe these are promising directions for future work.

\section*{Acknowledgements}
Y.\ Chen and M.\ Zurek acknowledge support by National Science Foundation grants CCF-2233152 and DMS-2023239.

\bibliographystyle{plainnat}
\bibliography{refs_copy.bib}

\clearpage
\appendix

\section{Guide to appendices}
In Appendix \ref{sec:related_work_expanded} we discuss related work in more detail.
In Appendix \ref{sec:drazin_inverses_etc} we introduce more notation used within the proofs.
In Appendix \ref{sec:mod_bellman_issues} we discuss topics related to the solutions of the modified and unmodified Bellman equations.
Appendix \ref{sec:proof_of_dmdp_baseline} contains a proof of Proposition \ref{prop:DMDP_baseline} discussed in the introduction.
The remaining appendices are mainly devoted to proofs of the results presented in the paper, and largely proceed in the same order.
Appendix \ref{sec:sensitivity_analysis_proofs} contains proofs of the results from Section \ref{sec:sensitivity_analysis}. Appendices \ref{sec:coupling_based_results_proofs} and \ref{sec:VI_restricted_commutativity_proofs} prove the results from Subsections \ref{sec:coupling_based_analysis} and \ref{sec:VI_restricted_commutativity}. 
Theorem \ref{thm:optimal_contractive_fixed_point_iter} is proven in Appendix \ref{sec:optimal_contractive_fp_proof}. Other results related to discounted VI are proven in Appendix \ref{sec:DMDP_results_proofs}.

\section{Related work}
\label{sec:related_work_expanded}
Here we discuss more related work.

\paragraph{Analysis of value iteration in average-reward MDPs}
Value iteration methods have been extensively studied for average-reward MDPs. We refer to
\cite{schweitzer_asymptotic_1977, schweitzer_geometric_1979} and the references within, which develop asymptotic convergence guarantees for value iteration in multichain MDPs.

\paragraph{Perturbation analysis for Markov chains}
Results of a similar nature to our Theorem \ref{lem:AMDP_performance_diff_lemma} and Corollary \ref{cor:AMDP_performance_diff_cor} have a long history and are referred to as perturbation theory or sensitivity analysis for Markov chains. Some references include 
\cite{schweitzer_perturbation_1968, meyer_jr_condition_1980, meyer_sensitivity_1994, ipsen_uniform_1994,cho_markov_2000}. \cite{cho_comparison_2001} provides a comprehensive comparison of such bounds, all focusing on the case of ergodic Markov chains. Such bounds commonly involve parameters related to the deviation matrix $H_P$ (the Drazin inverse of $I-P$ for a Markov chain with transition matrix $P$). Also see \cite{cao_single_1999,cao_unified_2000, cao_unified_2004} for similar results focused on MDPs.

\paragraph{Halpern iteration} 
While Halpern iteration has a long history \citep{halpern_fixed_1967}, its nonasymptotic convergence properties have been the subject of intense recent study \citep{sabach_first_2017}. \cite{lieder_convergence_2021} and \cite{park_exact_2022} obtain exactly optimal convergence rates in Hilbert spaces, for nonexpansive and contractive operators. \cite{bravo_universal_2022, contreras_optimal_2022} consider the general normed space setting, of greater relevance to MDPs. In the context of Markov decision processes and RL, \cite{bravo_stochastic_2024} develop a Q-learning algorithm based on Halpern iteration, and \cite{lee_accelerating_2023} and \cite{lee_optimal_2024} utilize Halpern iteration for solving discounted and average-reward MDPs, respectively.

\paragraph{Complexity of discounted value iteration}
Many different approaches have been taken in the literature to try to accelerate convergence of value iteration methods, especially when the discount factor is close to $1$.
\cite{goyal_first-order_2023} develop faster VI algorithms for solving discounted MDPs using momentum-like techniques, obtaining faster convergence in terms of the value error for reversible MDPs and also proving lower bounds.
\cite{lee_accelerating_2023} develop an algorithm for solving discounted MDPs with improved convergence properties for the fixed-point error, using a variant of Halpern iteration, and also develop lower bounds.
There exist many alternative approaches, less closely related to the present work, based on changing the operator from the Bellman operator. See \cite[Chapter 6]{puterman_markov_1994} or \cite[Chapter 2]{bertsekas_approximate_2018} for some of the variants of value iteration.

\paragraph{Average-reward-to-discounted reductions}
Solving average-reward MDPs by approximating them via discounted problems is a very common approach in the RL literature, especially among works studying statistical (sample) complexity. \cite{jin_towards_2021, wang_near_2022, zurek_span-agnostic_2025} develop such reductions applicable to weakly communicating MDPs or even more restrictive classes of MDPs. \cite{fruit_efficient_2018, zurek_plug-approach_2025} consider the related approach of perturbing the transition matrix to ensure contractive properties. \cite{zurek_span-based_2025} develop the first such reduction for general MDPs, involving the parameter $\Tb$. \cite{zurek_span-based_2025} also provides lower bounds on the sample complexity of solving general average-reward MDPs in terms of $\Tb$. An interesting question is whether it could be replaced by the sharper $\Tdrop$ within their results. Halpern iteration can be understood as regularizing a nonexpansive operator to make it strongly contractive, and decaying the regularization strength. From this perspective, there are conceptual similarities to discounted RL, particularly for problems where $\gamma$ is not intrinsic but rather functions as a tuning parameter which trades off long-term performance for computational tractability.

\paragraph{Other related work}
\cite{schweitzer_functional_1978} study the solutions of the unmodified Bellman equations.

\subsection{Comparison of Algorithm \ref{alg:halpern_then_picard} to prior work}
\label{sec:related_work_expanded_lee_comparison}
Here we give more detail on the comparison of Algorithm \ref{alg:halpern_then_picard}, when applied to the discounted Bellman operator $\T_\gamma$, with \cite{lee_accelerating_2023}, who also consider minimizing the discounted fixed-point error and obtain similar guarantees.
The algorithm of \cite{lee_accelerating_2023} can also be seen as interpolating between Halpern and Picard iterations, but in a more continuous manner (by continuously adjusting stepsizes) rather than the discrete transition employed in Algorithm \ref{alg:halpern_then_picard}. For certain specialized initializations, \cite[Theorem 2]{lee_accelerating_2023} demonstrates an improved constant relative to that achieved by our Algorithm \ref{alg:halpern_then_picard}, but the key limitation of \cite[Theorem 2]{lee_accelerating_2023} is that it has order-wise worse performance for some general initializations. Specifically, ignoring constant factors, their result for general initializations can be written as
\begin{align*}
    \infnorm{\T_\gamma(V_n) - V_n} \leq \frac{\gamma^n}{\sum_{i=0}^n \gamma^i} \max\left\{ \infnorm{V_0 - V_\gamma^\star}, \infnorm{V_0 - \Vhat_\gamma^\star}\right\}
\end{align*}
where they define $\Vhat_\gamma^\star$ to be the fixed-point of a Bellman anti-optimality operator $\T_{\gamma, \textnormal{anti}}$ defined as
\begin{align*}
    \T_{\gamma, \textnormal{anti}}(Q)(s,a) := r(s,a) + \min_{a \in \A} \gamma \sum_{s' \in \S} P(s' \mid s,a) Q(s', a)
\end{align*}
(see \cite[Section 4]{lee_accelerating_2023} for how their \cite[Theorem 2]{lee_accelerating_2023} can be written in terms of $\frac{\gamma^n}{\sum_{i=0}^n \gamma^i}$ up to constant factors). $\T_{\gamma, \textnormal{anti}}$ replaces the $\max$ in the usual discounted operator with a $\min$, and as we now show, this can lead its fixed point $\Vhat_\gamma^\star$ being very far from $V_\gamma^\star$, the fixed point of $\T_\gamma$.
Consider the following one state, two action MDP:
\begin{figure}[H]
    \centering
    \begin{tikzpicture}[ -> , >=stealth, shorten >=2pt , line width=0.5pt, node distance =2cm, scale=0.9]

    \node [circle, draw] (one) at (0 , 0) {1};
    \path (one) edge[dashed] [loop below, looseness=15] node [below] {$a=2, r(1,2) =+0$}  (one) ;
    
    \path (one) edge [loop above, looseness=15] node [above] {$a=1, r(1,1) =+1$}  (one) ;

    \end{tikzpicture}
    \caption{A one-state, two-action MDP where $\Vhat_\gamma^\star$ and $V_\gamma^\star$ are far apart. The two actions are denoted by straight and dashed lines respectively, and are annotated with their associated reward.}
\end{figure}
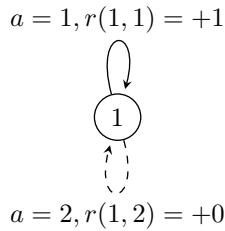

It is trivial to check that $V_\gamma^\star = \frac{1}{1-\gamma}$ and $\Vhat_\gamma^\star = 0$, so for any $V_0 \in \R$ we have $\max\left\{ \infnorm{V_0 - V_\gamma^\star}, \infnorm{V_0 - \Vhat_\gamma^\star}\right\} \geq \frac{1}{2}\frac{1}{1-\gamma}$. This issue is prohibitive to good performance in the setting considered in our Section \ref{sec:discounted_VI}, where we are able to construct a $V_0$ with $\infnorm{V_0 - V_\gamma^\star}$ bounded independently of $\frac{1}{1-\gamma}$.

\subsection{Lower bounds for discounted value error}
\cite[Theorem 3]{goyal_first-order_2023} shows the following:
\begin{prop}
    Fix $\gamma \in (0,1)$ and an integer $n \geq 1$. Then there exists a discounted MDP $(\S, \A, P, r, \gamma)$ such that for any sequence of iterates $(V_t)_{t=0}^\infty$ satisfying $V_0 = \zero$ and 
    \begin{align*}
        V_{s+1} \in \Span\{ V_0, \dots, V_s, \T_\gamma(V_0), \dots, \T_\gamma(V_s)\} ~~\forall s \geq 0,
    \end{align*}
    we have for any $s \in \{1, \dots, n-1\}$ that
    \begin{align*}
        \infnorm{V_s - V_\gamma^\star} \geq \frac{\gamma^s}{1 + \gamma}.
    \end{align*}
\end{prop}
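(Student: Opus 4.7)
The plan is to exhibit a hard instance in the form of a single-action-per-state ``reverse chain'' MDP on $n+1$ states $\{0, 1, \ldots, n\}$: from each state $i \geq 1$ the (unique) action deterministically transitions to $i-1$, state $0$ has a self-loop, and the reward is $r(0) = 1$ with $r(i) = 0$ for $i \geq 1$. Because there is only one action at each state, $\T_\gamma$ is an affine operator $\T_\gamma V = r + \gamma P V$, where $r = e_0$ and $P$ is the (deterministic) transition matrix. Solving the fixed-point equation directly gives $V_\gamma^\star(i) = \gamma^i/(1-\gamma)$ for each $i$.

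The core of the argument is to show that under the span condition, $V_s$ is forced to lie in the Krylov subspace $K_s := \Span\{r, P r, P^2 r, \ldots, P^{s-1} r\}$ for every $s \geq 1$. I would proceed by induction on $s$: the base case follows since $V_0 = \zero$ implies $V_1 \in \Span\{V_0, \T_\gamma V_0\} = \Span\{r\} = K_1$. For the inductive step, assuming $V_t \in K_t$ for every $t \leq s$, affinity of $\T_\gamma$ gives $\T_\gamma V_t = r + \gamma P V_t \in K_{t+1}$, so the span defining $V_{s+1}$ is contained in $K_{s+1}$.

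Next I would compute these Krylov vectors for the specific construction: a direct calculation shows $P^k e_0 = \sum_{i=0}^k e_i$ for $k \leq n$, so by elementary row reduction $K_s$ coincides with the coordinate subspace $\Span\{e_0, e_1, \ldots, e_{s-1}\}$. Consequently $V_s(i) = 0$ for all $i \geq s$, and in particular $V_s(s) = 0$. This yields $\infnorm{V_s - V_\gamma^\star} \geq |V_\gamma^\star(s)| = \gamma^s/(1-\gamma) \geq \gamma^s/(1+\gamma)$, which in fact recovers the strengthened bound $\gamma^s/(1-\gamma)$ alluded to in the footnote of the main text.

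The main subtlety to watch out for is that the span condition allows arbitrary linear combinations of \emph{all} prior iterates and their Bellman images, not only the most recent ones, so the inductive argument must close under this enlarged reachable set. The key enabling feature of the construction is that using a single action per state makes $\T_\gamma$ affine, which circumvents the nonlinearity usually introduced by the $\max$ over actions and turns the whole argument into a clean Krylov-subspace computation. I do not anticipate further significant obstacles.
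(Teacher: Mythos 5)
Your proposal is correct and follows essentially the same route as the paper: the paper simply cites the hard instance of \citet[Theorem 3]{goyal_first-order_2023} (a deterministic reverse chain with reward only at the absorbing state, where the span condition forces $V_s(i)=0$ for all states $i$ at distance at least $s$ from the rewarding state), and your single-action Krylov-subspace construction is a self-contained reproof of exactly those facts. As a bonus, your argument directly yields the sharper bound $\gamma^s/(1-\gamma)$, which is precisely the strengthening the paper records separately as Theorem \ref{thm:goyal_lb_strengthened}.
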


In fact, by reusing the work done within their proof, it is possible to prove the following stronger result:
\begin{thm}
\label{thm:goyal_lb_strengthened}
    Fix $\gamma \in (0,1)$ and an integer $n \geq 1$. Then there exists a discounted MDP $(\S, \A, P, r, \gamma)$ such that for any sequence of iterates $(V_t)_{t=0}^\infty$ satisfying $V_0 = \zero$ and 
    \begin{align*}
        V_{s+1} \in \Span\{ V_0, \dots, V_s, \T_\gamma(V_0), \dots, \T_\gamma(V_s)\} ~~\forall s \geq 0,
    \end{align*}
    we have for any $s \in \{1, \dots, n-1\}$ that
    \begin{align*}
        \infnorm{V_s - V_\gamma^\star} \geq \frac{\gamma^s}{1 - \gamma}.
    \end{align*}
\end{thm}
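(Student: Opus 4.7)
The plan is to reuse the span/Krylov induction underlying the proof of \cite[Theorem 3]{goyal_first-order_2023} on a streamlined hard instance and to evaluate the appropriate coordinate of $V_\gamma^\star$ exactly (rather than settling for a conservative estimate). The instance I would use is the one-action ``chain'' MDP on $\S = \{1, 2, \dots, n\}$: from each state $i < n$ the unique action transitions deterministically to $i+1$ with reward $0$, and state $n$ has a self-loop with reward $1$. Since $|\A|=1$, the discounted Bellman operator is globally linear: $(\T_\gamma V)(i) = \gamma V(i+1)$ for $i < n$ and $(\T_\gamma V)(n) = 1 + \gamma V(n)$. The corresponding optimal value function is $V_\gamma^\star(i) = \gamma^{n-i}/(1-\gamma)$.

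Next, by induction on $t \geq 0$, I would show that every iterate $V_t$ in the span-restricted class (starting from $V_0 = \zero$) satisfies the invariant $V_t(i) = 0$ for all $i \leq n - t$. The base case $t = 0$ is immediate from $V_0 = \zero$. For the inductive step, assume the invariant holds for $V_0, \dots, V_t$. For each $k \leq t$ and every $i \leq n-k-1$, both $V_k(i)$ and $V_k(i+1)$ vanish by the invariant, and since $i < n$ we have $(\T_\gamma V_k)(i) = \gamma V_k(i+1) = 0$. Because $V_{t+1}$ lies in $\Span\{V_0, \dots, V_t, \T_\gamma V_0, \dots, \T_\gamma V_t\}$, at any coordinate $i \leq n-t-1$ every element of the spanning set vanishes, so $V_{t+1}(i) = 0$, completing the induction.

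To conclude, fix $s \in \{1, \dots, n-1\}$ and evaluate the invariant at the coordinate $i^\star = n - s$: the invariant forces $V_s(n-s) = 0$, while direct computation gives $V_\gamma^\star(n-s) = \gamma^s/(1-\gamma)$, so
\[
\infnorm{V_s - V_\gamma^\star} \geq |V_\gamma^\star(n-s) - V_s(n-s)| = \frac{\gamma^s}{1-\gamma},
\]
which is the claimed lower bound. Note that a single MDP (the length-$n$ chain) handles the entire range $s \in \{1, \dots, n-1\}$ simultaneously, with the witnessing coordinate sliding from $n-1$ down to $1$ as $s$ grows.

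The main obstacle is making sure the linear-span condition cooperates with the Bellman updates I consider, which is why I specialize to $|\A|=1$: with a single action per state, $\T_\gamma$ is globally linear, so linear combinations of vectors satisfying the invariant still satisfy it. This lets the induction pass through cleanly and avoids any conservative ``$1+\gamma$'' slack that a multi-action construction might introduce; all the remaining structural bookkeeping (closure of the span-restricted class under the update rule, the range of valid $s$, etc.) is imported directly from the proof of \cite[Theorem 3]{goyal_first-order_2023}.
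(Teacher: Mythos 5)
Your proof is correct and the underlying mechanism is the same as the paper's: a chain MDP in which the span-restricted iterates are forced to vanish at coordinates that the information from the reward source has not yet reached, followed by a direct coordinate-wise evaluation of $V_\gamma^\star$ at the last such coordinate. The paper achieves this by citing the facts already established inside the proof of \citet[Theorem 3]{goyal_first-order_2023} (their instance has $V_\gamma^\star(i) = \gamma^{i-1}/(1-\gamma)$ and $V_s(i)=0$ for $i \geq s+1$) and simply evaluating at $i=s+1$, whereas you construct a fresh one-action chain from scratch (the mirror image, with the self-loop reward at state $n$) and re-derive both facts. The gains on your side are that the argument is fully self-contained and your induction is especially clean because with $|\A|=1$ the operator $\T_\gamma$ is globally linear, so closure of the invariant under linear combinations is immediate; the paper's route is shorter on the page since it leans on the prior work. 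One small note: you frame the $(1+\gamma)$ denominator in the original statement as slack introduced by a "multi-action construction," but it is better described as a conservative final inequality in the original proof rather than an artifact of the instance — the paper makes exactly this point by showing that the \emph{same} instance already supports the sharper $\gamma^s/(1-\gamma)$ bound.
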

\begin{proof}
    We use the same instance as the one constructed in the proof of \cite[Theorem 3]{goyal_first-order_2023}, which has $\S  =\{1, \dots, n\}$. As argued in this proof, the optimal value function satisfies $V_\gamma^\star(i) = \frac{\gamma^{i-1}}{1-\gamma}$ for all $i = 1, \dots, n$. They also show that for any $s \in \{0, \dots, n-1\}$, assuming $V_s$ satisfies the condition in the theorem statement, we have that $V_s(i) = 0$ for any $i \geq s + 1$. Therefore for any $s \in \{0, \dots, n-1\}$,
    \begin{align*}
        \infnorm{V_s - V_\gamma^\star} \geq \left|V_s(s + 1) - V_\gamma^\star(s + 1) \right| = \left|0 - \frac{\gamma^{s + 1 - 1}}{1-\gamma} \right| = \frac{\gamma^s}{1-\gamma}.
    \end{align*}
\end{proof}

\section{More notation}

\label{sec:drazin_inverses_etc}

$\infinfnorm{W}$ denotes the $\infnorm{\cdot}\to\infnorm{\cdot}$ operator norm of a matrix $W$. We note that this is also equal to $\max_{i} \onenorm{e_i^\top W}$, that is, the maximum $\ell_1$ norm of all rows of $W$.

For any policy $\pi$ we let $r_\pi = M^\pi r_\pi$, or equivalently $r_\pi(s) = \sum_{a \in \A} \pi(a \mid s) r(s,a)$ (for any $s \in \S$). We have $V_\gamma^\pi = (I - \gamma P_\pi)^{-1}r_\pi = \sum_{t=0}^\infty \gamma^t P_\pi^t r_\pi$. We let $\pistar_\gamma$ denote a $\gamma$-discounted optimal policy (such that $V_\gamma^{\pistar_\gamma} = V_\gamma^\star$). There always exists such a discounted optimal policy which is deterministic.

Fixing a policy $\pi$, the limiting matrix $P_\pi^\infty$ is
\begin{align*}
    P_\pi^\infty = \Clim_{T \to \infty} P_\pi^T = \lim_{T \to \infty} \frac{1}{T} \sum_{k=0}^{T-1} P_\pi^k
\end{align*}
where $\Clim$ is the Cesaro limit. We have $P_\pi^\infty P_\pi = P_\pi P_\pi^\infty = P_\pi^\infty$.
We denote the Drazin inverse of $I - P_\pi$ as $H_{P_\pi}$. It is sometimes referred to as the deviation matrix.
We have
\begin{align*}
    H_{P_\pi} = \Clim_{T \to \infty} \sum_{k=0}^{T-1}  (P_\pi^k - P_\pi^\infty).
\end{align*}
Additionally $H_{P_\pi}$ satisfies
\begin{align*}
    (I - P_\pi)H_{P_\pi} &= H_{P_\pi}(I - P_\pi) = I - P_\pi^\infty \\
    H_{P_\pi} P_\pi^\infty &= P_\pi^\infty H_{P_\pi} = 0
\end{align*}
We also have that $\rho^\pi = P_\pi^\infty r_\pi$ and $h^\pi = H_{P_\pi} r_\pi$.
We refer to \cite[Appendix A]{puterman_markov_1994} for more properties of $H_{P_\pi}$

The modified Bellman equations can be written in a vectorized form as
\begin{align}
    M(P\rho)&= \rho , \label{eq:mod_bellman_1_vec} \\
    \T(h) = M(r + P h) &= \rho + h. \label{eq:mod_bellman_2_vec}
\end{align}

\section{Properties of solutions to Bellman equations}
\label{sec:mod_bellman_issues}

First we show that for some $(\rho, h)$ which satisfy the modified Bellman equations, there exists a simultaneously argmaxing policy if and only if they also satisfy the unmodified Bellman equations.
\begin{lem}
\label{lem:simultaneous_argmax_equiv_unmod_soln}
    Suppose $(\rho, h)$ satisfies the modified Bellman equations~\eqref{eq:mod_bellman_1_vec} and~\eqref{eq:mod_bellman_2_vec}. Then there exists a policy $\pi$ attaining the maximums in both equations simultaneously, that is there exists $\pi \in \PiMD$ such that
    \begin{align}
        M^\pi(P\rho)&= \rho \label{eq:mod_bellman_1_vec_simul_argmax} \\
    M^\pi(r + P h) &= \rho + h. \label{eq:mod_bellman_2_vec_simul_argmax}
    \end{align}
    if and only if $(\rho, h)$ also satisfies the unmodified Bellman equations~\eqref{eq:unmod_bellman_1} and~\eqref{eq:unmod_bellman_2}.
\end{lem}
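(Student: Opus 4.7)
The proof is essentially a direct unpacking of the definitions; the key observation is that the modified second equation~\eqref{eq:mod_bellman_2_vec} provides the pointwise upper bound $r(s,a) + P_{sa} h \leq \rho(s) + h(s)$ for \emph{every} action $a$, not just those achieving the maximum in the first equation. This upper bound lets us convert between ``simultaneous argmax over all actions'' and ``argmax restricted to gain-preserving actions.''

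For the ($\Leftarrow$) direction, assume $(\rho, h)$ satisfies both the modified and unmodified equations. For each $s$, the unmodified second equation~\eqref{eq:unmod_bellman_2} produces an action $a_s$ with $P_{s a_s} \rho = \rho(s)$ (since $a_s$ lies in the restricted argmax set) and $r(s, a_s) + P_{s a_s} h = \rho(s) + h(s)$. Define $\pi(s) = a_s$. The first condition immediately yields~\eqref{eq:mod_bellman_1_vec_simul_argmax} since $\max_a P_{sa}\rho = \rho(s)$ by~\eqref{eq:mod_bellman_1_vec} and $\pi$ attains this value; the second condition gives~\eqref{eq:mod_bellman_2_vec_simul_argmax} directly.

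For the ($\Rightarrow$) direction, assume $\pi \in \PiMD$ simultaneously satisfies~\eqref{eq:mod_bellman_1_vec_simul_argmax} and~\eqref{eq:mod_bellman_2_vec_simul_argmax}. The first unmodified equation~\eqref{eq:unmod_bellman_1} is identical to~\eqref{eq:mod_bellman_1_vec} and so holds by assumption. For the second unmodified equation~\eqref{eq:unmod_bellman_2}, fix $s$ and set $a_s = \pi(s)$. Then $P_{s a_s}\rho = \rho(s)$ by~\eqref{eq:mod_bellman_1_vec_simul_argmax}, so $a_s$ belongs to the restricted set $\{a : P_{sa}\rho = \rho(s)\}$; and $r(s,a_s) + P_{s a_s} h = \rho(s) + h(s)$ by~\eqref{eq:mod_bellman_2_vec_simul_argmax}. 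Meanwhile, the modified equation~\eqref{eq:mod_bellman_2_vec} guarantees $r(s,a) + P_{sa} h \leq \rho(s) + h(s)$ for every $a \in \A$, hence in particular for every $a$ in the restricted set. Therefore $\max_{a : P_{sa}\rho = \rho(s)} r(s,a) + P_{sa} h = \rho(s) + h(s)$, which is~\eqref{eq:unmod_bellman_2}.

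There is no real obstacle here; the only thing to be careful about is distinguishing the two second equations and recognizing that the modified version serves as a universal upper bound while the unmodified version demands attainability within a restricted set. The argument works uniformly pointwise in $s$, so no global structural properties of the MDP are invoked.
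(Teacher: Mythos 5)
Your proof is correct and follows essentially the same route as the paper's: both directions hinge on the observation that the modified second equation gives the universal bound $r(s,a)+P_{sa}h \leq \rho(s)+h(s)$, so an action in the restricted set achieving the value $\rho(s)+h(s)$ is automatically a simultaneous argmax, and conversely. No gaps.
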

\begin{proof}
For the entire proof we fix $(\rho, h)$ satisfying the modified Bellman equations~\eqref{eq:mod_bellman_1_vec} and~\eqref{eq:mod_bellman_2_vec}.
First we suppose that there exists some $\pi \in \PiMD$ attaining the maximums in both equations simultaneously (satisfying~\eqref{eq:mod_bellman_1_vec_simul_argmax} and~\eqref{eq:mod_bellman_2_vec_simul_argmax}) and try to show that $(\rho, h)$ satisfy the unmodified Bellman equations~\eqref{eq:unmod_bellman_1} and~\eqref{eq:unmod_bellman_2}.
It is immediate that~\eqref{eq:unmod_bellman_1} is satisfied since it is the same as equation~\eqref{eq:mod_bellman_1}. To check~\eqref{eq:unmod_bellman_2}, note that by~\eqref{eq:mod_bellman_1_vec_simul_argmax} we have that $P_{s \pi(s)} \rho = \rho$, and by~\eqref{eq:mod_bellman_2_vec_simul_argmax} and~\eqref{eq:mod_bellman_2} (for the first and second equalities, respectively) we have
\begin{align*}
    r(s, \pi(s)) + P_{s \pi(s)}h = \rho(s)+ h(s) = \max_{a \in \A} r(s,a) + P_{sa}h.
\end{align*}
Therefore we have that
\begin{align*}
    \max_{a\in \A:P_{sa}\rho = \rho(s)} r(s,a) + P_{sa}h \geq r(s, \pi(s)) + P_{s \pi(s)}h = \rho(s)+ h(s)
\end{align*}
(since $P_{s \pi(s)} \rho = \rho$) and also trivially
\begin{align*}
    \max_{a\in \A:P_{sa}\rho = \rho} r(s,a) + P_{sa}h \leq \max_{a\in \A} r(s,a) + P_{sa}h = \rho(s)+ h(s),
\end{align*}
so we must have $\max_{a\in \A:P_{sa}\rho = \rho(s)} r(s,a) + P_{sa}h = \rho(s)+ h(s)$ (that is, the second unmodified Bellman equation~\eqref{eq:unmod_bellman_2} is satisfied).

Now we assume that $(\rho, h)$ satisfy the unmodified Bellman equations~\eqref{eq:unmod_bellman_1} and~\eqref{eq:unmod_bellman_2} and try to show that there exists a policy $\pi \in \PiMD$ satisfying~\eqref{eq:mod_bellman_1_vec_simul_argmax} and~\eqref{eq:mod_bellman_2_vec_simul_argmax}. We define (for each $s \in \S$) $\pi(s) \in \A$ to be an action attaining the maximum in~\eqref{eq:unmod_bellman_2}, that is $P_{s \pi(s)}\rho = \rho(s)$ and
\begin{align*}
    r(s, \pi(s)) + P_{s \pi(s)}h = \rho(s) + h(s).
\end{align*}
But then since $\rho(s) + h(s) = \max_{a \in \A}r(s, a) + P_{s a}h$ by the modified Bellman equation~\eqref{eq:mod_bellman_2}, we clearly have that $\pi$ satisfies~\eqref{eq:mod_bellman_1_vec_simul_argmax} and~\eqref{eq:mod_bellman_2_vec_simul_argmax}.
\end{proof}

Next we show another equivalent property to $(\rho, h)$ satisfying both the unmodified and modified Bellman equations, the restricted commutativity property, which is essential for Algorithm \ref{alg:app_shifted_halpern}.
\begin{lem}
\label{lem:restricted_commutativity_property}
    $(\rho^\star, h)$ satisfies both the modified and unmodified Bellman equations if and only if for any $\alpha \in \R$ we have
    \begin{align}
        \T(h + \alpha \rho^\star) = h + (\alpha + 1)\rho^\star. \label{eq:restricted_commutativity_bellman_op}
    \end{align}
\end{lem}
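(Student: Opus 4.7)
My plan is to prove both directions of the iff separately.

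For the reverse direction ($\Leftarrow$), starting from $\T(h + \alpha \rho^\star) = h + (\alpha+1)\rho^\star$ for all $\alpha \in \R$, I would first set $\alpha = 0$ to immediately recover $\T(h) = h + \rho^\star$, which is exactly the modified equation~\eqref{eq:mod_bellman_2}. To extract the first modified equation~\eqref{eq:mod_bellman_1}, I would expand $\T(h + \alpha \rho^\star)(s) = \max_{a}\bigl[r(s,a) + P_{sa}h + \alpha P_{sa}\rho^\star\bigr]$, divide both sides of the assumed identity by $\alpha > 0$, and send $\alpha \to +\infty$; the leading-order behavior then forces $\max_{a} P_{sa}\rho^\star = \rho^\star(s)$. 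To extract the unmodified equation~\eqref{eq:unmod_bellman_2}, I would note that for $\alpha$ sufficiently large, any action with $P_{sa}\rho^\star < \rho^\star(s)$ falls out of the argmax (it loses by a margin of order $\alpha \cdot \mathrm{gap}$), so the identity reduces to $\max_{a : P_{sa}\rho^\star = \rho^\star(s)} [r(s,a) + P_{sa}h] + \alpha \rho^\star(s) = h(s) + (\alpha+1)\rho^\star(s)$, and canceling the $\alpha \rho^\star(s)$ terms yields~\eqref{eq:unmod_bellman_2}.

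For the forward direction ($\Rightarrow$), I would invoke Lemma~\ref{lem:simultaneous_argmax_equiv_unmod_soln} to produce a deterministic policy $\pi$ that simultaneously argmaxes both Bellman equations, i.e., $P_{s\pi(s)}\rho^\star = \rho^\star(s)$ and $r(s,\pi(s)) + P_{s\pi(s)} h = \rho^\star(s) + h(s)$. Direct computation then gives
\[
    \T^\pi(h + \alpha \rho^\star) = r_\pi + P_\pi h + \alpha P_\pi \rho^\star = (\rho^\star + h) + \alpha \rho^\star = h + (\alpha+1)\rho^\star,
\]
yielding the lower bound $\T(h + \alpha\rho^\star) \geq \T^\pi(h + \alpha\rho^\star) = h + (\alpha+1)\rho^\star$. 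For the matching upper bound, every action $a$ satisfies the two modified inequalities $r(s,a) + P_{sa}h \leq \rho^\star(s) + h(s)$ and $P_{sa}\rho^\star \leq \rho^\star(s)$, which, when combined with appropriate multiplication of the second by $\alpha$ and added to the first, give $r(s,a) + P_{sa}h + \alpha P_{sa}\rho^\star \leq h(s) + (\alpha+1)\rho^\star(s)$, uniformly over $a$.

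The main technical subtlety will be the upper bound in the forward direction when $\alpha < 0$: the inequality $P_{sa}\rho^\star \leq \rho^\star(s)$ reverses direction once multiplied by a negative scalar, so the naive combination above only delivers the desired bound for $\alpha \geq 0$. I expect to address this by splitting on whether $a$ preserves the gain. For actions with $P_{sa}\rho^\star = \rho^\star(s)$ the term $\alpha P_{sa}\rho^\star = \alpha \rho^\star(s)$ exactly, and the modified second equation alone suffices for any sign of $\alpha$. For strictly gain-dropping actions one needs to control the slack $\rho^\star(s) + h(s) - (r(s,a) + P_{sa}h)$ against $|\alpha|(\rho^\star(s) - P_{sa}\rho^\star)$, which requires exploiting the restricted-max form of the unmodified equation~\eqref{eq:unmod_bellman_2} more carefully, for instance by comparing $\T$ to $\T^\pi$ directly using the policy from Lemma~\ref{lem:simultaneous_argmax_equiv_unmod_soln}.
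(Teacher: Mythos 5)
Your overall strategy mirrors the paper's: both directions of the equivalence are handled via the simultaneously-argmaxing policy from Lemma~\ref{lem:simultaneous_argmax_equiv_unmod_soln}, and the reverse direction uses a ``take $\alpha$ large'' argument (you phrase it as an $\alpha\to\infty$ limit; the paper picks a concrete $\overline{\alpha} > \frac{1+\tspannorm{h}}{\mingap}$, but the effect is the same). One small redundancy in your reverse direction: the first modified equation~\eqref{eq:mod_bellman_1} with $\rho=\rho^\star$ is always satisfied (since $\rho^\star \geq P_{sa}\rho^\star$ with equality attained for some $a$), so there is nothing to extract from the large-$\alpha$ leading order there.

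The ``technical subtlety'' you flag in the forward direction for $\alpha<0$ is not merely a subtlety to be worked around: the equality~\eqref{eq:restricted_commutativity_bellman_op} is genuinely \emph{false} for sufficiently negative $\alpha$, and no clever use of the unmodified equation can repair it. Concretely, take $\S=\{1,2\}$ with state~2 absorbing with reward~0, and state~1 having two actions: a self-loop with reward~1, and a transition to state~2 with reward~$0.9$. Then $\rho^\star=(1,0)$, $h^{\pistar}=(0,0)$, and $(\rho^\star,h^{\pistar})$ satisfies both the modified and unmodified Bellman equations. But with $\alpha=-1$,
\begin{align*}
\T(h^{\pistar}+\alpha\rho^\star)(1)=\max\{1+(-1),\ 0.9+0\}=0.9\neq 0 = h^{\pistar}(1)+(\alpha+1)\rho^\star(1).
\end{align*}
The root cause is exactly what you identify: a gain-dropping action at state~1 has only a small slack $\rho^\star(1)+h(1)-(r(1,a)+P_{1a}h)=0.1$ against the term $|\alpha|(\rho^\star(1)-P_{1a}\rho^\star)=|\alpha|$, so the upper bound fails once $\alpha<-0.1$. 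The unmodified equation~\eqref{eq:unmod_bellman_2} constrains only the gain-preserving actions and gives no leverage over the slack on gain-dropping ones. Incidentally, the paper's own proof has the same gap: the step $M(P\alpha\rho^\star)=\alpha\rho^\star$ uses $M(\alpha Q)=\alpha M(Q)$, which requires $\alpha\geq 0$; for $\alpha<0$ one has $M(\alpha P\rho^\star)(s)=\alpha\min_a P_{sa}\rho^\star\geq\alpha\rho^\star(s)$, which points the wrong way.

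So your instinct to be suspicious was correct, but the conclusion should be that the lemma needs to be restated for $\alpha\geq 0$, not that the $\alpha<0$ case can be salvaged. Fortunately this is harmless for the rest of the paper: every invocation of this lemma (Lemma~\ref{lem:rho_estimation_lemma_specific}, Lemma~\ref{lem:picard_gain_bound_mod}, and the abstract formulation in~\eqref{eq:abstract_restricted_commutativity}) uses only nonnegative integer shifts $\alpha=0,1,\dots,n$. Restricted to $\alpha\geq 0$, your argument goes through and matches the paper's.
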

\begin{proof}
    First suppose that $(\rho^\star, h)$ satisfies both the modified and unmodified Bellman equations. By Lemma \ref{lem:simultaneous_argmax_equiv_unmod_soln}, there exists a simultaneously argmaxing policy $\pi$ satisfying~\eqref{eq:mod_bellman_1_vec_simul_argmax} and~\eqref{eq:mod_bellman_2_vec_simul_argmax}. Now letting $\alpha \in \R$ be arbitrary, we have that
    \begin{align*}
        \T(h  +\alpha \rho^\star) \geq \T^{\pi}(h  +\alpha \rho^\star) = r_\pi + P_\pi h + P_\pi \alpha \rho^\star = r_\pi + P_\pi h + \alpha \rho^\star = \T(h) + \alpha \rho^\star
    \end{align*}
    using the properties of $\pi$. However we can also bound
    \begin{align*}
        \T(h  +\alpha \rho^\star) = M(r + P h + P\alpha\rho^\star) \leq M(r + Ph) + M(P\alpha \rho^\star) = \T(h) + \alpha \rho^\star.
    \end{align*}
    Therefore we must have that $\T(h  +\alpha \rho^\star) = \T(h) + \alpha \rho^\star = h + (\alpha + 1)\rho^\star$ as desired (using that $h$ satisfies the modified Bellman equation in this final equality).

    Now we assume~\eqref{eq:restricted_commutativity_bellman_op} holds for all $\alpha \in \R$. Note that we immediately have that $(\rho^\star, h)$ satisfies the modified Bellman equation by taking $\alpha = 0$ (and we always have that $MP \rho^\star = \rho^\star$). Now we choose $\overline{\alpha}$ sufficiently large so that the corresponding deterministic argmaxing policy $\pi$ such that
    \begin{align*}
        \T(h + \overline{\alpha} \rho^\star) = \T^{\pi}(h + \overline{\alpha} \rho^\star)
    \end{align*}
    must satisfy $P_\pi \rho^\star = \rho^\star$. Specifically $\overline{\alpha} > \frac{1+\spannorm{h}}{\mingap}$ suffices, because then if $\pi$ did not satisfy $P_\pi \rho^\star = \rho^\star$ (but is deterministic) then we would have $P_\pi \rho^\star \leq \rho^\star - \mingap = P_{\pistar} \rho^\star -\mingap$ by the definition of $\mingap$, and thus (fixing some arbitrary $s \in \S$) we would have
    \begin{align*}
        & \overline{\alpha} > \frac{1+\spannorm{h}}{\mingap} \geq \frac{r_\pi(s) - r_{\pistar}(s) + e_s^\top (P_\pi - P_{\pistar})h}{\mingap} \\
        \implies & \overline{\alpha} e_s^\top(P_{\pistar}\rho^\star - P_{\pi}\rho^\star) = \overline{\alpha} e_s^\top(\rho^\star - P_{\pi}\rho^\star) \geq \overline{\alpha} \mingap > r_\pi(s) - r_{\pistar}(s) + e_s^\top (P_\pi - P_{\pistar})h \\
        \implies & \T^\pi(h + \overline{\alpha} \rho^\star)(s) =  r_\pi(s)  + e_s^\top P_\pi (h + \overline{\alpha} \rho^\star) < r_{\pistar}(s) + e_s^\top P_{\pistar} (h + \overline{\alpha} \rho^\star) = \T^{\pistar}(h + \overline{\alpha} \rho^\star)(s)
    \end{align*}
    and the final inequality contradicts the fact that $\T(h + \overline{\alpha} \rho^\star) = \T^{\pi}(h + \overline{\alpha} \rho^\star)$. Now using~\eqref{eq:restricted_commutativity_bellman_op} and then this fact that $P_\pi \rho^\star = \rho^\star$, and then canceling terms, we have
    \begin{align*}
        &h + (\overline{\alpha} + 1)\rho^\star = \T(h + \overline{\alpha} \rho^\star) = \T^{\pi}(h + \overline{\alpha} \rho^\star) = r_\pi + P_\pi h + P_\pi \overline{\alpha} \rho^\star = r_\pi + P_\pi h + \overline{\alpha} \rho^\star \\
        \implies & h + \rho^\star = r_\pi + P_\pi h =\T^\pi(h)  .
    \end{align*}
    Using $\alpha = 0$ in~\eqref{eq:restricted_commutativity_bellman_op} we have that $\T(h) = \rho^\star + h$, so $\T^\pi(h) = \T(h)$. Thus we have shown that $\pi$ is a simultaneously argmaxing policy for the modified Bellman equation solution $(\rho^\star, h)$ in the sense of~\eqref{eq:mod_bellman_1_vec_simul_argmax} and~\eqref{eq:mod_bellman_2_vec_simul_argmax}, so by Lemma \ref{lem:simultaneous_argmax_equiv_unmod_soln} $h$ satisfies both the modified and unmodified Bellman equations.
\end{proof}

Next we prove Lemma \ref{lem:rho_estimation_lemma_specific}, and discuss the ambiguity within \cite[Theorem 9.4.1]{puterman_markov_1994}. To the best of our understanding, the proof of \cite[Theorem 9.4.1]{puterman_markov_1994} makes use of a particular solution $h^\star$ to the modified Bellman equations which is constructed in \cite[Proposition 9.1.1]{puterman_markov_1994} by adding a large multiple of $\rho^\star$ to some $h'$ which satisfies the unmodified Bellman equations. This ensures the existence of a simultaneously argmaxing policy (in the sense of equations~\eqref{eq:mod_bellman_1_vec_simul_argmax} and~\eqref{eq:mod_bellman_2_vec_simul_argmax}), and such a policy is used in an essential way for the proof. As noted in Subsection \ref{sec:problem_setup}, not all solutions $(\rho, h'')$ of the modified Bellman equations have $\rho = \rho^\star$, with an example provided in \cite[Example 5.1.1]{bertsekas_approximate_2018}. Also mentioned there, a sufficient condition for $\rho = \rho^\star$ is the existence of a simultaneously argmaxing policy. Therefore, not all solutions of the modified Bellman equations admit a simultaneously argmaxing policy (with \cite[Example 5.1.1]{bertsekas_approximate_2018} necessarily being an example, and another example provided below), so the proof of \cite[Theorem 9.4.1]{puterman_markov_1994} does not hold for all solutions to the modified Bellman equations. We note that the statement of \cite[Theorem 9.4.1]{puterman_markov_1994} in \cite{lee_optimal_2024} apparently allows a general solution to the modified Bellman equations, although in \cite{lee_optimal_2024} the definition of a solution to the modified Bellman equation adds the condition that a simultaneously argmaxing policy must exist.

\begin{proof}[Proof of Lemma \ref{lem:rho_estimation_lemma_specific}]
    We have that
    \begin{align*}
        \infnorm{\T^{(n)}(h_0) - n \rho^\star - h} \leq \infnorm{\T^{(n)}(h_0) - \T^{(n)}(h)} + \infnorm{\T^{(n)}(h) - n \rho^\star - h}
    \end{align*}
    by the triangle inequality.
    Since $h$ satisfies both the modified and unmodified Bellman equations, by Lemma \ref{lem:restricted_commutativity_property} (used $n$ times) we have
    \begin{align*}
        \infnorm{\T^{(n)}(h) - n \rho^\star - h} &= \infnorm{\T^{(n-1)}\left(\T(h) \right) - n \rho^\star - h} \\
        &= \infnorm{\T^{(n-1)}\left(h + \rho^\star \right) - n \rho^\star - h} \\
        &= \infnorm{\T^{(n-1)}\left(h \right) - (n-1) \rho^\star - h} \\
        & ~~\vdots \\
        &= \infnorm{\T^{(0)}\left(h \right) - (0) \rho^\star - h} \\
        &= \infnorm{h - h} = 0.
    \end{align*}
    By nonexpansiveness of $\T$, it is easy to see that $\infnorm{\T^{(n)}(h_0) - \T^{(n)}(h)} \leq \infnorm{h_0 - h}$. Combining all these calculations yields the desired result.
\end{proof}

\subsection{Constructing solutions to the modified Bellman equations}
\label{sec:constructing_mod_bellman_solutions}
In this subsection we demonstrate how solutions to the modified Bellman equations $h$ may have $\spannorm{h}$ on the order of $\frac{1}{\mingap}$ and arbitrarily large, even when constructed from a vector $h^{\pistar}$ solving the unmodified Bellman equations which has $\spannorm{h^{\pistar}} \leq O(1)$.

The following lemma demonstrates this construction.
This is a non-asymptotic version of \citet[Proposition 9.1.1]{puterman_markov_1994} which was originally shown by \cite{denardo_multichain_1968}.
\begin{lem}
    \label{lem:mod_bellman_soln_particular}
    Letting $h = h^{\pistar} + \frac{\spannorm{h^{\pistar}} + 1}{\Delta} \rho^\star$, $h$ satisfies the modified Bellman equation~\eqref{eq:mod_bellman_2_vec} with $\rho = \rho^\star$. 
\end{lem}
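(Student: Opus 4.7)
The plan is to substitute $h = h^{\pistar} + C \rho^\star$ with $C = \frac{\spannorm{h^{\pistar}} + 1}{\Delta}$ into the Bellman operator, expand using linearity, and do a case analysis on actions depending on whether they preserve the optimal gain. The claim $\T(h) = \rho^\star + h$ pointwise reduces to
\[
\max_{a\in\A}\Big[r(s,a) + P_{sa} h^{\pistar} + C\,P_{sa}\rho^\star\Big] \;=\; (1+C)\rho^\star(s) + h^{\pistar}(s)
\]
for each $s$, since $\rho^\star(s) + h(s) = (1+C)\rho^\star(s) + h^{\pistar}(s)$.

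First I would split the maximum over $a$ into two subsets: the gain-preserving actions $\mathcal{A}_1(s) = \{a : P_{sa}\rho^\star = \rho^\star(s)\}$ and the gain-dropping actions $\mathcal{A}_2(s) = \{a : P_{sa}\rho^\star < \rho^\star(s)\}$. (These partition $\A$, using equation~\eqref{eq:unmod_bellman_1} applied to $\rho^\star$.) Over $\mathcal{A}_1(s)$ the term $C\, P_{sa}\rho^\star$ is the constant $C\rho^\star(s)$, so the restricted maximum equals $C\rho^\star(s) + \max_{a\in \mathcal{A}_1(s)}[r(s,a) + P_{sa} h^{\pistar}]$, and by the unmodified Bellman equation~\eqref{eq:unmod_bellman_2} applied to $(\rho^\star, h^{\pistar})$, this equals $C\rho^\star(s) + \rho^\star(s) + h^{\pistar}(s) = (1+C)\rho^\star(s) + h^{\pistar}(s)$. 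Thus restricting to gain-preserving actions already achieves the target value.

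Next, for actions $a \in \mathcal{A}_2(s)$, I would show the expression is no larger than this target. By the definition of $\Delta$ we have $P_{sa}\rho^\star \le \rho^\star(s) - \Delta$, so
\[
r(s,a) + P_{sa} h^{\pistar} + C\, P_{sa}\rho^\star \;\le\; r(s,a) + P_{sa} h^{\pistar} + C\rho^\star(s) - C\Delta.
\]
Then using $r(s,a) \le 1$, $\rho^\star(s) \ge 0$, and $P_{sa} h^{\pistar} - h^{\pistar}(s) \le \spannorm{h^{\pistar}}$, I would bound
\[
r(s,a) + P_{sa} h^{\pistar} \;\le\; 1 + \spannorm{h^{\pistar}} + \rho^\star(s) + h^{\pistar}(s).
\]
Substituting $C\Delta = \spannorm{h^{\pistar}} + 1$ shows that the gain-dropping contribution is at most $(1+C)\rho^\star(s) + h^{\pistar}(s)$, matching the target. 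Combining both cases gives the required equality.

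The only mildly delicate step is the slack estimate for $\mathcal{A}_2(s)$: the choice of $C$ is exactly calibrated so that the penalty $C\Delta$ introduced by a single gain drop absorbs the worst-case increase $1 + \spannorm{h^{\pistar}}$ in $r(s,a) + P_{sa}h^{\pistar}$ relative to its optimal value on $\mathcal{A}_1(s)$. Everything else is bookkeeping.
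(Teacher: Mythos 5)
Your proof is correct and follows essentially the same route as the paper's: split the actions at each state into gain-preserving and gain-dropping ones, use the unmodified optimality equation \eqref{eq:unmod_bellman_2} for $(\rho^\star, h^{\pistar})$ to evaluate the restricted maximum over gain-preserving actions, and use the calibrated choice of $C = (\spannorm{h^{\pistar}}+1)/\Delta$ so that the $C\Delta$ penalty incurred by a gain-dropping action dominates the worst-case surplus $1 + \spannorm{h^{\pistar}}$. The only cosmetic difference is that the paper first establishes the $\geq$ direction by plugging in $\pistar$ explicitly and then bounds an arbitrary deterministic policy above, while you compute the maximum over actions directly; and in the gain-dropping case you invoke $\rho^\star(s)\geq 0$ and a crude bound on $r(s,a)+P_{sa}h^{\pistar}$, whereas the paper compares it to the restricted maximum via the unmodified equation — both yield the same numeric bound $1+\spannorm{h^{\pistar}}+\rho^\star(s)+h^{\pistar}(s)$.
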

\begin{proof}
    Since we have
    \begin{align*}
        M(r + P h) & \geq M^{\pistar}(r + P h) \\
        &= M^{\pistar}\left(r + P \left( h^{\pistar} + \frac{\spannorm{h^{\pistar}} + 1}{\Delta}\rho^\star\right)\right) \\
        &= r_{\pistar} + P_{\pistar}h^{\pistar}   + \frac{\spannorm{h^{\pistar}} + 1}{\Delta}P_{\pistar}\rho^\star \\
        &= \rho^\star + h^{\pistar}   + \frac{\spannorm{h^{\pistar}} + 1}{\Delta}\rho^\star \\
        &= \rho^\star + h
    \end{align*}
    since $P_{\pistar}\rho^\star = \rho^\star$ and $r_{\pistar} + P_{\pistar}h^{\pistar}  = \rho^\star + h^{\pistar}$, it suffices to show that for all deterministic policies $\pi$ that $M^{\pi}(r + P h) \leq \rho^\star + h$. If $P_{s \pi(s)} \rho^\star = \rho^\star(s)$, then since $h^{\pistar}$ satisfies the unmodified Bellman equation~\eqref{eq:unmod_bellman_2} (with $\rho = \rho^\star$), we have
    \begin{align*}
        e_s^\top M^{\pi}(r + P h) &= r(s, \pi(s)) + P_{s \pi(s)}h \\
        &= r(s, \pi(s)) + P_{s \pi(s)}h^{\pistar} + \frac{\spannorm{h^{\pistar}} + 1}{\Delta} P_{s \pi(s)}\rho^\star\\
        &= r(s, \pi(s)) + P_{s \pi(s)}h^{\pistar} + \frac{\spannorm{h^{\pistar}} + 1}{\Delta}  \rho^\star(s)\\
        &\leq  \max_{a \in \A} r(s, a) + P_{s a}h^{\pistar} + \frac{\spannorm{h^{\pistar}} + 1}{\Delta} \rho^\star(s)\\
        &=  \max_{a \in \A: P_{sa}\rho^\star = \rho^\star(s)} r(s, a) + P_{s a}h^{\pistar} +\frac{\spannorm{h^{\pistar}} + 1}{\Delta} \rho^\star(s)\\
        &=  \rho^\star(s) + h^{\pistar}(s) +\frac{\spannorm{h^{\pistar}} + 1}{\Delta}  \rho^\star(s) \\
        &= \rho^\star(s) + h(s)
    \end{align*}
    as desired. Now we consider the case where $P_{s \pi(s)} \rho^\star < \rho^\star(s)$. Letting $a^\star \in \argmax_{a \in \A: P_{sa}\rho^\star = \rho^\star(s)} r(s, a) + P_{s a}h^{\pistar} $, first we calculate that
    \begin{align}
        &r(s, \pi(s)) + P_{s \pi(s)}h^{\pistar} - \max_{a \in \A: P_{sa}\rho^\star = \rho^\star(s)} \left(r(s, a) + P_{s a}h^{\pistar} \right)\nonumber \\
        &\qquad\qquad= r(s, \pi(s))-r(s, a^\star) + (P_{s \pi(s)} - P_{s a^\star})h^{\pistar} \nonumber \\
        & \qquad\qquad \leq 1 + \spannorm{h^{\pistar}}. \label{eq:h_particular_soln_1}
    \end{align}
    Also since $P_{s \pi(s)} \rho^\star < \rho^\star(s)$, by definition of $\mingap$ we have $\rho^\star(s) - P_{s \pi(s)} \rho^\star \geq \mingap$, and thus
    \begin{align*}
        e_s^\top M^{\pi}(r + P h) &= r(s, \pi(s)) + P_{s \pi(s)}h \\
        &= r(s, \pi(s)) + P_{s \pi(s)}h^{\pistar} + \frac{\spannorm{h^{\pistar}} + 1}{\Delta} P_{s \pi(s)} \rho^\star\\
        &\leq r(s, \pi(s)) + P_{s \pi(s)}h^{\pistar} + \frac{\spannorm{h^{\pistar}} + 1}{\Delta}  \left(\rho^\star(s) - \mingap \right)\\
        &\leq  \max_{a \in \A: P_{sa}\rho^\star = \rho^\star(s)} r(s, a) + P_{s a}h^{\pistar} +1 + \spannorm{h^{\pistar}} + \frac{\spannorm{h^{\pistar}} + 1}{\Delta} \left(\rho^\star(s) - \mingap \right)\\
        &=  \rho^\star(s) + h^{\pistar}(s)  + \frac{\spannorm{h^{\pistar}}+1}{\mingap}  \rho^\star(s) \\
        &=  \rho^\star(s) + h(s)
    \end{align*}
    making use of~\eqref{eq:h_particular_soln_1} in the second inequality.
\end{proof}

Now we show a simple example where $\Tdrop,\spannorm{h^{\pistar}} \leq O(1)$ and $\spannorm{h} \approx \frac{1}{\mingap}$ is arbitrarily large.
\begin{figure}[H]
    \centering
    \begin{tikzpicture}[ -> , >=stealth, shorten >=2pt , line width=0.5pt, node distance =2cm, scale=0.9]

    \node [circle, draw] (one) at (1 , 2) {1};
    \node [circle, draw] (two) at (1 , 0) {2};
    \node [circle, draw] (three) at (1 , -2) {3};
    \node [circle, draw] (four) at (-1 , 0) {4};

    \path (four) edge node [above] {$+0$~~~} (one);
    \path (four) edge node [above] {$+1$} (two);
    \path (four) edge node [above] {~~~$+0$} (three);
    
    \path (one) edge [loop right] node [right] {$+1$}  (one) ;
    \path (two) edge [loop right] node [right] {$+1-\varepsilon$}  (two) ;
    \path (three) edge [loop right] node [right] {$+0$}  (three) ;

    \end{tikzpicture}
    \caption{A four-state MDP parameterized by $\varepsilon > 0$. Each arrow represents a (deterministic) action and is annotated with its reward. Only state $4$ has multiple actions.}
    \label{fig:bad_mingap_mdp}
\end{figure}
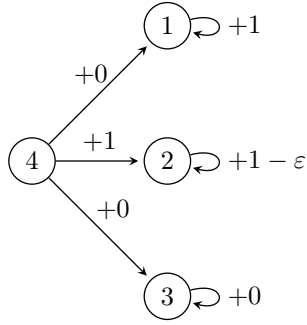

\begin{thm}
    \label{thm:large_h_mod_sp_example}
    Fix $\varepsilon > 0$. Then for the MDP in the above Figure \ref{fig:bad_mingap_mdp},
    \begin{enumerate}
        \item $\Tdrop = 1$.
        \item $\spannorm{h^{\pistar}} = 1$.
        \item $\frac{1}{\mingap} = \frac{1}{\varepsilon}$.
        \item Letting $\mathcal{H}$ denote the set of all solutions to the modified Bellman equations,
        \begin{align*}
            \inf \left\{ \spannorm{h^{\pistar} + c \rho^\star} : c \in \R, \text {$(\rho^\star, h^{\pistar} + c \rho^\star) \in \mathcal{H}$} \right\} \geq \frac{1}{\varepsilon}.
        \end{align*}
    \end{enumerate}
\end{thm}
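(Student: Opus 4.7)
The plan is to verify each item by direct calculation on this small $4$-state instance, with item~4 being the main content. I first identify the Blackwell-optimal policy $\pistar$ as the one choosing the action $4 \to 1$ from state~$4$: the only recurrent classes are the singletons $\{1\}, \{2\}, \{3\}$ with long-run rewards $1, 1-\varepsilon, 0$, so any policy strictly prefers the $4\to 1$ action for every $\gamma$ close enough to $1$. This gives $\rho^\star = (1,\ 1-\varepsilon,\ 0,\ 1)$, and $\rho^\star$ is constant on each recurrent class but takes the value $1$ at the transient state $4$.

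For items~1 and~3, I observe that the only state-action pairs with $P_{sa}\rho^\star < \rho^\star(s)$ lie at state~$4$, namely the actions $4\to 2$ (gap $\varepsilon$) and $4\to 3$ (gap $1$); consequently $\mingap = \varepsilon$. Since states $1,2,3$ are absorbing, any trajectory visits state~$4$ at most once, so $\Tdrop^\pi \le 1$ for every $\pi \in \PiMD$, with equality achieved by any policy choosing $4 \to 2$ or $4 \to 3$. For item~2, a direct Cesaro-limit computation gives $h^{\pistar}(1) = h^{\pistar}(2) = h^{\pistar}(3) = 0$ (each absorbing self-loop has per-step reward equal to its own gain), while under $\pistar$ starting at state~$4$ there is a single one-step shortfall of $R_0 - \rho^{\pistar}(S_0) = 0 - 1 = -1$ followed by zero deviations thereafter, so $h^{\pistar}(4) = -1$ and $\spannorm{h^{\pistar}} = 1$.

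The main step is item~4. I parameterize $h_c := h^{\pistar} + c\rho^\star = (c,\ c(1-\varepsilon),\ 0,\ c-1)$ and check the modified Bellman equation $M(r + P h_c) = \rho^\star + h_c$. At states $1,2,3$ this is automatic because each has a unique action. At state~$4$ the right-hand side equals $c$, while the three available actions give respectively $c$ (to~$1$), $1 + c(1-\varepsilon)$ (to~$2$), and $0$ (to~$3$); requiring the maximum to equal $c$ reduces to the scalar condition $c \geq 1/\varepsilon$. For any such $c$, since $\varepsilon \leq 1$ we have $0 \leq c(1-\varepsilon) \leq c$ and $0 \leq c-1 \leq c$, so $\spannorm{h_c} = c - 0 = c \geq 1/\varepsilon$, giving the claim. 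The only real subtlety is the state-$4$ calculation, where the $1/\varepsilon$ lower bound arises because shifting by $c\rho^\star$ boosts destination $1$ by $c$ while destination $2$ only by $c(1-\varepsilon)$, so overcoming the reward advantage of $1$ enjoyed by the $4 \to 2$ action requires $c\varepsilon \geq 1$.
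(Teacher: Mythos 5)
Your proof is correct and takes essentially the same approach as the paper: you compute $\rho^\star = (1,\,1-\varepsilon,\,0,\,1)$ and $h^{\pistar} = (0,\,0,\,0,\,-1)$, verify $\Tdrop = 1$ and $\mingap = \varepsilon$ by inspecting state~$4$, and derive the constraint $c \geq 1/\varepsilon$ from the modified Bellman equation at state~$4$, which forces $\spannorm{h^{\pistar} + c\rho^\star} \geq 1/\varepsilon$. The only cosmetic differences are that you compute $h^{\pistar}$ directly from the Cesaro-limit definition rather than via $P_{\pistar}^\infty h^{\pistar} = \zero$ plus the unmodified Bellman equation, and you evaluate the span exactly as $c$ (correctly noting this uses $\varepsilon \leq 1$, which is implicit in $r \in [0,1]^{\S\times\A}$), where the paper only notes it is at least $1/\varepsilon$.
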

\begin{proof}
    It is immediate to see that
    \begin{align*}
        \rho^\star = \begin{bmatrix}
            1 \\ 1-\varepsilon \\0 \\1
        \end{bmatrix}. 
    \end{align*}
    This implies that $\frac{1}{\mingap} = \frac{1}{\varepsilon}$ (by considering the actions in state $4$). It is also trivial to see that $\Tdrop = 1$, since only state $4$ has any ``gain-dropping'' actions available, and they all lead to states other than state $4$ after $1$ step. Since necessarily $P_{\pistar}^\infty h^{\pistar} = \zero$, this implies that the absorbing states $s = 1,2,3$ must have $h^{\pistar}(s)=0$. 
    Combining with the unmodified Bellman equation (which $h^{\pistar}$ must satisfy) implies that
    \begin{align*}
        h^{\pistar} = \begin{bmatrix}
            0 \\ 0 \\0 \\-1
        \end{bmatrix}, 
    \end{align*}
    so we have $\spannorm{h^{\pistar}} = 1$. Finally, for $(\rho^\star, h^{\pistar} + c \rho^\star)$ to satisfy the modified Bellman equation we must have
    \begin{align*}
        & 0 + c(1) +h^{\pistar}(1) \geq 1 + c(1-\varepsilon) + h^{\pistar}(2)\\
        \iff & 0 + c(1) +0 \geq 1 + c(1-\varepsilon) + 0\\
        \iff & c \geq \frac{1}{\varepsilon}
    \end{align*}
    which implies $h^{\pistar}(1) + c \rho^\star \geq \frac{1}{\varepsilon}$ for all solutions of this form to the modified Bellman equations, and furthermore $h^{\pistar}(3) + c \rho^\star(3) = 0 + c0 = 0$, which implies the span of $h^{\pistar} + c \rho^\star$ is at least $\frac{1}{\varepsilon}$.
\end{proof}

\section{Proof of Proposition \ref{prop:DMDP_baseline}}
\label{sec:proof_of_dmdp_baseline}

\begin{proof}[Proof of Proposition \ref{prop:DMDP_baseline}]
	We have
	\begin{align*}
		\infnorm{V^{\pihat} - V^\star} \leq \frac{2}{1-\gamma} \infnorm{V_t - V^\star} \leq \frac{2}{1-\gamma} \gamma^n \infnorm{V_0 - V^\star} \leq \frac{2}{(1-\gamma)^2} \gamma^n 
	\end{align*}
	where the inequality $\infnorm{V^{\pihat} - V^\star} \leq \frac{2}{1-\gamma} \infnorm{V_t - V^\star}$ is due to \cite{singh_upper_1994}.
    To ensure the above quantity is $\leq 1$, we need
    \begin{align*}
        & \frac{2}{(1-\gamma)^2} \gamma^n \leq 1 \\
        \iff & \gamma^n \leq \frac{(1-\gamma)^2}{2} \\
        \iff & n\log(\gamma) \leq \log \left(\frac{(1-\gamma)^2}{2} \right) \\
        \iff & n \geq \frac{\log \left( \frac{(1-\gamma)^2}{2}\right)}{\log \gamma } = \frac{\log \left( \frac{2}{(1-\gamma)^2}\right)}{\log (1/\gamma)} \\
        \impliedby & n \geq \frac{\log \left( \frac{2}{(1-\gamma)^2}\right)}{1-\gamma}
    \end{align*}
    where for the final implication, we use that $\log (1/\gamma) \geq 1 - \gamma$ for any $\gamma$. Finally, with the stated choice of $\gamma$, the RHS above is
    \begin{align*}
        \frac{n}{2\log (n)} \log \left( 2 \frac{n^2}{4 \log^2(n)} \right) \leq \frac{n}{2 \log(n)} \log \left( \frac{n^2}{\log^2(n)}  \right) \leq \frac{n}{2 \log(n)} 2 \log (n) =n
    \end{align*}
    as desired. Finally, we can apply \cite[Proof of Theorem 6]{zurek_span-based_2025} to give that
    \begin{align*}
        \rho^\pi \geq \rho^\star - (1-\gamma) \left(3 \Tb + 3 \spannorm{h^{\pistar}} + 2 \right) \one.
    \end{align*}

\end{proof}

\section{Proofs for sensitivity analysis for multichain MDPs}
\label{sec:sensitivity_analysis_proofs}
\begin{proof}[Proof of Lemma \ref{lem:AMDP_performance_diff_lemma}]
We prove the more general statement that for any policy $\pi$ and any vectors $\rho, h \in \R^{\S}$, we have
    \begin{align*}
        \rho^\pi - \rho = H_{P_\pi} (P_\pi - I) \rho + P_\pi^\infty (r_\pi + P_\pi h - \rho - h).
    \end{align*}
(Then we can substitute $\rho = \rho^\star$ to obtain the statement of Lemma \ref{lem:AMDP_performance_diff_lemma}.)

Since $H_{P_\pi} (P_\pi - I) = P_\pi^\infty - I$, we have
\begin{align*}
     H_{P_\pi} (P_\pi - I) \rho = (P_\pi^\infty - I) \rho.
\end{align*}
Also since $P_\pi^\infty (P_\pi - I) = \zero$ and $\rho^\pi = P_\pi^\infty r_\pi$, we have
\begin{align*}
    P_\pi^\infty (r_\pi + P_\pi h - \rho - h) = P_\pi^\infty (r_\pi - \rho) = \rho^\pi - P_\pi^\infty  \rho.
\end{align*}
Combining these two calculations, we have
    \begin{align*}
        \rho^\pi - \rho &= \left(\rho^\pi - P_\pi^\infty \rho\right) + \left(P_\pi^\infty \rho - \rho\right)\\
        &=  P_\pi^\infty (r_\pi + P_\pi h - \rho - h) + H_{P_\pi} (P_\pi - I) \rho .
    \end{align*}
\end{proof}

\begin{lem}
    \label{lem:finiteness_of_tdrop}
    \begin{enumerate}
        \item For any policy $\pi$, all states $s $ such that $e_s^\top P_\pi \rho^\star - \rho^\star(s) < 0$ are transient.
        \item For any policy $\pi$, $\Tdrop^\pi $ is finite.
        \item $\Tdrop $ is finite.
    \end{enumerate}
\end{lem}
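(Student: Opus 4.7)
The three parts build on one another, with Part 1 doing the substantive work. My plan is first to show that the first modified Bellman equation~\eqref{eq:mod_bellman_1_vec} gives the elementwise inequality $P_\pi \rho^\star \leq \rho^\star$ for every policy $\pi$ (since for any state $s$ and action $a$, $P_{sa}\rho^\star \leq \max_{a'}P_{sa'}\rho^\star = \rho^\star(s)$, and $P_\pi$ is a convex combination of the $P_{sa}$ rows). Then I will establish Part 1 by contraposition: if $s$ is recurrent under $\pi$ with recurrent class $C$, then the limiting matrix $P_\pi^\infty$ restricted to $C$ has a stationary distribution $\mu$ strictly positive on $C$, and $\mu^\top (I - P_\pi)(\rho^\star - P_\pi \rho^\star)$... actually more simply, writing $v = \rho^\star - P_\pi \rho^\star \geq \zero$ and using that $e_s^\top P_\pi^\infty v = e_s^\top P_\pi^\infty (\rho^\star - P_\pi \rho^\star) = e_s^\top (P_\pi^\infty - P_\pi^\infty P_\pi)\rho^\star = 0$ (since $P_\pi^\infty P_\pi = P_\pi^\infty$), together with $e_s^\top P_\pi^\infty$ being supported on and strictly positive throughout the recurrent class of $s$, forces $v$ to vanish on that class, in particular $v(s) = 0$. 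Hence the strict inequality $e_s^\top P_\pi \rho^\star < \rho^\star(s)$ can only occur at transient $s$.

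For Part 2 I will argue that any gain-dropping event encountered along a trajectory of $\pi$ occurs at a transient state. Specifically, if $\pi(a\mid s) > 0$ and $P_{sa}\rho^\star < \rho^\star(s)$, then
\begin{align*}
    e_s^\top P_\pi \rho^\star = \sum_{a'} \pi(a' \mid s) P_{sa'}\rho^\star < \sum_{a'}\pi(a' \mid s)\rho^\star(s) = \rho^\star(s),
\end{align*}
since the term for action $a$ is strict and the rest are weak. By Part 1, $s$ is then transient under $\pi$. Therefore the sum of indicators appearing in~\eqref{eq:tdrop_pi_defn} is almost surely bounded above by the number of visits of the trajectory to $\Trans^\pi$, which gives $\Tdrop^\pi \leq \Tb^\pi$. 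Finiteness of $\Tb^\pi$ in a finite MDP (by standard Markov chain theory, the expected number of visits to transient states before absorption into $\Rec^\pi$ is finite) then yields $\Tdrop^\pi < \infty$.

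Part 3 is immediate: $\PiMD$ is a finite set in a finite MDP, so $\Tdrop = \max_{\pi \in \PiMD}\Tdrop^\pi$ is a maximum of finitely many finite quantities. (The remark in the paper that the supremum over all history-dependent randomized policies is attained by some $\pi \in \PiMD$ comes from viewing $\Tdrop$ as the optimal expected-total-reward value under the nonnegative reward $\overline{r}(s,a) = \ind\{P_{sa}\rho^\star < \rho^\star(s)\}$ in the transient MDP induced by restricting to states where $\overline{r}$ may be nonzero, i.e.\ a stochastic-shortest-path-style problem, but this is not needed for finiteness.)

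The only nontrivial step is Part 1, and the main subtlety there is correctly leveraging the identity $P_\pi^\infty P_\pi = P_\pi^\infty$ together with the nonnegativity of $v = \rho^\star - P_\pi\rho^\star$ and the support properties of $e_s^\top P_\pi^\infty$; once this is in place, Parts 2 and 3 follow cleanly.
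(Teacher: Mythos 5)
Your proposal is correct, and Parts 2 and 3 coincide with the paper's argument (bounding the gain-dropping indicators by indicators of visits to states transient under $P_\pi$, hence by $\Tb^\pi$, then taking a finite maximum over $\PiMD$). Part 1, however, is proved by a genuinely different route. The paper argues by contraposition using a cited structural fact \citep[Lemma 17]{zurek_span-based_2024}: if $s$ is recurrent under $P_\pi$, every state in the support of $e_s^\top P_\pi$ lies in the same closed recurrent class, all states of which share the same value of $\rho^\star$, so $e_s^\top P_\pi \rho^\star = \rho^\star(s)$ directly. You instead set $v = \rho^\star - P_\pi \rho^\star \geq \zero$ (which follows from $\rho^\star(s) = \max_a P_{sa}\rho^\star$), observe that $e_s^\top P_\pi^\infty v = e_s^\top(P_\pi^\infty - P_\pi^\infty P_\pi)\rho^\star = 0$, and use strict positivity of $e_s^\top P_\pi^\infty$ on the recurrent class of a recurrent $s$ to force $v(s)=0$. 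Both arguments are sound; yours is self-contained (it only needs the identities for $P_\pi^\infty$ already collected in Appendix~\ref{sec:drazin_inverses_etc} plus positivity of the stationary distribution on its recurrent class), while the paper's leans on an external lemma that yields the stronger structural statement that $\rho^\star$ is constant on each recurrent class of $P_\pi$ — a fact the paper also invokes informally in Section~\ref{sec:sensitivity_analysis} when discussing the topological meaning of gain-dropping pairs. Either version supports the downstream uses of the lemma.
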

\begin{proof}
    For the first statement, suppose some state $s$ is recurrent under the Markov chain $P_\pi$. Then all states in the support of $e_s^\top P_\pi$ must also be recurrent and in the same maximal closed recurrent class. By \citet[Lemma 17]{zurek_span-based_2025}, any states $s', s''$ in the same recurrent class have $\rho^\star(s') = \rho^\star(s'')$, so we must have $e_s^\top P_\pi \rho^\star = \rho^\star(s)$. Therefore by contraposition, if $e_s^\top P_\pi \rho^\star - \rho^\star(s) < 0$ then $s$ must be transient.

    For the second statement, if for some $s \in \S$ a policy $\pi$ has nonzero probability of taking some action $a$ such that $P_{sa}\rho^\star < \rho^\star$, then we must have $P_\pi \rho^\star < \rho^\star$. Therefore almost surely $\ind(P_{S_t, A_t}\rho^\star < \rho^\star(S_t)) \leq \ind( e_{S_t}^\top P_\pi \rho^\star < \rho^\star(S_t))$, which implies
    \begin{align*}
        \Tdrop^\pi = \max_{s \in \S} \E_s^\pi \left[\sum_{t=0}^\infty \ind\left(P_{S_t, A_t}\rho^\star < \rho^\star(S_t)\right) \right] \leq \max_{s \in \S} \E_s^\pi \left[\sum_{t=0}^\infty \ind\left( e_{S_t}^\top P_\pi \rho^\star < \rho^\star(S_t)\right)\right].
    \end{align*}
    Hence $\Tdrop^\pi $ is bounded by the expected amount of time spent by the Markov chain $P_\pi$ in states which are transient under $P_\pi$ (maximized over all starting states), and since the expected amount of time spent in transient states in a finite Markov chain is finite \citep{durrett_probability_2019}, we must have that $\Tdrop^\pi $ is finite.

    For the final statement, since there are only a finite number of policies in $\PiMD$ and $\Tdrop^\pi < \infty$ for each $\pi \in \PiMD$, we have that $\Tdrop = \max_{\in  \PiMD} \Tdrop^\pi < \infty$.
\end{proof}

\begin{lem}
    \label{lem:determinstic_pol_tdrop_formula}
    Suppose that $\pi \in \PiMD$ is a deterministic policy. Then
    \begin{align*}
        \Tdrop^\pi = \max_{s \in \S} \E_s^\pi \left[\sum_{t=0}^\infty \ind\left( e_{S_t}^\top P_\pi \rho^\star < \rho^\star(S_t)\right)\right].
    \end{align*}
\end{lem}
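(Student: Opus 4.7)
The plan is to observe that for a deterministic policy $\pi$, the random action $A_t$ equals $\pi(S_t)$ almost surely under $\E^\pi_{s_0}$, so the state-action indicator in the definition of $\Tdrop^\pi$ can be rewritten as a purely state-based indicator matching the right-hand side of the claim.

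First, I would unpack the relationship between the row $e_s^\top P_\pi$ of the policy-induced transition matrix and the row $P_{s, \pi(s)}$ of the state-action transition kernel. Using $P_\pi = M^\pi P$ and the fact that a deterministic $\pi$ has $\pi(a \mid s) = \ind(a = \pi(s))$, one gets $(P_\pi)_{s, s'} = \sum_a \pi(a \mid s) P(s' \mid s,a) = P(s' \mid s, \pi(s))$, i.e.\ $e_s^\top P_\pi = P_{s, \pi(s)}$ as elements of $\R^\S$. Hence for every $s$,
\begin{align*}
    e_s^\top P_\pi \rho^\star = P_{s, \pi(s)} \rho^\star.
\end{align*}

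Next, under $\E^\pi_{s_0}$ the event $\{A_t = \pi(S_t)\}$ has probability one for each $t$, so with probability one
\begin{align*}
    \ind\bigl(P_{S_t, A_t}\rho^\star < \rho^\star(S_t)\bigr) = \ind\bigl(P_{S_t, \pi(S_t)}\rho^\star < \rho^\star(S_t)\bigr) = \ind\bigl(e_{S_t}^\top P_\pi \rho^\star < \rho^\star(S_t)\bigr).
\end{align*}
Summing over $t$, taking expectations with respect to $\E^\pi_{s}$, and then maximizing over $s \in \S$ gives exactly
\begin{align*}
    \Tdrop^\pi = \max_{s \in \S} \E_s^\pi \left[\sum_{t=0}^\infty \ind\left( e_{S_t}^\top P_\pi \rho^\star < \rho^\star(S_t)\right)\right],
\end{align*}
as claimed. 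There is no substantive obstacle; the only thing to be careful about is exchanging the expectation with the infinite sum, which is immediate by monotone convergence since the summands are nonnegative.
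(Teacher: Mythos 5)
Your proposal is correct and takes essentially the same approach as the paper's proof: both observe that for deterministic $\pi$, $A_t = \pi(S_t)$ almost surely, so $P_{S_t,A_t}\rho^\star = e_{S_t}^\top P_\pi \rho^\star$ and the two indicators coincide pathwise. Your extra remarks (unpacking $e_s^\top P_\pi = P_{s,\pi(s)}$ via $P_\pi = M^\pi P$, and noting monotone convergence) are sound but not needed beyond what the paper already uses.
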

\begin{proof}
    Note that for a deterministic policy we have $A_t = \pi(S_t)$, and thus $P_{S_t, A_t}\rho^\star = P_{S_t, \pi(S_t)}\rho^\star = e_{S_t}^\top P_{\pi} \rho^\star$.
    Therefore
    \begin{align*}
        \Tdrop^\pi = \max_{s \in \S} \E_s^\pi \left[\sum_{t=0}^\infty \ind\left(P_{S_t, A_t}\rho^\star < \rho^\star(S_t)\right) \right] =\max_{s \in \S} \E_s^\pi \left[\sum_{t=0}^\infty \ind\left( e_{S_t}^\top P_\pi \rho^\star < \rho^\star(S_t)\right)\right].
    \end{align*}
\end{proof}

\begin{lem}
\label{lem:interpreting_Hpi}
Fix a deterministic policy $\pi \in \PiMD$, let $s, s' \in \S$ be two states, and suppose $s'$ is transient under $P_\pi$. Then
    \begin{align}
        \left(H_{P_\pi} \right)_{s s'} = \E^\pi_{s} \left[\sum_{t=0}^\infty \ind(S_t =s') \right], \label{eq:number_visits_equality}
    \end{align}
    or in words, $\left(H_{P_\pi} \right)_{s s'}$ is the expected number of visits of state $s'$ when following policy $\pi$ and starting at state $s$.
    Therefore
    \begin{align}
        \max_{s \in \S} \sum_{s' : \text{$s'$ is transient under $P_\pi$}} \left(H_{P_\pi} \right)_{s s'} = \Tb^{\pi}
    \end{align}
    and
    \begin{align}
        \max_{s \in \S} \sum_{s' : \text{ $e_{s'}^\top P_\pi \rho^\star   < \rho^\star(s')$}} \left(H_{P_\pi} \right)_{s s'} = \Tdrop^{\pi}.
    \end{align}
\end{lem}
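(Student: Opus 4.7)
The plan is to prove the three claims in order, with the first identity doing most of the work and the other two following as corollaries.

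First, I would unpack the definition $H_{P_\pi} = \Clim_{T\to\infty} \sum_{k=0}^{T-1}(P_\pi^k - P_\pi^\infty)$ coordinate-wise. The key fact is that for any transient state $s'$, the entire column $(P_\pi^\infty)_{\cdot,s'}$ vanishes: this is standard for finite Markov chains, since $P_\pi^\infty$ encodes limiting (Cesaro) probabilities and transient states receive zero limiting mass from every starting state. Therefore $(H_{P_\pi})_{ss'} = \Clim_{T\to\infty}\sum_{k=0}^{T-1}(P_\pi^k)_{ss'}$. Moreover, since $s'$ is transient, the series $\sum_{k=0}^\infty (P_\pi^k)_{ss'}$ converges (expected total number of visits to a transient state is finite), so the Cesaro limit coincides with the ordinary limit. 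Recognizing $(P_\pi^k)_{ss'} = \P_s^\pi(S_k = s')$ and applying monotone convergence then gives
\begin{align*}
    (H_{P_\pi})_{ss'} = \sum_{k=0}^\infty \P_s^\pi(S_k=s') = \E_s^\pi\Bigl[\sum_{t=0}^\infty \ind(S_t = s')\Bigr],
\end{align*}
which is the first identity.

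For the second identity, I would use the observation that once the chain $P_\pi$ enters $\Rec^\pi$ it stays there forever, so $\tau_{\Rec^\pi} = \sum_{t=0}^\infty \ind(S_t \in \Trans^\pi)$ almost surely. Summing the first identity over $s' \in \Trans^\pi$ and taking $\max_s$ yields
\begin{align*}
    \max_{s \in \S} \sum_{s' \in \Trans^\pi}(H_{P_\pi})_{ss'} = \max_{s\in \S} \E_s^\pi[\tau_{\Rec^\pi}] = \Tb^\pi.
\end{align*}

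For the third identity, I would first invoke Lemma \ref{lem:finiteness_of_tdrop}(1), which guarantees that every state $s'$ with $e_{s'}^\top P_\pi \rho^\star < \rho^\star(s')$ is transient under $P_\pi$; this is precisely what legitimizes applying the first identity to every $s'$ in the indexing set. Then I would combine the first identity with Lemma \ref{lem:determinstic_pol_tdrop_formula} (which rewrites $\Tdrop^\pi$ using $e_{S_t}^\top P_\pi \rho^\star$ since $\pi$ is deterministic), swapping sum and expectation, to obtain
\begin{align*}
    \max_{s\in \S} \sum_{s' : e_{s'}^\top P_\pi \rho^\star < \rho^\star(s')} (H_{P_\pi})_{ss'} = \max_{s\in \S} \E_s^\pi\Bigl[\sum_{t=0}^\infty \ind(e_{S_t}^\top P_\pi \rho^\star < \rho^\star(S_t))\Bigr] = \Tdrop^\pi.
\end{align*}
The only nontrivial step is the vanishing of $P_\pi^\infty$'s columns at transient states and the reduction of the Cesaro limit to a convergent sum; everything else is bookkeeping and appeals to the already-stated lemmas.
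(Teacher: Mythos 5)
Your proposal is correct and follows essentially the same route as the paper's proof: the column of $P_\pi^\infty$ at a transient state vanishes, the Cesaro limit then reduces to an ordinary (convergent) limit of visit probabilities, and the two corollaries follow from Lemmas \ref{lem:finiteness_of_tdrop} and \ref{lem:determinstic_pol_tdrop_formula} exactly as in the paper. Your explicit remark that $\tau_{\Rec^\pi}$ equals the total time spent in $\Trans^\pi$ (because recurrent classes are closed) is a detail the paper leaves implicit, but it is correct and harmless.
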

\begin{proof}
Since $s'$ is transient, we have that $e_s^\top P_{\pi}^\infty e_{s'} = 0$. Thus
    \begin{align*}
        \left(H_{P_\pi} \right)_{s s'} &= \Clim_{T \to \infty} \sum_{k=0}^{T-1} e_s^\top (P_\pi^k - P_\pi^\infty) e_{s'} \\
        &= \lim_{N \to \infty} \frac{1}{N} \sum_{T=1}^{N} \sum_{k=0}^{T-1} e_s^\top (P_\pi^k - P_\pi^\infty) e_{s'} \\
         &= \lim_{N \to \infty} \frac{1}{N} \sum_{T=1}^{N} \sum_{k=0}^{T-1} e_s^\top P_\pi^k e_{s'}  && \text{$s'$ is transient} \\
         &=  \lim_{T \to \infty} \sum_{k=0}^{T-1} e_s^\top P_\pi^k e_{s'} \\
         &= \E^\pi_{s} \left[\sum_{t=0}^\infty \ind(S_t =s') \right]
    \end{align*}
    where the second-last equality follows from elementary analysis arguments, or can be seen using the fact that the Cesaro limit of a convergent sequence is simply its usual limit, and the sequence $\left(\sum_{k=0}^{T-1} e_s^\top P_\pi^k e_{s'}\right)_{T \in \N}$ is convergent since all terms are nonnegative and the infinite sum is finite since $s'$ is transient \citep{durrett_probability_2019}.

    The next two statements follow immediately (noting that by Lemma \ref{lem:finiteness_of_tdrop}, all states such that $e_{s'}^\top P_\pi \rho^\star   < \rho^\star(s')$ are transient in $P_\pi$, and using the formula for $\Tdrop^\pi$ given in Lemma \ref{lem:determinstic_pol_tdrop_formula} for $\pi \in \PiMD$). 
\end{proof}

\begin{proof}[Proof of Corollary \ref{cor:AMDP_performance_diff_cor}]
    Combining Lemma \ref{lem:AMDP_performance_diff_lemma} with the triangle inequality we have
    \begin{align*}
        \infnorm{\rho^\pi - \rho^\star} \leq \infnorm{ H_{P_\pi} (P_\pi \rho^\star - \rho^\star)}  + \infnorm{P_\pi^\infty (r_\pi + P_\pi h - \rho^\star - h)}.
    \end{align*}
    Since $\infinfnorm{P_\pi^\infty} \leq 1$, we can bound
    \begin{align*}
        \infnorm{P_\pi^\infty (r_\pi + P_\pi h - \rho^\star - h)} &\leq \infinfnorm{P_\pi^\infty} \infnorm{r_\pi + P_\pi h - \rho^\star - h} \\
        &\leq \infnorm{r_\pi + P_\pi h - \rho^\star - h} \\
        &= \infnorm{\T^\pi(h) - h - \rho^\star}.
    \end{align*}
    To bound $\infnorm{ H_{P_\pi} (P_\pi \rho^\star - \rho^\star)}$, using Lemma \ref{lem:interpreting_Hpi} we have
    \begin{align*}
        \infnorm{ H_{P_\pi} (P_\pi \rho^\star - \rho^\star)} &= \max_{s \in \S} \left| e_s^\top H_{P_\pi} (P_\pi \rho^\star - \rho^\star) \right| \\
        &= \max_{s \in \S} \left| \sum_{s' \in \S} \left( H_{P_\pi}\right)_{ss'} e_{s'}^\top(P_\pi \rho^\star - \rho^\star) \right| \\
        &\stackrel{(i)}{=} \max_{s \in \S} \left| \sum_{s' \in \S : ~e_{s'}^\top(P_\pi \rho^\star - \rho^\star) < 0} \left( H_{P_\pi}\right)_{ss'} e_{s'}^\top(P_\pi \rho^\star - \rho^\star) \right| \\
        &\stackrel{(ii)}{=} \max_{s \in \S}  \sum_{s' \in \S : ~e_{s'}^\top(P_\pi \rho^\star - \rho^\star) < 0} \left( H_{P_\pi}\right)_{ss'} e_{s'}^\top( \rho^\star - P_\pi \rho^\star) \\
        & \leq \max_{s \in \S}  \sum_{s' \in \S : ~e_{s'}^\top(P_\pi \rho^\star - \rho^\star) < 0} \left( H_{P_\pi}\right)_{ss'} \infnorm{\rho^\star - P_\pi \rho^\star} \\
        &\stackrel{(iii)}{=} \Tdrop^\pi \infnorm{\rho^\star - P_\pi \rho^\star}.
    \end{align*}
    In $(i)$ we used that $e_{s'}^\top(P_\pi \rho^\star - \rho^\star) \neq 0$ is equivalent to $e_{s'}^\top(P_\pi \rho^\star - \rho^\star) < 0$, since $e_{s'}^\top(P_\pi \rho^\star - \rho^\star) > 0$ is impossible. In $(ii)$ we used the fact that $\left(H_{P_\pi} \right)_{s s'} \geq 0$ for any state $s'$ which is transient under $P_\pi$ (implied by Lemma \ref{lem:interpreting_Hpi}) and the fact that the states $s'$ such that $e_{s'}^\top P_\pi \rho^\star < \rho^\star(s)$ are transient by Lemma \ref{lem:finiteness_of_tdrop}, as well as the fact that $e_{s'}^\top( \rho^\star - P_\pi \rho^\star) \geq 0$ again. In $(iii)$ we again use Lemma \ref{lem:interpreting_Hpi}.
\end{proof}

\begin{lem}
\label{lem:Hpi_restricted_monotonicity}
    For any $x,y \in \R^{\S}$ and policy $\pi$ such that $P_\pi^\infty x = P_\pi^\infty y = \zero$, we have that
    \begin{align*}
        x \leq y \implies H_{P_\pi} x \leq H_{P_\pi} y.
    \end{align*}
\end{lem}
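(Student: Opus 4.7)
The plan is to reduce the claim to showing $H_{P_\pi} z \geq 0$ whenever $z := y - x \geq \zero$ and $P_\pi^\infty z = \zero$, then use the explicit probabilistic interpretation of $H_{P_\pi}$ on transient states provided by Lemma \ref{lem:interpreting_Hpi}.

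First I would exploit linearity: setting $z = y - x$, the hypotheses give $z \geq \zero$ and $P_\pi^\infty z = P_\pi^\infty y - P_\pi^\infty x = \zero$, and $H_{P_\pi}(y) - H_{P_\pi}(x) = H_{P_\pi} z$. So the lemma reduces to the implication $\{z \geq 0,~ P_\pi^\infty z = 0\} \Longrightarrow H_{P_\pi} z \geq 0$.

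Next I would identify the support of $z$ using the structure of $P_\pi^\infty$. For a finite Markov chain, $(P_\pi^\infty)_{s s'}$ is the product of the probability (starting from $s$) of absorption into the recurrent class containing $s'$, with the stationary mass $\mu_R(s')$ within that class. In particular, if $s'$ lies in a recurrent class $R$ and $s \in R$, then $(P_\pi^\infty z)(s) = \sum_{s'' \in R} \mu_R(s'') z(s'')$. Since $z \geq \zero$ and all $\mu_R(s'') > 0$ for $s'' \in R$, the condition $(P_\pi^\infty z)(s) = 0$ forces $z(s'') = 0$ for every recurrent state $s''$. Thus $z$ is supported on the transient states of $P_\pi$.

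Finally, I would combine this support restriction with Lemma \ref{lem:interpreting_Hpi}, which identifies $(H_{P_\pi})_{s s'}$ with the expected number of visits of $s'$ under $\pi$ starting at $s$ whenever $s'$ is transient; in particular $(H_{P_\pi})_{s s'} \geq 0$ for all such $s'$. Writing
\begin{align*}
   (H_{P_\pi} z)(s) \;=\; \sum_{s' \text{ transient}} (H_{P_\pi})_{s s'}\, z(s'),
\end{align*}
every term is a product of two nonnegative quantities, so $(H_{P_\pi} z)(s) \geq 0$ for all $s$. This yields the desired inequality $H_{P_\pi} x \leq H_{P_\pi} y$. The only delicate step is the support argument in the second paragraph, which relies on the standard block-structure description of $P_\pi^\infty$; everything else is a short linearity/sign bookkeeping exercise.
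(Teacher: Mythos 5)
Your proof is correct, and it takes a genuinely different route from the paper. The paper's proof works pointwise: it observes (as in the proof of Lemma \ref{lem:interpreting_Hpi}) that when $P_\pi^\infty x = \zero$ one has $e_s^\top H_{P_\pi} x = \Clim_T \sum_{k=0}^{T-1} e_s^\top P_\pi^k x$, notes that each partial sum is monotone in $x$ because every $P_\pi^k$ is a stochastic matrix, and passes to the limit. You instead reduce by linearity to showing $H_{P_\pi} z \geq \zero$ for $z = y - x \geq \zero$ with $P_\pi^\infty z = \zero$, argue from the block structure of $P_\pi^\infty$ (positive stationary mass on each recurrent class) that $z$ is supported on transient states, and then invoke the nonnegativity of the transient columns of $H_{P_\pi}$ furnished by Lemma \ref{lem:interpreting_Hpi}. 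Your approach is slightly more structural and in fact requires only the weaker hypothesis $P_\pi^\infty(y-x) = \zero$ rather than $P_\pi^\infty x = P_\pi^\infty y = \zero$ separately; it also avoids any delicacy about when the Cesaro limit can be replaced by an ordinary limit (the paper writes $\lim$ where, for general non-transient-supported $x$, periodicity can prevent convergence, so strictly speaking the Cesaro limit should be retained there). The paper's approach is shorter and does not require analyzing the support of $z$. One small caveat: Lemma \ref{lem:interpreting_Hpi} is stated for deterministic $\pi$, while the present lemma is stated for any $\pi$; however the fact you actually use, namely $(H_{P_\pi})_{ss'} = \sum_{k\ge 0}(P_\pi^k)_{ss'} \geq 0$ for $s'$ transient, holds for any policy by the same series argument, so this does not cause a gap.
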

\begin{proof}
    To show the desired elementwise inequality, it suffices to fix $s \in \S$ and show $e_s^\top H_{P_\pi} x \leq e_s^\top H_{P_\pi} y$. By an almost identical calculation to that of Lemma \ref{lem:interpreting_Hpi} we have that
    \begin{align*}
        e_s^\top H_{P_\pi} x = \lim_{T \to \infty} \sum_{k=0}^{T-1} e_s^\top P_\pi^k x \quad \text{and} \quad e_s^\top H_{P_\pi} y = \lim_{T \to \infty} \sum_{k=0}^{T-1} e_s^\top P_\pi^k y.
    \end{align*}
    Since $P^k_\pi$ is a stochastic matrix, $x \leq y$ implies that
    \begin{align*}
        \sum_{k=0}^{T-1} e_s^\top P_\pi^k x \leq \sum_{k=0}^{T-1} e_s^\top P_\pi^k y
    \end{align*}
    for any $T$, and thus
    \begin{align*}
        e_s^\top H_{P_\pi} x = \lim_{T \to \infty} \sum_{k=0}^{T-1} e_s^\top P_\pi^k x \leq   \lim_{T \to \infty} \sum_{k=0}^{T-1} e_s^\top P_\pi^k y = e_s^\top H_{P_\pi} y.
    \end{align*}
\end{proof}

\begin{lem}
\label{lem:Tdrop_lessthan_Tb}
    For any fixed policy $\pi \in \PiMD$, we have $\Tdrop^\pi \leq \Tb^{\pi}$. Consequently $\Tdrop \leq \Tb$.
\end{lem}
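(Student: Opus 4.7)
The plan is to use the characterization of $\Tdrop^\pi$ from Lemma \ref{lem:determinstic_pol_tdrop_formula} that is available because $\pi$ is deterministic, namely
\[
\Tdrop^\pi = \max_{s \in \S} \E^\pi_s \left[ \sum_{t=0}^\infty \ind\left(e_{S_t}^\top P_\pi \rho^\star < \rho^\star(S_t)\right)\right],
\]
and then argue that every indicator inside the sum is dominated by the indicator that $S_t$ is transient under $P_\pi$.

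The first step is to invoke the first part of Lemma \ref{lem:finiteness_of_tdrop}: every state $s'$ with $e_{s'}^\top P_\pi \rho^\star < \rho^\star(s')$ is transient under $P_\pi$. Consequently, for every $t$,
\[
\ind\left(e_{S_t}^\top P_\pi \rho^\star < \rho^\star(S_t)\right) \leq \ind(S_t \in \Trans^\pi).
\]
The second step is to observe that once the trajectory enters the recurrent set $\Rec^\pi$ it never leaves (this is just the definition of a maximal closed recurrent class), so
\[
\sum_{t=0}^\infty \ind(S_t \in \Trans^\pi) = \tau_{\Rec^\pi},
\]
where $\tau_{\Rec^\pi}$ is the hitting time of $\Rec^\pi$.

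Combining these two observations, taking expectation under $\E^\pi_s$, and maximizing over $s \in \S$ gives
\[
\Tdrop^\pi \leq \max_{s \in \S} \E^\pi_s [\tau_{\Rec^\pi}] = \Tb^\pi,
\]
which proves the first claim. The second claim is immediate: since the bound $\Tdrop^\pi \leq \Tb^\pi$ holds for every $\pi \in \PiMD$, taking the maximum over $\PiMD$ on both sides yields $\Tdrop \leq \Tb$.

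There is no real obstacle here; the only mild subtlety is making sure the two ingredients (the structural fact from Lemma \ref{lem:finiteness_of_tdrop} that ``gain-dropping'' states are transient, and the deterministic-policy reformulation of $\Tdrop^\pi$ from Lemma \ref{lem:determinstic_pol_tdrop_formula}) are available, both of which are stated earlier in the paper.
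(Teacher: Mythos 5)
Your proof is correct, and it takes a more elementary route than the paper's. You reduce everything to the probabilistic representations directly: dominance of indicators at each time step, plus the observation that the recurrent set is closed so $\sum_t \ind(S_t \in \Trans^\pi)$ equals the hitting time $\tau_{\Rec^\pi}$, then monotonicity of expectation. The paper instead expresses both quantities in matrix form, writing $\Tdrop^\pi = \max_s e_s^\top H_{P_\pi} e_Y$ and $\Tb^\pi = \max_s e_s^\top H_{P_\pi} e_X$ via the Drazin-inverse interpretation (Lemma \ref{lem:interpreting_Hpi}), and then invokes a separate restricted-monotonicity lemma (Lemma \ref{lem:Hpi_restricted_monotonicity}) for $H_{P_\pi}$ applied to the elementwise inequality $e_Y \leq e_X$. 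The two arguments encode the same content — "gain-dropping states are a subset of transient states" plus a monotone comparison — but yours skips the $H_{P_\pi}$ formalism entirely and is therefore shorter and more self-contained. The paper's route has the advantage of reusing machinery (Lemmas \ref{lem:interpreting_Hpi} and \ref{lem:Hpi_restricted_monotonicity}) that is also needed elsewhere, e.g.\ in the proof of Lemma \ref{lem:Tdrop_lessthan_mingap}, so there the overhead is amortized; as a standalone proof of this lemma, your version is cleaner.
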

\begin{proof}
    First we prove the statement for a fixed policy $\pi$. For any set $X \subseteq \S$ let $e_X \in \R^{\S}$ be a vector such that
    \begin{align*}
        e_X(s) = \begin{cases}
            1 & s \in X \\
            0 & \text{otherwise}
        \end{cases}.
    \end{align*}
    Let $X$ be the set of states which are transient under $P_\pi$ and let $Y$ be the set of states $s$ such that $e_s^{\top} P_\pi \rho^\star < \rho^\star(s)$. By Lemma \ref{lem:finiteness_of_tdrop} we have $Y \subseteq X$, so elementwise $e_Y \leq e_X$. By Lemma \ref{lem:interpreting_Hpi} we have that $\Tb^{\pi} = \max_{s \in \S} e_s^\top H_{P_\pi} e_X$ and that $\Tdrop^{\pi} = \max_{s \in \S} e_s^\top H_{P_\pi} e_Y$ (also using Lemma \ref{lem:determinstic_pol_tdrop_formula}). Then using Lemma \ref{lem:Hpi_restricted_monotonicity}, we have that
    \begin{align*}
        \Tdrop^{\pi} = \max_{s \in \S} e_s^\top H_{P_\pi} e_Y \leq \max_{s \in \S} e_s^\top H_{P_\pi} e_X = \Tb^{\pi}.
    \end{align*}
    Now taking the maximum over all stationary deterministic policies, we have that
    \begin{align*}
        \Tdrop = \sup_{\pi \in \PiMD}\Tdrop^{\pi} \leq \sup_{\pi \in \PiMD} \Tb^{\pi} = \Tb.
    \end{align*}
\end{proof}

\begin{lem}
\label{lem:Tdrop_lessthan_mingap}
    For any fixed policy $\pi \in \PiMD$, we have $\Tdrop^\pi \leq \frac{1}{\mingap} $. Consequently $\Tdrop \leq \frac{1}{\mingap}$.
\end{lem}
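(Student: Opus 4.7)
The plan is a simple potential-function/Markov-inequality argument, using $\rho^\star$ itself as the potential. The first modified Bellman equation~\eqref{eq:mod_bellman_1_vec} gives $MP\rho^\star = \rho^\star$, which implies $\rho^\star(s) - P_{sa}\rho^\star \geq 0$ for every state--action pair $(s,a)$. Therefore, under any policy $\pi$, the process $\rho^\star(S_t)$ is a nonnegative supermartingale, and the per-step expected decrease is
\[
  \E^\pi_{s_0}\bigl[\rho^\star(S_t) - \rho^\star(S_{t+1})\bigr]
  \;=\; \E^\pi_{s_0}\bigl[\rho^\star(S_t) - P_{S_t,A_t}\rho^\star\bigr]
\]
by the tower property.

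By the definition of $\mingap$, whenever $P_{S_t,A_t}\rho^\star < \rho^\star(S_t)$ we have $\rho^\star(S_t) - P_{S_t,A_t}\rho^\star \geq \mingap$, and otherwise this quantity is zero. Hence
\[
  \E^\pi_{s_0}\bigl[\rho^\star(S_t) - \rho^\star(S_{t+1})\bigr]
  \;\geq\; \mingap \cdot \E^\pi_{s_0}\bigl[\ind(P_{S_t,A_t}\rho^\star < \rho^\star(S_t))\bigr].
\]
Summing this inequality from $t=0$ to $T-1$, the left-hand side telescopes to $\rho^\star(s_0) - \E^\pi_{s_0}[\rho^\star(S_T)] \leq \rho^\star(s_0) \leq 1$, where the last bound uses that rewards lie in $[0,1]$ and hence $\rho^\star \in [0,1]^\S$. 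Letting $T \to \infty$ and exchanging sum and expectation by monotone convergence (all summands being nonnegative) gives
\[
  \mingap \cdot \E^\pi_{s_0}\Bigl[\sum_{t=0}^\infty \ind(P_{S_t,A_t}\rho^\star < \rho^\star(S_t))\Bigr] \;\leq\; 1.
\]

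Dividing by $\mingap$ and then taking the maximum over the initial state $s_0$ yields $\Tdrop^\pi \leq 1/\mingap$, and taking the maximum over $\pi \in \PiMD$ gives the consequent $\Tdrop \leq 1/\mingap$. There is no real obstacle here beyond a careful application of the tower property, the telescoping of a nonnegative-increment sum, and the boundedness $\rho^\star \leq 1$; the essence is that every ``gain-dropping'' step spends at least $\mingap$ units of the finite budget $\rho^\star(s_0)\leq 1$ carried by the supermartingale $\rho^\star(S_t)$.
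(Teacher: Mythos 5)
Your proof is correct, and it takes a genuinely different route from the paper's. The paper's argument stays entirely in linear-algebraic territory: it writes $H_{P_\pi}(\rho^\star - P_\pi\rho^\star) = \rho^\star - P_\pi^\infty\rho^\star \leq \one$, then bounds $\rho^\star - P_\pi\rho^\star \geq \mingap\, e_Y$ (with $e_Y$ the indicator of gain-dropping states), and applies a restricted monotonicity property of the Drazin inverse $H_{P_\pi}$ (Lemma~\ref{lem:Hpi_restricted_monotonicity}) to conclude $\mingap\, H_{P_\pi} e_Y \leq \one$, identifying $\max_s e_s^\top H_{P_\pi} e_Y = \Tdrop^\pi$ via Lemma~\ref{lem:interpreting_Hpi}. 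Your version unrolls this as a direct telescoping/supermartingale argument on the trajectory $\rho^\star(S_t)$, which is essentially the probabilistic content underlying the algebraic identity $e_s^\top H_{P_\pi} v = \sum_{k\geq 0} e_s^\top P_\pi^k v$ for $v$ supported on transient states. What you buy with this is self-containedness (no need for the $H_{P_\pi}$ interpretation and monotonicity lemmas) and slightly more generality: your argument applies verbatim to any Markovian policy, not only $\pi \in \PiMD$, since it never invokes the deterministic structure used in Lemma~\ref{lem:determinstic_pol_tdrop_formula}. What the paper buys is consistency with the surrounding development, which uses the same $H_{P_\pi}$-monotonicity machinery to prove the companion bound $\Tdrop^\pi \leq \Tb^\pi$. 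One minor wording quibble: the relation $M P\rho^\star = \rho^\star$ is a known property of the optimal gain, not a consequence of imposing equation~\eqref{eq:mod_bellman_1_vec} on $(\rho^\star, h)$; the paper treats it as a standing fact (``we always have $\rho^\star(s) \geq P_{sa}\rho^\star$''), so it is worth phrasing it as such rather than as following from the optimality conditions.
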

\begin{proof}
    First we fix a policy $\pi \in \PiMD$ and show that $\Tdrop^\pi \leq \frac{1}{\mingap}$.
    From the proof of Lemma \ref{lem:AMDP_performance_diff_lemma}, we have
    \begin{align*}
        H_{P_\pi} ( \rho^\star - P_\pi  \rho^\star)  = \rho^\star - P_\pi^\infty \rho^\star 
    \end{align*}
    and also $\rho^\star - P_\pi^\infty \rho^\star \leq \one$ since $\rho^\star \geq \zero$ so $P_\pi^\infty \rho^\star \geq \zero$. We define $e_Y$ in the same way as in the proof of Lemma \ref{lem:Tdrop_lessthan_Tb}, that is we let $Y$ be the set of states $s$ such that $e_s^{\top} P_\pi \rho^\star < \rho^\star(s)$ and $e_Y$ be the indicator vector for this set. Then (since $P_\pi \rho^\star \leq \rho^\star$ so $e_s^{\top} P_\pi \rho^\star > \rho^\star(s)$ for some $s$ is impossible) we have that $e_Y \circ ( \rho^\star - P_\pi  \rho^\star) =  \rho^\star - P_\pi  \rho^\star$. Then by the definition of $\mingap$ we have that
    \begin{align*}
        \rho^\star - P_\pi  \rho^\star = e_Y \circ ( \rho^\star - P_\pi \rho^\star)  \geq \mingap  e_Y.
    \end{align*}
    Since both sides of this elementwise inequality are only supported on states which are transient under $P_\pi$ (so $P_\pi^\infty (\rho^\star - P_\pi  \rho^\star) = \zero$ and $P_\pi^\infty e_Y = \zero$), by Lemma \ref{lem:Hpi_restricted_monotonicity} we have that
    \begin{align*}
        H_{P_\pi} ( \rho^\star - P_\pi  \rho^\star) \geq \mingap H_{P_\pi} e_Y.
    \end{align*}
    Combining all these inequalities we have that
    \begin{align*}
        \mingap H_{P_\pi} e_Y \leq H_{P_\pi} ( \rho^\star - P_\pi  \rho^\star) = \rho^\star - P_\pi^\infty \rho^\star  \leq \one
    \end{align*}
    which implies 
    \begin{align*}
        \mingap \Tdrop^\pi = \mingap \max_{s \in \S} e_s^\top H_{P_\pi} e_Y \leq 1
    \end{align*}
    (using that $\Tdrop^\pi = \max_{s \in \S} e_s^\top H_{P_\pi} e_Y$ as shown in the proof of Lemma \ref{lem:Tdrop_lessthan_Tb}). Rearranging we have that $\Tdrop^\pi \leq \frac{1}{\mingap}$ as desired, and then taking the supremum over all $\pi \in \PiMD$ we have that $\Tdrop = \sup_{\pi \in \PiMD} \Tdrop^\pi \leq \frac{1}{\mingap} $.
\end{proof}

\begin{lem}
\label{lem:etr_connection_bound}
    Fix a starting state $s_0$. For any infinite sequence of policies $\pi_0, \pi_1, \dots$, letting $S_0 = s_0, A_t \sim \pi_t(S_t), S_{t+1} \sim P(\cdot \mid S_t, A_t)$ be its induced distribution over trajectories, and letting $\E_{s_0}^{(\pi_t)_{t =0}^\infty}$ denote the corresponding expectation, we have that
    \begin{align*}
        \E_{s_0}^{(\pi_t)_{t =0}^\infty}\left[ \sum_{k=0}^\infty \overline{r}(S_k, A_k)\right] \leq \Tdrop
    \end{align*}
    where $\overline{r}(s,a) = \ind (P_{sa}\rho^\star < \rho^\star)$.
\end{lem}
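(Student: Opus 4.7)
The plan is to interpret the left-hand side as the expected total reward collected by a particular history-dependent randomized policy in the auxiliary nonnegative-reward MDP $(\S,\A,P,\overline{r})$, and then invoke the standard result from total-reward MDP theory that the optimal value of this MDP is attained by a stationary deterministic policy, which is exactly how $\Tdrop$ is characterized.

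First I would set up the auxiliary positive-reward MDP: since $\overline{r}(s,a) = \ind(P_{sa}\rho^\star < \rho^\star(s)) \in \{0,1\}$, the sum $\sum_{k=0}^\infty \overline{r}(S_k,A_k)$ is a monotone nondecreasing sequence in its truncation, so monotone convergence ensures the expectation equals the (possibly extended-real-valued) expected total reward of this MDP under the induced distribution. The sequence $(\pi_t)_{t=0}^\infty$ defines a randomized, Markovian, but non-stationary policy, which is a special case of a history-dependent randomized policy in the sense of \cite[Chapter 2]{puterman_markov_1994}. Let $V^\star_{\overline r}$ denote the optimal expected-total-reward value function of this auxiliary MDP, taken over all history-dependent randomized policies.

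Next I would apply the standard positive-reward MDP result (e.g.\ \cite[Chapter 7]{puterman_markov_1994}, which the paper already cites in the paragraph defining $\Tdrop$): since $\overline{r}\ge 0$ and the optimal value is finite (guaranteed by Lemma~\ref{lem:finiteness_of_tdrop}), the supremum over history-dependent randomized policies is attained by some stationary deterministic policy, giving
\begin{align*}
V^\star_{\overline r}(s_0) \;=\; \sup_{\pi\text{ HR}} \E_{s_0}^{\pi}\Bigl[\sum_{k=0}^\infty \overline{r}(S_k,A_k)\Bigr]
\;=\; \max_{\pi\in\PiMD} \E_{s_0}^{\pi}\Bigl[\sum_{k=0}^\infty \overline{r}(S_k,A_k)\Bigr].
\end{align*}
In particular the left-hand side of the target inequality is at most $V^\star_{\overline r}(s_0)$, and then $V^\star_{\overline r}(s_0) \le \max_{s\in\S} V^\star_{\overline r}(s) = \max_{s\in\S}\max_{\pi\in\PiMD}\E_s^\pi[\sum_k \overline{r}(S_k,A_k)] = \Tdrop$ by definition of $\Tdrop$ and the paper's remark that $\Tdrop$ equals this optimal value function's sup-norm.

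There is not really a hard step here; the only thing that requires a moment of care is confirming that the non-stationary Markov policy $(\pi_t)_{t=0}^\infty$ does legitimately fall under the umbrella of history-dependent randomized policies to which the Puterman Chapter 7 attainment theorem applies, and that the finiteness hypothesis needed to invoke that theorem is already verified by Lemma~\ref{lem:finiteness_of_tdrop}. Once these two points are noted, the bound is immediate from the characterization of $\Tdrop$ as the optimum of the auxiliary total-reward MDP.
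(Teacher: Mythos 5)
Your proposal is correct and follows essentially the same route as the paper: both cast the left-hand side as the expected total reward of a (non-stationary, hence history-dependent) policy in the auxiliary nonnegative-reward MDP with reward $\overline{r}$, invoke the positive-bounded total-reward theory of \cite[Chapter 7]{puterman_markov_1994} to reduce to a stationary deterministic optimal policy $\pit$, and conclude via $\E_{s_0}^{\pit}[\sum_k \overline{r}(S_k,A_k)] \leq \Tdrop^{\pit} \leq \Tdrop$. The only cosmetic difference is that you justify the finiteness hypothesis via Lemma \ref{lem:finiteness_of_tdrop}, while the paper verifies \cite[Assumption 7.1.1]{puterman_markov_1994} directly from nonnegativity of $\overline{r}$; both are fine.
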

\begin{proof}
    Note that \cite[Assumption 7.1.1]{puterman_markov_1994} is satisfied, since the reward function $\overline{r}$ is non-negative. Therefore by \cite[Theorem 7.1.9]{puterman_markov_1994} there exists a stationary deterministic optimal policy $\pit$ for the expected total reward problem. Therefore we have
    \begin{align*}
        \E_{s_0}^{(\pi_t)_{t =0}^\infty}\left[ \sum_{k=0}^\infty \overline{r}(S_k, A_k)\right] \leq \E_{s_0}^{\pit}\left[ \sum_{k=0}^\infty \overline{r}(S_k, A_k)\right] \leq \Tdrop^{\pit} \leq \Tdrop
    \end{align*}
    as desired.
\end{proof}

\section{Proofs of coupling-based results}
\label{sec:coupling_based_results_proofs}

\begin{proof}[Proof of Lemma \ref{lem:fixed_point_method_coupling}]
    We prove this result by induction.
    Note that by convention $\Lambda_0 = 0$ (since it is an empty summation). Therefore the $t=0$ case holds because we have $x_0 = y_0 = y_0 + 0 \rho = y_0 + \Lambda_0 \rho$.

    Now we suppose that for some $t$ we have $x_t = y_t + \Lambda_t \rho$. Then (also using the definition that $x_0 = y_0$) we can calculate that
    \begin{align*}
        x_{t+1} &= (1-\beta_{t+1}) x_0 + \beta_{t+1}\L(x_t) \\
        &= (1-\beta_{t+1}) y_0 + \beta_{t+1}\L(y_t + \Lambda_t \rho)\\
        &= (1-\beta_{t+1}) y_0 + \beta_{t+1}\L(y_t) + \beta_{t+1}\Lambda_t \rho\\
        &= (1-\beta_{t+1}) y_0 + \beta_{t+1}\Lshift(y_t) + \beta_{t+1}(\Lambda_t +1)\rho\\
        &= y_{t+1} + \beta_{t+1}(\Lambda_t +1)\rho.
    \end{align*}
    Now it remains to show that $\Lambda_{t+1} = \beta_{t+1}(\Lambda_t +1)$, which is true because we have
    \begin{align*}
        \beta_{t+1}(\Lambda_t +1) = \beta_{t+1}\left(\sum_{i = 1}^t \prod_{j=i}^t \beta_j +1\right) = \sum_{i = 1}^t \prod_{j=i}^{t+1} \beta_j +\beta_{t+1} = \sum_{i = 1}^{t+1} \prod_{j=i}^{t+1} \beta_j = \Lambda_{t+1}.
    \end{align*}
\end{proof}

\begin{proof}[Proof of Corollary \ref{cor:policy_eval_halpern}]
    Fixing some policy $\pi \in \PiMR$, we have that $\Tshift^\pi(h) := \T^\pi(h) - \rho^\pi  = r_\pi + P_\pi h - \rho^\pi$ is an affine operator, and furthermore it has fixed point $h^\pi$ since by the Poisson/Bellman equation we have
    \begin{align*}
        \Tshift^\pi(h^\pi) = r_\pi + P_\pi h^\pi - \rho^\pi = \rho^\pi + h^\pi - \rho^\pi = h^\pi.
    \end{align*}
    The nonexpansiveness of $\Tshift^\pi$ with respect to $\infnorm{\cdot}$ also follows immediately from that of $\T^\pi$, since
    \begin{align*}
        \infnorm{\Tshift^\pi(h) - \Tshift^\pi(h')} = \infnorm{\T^\pi(h) - \T^\pi(h')} \leq \infnorm{h - h'}.
    \end{align*}
    Therefore we can directly apply \cite[Theorem 4.6]{contreras_optimal_2022} to conclude that, for any initial point $y_0 = h_0$, the Halpern-iteration-generated iterate sequence $(y_t)_{t=0}^\infty$ where
    \begin{align*}
        y_{t+1} = (1-\beta_{t+1}) y_0 + \beta_{t+1} \Tshift^\pi(y_t)
    \end{align*}
    and $\beta_t = 1 - \frac{1}{t+1}$ satisfies
    \begin{align}
        \infnorm{\Tshift^\pi(y_t) - y_t} \leq \frac{2}{t+1}\infnorm{y_0 - h^\pi} = \frac{2}{t+1}\infnorm{h_0 - h^\pi}. \label{eq:contreras_optimal_2022_corollary}
    \end{align}

    To apply Lemma \ref{lem:fixed_point_method_coupling}, notice that for any $\alpha \in \R$ we have that
    \begin{align*}
        \T^\pi(x + \alpha \rho^\pi) = r_\pi + P_\pi(x + \alpha \rho^\pi) = r_\pi + P_\pi x + \alpha P_\pi \rho^\pi = r_\pi + P_\pi x + \alpha \rho^\pi = \T^\pi(x) + \alpha \rho^\pi
    \end{align*}
    due to the key fact that $P_\pi \rho^\pi = \rho^\pi$.
    Now applying Lemma \ref{lem:fixed_point_method_coupling}, we have that $h_t = y_t + \Lambda_t \rho^\pi$ (where $h_t$ is defined in the statement of Corollary \ref{cor:policy_eval_halpern}). Therefore
    \begin{align*}
        \infnorm{\T^\pi(h_t) - h_t - \rho^\pi} &= \infnorm{\T^\pi(y_t + \Lambda_t \rho^\pi) - (y_t + \Lambda_t \rho^\pi) - \rho^\pi} \\
        &= \infnorm{\T^\pi(y_t) + \Lambda_t \rho^\pi - (y_t + \Lambda_t \rho^\pi) - \rho^\pi} \\
        &= \infnorm{\T^\pi(y_t)  - y_t - \rho^\pi} \\
        &= \infnorm{\Tshift^\pi(y_t)  - y_t} \\
        &\leq \frac{2}{t+1}\infnorm{h_0 - h^\pi}
    \end{align*}
    using the commutativity property in the second equality, and applying~\eqref{eq:contreras_optimal_2022_corollary} in the final inequality.
\end{proof}
We note that $h^\pi$ could be replaced with any other fixed point of $\Tshift^\pi(h)$ with an unchanged analysis.

\section{Proofs of results for fixed-point methods under restricted commutativity}
\label{sec:VI_restricted_commutativity_proofs}
Here we prove the main results of Subsection \ref{sec:VI_restricted_commutativity}. 

\subsection{Abstract setting}
Recall that a seminorm $\norm{\cdot}$ is a function $  X \to [0, \infty)$ which satisfies the triangle inequality and positive homogeneity, that is, for all $x, y \in X$ (where $X$ is a vector space) and $\alpha \in \R$,
\begin{align*}
    \norm{x + y} \leq \norm{x} + \norm{y}
\end{align*}
and
\begin{align*}
    \norm{\alpha x} = |\alpha| \norm{x}.
\end{align*}
Our results in this section hold for the abstract setting that $\L : X \to X$ is nonexpansive with respect to a seminorm $\norm{\cdot}$, meaning
\begin{align*}
    \norm{\L(x) - \L(y)} \leq \norm{x - y}
\end{align*}
for all $x, y\in X$. We assume that $\L$ satisfies
\begin{align}
    \L(x^\star + \alpha \rho) = x^\star + (\alpha +1)\rho \label{eq:abstract_restricted_commutativity}
\end{align}
for some $\rho, x^\star \in X$ and for all $\alpha \in \R$.
We also define the shifted operator $\Lshift(x):= \L(x) - \rho$. Note the above implies that $x^\star$ is a fixed-point of $\Lshift$. Also $\Lshift$ is clearly nonexpansive since $\Lshift(x) - \Lshift(x') = \L(x)-\L(x')$ for any $x ,x' \in X$.
First we restate Algorithm \ref{alg:app_shifted_halpern} for this general setting.

\begin{algorithm}[H]
\caption{Approximately Shifted Halpern Iteration (General)} \label{alg:app_shifted_halpern_general}
\begin{algorithmic}[1]
\Require Number of iterations $n$ for each phase, initial point $x_0$
\For{$t = 0, \dots, n-1$}
\State $x_{t+1} = \L(x_t)$
\EndFor
\State Let $z_0 = x_{n}$, form gain estimate $\rhohat = \frac{1}{n} \left(x_{n} - x_0 \right)$  \algorithmiccomment{Halpern iteration phase; begins with $x_n$}\label{alg:gain_estimation_step_gen}
\State Form approximate shifted operator $\Lhat$ as $\Lhat(z) := \L(z) - \rhohat$
\For{$t = 0, \dots, n-1$}
\State $z_{t+1} = (1-\beta_{t+1}) z_0 + \beta_{t+1} \Lhat(z_t)$ where $\beta_t = 1 - \frac{2}{t+2}$
\EndFor
\State \Return $z_n$
\end{algorithmic}
\end{algorithm}

For the rest of the proofs we treat $n$ and $x_0$, the inputs to Algorithm \ref{alg:app_shifted_halpern_general}, as fixed.
Before beginning the proofs, we define $z^\star = x^\star + n \rho$, and we note that $z^\star$ is also a fixed-point of $\Lshift$ (by taking $\alpha = n$ in~\eqref{eq:abstract_restricted_commutativity}).
For all integers $t \geq 1$ define
\begin{align}
    \Lambda_t = \sum_{i=1}^t \prod_{j = i}^t \beta_j =\beta_t \beta_{t-1} \cdots \beta_1  + \beta_t \beta_{t-1} \cdots \beta_2 + \cdots + \beta_t \beta_{t-1} +  \beta_t.
\end{align}
We also assume that $\beta_0 = 0$ and let $\Lambda_0 = 0$.
\begin{lem}
\label{lem:lambda_value}
        If $\beta_t = 1 - \frac{2}{t+2}$ then $\Lambda_t = \frac{t}{3}$.
\end{lem}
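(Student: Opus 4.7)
The plan is to exploit the telescoping structure of the product $\prod_{j=i}^t \beta_j$ with $\beta_j = j/(j+2)$, and then evaluate the resulting sum in closed form. Concretely, I would first rewrite
\[
\prod_{j=i}^t \beta_j = \prod_{j=i}^t \frac{j}{j+2} = \frac{i(i+1)(i+2)\cdots t}{(i+2)(i+3)\cdots (t+2)} = \frac{i(i+1)}{(t+1)(t+2)},
\]
since the factors $i+2,\ldots,t$ cancel between numerator and denominator. Thus
\[
\Lambda_t = \sum_{i=1}^t \frac{i(i+1)}{(t+1)(t+2)} = \frac{1}{(t+1)(t+2)} \sum_{i=1}^t i(i+1).
\]

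Next I would apply the standard identity $\sum_{i=1}^t i(i+1) = \frac{t(t+1)(t+2)}{3}$ (easily verified by induction, or by writing $i(i+1)=\frac{1}{3}\bigl[i(i+1)(i+2)-(i-1)i(i+1)\bigr]$ and telescoping). Substituting this gives
\[
\Lambda_t = \frac{1}{(t+1)(t+2)} \cdot \frac{t(t+1)(t+2)}{3} = \frac{t}{3},
\]
as claimed.

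As a sanity check (and a fully self-contained alternative), one can also use the recursion $\Lambda_{t+1} = \beta_{t+1}(\Lambda_t + 1)$ already derived in the proof of Lemma~\ref{lem:fixed_point_method_coupling}: the base case $\Lambda_1 = \beta_1 = 1/3$ holds, and assuming $\Lambda_t = t/3$ one computes $\beta_{t+1}(\Lambda_t+1) = \frac{t+1}{t+3}\cdot \frac{t+3}{3} = \frac{t+1}{3}$, closing the induction. There is no real obstacle here — the only thing to watch is lining up the index shift in the telescoping product correctly (equivalently, getting the base case of the induction right).
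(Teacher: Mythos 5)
Your proof is correct. Your primary argument—computing the telescoping product $\prod_{j=i}^t \beta_j = \frac{i(i+1)}{(t+1)(t+2)}$ explicitly and then summing via $\sum_{i=1}^t i(i+1) = \frac{t(t+1)(t+2)}{3}$—is a genuinely different route from the paper, which proves the identity purely by induction using the recursion $\Lambda_{t+1} = \beta_{t+1}(1+\Lambda_t)$ (with base case $\Lambda_0 = 0$; note the paper explicitly sets $\beta_0 = 0$ and $\Lambda_0 = 0$, so starting the induction at $t=0$ rather than $t=1$ is the cleaner match to its conventions). Your closed-form computation is slightly more informative, since it exhibits the individual weights $\prod_{j=i}^t \beta_j$ rather than only their sum, which could be useful if one ever needed the weight profile of the Halpern averaging; the paper's induction is shorter and reuses the recursion $\Lambda_t = \beta_t(1+\Lambda_{t-1})$ that it needs elsewhere anyway. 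Your "sanity check" alternative is essentially identical to the paper's proof, so either half of your writeup would suffice on its own.
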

\begin{proof}
    We show this by induction. By definition $\Lambda_0 = 0$, so the desired formula holds for $t = 0$. Note that by definition of $\Lambda_{t}$, for all $t \geq 1$ we have $\Lambda_{t} = \beta_t (1 + \Lambda_{t-1})$. Therefore supposing the desired formula holds for some $\Lambda_t$ where $t \geq 0$, we have
    \begin{align*}
        \Lambda_{t+1} = \beta_{t+1} (1 + \Lambda_{t}) = \left(1 - \frac{2}{t+3} \right) \left( 1 + \frac{t}{3}\right) = \frac{t+1}{t+3} \frac{t+3}{3} = \frac{t+1}{3}
    \end{align*}
    as desired.
\end{proof}

\begin{lem}
\label{lem:abstract_gain_est_bound}
    \begin{align*}
        \norm{\rhohat - \rho} & \leq \frac{2}{n} \norm{x_0 - x^\star}
    \end{align*}
    and
    \begin{align*}
        \norm{z_0 - z^\star} \leq  \norm{x_0 - x^\star}.
    \end{align*}
\end{lem}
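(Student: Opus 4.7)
The plan is to prove both bounds from a single key observation: iterating the commutativity hypothesis~\eqref{eq:abstract_restricted_commutativity} shows that $x^\star + n\rho$ is the $n$-step Picard image of $x^\star$ under $\L$. Concretely, starting from $\L(x^\star) = x^\star + \rho$ and applying~\eqref{eq:abstract_restricted_commutativity} with $\alpha = 1, 2, \ldots, n-1$ yields $\L^{(n)}(x^\star) = x^\star + n\rho = z^\star$ by a straightforward induction. This is the abstract analogue of Lemma~\ref{lem:rho_estimation_lemma_specific}.

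Combining this identity with nonexpansiveness of $\L$ (and hence of $\L^{(n)}$) with respect to $\norm{\cdot}$ immediately gives
\begin{align*}
    \norm{x_n - (x^\star + n\rho)} = \norm{\L^{(n)}(x_0) - \L^{(n)}(x^\star)} \leq \norm{x_0 - x^\star}.
\end{align*}
Since $z_0 = x_n$ and $z^\star = x^\star + n\rho$ by definition, this is exactly the second bound $\norm{z_0 - z^\star} \leq \norm{x_0 - x^\star}$.

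For the first bound, I would rewrite $\rhohat - \rho$ by adding and subtracting $x^\star + n\rho$ inside the numerator:
\begin{align*}
    \rhohat - \rho = \frac{1}{n}\bigl( x_n - x_0 \bigr) - \rho = \frac{1}{n}\bigl( (x_n - (x^\star + n\rho)) + (x^\star - x_0) \bigr).
\end{align*}
Applying the triangle inequality and positive homogeneity of $\norm{\cdot}$ then gives
\begin{align*}
    \norm{\rhohat - \rho} \leq \frac{1}{n} \bigl( \norm{x_n - (x^\star + n\rho)} + \norm{x_0 - x^\star} \bigr) \leq \frac{2}{n} \norm{x_0 - x^\star},
\end{align*}
where the final step uses the bound already established above.

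There is no real obstacle here: the only content is the inductive identity $\L^{(n)}(x^\star) = x^\star + n\rho$, after which both conclusions follow from one application of nonexpansiveness plus the triangle inequality. The mild subtlety is simply to remember that $\norm{\cdot}$ is only assumed to be a seminorm, but the argument uses only the triangle inequality and positive homogeneity, both of which hold for seminorms.
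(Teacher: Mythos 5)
Your proof is correct and follows essentially the same approach as the paper: both hinge on the identity $\L^{(n)}(x^\star) = x^\star + n\rho$ (from iterating the restricted commutativity property), apply nonexpansiveness of $\L^{(n)}$ to obtain $\norm{z_0 - z^\star} \leq \norm{x_0 - x^\star}$, and then derive the gain-estimation bound via the triangle inequality on the decomposition $x_n - x_0 - n\rho = (x_n - z^\star) + (x^\star - x_0)$. The only cosmetic difference is the order in which the two conclusions are derived and that the paper states an explicit intermediate bound $\norm{\L^{(n)}(x_0) - x_0 - n\rho} \leq 2\norm{x_0 - x^\star}$ before dividing by $n$.
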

\begin{proof}
    First we show that
    \begin{align}
        \norm{\L^{(n)}(x_0) - x_0 - n \rho} \leq 2\norm{x_0 - x^\star}. \label{eq:abstract_gain_est_bound_1}
    \end{align}
    We calculate that
    \begin{align*}
        \norm{\L^{(n)}(x_0) - x_0 - n \rho} &\leq  \norm{\L^{(n)}(x_0) -x^\star - n \rho} + \norm{x^\star- x_0} \\
        &= \norm{\L^{(n)}(x_0) -\L^{(n)}(x^\star)} + \norm{x^\star- x_0} \\
        & \leq \norm{x^\star- x_0} + \norm{x^\star- x_0}.
    \end{align*}

    Next, we have
    \begin{align*}
        \rhohat = \frac{\L^{(n)}(x_0) - x_0}{n},
    \end{align*}
    so using~\eqref{eq:abstract_gain_est_bound_1},
    \begin{align*}
        \norm{\rhohat - \rho} = \norm{\frac{\L^{(n)}(x_0) -n \rho - x_0}{n}} \leq \frac{2\norm{x_0 - x^\star}}{n}.
    \end{align*}

    Next, since $z^\star= x^\star +  n \rho = \L^{(n)}(x^\star) $, we have by nonexpansiveness that
    \begin{align*}
        \norm{z_0 - z^\star} = \norm{\L^{(n)}(x_0) - \L^{(n)}(x^\star)} \leq \norm{x_0 - x^\star}.
    \end{align*}
\end{proof}

Now we show the Halpern iterates remain bounded using the shifted Bellman operator $\Lhat$.
\begin{lem}
\label{lem:zt_fixpt_dist_boundedness}
    For all $t \leq n$, 
    \begin{align*}
        \norm{z_t - z^\star} \leq \frac{5}{3}\norm{x_0 - x^\star}.
    \end{align*}
\end{lem}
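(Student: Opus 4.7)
The plan is to control $\norm{z_t - z^\star}$ by induction on $t$, using the Halpern recursion together with the two preparatory bounds already established in Lemma \ref{lem:abstract_gain_est_bound}: namely $\norm{z_0 - z^\star} \leq \norm{x_0 - x^\star}$ and $\norm{\rhohat - \rho} \leq \frac{2}{n}\norm{x_0 - x^\star}$. Write $D = \norm{x_0 - x^\star}$ and $e = \norm{\rhohat - \rho}$ for brevity. The induction hypothesis I will propagate is the quantitative bound
\begin{align*}
    \norm{z_t - z^\star} \leq D + e \Lambda_t,
\end{align*}
which at $t=0$ reduces to $\norm{z_0 - z^\star} \leq D$ (since $\Lambda_0 = 0$) and hence is already known.

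The key step is the inductive step. Subtracting $z^\star$ from both sides of the Halpern update and using $z^\star = z^\star$ on the convex combination yields
\begin{align*}
    z_{t+1} - z^\star = (1-\beta_{t+1})(z_0 - z^\star) + \beta_{t+1}\bigl(\Lhat(z_t) - z^\star\bigr).
\end{align*}
Because $z^\star$ is a fixed point of $\Lshift = \L - \rho$, we have $\L(z^\star) = z^\star + \rho$, so
\begin{align*}
    \Lhat(z_t) - z^\star = \bigl(\L(z_t) - \L(z^\star)\bigr) + (\rho - \rhohat).
\end{align*}
Applying the triangle inequality and nonexpansiveness of $\L$ gives $\norm{\Lhat(z_t) - z^\star} \leq \norm{z_t - z^\star} + e$. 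Combining these with the induction hypothesis and using $\norm{z_0 - z^\star} \leq D$ produces
\begin{align*}
    \norm{z_{t+1} - z^\star} \leq (1-\beta_{t+1}) D + \beta_{t+1}\bigl(D + e\Lambda_t + e\bigr) = D + e\,\beta_{t+1}(1 + \Lambda_t) = D + e\,\Lambda_{t+1},
\end{align*}
where the last equality uses the recursion $\Lambda_{t+1} = \beta_{t+1}(1 + \Lambda_t)$ noted in the proof of Lemma \ref{lem:lambda_value}.

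To conclude, I plug in the explicit value $\Lambda_t = t/3$ from Lemma \ref{lem:lambda_value} and the estimate $e \leq 2D/n$ from Lemma \ref{lem:abstract_gain_est_bound}, obtaining
\begin{align*}
    \norm{z_t - z^\star} \leq D + \frac{2D}{n} \cdot \frac{t}{3} = D\left(1 + \frac{2t}{3n}\right),
\end{align*}
which for all $t \leq n$ is at most $\tfrac{5}{3} D = \tfrac{5}{3}\norm{x_0 - x^\star}$. There is no real obstacle beyond carefully accounting for the perturbation $\rho - \rhohat$ at each Halpern step; the cleanness of the argument is that the accumulated perturbation is exactly $e \Lambda_t$, which remains a constant fraction of $D$ precisely on the range $t \leq n$ where Algorithm \ref{alg:app_shifted_halpern_general} uses the estimate $\rhohat$.
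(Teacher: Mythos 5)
Your proof is correct and follows essentially the same inductive strategy as the paper: propagate the bound $\norm{z_t - z^\star} \leq \norm{x_0-x^\star} + \Lambda_t\norm{\rhohat-\rho}$ using the Halpern recursion, nonexpansiveness, the identity $\Lambda_{t+1} = \beta_{t+1}(1+\Lambda_t)$, and the two estimates from Lemma \ref{lem:abstract_gain_est_bound}, then specialize via $\Lambda_t = t/3$ and $t \leq n$. Your writeup is arguably slightly cleaner in the inductive step (you decompose $\Lhat(z_t) - z^\star$ exactly before taking norms, and keep $e = \norm{\rhohat-\rho}$ symbolic until the end), but the argument is the same one the paper uses.
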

\begin{proof}
We show the stronger bound
\begin{align*}
    \norm{z_t - z^\star} \leq \norm{x_0 - x^\star} + 2 \frac{\Lambda_t}{n} \norm{x_0 - x^\star}
\end{align*}
by induction. This obviously holds for $t = 0$ since $\Lambda_0 =0$ and by using Lemma \ref{lem:abstract_gain_est_bound}. Letting $t$ be arbitrary and assuming the above statement holds for $t-1$, we have
    \begin{align*}
    \norm{z_t - z^\star} &= \norm{(1 - \beta_t)z_0 + \beta_t \Lhat(z_{t-1}) - z^\star} \\
    &\leq \norm{(1 - \beta_t)z_0 + \beta_t \Lshift(z_{t-1}) - z^\star}  + \beta_t \norm{\rho - \rhohat}\\ 
    & \leq (1 - \beta_t) \norm{z_0 - z^\star} + \beta_t \norm{\Lshift(z_{t-1}) - z^\star} + \beta_t \norm{\rho - \rhohat} \\
    & \leq  (1 - \beta_t) \norm{z_0 - z^\star} + \beta_t \norm{z_{t-1} - z^\star} + \beta_t \norm{\rho - \rhohat} \\
    & \leq  (1 - \beta_t) \norm{z_0 - z^\star} + \beta_t \norm{z_{t-1} - z^\star} + \frac{2 \beta_t}{n} \norm{x_0 - x^\star} \\
    & \leq (1 - \beta_t) \norm{z_0 - z^\star} + \beta_t \left( \norm{x_0 - x^\star} + 2 \frac{\Lambda_{t-1}}{n} \norm{x_0 - x^\star} \right) + \frac{2 \beta_t}{n} \norm{x_0 - x^\star} \\
    & = (1 - \beta_t + \beta_t) \norm{z_0 - z^\star} + 2  \frac{\beta_t \Lambda_{t-1} + \beta_t}{n} \norm{x_0 - x^\star} \\
    & = \norm{z_0 - z^\star} + 2 \frac{\Lambda_t}{n} \norm{x_0 - x^\star} \\
    & \leq \norm{x_0 - x^\star} + 2 \frac{\Lambda_t}{n} \norm{x_0 - x^\star}
\end{align*}
where the first two inequalities are by triangle inequality, then we use nonexpansiveness of $\Lshift$ and the fact that $\Lshift(z^\star) = z^\star$, then we use Lemma \ref{lem:abstract_gain_est_bound} to bound $\norm{\rho - \rhohat}$, then we use the inductive hypothesis, in the penultimate equality we use that $\Lambda_{t} = \beta_t (1 + \Lambda_{t-1})$, and in the final inequality we use Lemma \ref{lem:abstract_gain_est_bound} to bound $\norm{z_0 -z^\star}$.

We conclude by using Lemma \ref{lem:lambda_value} and the fact that $t \leq n$.
\end{proof}

\begin{lem}
\label{lem:lshift_zt_z0_boundedness}
    For all $t \leq n$, 
    \begin{align*}
        \norm{\Lshift (z_t) - z_0 } \leq \frac{8}{3}\norm{x_0 - x^\star}.
    \end{align*}
\end{lem}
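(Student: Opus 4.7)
The plan is to prove this by combining the bound from Lemma \ref{lem:zt_fixpt_dist_boundedness} with the bound on $\norm{z_0 - z^\star}$ from Lemma \ref{lem:abstract_gain_est_bound}, using the triangle inequality and the nonexpansiveness of $\Lshift$ together with the fact that $z^\star$ is a fixed point of $\Lshift$.

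More concretely, I would insert $\Lshift(z^\star)$ as a pivot and write
\[
\norm{\Lshift(z_t) - z_0} \leq \norm{\Lshift(z_t) - \Lshift(z^\star)} + \norm{\Lshift(z^\star) - z_0}.
\]
For the first term, since $\Lshift$ is nonexpansive (as noted right after the definition of $\Lshift$), we have $\norm{\Lshift(z_t) - \Lshift(z^\star)} \leq \norm{z_t - z^\star}$, and Lemma \ref{lem:zt_fixpt_dist_boundedness} gives this is at most $\tfrac{5}{3}\norm{x_0 - x^\star}$ for $t \leq n$. For the second term, since $z^\star = x^\star + n\rho$ satisfies $\L(z^\star) = x^\star + (n+1)\rho$ by the restricted commutativity property \eqref{eq:abstract_restricted_commutativity}, we get $\Lshift(z^\star) = \L(z^\star) - \rho = x^\star + n\rho = z^\star$, so $\norm{\Lshift(z^\star) - z_0} = \norm{z^\star - z_0} \leq \norm{x_0 - x^\star}$ by the second inequality of Lemma \ref{lem:abstract_gain_est_bound}.

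Summing gives $\norm{\Lshift(z_t) - z_0} \leq \tfrac{5}{3}\norm{x_0 - x^\star} + \norm{x_0 - x^\star} = \tfrac{8}{3}\norm{x_0 - x^\star}$, as desired. There is no real obstacle here; the lemma is essentially a triangle-inequality corollary of the previous two lemmas, and the only subtle point is to remember that $z^\star$ is a fixed point of the \emph{exact} shifted operator $\Lshift$ rather than the estimated one $\Lhat$, so the nonexpansiveness argument goes through cleanly without any $\norm{\rho - \rhohat}$ term.
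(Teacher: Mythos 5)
Your proof is correct and matches the paper's argument essentially step for step: both pivot through $\Lshift(z^\star)=z^\star$, apply nonexpansiveness of $\Lshift$ to the first term, bound $\norm{z_t - z^\star}$ by Lemma~\ref{lem:zt_fixpt_dist_boundedness} and $\norm{z_0 - z^\star}$ by Lemma~\ref{lem:abstract_gain_est_bound}, and sum $\tfrac{5}{3}+1=\tfrac{8}{3}$. Your explicit remark that the fixed point is of $\Lshift$ (not $\Lhat$), so no $\norm{\rho-\rhohat}$ term appears, is exactly the right thing to keep in mind.
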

\begin{proof}
    \begin{align*}
        \norm{\Lshift (z_t) - z_0 } & = \norm{\Lshift (z_t) - \Lshift(z^\star) + z^\star - z_0 } \\
        & \leq \norm{\Lshift (z_t) - \Lshift(z^\star)} + \norm{z^\star - z_0} \\
        & \leq \norm{z_t - z^\star} + \norm{z^\star - z_0} \\
        & \leq \frac{8}{3} \norm{  x_0 - x^\star}
    \end{align*}
    using Lemmas \ref{lem:zt_fixpt_dist_boundedness} and \ref{lem:abstract_gain_est_bound} in the final step.
\end{proof}

Now we can prove the main theorem.
\begin{thm}
\label{thm:abstract_halpern_main}
    \begin{align*}
        \norm{\Lshift(z_n) -z_n} \leq \frac{13 + \frac{35}{n} + \frac{20}{n^2}}{n}\norm{x_0 - x^\star}.
    \end{align*}
\end{thm}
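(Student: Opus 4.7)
The plan is to bound $\norm{\Lshift(z_n) - z_n}$ by (i) a decomposition isolating $\norm{z_n - z_{n-1}}$ plus terms already controlled by the preceding lemmas and (ii) a telescoping recursion forcing $\norm{z_n - z_{n-1}} = O(1/n) \norm{x_0 - x^\star}$. A naive coupling to a hypothetical exact Halpern iteration on $\Lshift$ would accumulate the perturbation $\rho - \rhohat$ additively over all $n$ steps and only yield an $O(1)$ bound, so the recursion-based route is essential to reach rate $O(1/n)$.

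Substituting $z_n = (1-\beta_n)z_0 + \beta_n \Lhat(z_{n-1})$ with $\Lhat = \Lshift + (\rho - \rhohat)$ and regrouping gives
\begin{align*}
\Lshift(z_n) - z_n = [\Lshift(z_n) - \Lshift(z_{n-1})] + (1-\beta_n)[\Lshift(z_{n-1}) - z_0] - \beta_n(\rho - \rhohat).
\end{align*}
Nonexpansiveness of $\Lshift$, Lemmas \ref{lem:lshift_zt_z0_boundedness} and \ref{lem:abstract_gain_est_bound}, and $1 - \beta_n = \frac{2}{n+2}$ then bound the last two terms by order $1/n$ times $\norm{x_0 - x^\star}$. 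For the first term, subtracting the updates for $z_{t+1}$ and $z_t$ (valid for $t \geq 1$) gives
\begin{align*}
z_{t+1} - z_t = (\beta_{t+1} - \beta_t)(\Lhat(z_{t-1}) - z_0) + \beta_{t+1}(\Lhat(z_t) - \Lhat(z_{t-1})).
\end{align*}
Combining nonexpansiveness of $\Lhat$ with $\beta_{t+1} - \beta_t = \frac{2}{(t+2)(t+3)}$ and $\beta_{t+1} = \frac{t+1}{t+3}$, multiplying through by $(t+2)(t+3)$, and setting $D_t := (t+2)(t+3)\norm{z_{t+1} - z_t}$ collapses this to the clean recursion $D_t \leq 2B + D_{t-1}$, where $B := \sup_t \norm{\Lhat(z_t) - z_0} \leq (8/3 + 2/n)\norm{x_0 - x^\star}$ (again combining Lemmas \ref{lem:lshift_zt_z0_boundedness} and \ref{lem:abstract_gain_est_bound}). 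The base case $D_0 = 2\norm{\Lhat(z_0) - z_0}$ is handled via restricted commutativity at $z^\star$ (which gives $\L(z^\star) - z^\star = \rho$, so $\norm{\L(z_0) - z_0 - \rho} \leq 2\norm{z_0 - z^\star}$) together with Lemma \ref{lem:abstract_gain_est_bound}, producing $D_0 \leq (4 + 4/n)\norm{x_0 - x^\star}$.

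Iterating yields $D_{n-1} \leq D_0 + 2(n-1)B$, hence $\norm{z_n - z_{n-1}} \leq \frac{8 + 16n}{3(n+1)(n+2)}\norm{x_0 - x^\star}$. Substituting back into the initial decomposition and consolidating constants gives a bound of the form $\frac{38n^2 + 42n + 12}{3n(n+1)(n+2)}\norm{x_0 - x^\star}$, which an elementary cross-multiplication confirms is at most $\frac{13 + 35/n + 20/n^2}{n}\norm{x_0 - x^\star}$. The main obstacle is identifying the correct telescoping substitution: because the unweighted drift term $2\norm{\Lhat(z_{t-1}) - z_0}$ is $O(1)$ rather than decaying in $t$, only after multiplication by the precise weight $(t+2)(t+3)$---matched to the step-size schedule $\beta_t = 1 - \frac{2}{t+2}$---does the recursion telescope to a linear-in-$n$ bound on $D_{n-1}$, which then divides out against the factor $(n+1)(n+2)$ to produce the desired $O(1/n)$ rate.
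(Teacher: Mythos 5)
Your proof is correct and follows essentially the same template as the paper's: (i) set up a one-step recursion for $\norm{z_{t+1}-z_t}$ using the difference of consecutive Halpern updates, with the drift term controlled by Lemmas~\ref{lem:lshift_zt_z0_boundedness} and~\ref{lem:abstract_gain_est_bound}; (ii) solve the recursion to get $\norm{z_n - z_{n-1}} = O(1/n)\norm{x_0 - x^\star}$; (iii) decompose the fixed-point residual to isolate $\norm{z_n - z_{n-1}}$ plus already-controlled remainder terms. The two places you deviate are both cosmetic algebra rather than a different route: you solve the recursion by the change of variables $D_t = (t+2)(t+3)\norm{z_{t+1}-z_t}$, which telescopes cleanly, where the paper verifies the ansatz $\norm{z_t - z_{t-1}} \leq (16/3 + 4/n)/(t+1)$ directly by induction; and in step (iii) you write the residual in terms of $z_n - z_{n-1}$, which is a small improvement over the paper's rearrangement $\beta_{t+1}(\Lshift(z_t)-z_t) = (z_{t+1}-z_t) - \beta_{t+1}(\rho-\rhohat) - (1-\beta_{t+1})(z_0-z_t)$ specialized at $t=n$, since the latter implicitly references the out-of-range iterate $z_{n+1}$ (which the paper's bound does still cover, as the inductive estimate holds for all $t$). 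Your final rational-function bound $\tfrac{38n^2+42n+12}{3n(n+1)(n+2)}$ is slightly looser than the sharpest consolidation of your own intermediate bounds (you bounded $\beta_n\norm{\rho-\rhohat}$ by $2/n$ rather than $\tfrac{2}{n+2}$), but the cross-multiplication confirming it is at most $(13 + 35/n + 20/n^2)/n$ is correct, so the claimed constant is established.
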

\begin{proof}
For any $t \geq 1$, we have
\begin{align*}
    z_{t+1} - z_t &= (1-\beta_{t+1}) z_0 + \beta_{t+1} \Lhat (z_t) - (1-\beta_{t}) z_0 + \beta_{t} \Lhat (z_{t-1}) \\
    &= (\beta_{t}-\beta_{t+1}) z_0 + (\beta_{t+1}-\beta_t) \Lhat (z_t) + \beta_t \left(\Lhat (z_t) - \Lhat (z_{t-1}) \right) \\
    &= (\beta_{t+1}-\beta_t) \left(\Lhat (z_t) - z_0 \right)+ \beta_t \left(\Lhat (z_t) - \Lhat (z_{t-1}) \right) \\
    &= (\beta_{t+1}-\beta_t) \left(\Lshift (z_t) - z_0 \right) + \beta_t \left(\Lshift (z_t) - \Lshift (z_{t-1}) \right) + (\beta_{t+1}-\beta_t) (\rho - \rhohat) .
\end{align*}
Therefore
\begin{align}
    \norm{z_{t+1} - z_t} & \leq (\beta_{t+1}-\beta_t) \norm{\Lshift (z_t) - z_0 } + \beta_t \norm{\Lshift (z_t) - \Lshift (z_{t-1}) } + (\beta_{t+1}-\beta_t) \norm{\rho - \rhohat} \nonumber \\
    & \leq (\beta_{t+1}-\beta_t) \frac{8}{3}\norm{x_0 - x^\star} + \beta_t \norm{z_t - z_{t-1} } + (\beta_{t+1}-\beta_t) \frac{2}{n}\norm{x_0 - x^\star} \nonumber\\
    &= \beta_t \norm{z_t - z_{t-1} } + (\beta_{t+1}-\beta_t) \left(\frac{8}{3} + \frac{2}{n}\right)\norm{x_0 - x^\star} \label{eq:abstract_halpern_main_1}
\end{align}
using the triangle inequality, and then Lemmas \ref{lem:lshift_zt_z0_boundedness} and \ref{lem:abstract_gain_est_bound}. 
We also note that
\begin{align*}
    \norm{z_1 - z_0} &= \norm{(1-\beta_1)z_0 - \beta_1 \Lhat(z_0) - z_0} = \beta_1\norm{z_0 -\Lhat(z_0)} \\
    & \leq \beta_1 \norm{z_0 - \Lshift(z_0)} + \beta_1 \norm{\Lshift(z_0) - \Lhat(z_0)} = \beta_1 \norm{z_0 - \Lshift(z_0)} + \beta_1 \norm{\rho - \rhohat}\\
    & \leq \beta_1 2 \norm{z_0 - z^\star} + \beta_1 \norm{\rho - \rhohat} \\
    & \leq \frac{4}{3}\norm{x_0 - x^\star}
\end{align*}
using Lemma \ref{lem:abstract_gain_est_bound}, the fact that $\beta_1 = \frac{1}{3}$, and also the inequality
\begin{align*}
    \norm{z_0 - \Lshift(z_0)} \leq \norm{z_0 - z^\star} + \norm{\Lshift(z^\star) - \Lshift(z_0)} \leq 2 \norm{z_0 - z^\star}
\end{align*}
since $z^\star$ is a fixed-point of $\Lshift$.
Now supposing inductively that $\norm{z_{t} - z_{t-1}}  \leq \frac{\frac{16}{3} + \frac{4}{n}}{t+1}\norm{x_0 - x^\star}$ (which we have just confirmed for the case that $t = 0$), we can use the bound~\eqref{eq:abstract_halpern_main_1} to obtain that
\begin{align*}
    \norm{z_{t+1} - z_t} &\leq \beta_t \norm{z_t - z_{t-1} } + (\beta_{t+1}-\beta_t) \left(\frac{8}{3} + \frac{2}{n}\right)\norm{x_0 - x^\star} \\
    & \leq \beta_t \frac{\frac{16}{3} + \frac{4}{n}}{t+1}\norm{x_0 - x^\star} + (\beta_{t+1}-\beta_t) \left(\frac{8}{3} + \frac{2}{n}\right)\norm{x_0 - x^\star} \\
    &= \left(\frac{\beta_t}{t+1} + \frac{\beta_{t+1} - \beta_t}{2} \right)\left(\frac{16}{3} + \frac{4}{n}\right)\norm{x_0 - x^\star} \\
    & \leq \frac{1}{t+2}\left(\frac{16}{3} + \frac{4}{n}\right)\norm{x_0 - x^\star}
\end{align*}
where the final inequality is because
\begin{align*}
    \frac{\beta_t}{t+1} + \frac{\beta_{t+1} - \beta_t}{2} &= \frac{1 -\frac{2}{t+2}}{t+1} + \frac{1 - \frac{2}{t+3}-1 +\frac{2}{t+2}}{2} \\
    &= \frac{t}{(t+1)(t+2)} + \frac{1}{(t+2)(t+3)} \\
    &\leq \frac{t+1}{(t+1)(t+2)} = \frac{1}{t+2}.
\end{align*}
Therefore by induction we have that
\begin{align*}
    \norm{z_{t} - z_{t-1}}  \leq \frac{\frac{16}{3} + \frac{4}{n}}{t+1}\norm{x_0 - x^\star}
\end{align*}
for all $t \geq 1$.

Now relating $\norm{z_{t} - z_{t-1}}$ to the fixed-point error, we have
\begin{align*}
    z_{t+1} - z_t &= (1-\beta_{t+1}) z_0 + \beta_{t+1} \Lhat (z_t) - z_t \\
    &= (1-\beta_{t+1}) (z_0 - z_t) + \beta_{t+1} (\Lhat (z_t) - z_t) \\
    &= (1-\beta_{t+1}) (z_0 - z_t) + \beta_{t+1} (\Lshift (z_t) - z_t) + \beta_{t+1} (\rho - \rhohat)
\end{align*}
which implies
\begin{align*}
    \beta_{t+1} (\Lshift (z_t) - z_t) &= \left( z_{t+1} - z_t  \right) - \beta_{t+1} (\rho - \rhohat) - (1-\beta_{t+1}) (z_0 - z_t)
\end{align*}
so by triangle inequality
\begin{align*}
    \beta_{t+1} \norm{\Lshift (z_t) - z_t} & \leq \norm{z_{t+1} - z_t} + \beta_{t+1} \norm{\rho - \rhohat} + (1-\beta_{t+1}) \norm{z_0 - z_t} \\
    & \leq \norm{z_{t+1} - z_t} + \beta_{t+1} \norm{\rho - \rhohat} + (1-\beta_{t+1}) \left(\norm{z_0 - z^\star} + \norm{z_t - z^\star} \right) \\
    & \leq \frac{\frac{16}{3} + \frac{4}{n}}{t+2}\norm{x_0 - x^\star} + \beta_{t+1} \frac{2}{n} \norm{x_0 - x^\star} + (1-\beta_{t+1}) \frac{8}{3} \norm{x_0 - x^\star} \\
    &= \left( \frac{\frac{16}{3} + \frac{4}{n}}{t+2} + \frac{t+1}{t+3}\frac{2}{n} + \frac{2}{t+3}\frac{8}{3}\right) \norm{x_0 - x^\star}.
\end{align*}
Rearranging and simplifying, we obtain
\begin{align*}
    \norm{\Lshift (z_t) - z_t} & \leq \frac{1}{\beta_{t+1}}\left( \frac{\frac{16}{3} + \frac{4}{n}}{t+2} + \frac{t+1}{t+3}\frac{2}{n} + \frac{2}{t+3}\frac{8}{3}\right) \norm{x_0 - x^\star} \\
    &= \frac{t+3}{t+1} \left( \frac{\frac{16}{3} + \frac{4}{n}}{t+2} + \frac{t+1}{t+3}\frac{2}{n} + \frac{2}{t+3}\frac{8}{3}\right) \norm{x_0 - x^\star} \\
    &= \frac{2}{n} \norm{x_0 - x^\star} + \frac{t+3}{t+1} \left( \frac{\frac{16}{3} + \frac{4}{n}}{t+2} + \frac{1}{t+3}\frac{16}{3}\right) \norm{x_0 - x^\star}  \\
    &\leq \frac{2}{n} \norm{x_0 - x^\star} + \frac{t+3}{t+1} \left( \frac{1}{t+2} + \frac{1}{t+3}\right) \left(\frac{16}{3} + \frac{4}{n} \right)\norm{x_0 - x^\star} \\
    &= \frac{2}{n} \norm{x_0 - x^\star} + \frac{t+3}{t+1} \frac{2t + 5}{(t+2)(t+3)} \left(\frac{16}{3} + \frac{4}{n} \right)\norm{x_0 - x^\star}.
\end{align*}
Now substituting $t = n$, and using that $\frac{n + 2.5}{n+2} = \frac{1 + \frac{2.5}{n}}{1+ \frac{2}{n}} \leq 1 + \frac{2.5}{n}$, we have
\begin{align*}
     \norm{\Lshift (z_t) - z_t} & \leq \frac{2}{n} \norm{x_0 - x^\star} + \frac{2+\frac{5}{n}}{n+1} \left(\frac{16}{3} + \frac{4}{n} \right)\norm{x_0 - x^\star} \\
     & \leq \frac{2 + \frac{32}{3} + \frac{80}{3n} + \frac{8}{n} + \frac{20}{n^2}}{n} \norm{x_0 - x^\star} \\
     &\leq \frac{13 + \frac{35}{n} + \frac{20}{n^2}}{n}\norm{x_0 - x^\star}.
\end{align*}
\end{proof}

\subsection{Undiscounted value iteration setting}
Here we specialize to the case of $\L = \T$ and prove Theorems \ref{thm:app_shifted_halpern_main} and \ref{thm:app_shifted_halpern_large_iters}.

First we develop a result to control the first term of Corollary \ref{cor:AMDP_performance_diff_cor}.
\begin{lem}
\label{lem:implicit_gain_optimality}
    Let $h$ satisfy both the modified and unmodified Bellman equations.
    Suppose that $z \in \R^{\S}$ satisfies $\infnorm{z - \Gamma \rho^\star - h} \leq B$ for some $\Gamma, B > 0$. Then if $\pi$ is greedy with respect to $r + P z$ ($\T^\pi(z) = \T(z)$), we have
    \begin{align*}
        P_\pi \rho^\star \geq \rho^\star - \frac{2B}{\Gamma} \one.
    \end{align*}
\end{lem}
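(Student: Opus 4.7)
The strategy is to exploit the fact that when $\Gamma$ is large, the vector $r + Pz \approx r + P(\Gamma \rho^\star + h)$ is dominated by the term $\Gamma P \rho^\star$, so any policy $\pi$ that is greedy with respect to $r + Pz$ should approximately maximize $P\rho^\star$. We compare the greedy policy $\pi$ to a benchmark policy $\pistar$ that simultaneously attains the maxima in both the modified and unmodified Bellman equations; such a policy exists by Lemma~\ref{lem:simultaneous_argmax_equiv_unmod_soln}, since $h$ was assumed to satisfy both equations.

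First I would write $z = \Gamma \rho^\star + h + \delta$ with $\infnorm{\delta} \leq B$ and use greediness of $\pi$, namely
\[
    r_\pi + P_\pi z \geq r_{\pistar} + P_{\pistar} z,
\]
which after substitution and rearrangement becomes
\[
    \Gamma (P_\pi \rho^\star - P_{\pistar}\rho^\star) \geq (r_{\pistar} - r_\pi) + (P_{\pistar} - P_\pi) h + (P_{\pistar} - P_\pi)\delta.
\]
Since $\pistar$ is a simultaneous argmax, $P_{\pistar}\rho^\star = \rho^\star$ from the first unmodified equation, so the left-hand side collapses to $\Gamma(P_\pi \rho^\star - \rho^\star)$.

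Next I would handle the right-hand side by noting that $\pistar$ attains the max in the modified equation~\eqref{eq:mod_bellman_2_vec}, so $r_{\pistar} + P_{\pistar} h = \rho^\star + h$, while $\pi$ satisfies $r_\pi + P_\pi h \leq \rho^\star + h$ since this is a max over all policies. Subtracting, the combined term $(r_{\pistar} - r_\pi) + (P_{\pistar} - P_\pi)h$ is elementwise nonnegative. The perturbation term is bounded by $\infnorm{(P_{\pistar} - P_\pi)\delta} \leq 2B$ via the triangle inequality and $\infinfnorm{P_{\pistar}}, \infinfnorm{P_\pi} \leq 1$, so $(P_{\pistar} - P_\pi)\delta \geq -2B\one$.

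Combining these bounds yields $\Gamma(P_\pi\rho^\star - \rho^\star) \geq -2B\one$, and dividing by $\Gamma$ gives the conclusion. There is no serious obstacle here; the main subtlety is remembering that the simultaneous argmax policy requires $h$ to solve both Bellman equations (not just the modified one), which is where the hypothesis is used essentially.
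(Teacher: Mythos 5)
Your proposal is correct and follows essentially the same route as the paper's proof: both invoke Lemma~\ref{lem:simultaneous_argmax_equiv_unmod_soln} to obtain a simultaneously argmaxing policy, compare the greedy policy against it via $r_\pi + P_\pi z \geq r_{\pit} + P_{\pit} z$, use $r_\pi + P_\pi h \leq \T(h) = \rho^\star + h$ from the modified equation, and absorb the perturbation into a $2B$ term. The only difference is presentational (you isolate the greediness inequality and substitute $z = \Gamma\rho^\star + h + \delta$, while the paper writes one chain of inequalities starting from $P_\pi\rho^\star$), and note the paper reserves $\pistar$ for the Blackwell-optimal policy, calling the simultaneous argmaxer $\pit$.
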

\begin{proof}
    Since $h$ satisfies both the modified and unmodified Bellman equations, by Lemma \ref{lem:simultaneous_argmax_equiv_unmod_soln} there exists a policy $\pit$ which attains the maximum in both simultaneously, meaning $\T^{\pit}(h ) = \rho^\star + h $ and $P_{\pit} \rho^\star = \rho^\star$.
    Then we can calculate
    \begin{align*}
        P_\pi \rho^\star &= \frac{1}{\Gamma} P_\pi \left(\Gamma \rho^\star \right) \\
        & = \frac{1}{\Gamma} P_\pi \left(\Gamma \rho^\star + h  - h  \right) \\
        & \geq \frac{1}{\Gamma} P_\pi \left(z - \infnorm{z - \Gamma \rho^\star - h }\one - h  \right) \\
        & \geq \frac{1}{\Gamma} P_\pi \left(z - B\one - h  \right) \\
        & \geq \frac{1}{\Gamma} \left(P_\pi \left(z - B\one  \right) + r_\pi - \rho^\star - h  \right)\\
        & = \frac{1}{\Gamma} \left(r_\pi + P_\pi z - B\one - \rho^\star - h  \right)\\
        & \geq  \frac{1}{\Gamma} \left(r_{\pistar} + P_{\pistar} z - B\one - \rho^\star - h  \right)\\
        & \geq  \frac{1}{\Gamma} \left(r_{\pistar} + P_{\pistar} \left(\Gamma \rho^\star + h  \right)- P_{\pistar} \infnorm{z - \Gamma \rho^\star - h } \one - B\one - \rho^\star - h  \right)\\
        & \geq  \frac{1}{\Gamma} \left(r_{\pistar} + P_{\pistar} \left(\Gamma \rho^\star + h  \right) - 2B\one - \rho^\star - h  \right)\\
        &= \rho^\star - \frac{2B}{\Gamma} \one
    \end{align*}
    where we used the fact that (by the modified Bellman equation~\eqref{eq:mod_bellman_2_vec}) $r_\pi + P_\pi h  = \T^\pi(h ) \leq \T(h )  = \rho^\star + h $ which implies $- P_\pi h  \geq r_\pi - \rho^\star - h $, and then the fact that since $\pi$ is greedy with respect to $r + P z$ we have $r_\pi + P_\pi z = \T^\pi(z) = \T(z) \geq \T^{\pit}(z) = r_{\pit} + P_{\pit} z$, and then in the final equality we used that $P_{\pit} \rho^\star = \rho^\star$ and that $r_{\pit} + P_{\pit} h  = \rho^\star + h $.
\end{proof}

\begin{lem}
\label{lem:gain_pres_bound_halpern}
    Let $h$ satisfy both the modified and unmodified Bellman equations.
    The policy $\pihat$ output by Algorithm \ref{alg:app_shifted_halpern} satisfies
    \begin{align*}
        \infnorm{P_{\pihat} \rho^\star - \rho^\star} \leq \frac{\frac{10}{3}\infnorm{h_0 - h}}{n}.
    \end{align*}
\end{lem}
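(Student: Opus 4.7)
The plan is to reduce the statement to an immediate application of Lemma \ref{lem:implicit_gain_optimality} once we have shown that the final Halpern iterate $z_n$ produced by Algorithm \ref{alg:app_shifted_halpern} stays close to $n\rho^\star + h$ in $\infnorm{\cdot}$, with a gain $\Gamma = n$ large enough to turn the closeness into a sharp bound on $\infnorm{P_{\pihat}\rho^\star - \rho^\star}$.

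First, I would verify that the hypotheses of the abstract setting from Subsection \ref{sec:VI_restricted_commutativity_proofs} apply with $\L = \T$, $X = \R^\S$, $\norm{\cdot} = \infnorm{\cdot}$, $x^\star = h$, and $\rho = \rho^\star$. Nonexpansiveness of $\T$ in $\infnorm{\cdot}$ is standard, and the key restricted commutativity identity $\T(h + \alpha \rho^\star) = h + (\alpha+1)\rho^\star$ for all $\alpha \in \R$ follows from Lemma \ref{lem:restricted_commutativity_property}, using that $h$ satisfies both the modified and unmodified Bellman equations by hypothesis. With these identifications, the abstract algorithm is exactly Algorithm \ref{alg:app_shifted_halpern} and $z^\star := x^\star + n \rho = h + n \rho^\star$.

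Next, I would invoke Lemma \ref{lem:zt_fixpt_dist_boundedness} at $t = n$ to obtain
\[
\infnorm{z_n - (n\rho^\star + h)} \;\leq\; \tfrac{5}{3}\infnorm{h_0 - h}.
\]
Then Lemma \ref{lem:implicit_gain_optimality} applies directly with $z = z_n$, $\Gamma = n$, $B = \tfrac{5}{3}\infnorm{h_0 - h}$, and $\pi = \pihat$ (using that $\pihat$ is greedy with respect to $r + P z_n$ by construction on the last line of Algorithm \ref{alg:app_shifted_halpern}). This yields the elementwise bound
\[
P_{\pihat}\rho^\star \;\geq\; \rho^\star - \frac{2B}{\Gamma}\one \;=\; \rho^\star - \frac{\tfrac{10}{3}\infnorm{h_0 - h}}{n}\one.
\]

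Finally, the reverse elementwise inequality $P_{\pihat}\rho^\star \leq \rho^\star$ holds for every policy since $\rho^\star$ satisfies $\max_{a \in \A} P_{sa}\rho^\star = \rho^\star(s)$ for all $s \in \S$ (the first modified/unmodified Bellman equation~\eqref{eq:mod_bellman_1}). Combining the two one-sided bounds gives
\[
\infnorm{P_{\pihat}\rho^\star - \rho^\star} \;\leq\; \frac{\tfrac{10}{3}\infnorm{h_0 - h}}{n},
\]
as desired. There is no real obstacle here beyond correctly identifying the abstract setting with the current one; all the substantive work has already been carried out in Lemmas \ref{lem:restricted_commutativity_property}, \ref{lem:zt_fixpt_dist_boundedness}, and \ref{lem:implicit_gain_optimality}.
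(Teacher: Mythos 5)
Your proposal is correct and matches the paper's own proof essentially step for step: specialize the abstract setting with $\L = \T$, $x^\star = h$, apply Lemma \ref{lem:zt_fixpt_dist_boundedness} at $t=n$ to get $\infnorm{z_n - n\rho^\star - h} \leq \frac{5}{3}\infnorm{h_0-h}$, then invoke Lemma \ref{lem:implicit_gain_optimality} with $\Gamma = n$ and combine with the trivial bound $P_{\pihat}\rho^\star \leq \rho^\star$. No issues.
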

\begin{proof}
    Specializing $\norm{\cdot} = \infnorm{\cdot}$, $\L = \T$, and $x^\star = h$, 
    by Lemma \ref{lem:zt_fixpt_dist_boundedness}, we have
    \begin{align*}
        \infnorm{z_n - n\rho^\star - h} = \infnorm{z_n - z^\star} \leq \frac{5}{3}\infnorm{x_0 - x^\star} = \frac{5}{3}\infnorm{h_0 - h}.
    \end{align*}
    By Lemma \ref{lem:implicit_gain_optimality} this implies that
    \begin{align*}
        P_{\pihat} \rho^\star \geq \rho^\star - \frac{\frac{10}{3}\infnorm{h_0 - h}}{n}.
    \end{align*}
    Also we trivially have $P_{\pihat} \rho^\star \leq \rho^\star$.
\end{proof}

\begin{lem}
\label{lem:fpe_bound_halpern}
    Let $h$ satisfy both the modified and unmodified Bellman equations.
    The vector $z_n$ and policy $\pihat$ output by Algorithm \ref{alg:app_shifted_halpern} satisfy
    \begin{align*}
    \infnorm{\T^{\pihat}(z_n) - \rho^\star - z_n} \leq \frac{13 + \frac{35}{n} + \frac{20}{n^2}}{n}\infnorm{h_0 - h}.
    \end{align*}
\end{lem}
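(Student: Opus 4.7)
The plan is simply to recognize Lemma \ref{lem:fpe_bound_halpern} as the instantiation of the abstract bound in Theorem \ref{thm:abstract_halpern_main} to the Bellman setting, specialized to $\L = \T$, $x^\star = h$, and $\rho = \rho^\star$ (with $\norm{\cdot} = \infnorm{\cdot}$).

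First I would verify that the hypotheses of the abstract setting of Subsection~\ref{sec:VI_restricted_commutativity_proofs} hold for this specialization. Nonexpansiveness of $\T$ with respect to $\infnorm{\cdot}$ is standard and already recorded in Subsection~\ref{sec:problem_setup}. The key abstract restricted commutativity hypothesis~\eqref{eq:abstract_restricted_commutativity}, namely $\T(h + \alpha \rho^\star) = h + (\alpha+1)\rho^\star$ for all $\alpha \in \R$, is exactly the content of Lemma \ref{lem:restricted_commutativity_property} under the assumption that $h$ solves both the modified and unmodified Bellman equations, which is the hypothesis of Lemma \ref{lem:fpe_bound_halpern}.

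Next, I would observe that Algorithm \ref{alg:app_shifted_halpern} is the specialization of Algorithm \ref{alg:app_shifted_halpern_general} to this operator and norm: the first $n$ Picard steps, the construction of $\rhohat$, and the Halpern phase with stepsizes $\beta_t = 1 - \frac{2}{t+2}$ all match literally. Hence the iterate $z_n$ in Lemma \ref{lem:fpe_bound_halpern} coincides with the iterate $z_n$ in Theorem \ref{thm:abstract_halpern_main}, and applying that theorem yields
\begin{align*}
    \infnorm{\Lshift(z_n) - z_n} \leq \frac{13 + \frac{35}{n} + \frac{20}{n^2}}{n}\infnorm{h_0 - h}.
\end{align*}
Here $\Lshift(z) = \T(z) - \rho^\star$, so $\Lshift(z_n) - z_n = \T(z_n) - \rho^\star - z_n$.

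Finally, to convert this into the claimed statement involving $\T^{\pihat}$, I would use that $\pihat$ is defined in Algorithm \ref{alg:app_shifted_halpern} to be greedy with respect to $r + P z_n$, and hence $\T^{\pihat}(z_n) = \T(z_n)$ by definition of the maximization operator. Substituting this equality yields the desired bound. There is no real obstacle in this proof; the work was already carried out in Theorem \ref{thm:abstract_halpern_main}, and all that is required here is to check that the hypotheses line up and to replace $\T(z_n)$ by $\T^{\pihat}(z_n)$ via the greedy choice.
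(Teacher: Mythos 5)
Your proposal is correct and follows exactly the paper's approach: the paper's proof is a one-line invocation of Theorem \ref{thm:abstract_halpern_main} with the same specialization $\L = \T$, $x^\star = h$, $\norm{\cdot} = \infnorm{\cdot}$ (and implicitly $\rho = \rho^\star$). Your write-up simply makes explicit the verification of the abstract hypotheses (nonexpansiveness, restricted commutativity via Lemma \ref{lem:restricted_commutativity_property}, and the algorithm correspondence) and the final replacement $\T(z_n) = \T^{\pihat}(z_n)$ from the greedy choice, all of which the paper treats as immediate.
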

\begin{proof}
    This follows immediately from Theorem \ref{thm:abstract_halpern_main} by specializing $\norm{\cdot} = \infnorm{\cdot}$, $\L = \T$, and $x^\star = h$. We note that $\pihat$ is defined so that $\T(z_n) = \T^{\pihat}(z_n)$.
\end{proof}

\begin{proof}[Proof of Theorem \ref{thm:app_shifted_halpern_main}]
    This follows immediately by using Lemmas \ref{lem:gain_pres_bound_halpern} and \ref{lem:fpe_bound_halpern} to bound the two terms in Corollary \ref{cor:AMDP_performance_diff_cor}.
\end{proof}

\begin{proof}[Proof of Theorem \ref{thm:app_shifted_halpern_large_iters}]
    Under the assumption that $n \geq \frac{4\infnorm{h_0 - h}}{\mingap}$, we have by Lemma \ref{lem:gain_pres_bound_halpern} that
    \begin{align*}
        \infnorm{P_{\pihat} \rho^\star - \rho^\star} \leq \frac{\frac{10}{3}\infnorm{h_0 - h}}{n} \leq \frac{10}{12} \mingap < \mingap.
    \end{align*}
    By the definition of $\mingap$ and since $\pihat$ is a deterministic policy, this implies that $\infnorm{P_{\pihat} \rho^\star - \rho^\star} = 0$.

    Using this fact, as well as Lemma \ref{lem:fpe_bound_halpern} to bound the other term in Corollary \ref{cor:AMDP_performance_diff_cor}, we immediately obtain the desired conclusion.
\end{proof}

\section{Proof of Theorem \ref{thm:optimal_contractive_fixed_point_iter}}
\label{sec:optimal_contractive_fp_proof}
The following theorem on the performance of Halpern iteration for general normed spaces is due to \cite[Lemma 5]{sabach_first_2017}. Their result is presented for Euclidean spaces but the proof holds for general norms. See \cite[Remark 2]{contreras_optimal_2022} for more discussion of this bound.
\begin{thm}
\label{thm:halpern_standard_bound}
    Suppose $\L$ is nonexpansive with respect to some norm $\norm{\cdot}$. Let $x^\star$ be a fixed point of $\L$, and fix some initial point $x_0$. For all $t =1, 2, \dots$, let $x_{t+1} = (1-\beta_{t+1})x_0 + \beta_{t+1}\L(x_t)$, where $\beta_t = 1 - \frac{2}{t+2}$. Then for all $t \geq 0$ we have
    \begin{align*}
        \norm{\L(x_t) - x_t} \leq \frac{4}{t + 1}\norm{x_0 - x^\star}.
    \end{align*}
\end{thm}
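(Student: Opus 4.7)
The plan is to prove the standard Halpern residual bound by adapting the Sabach--Shtern anchor-based argument to general normed spaces. The strategy unfolds in four stages: first establish boundedness of the iterates, then derive a two-term recursion on the consecutive-difference norms $\norm{x_{t+1}-x_t}$, solve that recursion by induction, and finally convert the bound on differences into a bound on the fixed-point residual $\norm{\L(x_t)-x_t}$.

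First, I would show $\norm{x_t - x^\star} \leq \norm{x_0 - x^\star}$ for all $t \geq 0$. Subtracting $x^\star$ from both sides of the update rule, using $x^\star=\L(x^\star)$, and applying the triangle inequality and nonexpansiveness yields $\norm{x_{t+1}-x^\star} \leq (1-\beta_{t+1})\norm{x_0-x^\star} + \beta_{t+1}\norm{x_t-x^\star}$, which is a convex combination and therefore preserves the bound by induction. An immediate consequence is $\norm{\L(x_t) - x_0}\leq 2\norm{x_0-x^\star}$ via the triangle inequality through $x^\star$.

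Next, a direct subtraction of the update rules at $t$ and $t-1$ and regrouping gives the key telescoping identity
\[
x_{t+1}-x_t \;=\; \beta_{t+1}\bigl(\L(x_t)-\L(x_{t-1})\bigr) \,+\, (\beta_{t+1}-\beta_t)\bigl(\L(x_{t-1})-x_0\bigr), \qquad t\geq 1.
\]
Applying nonexpansiveness to the first term and the bound from the first stage to the second, together with the explicit formula $\beta_{t+1}-\beta_t = \frac{2}{(t+2)(t+3)}$, produces the recursion $\norm{x_{t+1}-x_t} \leq \beta_{t+1}\norm{x_t-x_{t-1}} + \frac{4}{(t+2)(t+3)}\norm{x_0-x^\star}$. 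I then claim by induction that $\norm{x_t-x_{t-1}} \leq \frac{4}{t+1}\norm{x_0-x^\star}$; the base case $t=1$ reads $\norm{x_1-x_0} = \beta_1\norm{\L(x_0)-x_0} \leq \tfrac{2}{3}\norm{x_0-x^\star} \leq 2$, and the inductive step closes cleanly since $\beta_{t+1}\cdot \frac{4}{t+1} = \frac{4}{t+3}$ and $\frac{4}{t+3}+\frac{4}{(t+2)(t+3)} = \frac{4}{t+2}$ exactly.

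Finally, I would convert the consecutive-difference bound into the residual bound using the rearrangement $\L(x_t)-x_{t+1} = (1-\beta_{t+1})(\L(x_t)-x_0)$ obtained by subtracting $\L(x_t)$ from the update, together with $\norm{\L(x_t)-x_0}\leq 2\norm{x_0-x^\star}$ and the triangle-inequality split $\L(x_t)-x_t = (\L(x_t)-x_{t+1})+(x_{t+1}-x_t)$. The main obstacle is that a naive application of this split in the final stage produces a constant of roughly $8$ rather than $4$ in front of $1/(t+1)$, because $\tfrac{1}{t+2}+\tfrac{1}{t+3} \not\leq \tfrac{1}{t+1}$; obtaining the precise constant $4$ in the theorem will require a slightly more careful accounting---for instance, inducting directly on the residual $\norm{\L(x_t)-x_t}$ while using the identity $\beta_{t+1}(\L(x_t)-x_t) = (x_{t+1}-x_t) - (1-\beta_{t+1})(x_0-x_t)$ and absorbing the anchor term into a sharper inductive hypothesis---rather than the one-shot triangle-inequality conversion sketched above.
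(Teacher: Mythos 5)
Your first three stages are correct and are exactly the standard Sabach--Shtern-style argument: the boundedness $\norm{x_t-x^\star}\le\norm{x_0-x^\star}$, the identity $x_{t+1}-x_t=\beta_{t+1}(\L(x_t)-\L(x_{t-1}))+(\beta_{t+1}-\beta_t)(\L(x_{t-1})-x_0)$, and the induction giving $\norm{x_{t+1}-x_t}\le\frac{4}{t+2}\norm{x_0-x^\star}$ all check out (the telescoping of $\frac{4}{t+3}+\frac{4}{(t+2)(t+3)}=\frac{4}{t+2}$ is right). Note that the paper itself does not reprove this theorem --- it imports it from Sabach and Shtern's Lemma 5 with the remark that the argument carries over to general norms --- so there is no in-paper proof to compare against; your sketch has to stand on its own.

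It does not, however, reach the stated bound, and you have correctly diagnosed where: the conversion $\L(x_t)-x_t=(\L(x_t)-x_{t+1})+(x_{t+1}-x_t)$ yields $\norm{\L(x_t)-x_t}\le\frac{4}{t+3}\norm{x_0-x^\star}+\frac{4}{t+2}\norm{x_0-x^\star}$, i.e.\ roughly $\frac{8}{t+1}\norm{x_0-x^\star}$, twice the claimed constant. The repair you gesture at does not close this gap. Inducting directly on the residual via $d_{t+1}\le\beta_{t+1}d_t+(1-\beta_{t+1})\bigl(\norm{x_0-x_t}+\norm{\L(x_t)-x_0}\bigr)\le\beta_{t+1}d_t+\frac{8}{t+3}\norm{x_0-x^\star}$ is worse, not better: the inhomogeneous term decays only like $1/t$ while the homogeneous part $\prod_j\beta_j$ decays like $1/t^2$, so this recursion by itself does not even give an $O(1/t)$ bound. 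Likewise, the sharper variants one can extract from your own estimates --- e.g.\ using the exact solution $\norm{x_t-x_{t-1}}\le\frac{4t}{(t+1)(t+2)}\norm{x_0-x^\star}$ of your difference recursion together with $\norm{x_t-x_0}\le\beta_t\norm{\L(x_{t-1})-x_0}\le\frac{2t}{t+2}\norm{x_0-x^\star}$ in the identity $\beta_{t+1}(\L(x_t)-x_t)=(x_{t+1}-x_t)-(1-\beta_{t+1})(x_0-x_t)$ --- still land at $\frac{4(2t+1)}{(t+1)(t+2)}\norm{x_0-x^\star}$, which exceeds $\frac{4}{t+1}\norm{x_0-x^\star}$ for every $t\ge2$. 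So what you have actually proved is the theorem with constant $8$ in place of $4$ (which would only inflate constants downstream, e.g.\ in Theorem \ref{thm:optimal_contractive_fixed_point_iter}); obtaining the constant $4$ is precisely the nontrivial content of the cited lemma, and your sketch leaves that step unproved.
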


\begin{proof}[Proof of Theorem \ref{thm:optimal_contractive_fixed_point_iter}]
    Let $x^\star$ be the unique fixed point of $\L$. Let $E = \left \lfloor \frac{1}{1-\gamma}\right \rfloor - 1$, which is $\geq 0$ since $\frac{1}{1-\gamma} \geq 1$. Since Algorithm \ref{alg:halpern_then_picard} generates iterates $x_0, \dots, x_{E}$ in an identical manner to the Halpern iteration described in Theorem \ref{thm:halpern_standard_bound}, and also if $\L$ is $\gamma$-contractive for $\gamma < 1$ then it is clearly nonexpansive, by Theorem \ref{thm:halpern_standard_bound} we have that
    \begin{align*}
        \norm{\L(x_t) - x_t} \leq \frac{4}{t+1}\norm{x_0 - x^\star}
    \end{align*}
    for all $t = 0, \dots, E$. In particular
    \begin{align*}
        \norm{\L(x_E) - x_E} \leq \frac{4}{E + 1}\norm{x_0 - x^\star} \leq \frac{4}{\left \lfloor \frac{1}{1-\gamma}\right \rfloor} \norm{x_0 - x^\star}.
    \end{align*}
    Now for $t = E+1, E+2, \dots$, since we have $x_{t} = \L(x_{t-1})$, it is immediate from $\gamma$-contractivity of $\L$ that
    \begin{align*}
        \norm{\L(x_t) - x_t} = \norm{\L(x_t) - \L(x_{t-1})} \leq \gamma \norm{x_t - x_{t-1}} = \gamma \norm{\L(x_{t-1}) - x_{t-1}}.
    \end{align*}
    Therefore for all $ t = E+1, E+2, \dots$ we have
    \begin{align*}
        \norm{\L(x_t) - x_t} \leq \gamma^{t - E} \norm{\L(x_E) - x_E} \leq \gamma^{t - E} \frac{4}{\left \lfloor \frac{1}{1-\gamma}\right \rfloor}\norm{x_0 - x^\star}.
    \end{align*}

    Now we relate these bounds to the quantity $\frac{\gamma^t}{\sum_{i=0}^t \gamma^i}$. First we note the useful fact that
    \begin{align}
        \sup_{\gamma \in (0,1)} \gamma^{-\frac{1}{1-\gamma}+1} = e. \label{eq:gamma_eff_hzn_ineq}
    \end{align}
    which we will verify later.
    Now for the case that $t =0 , \dots, E$, we can bound
    \begin{align*}
        \frac{\frac{4}{t+1}}{\frac{\gamma^t}{\sum_{i=0}^t \gamma^i}} =  4 \frac{\sum_{i=0}^t \gamma^i}{t+1} \gamma^{-t} \leq 4 \frac{\sum_{i=0}^t 1}{t+1} \gamma^{-E} \leq 4e
    \end{align*}
    using~\eqref{eq:gamma_eff_hzn_ineq} in the final inequality, since $\gamma^{-\left \lfloor \frac{1}{1-\gamma}\right \rfloor+1} \leq \gamma^{-\frac{1}{1-\gamma}+1}$. Next for the case that $t \geq E+1$, we can bound
    \begin{align*}
        \frac{\frac{4}{\left \lfloor \frac{1}{1-\gamma}\right \rfloor}\gamma^{t-E}}{\frac{\gamma^t}{\sum_{i=0}^t \gamma^i}} = 4 \left(\frac{1}{\left \lfloor \frac{1}{1-\gamma}\right \rfloor}\sum_{i=0}^t \gamma^i \right) \gamma^{-E} \leq 4e \frac{\frac{1}{1-\gamma}}{\left \lfloor \frac{1}{1-\gamma}\right \rfloor}
    \end{align*}
    where in the final inequality we used~\eqref{eq:gamma_eff_hzn_ineq} again and that $\sum_{i=0}^t \gamma^i \leq \sum_{i=0}^\infty \gamma^i = \frac{1}{1-\gamma}$.

    Therefore we have shown that
    \begin{align}
        \norm{\L(x_t) - x_t} \leq \begin{cases}
            \frac{4}{t+1} \norm{x_0 - x^\star} & t \leq E \\
            4 \frac{1}{\left \lfloor \frac{1}{1-\gamma}\right \rfloor}\gamma^{t-E} \norm{x_0 - x^\star} & t > E
        \end{cases} ~~~\leq~ 4e \frac{\frac{1}{1-\gamma}}{\left \lfloor \frac{1}{1-\gamma}\right \rfloor}\frac{\gamma^t}{\sum_{i=0}^t \gamma^i}\norm{x_0 - x^\star} .
    \end{align}
    In the case that $\frac{1}{1-\gamma}$ is an integer, we thus have that
    \begin{align}
        \norm{\L(x_t) - x_t} \leq \begin{cases}
            \frac{4}{t+1} \norm{x_0 - x^\star} & t \leq E \\
            4 (1-\gamma)\gamma^{t-E} \norm{x_0 - x^\star} & t > E
        \end{cases} ~~~\leq~ 4e\frac{\gamma^t}{\sum_{i=0}^t \gamma^i}\norm{x_0 - x^\star} .\label{eq:integer_eff_hzn_bound}
    \end{align}
    In the case that $\frac{1}{1-\gamma}$ is not an integer, using the fact that $\left \lfloor \frac{1}{1-\gamma}\right \rfloor \geq \frac{1}{2}\frac{1}{1-\gamma}$ since $\frac{1}{1-\gamma}\geq1$, we obtain
    \begin{align*}
        \norm{\L(x_t) - x_t} \leq \begin{cases}
            \frac{4}{t+1} \norm{x_0 - x^\star} & t \leq E \\
            8 (1-\gamma)\gamma^{t-E} \norm{x_0 - x^\star} & t > E
        \end{cases} ~~~\leq~ 8e\frac{\gamma^t}{\sum_{i=0}^t \gamma^i}\norm{x_0 - x^\star} .
    \end{align*}

    Now it remains to verify~\eqref{eq:gamma_eff_hzn_ineq}. Note $\gamma \mapsto \gamma^{-\frac{1}{1-\gamma}+1}$ is a smooth function on $(0,1)$, and it suffices to show that the function $\gamma \mapsto \log(\gamma^{-\frac{1}{1-\gamma}+1})$ is non-decreasing and approaches $1$ as $\gamma \to 1$ (since $\log$ is monotone increasing). To show $\gamma \mapsto \log(\gamma^{-\frac{1}{1-\gamma}+1})$ is non-decreasing it suffices to show that its derivative is $\geq 0$. We have
    \begin{align*}
        \frac{d}{d\gamma}\log(\gamma^{-\frac{1}{1-\gamma}+1}) = \frac{d}{d\gamma} \frac{\gamma}{\gamma - 1}\log(\gamma) = \frac{1}{\gamma}\frac{\gamma}{\gamma - 1} + \frac{\gamma - 1 - \gamma}{(\gamma - 1)^2}\log(\gamma) = \frac{\gamma - 1 -\log(\gamma)}{(\gamma - 1)^2}
    \end{align*}
    and by Taylor's theorem applied to $\log(\gamma)$ about $\gamma = 1$, for any $\gamma \in (0,1)$ there exists some $\xi \in (0,1)$ such that $\log(\gamma) = 0 + 1(\gamma - 1) - \xi^{-2}(\gamma - 1)^2 < \gamma - 1$, and so $\gamma - 1 -\log(\gamma) > 0$ for any $\gamma \in (0,1)$ and thus the function $\log(\gamma^{-\frac{1}{1-\gamma}+1})$ is nondecreasing. Next, to show that the limit of this function as $\gamma \to 1$ is $1$, by L'Hopital's rule we have
    \begin{align*}
        \lim_{\gamma \to 1} \log(\gamma^{-\frac{1}{1-\gamma}+1}) = \lim_{\gamma \to 1} \frac{\gamma}{\gamma - 1}\log(\gamma) = \lim_{\gamma \to 1} \frac{\log(\gamma) + 1}{1} = 1
    \end{align*}
    as desired. Thus $\sup_{\gamma \in (0,1)} \log(\gamma^{-\frac{1}{1-\gamma}+1}) = 1$, and so since $\log$ is monotone increasing we have that $\sup_{\gamma \in (0,1)} \gamma^{-\frac{1}{1-\gamma}+1} = \exp(1) = e$ as desired.
\end{proof}

\section{Proofs of DMDP Results}
\label{sec:DMDP_results_proofs}
\subsection{Gain approximation lemmas}
In this section we develop several different bounds on the quantity $\infnorm{V_\gamma^\star - \frac{1}{1-\gamma}\rho^\star}$.

\begin{lem}
\label{lem:vstar_rhostar_unmod_bellman}
    For any $\gamma \in (0,1)$, we have that
    \begin{align*}
        \infnorm{V_\gamma^\star - \frac{1}{1-\gamma}\rho^\star} \leq  \spannorm{h^{\pistar}} + \Tdrop + \spannorm{h^{\pistar}} \Tdrop.
    \end{align*}
\end{lem}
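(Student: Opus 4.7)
The plan is to prove matching two-sided bounds
\[
    -\spannorm{h^{\pistar}} ~\leq~ V_\gamma^\star(s) - \frac{\rho^\star(s)}{1-\gamma} ~\leq~ \spannorm{h^{\pistar}} + \Tdrop + \spannorm{h^{\pistar}}\Tdrop
\]
at every state $s$, from which the stated $\infnorm{\cdot}$ bound follows (the upper side dominates since $\Tdrop, \spannorm{h^{\pistar}} \geq 0$).

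For the \emph{lower bound}, I would use $V_\gamma^\star \geq V_\gamma^{\pistar}$ for a Blackwell-optimal $\pistar$. Since $\pistar$ is gain-optimal, $\rho^{\pistar}=\rho^\star$ and therefore $P_{\pistar}\rho^\star = P_{\pistar}\rho^{\pistar} = \rho^{\pistar} = \rho^\star$, so $\E_s^{\pistar}[\rho^\star(S_t)]=\rho^\star(s)$ for all $t$. Writing
\[
V_\gamma^{\pistar}(s)-\tfrac{\rho^\star(s)}{1-\gamma}=\sum_{t\geq 0}\gamma^t\E_s^{\pistar}[R_t-\rho^\star(S_t)]
\]
and substituting the Poisson equation $r_{\pistar}-\rho^\star=(I-P_{\pistar})h^{\pistar}$, the right-hand side telescopes (by rearranging $\sum_t\gamma^t\E_s^{\pistar}[h^{\pistar}(S_t)-h^{\pistar}(S_{t+1})]$) into the closed form
\[
h^{\pistar}(s)-(1-\gamma)e_s^\top(I-\gamma P_{\pistar})^{-1}P_{\pistar}h^{\pistar}.
\]
This expression is invariant under $h^{\pistar}\mapsto h^{\pistar}+c\one$ since $P_{\pistar}\one=\one$, so picking $c$ to center $h^{\pistar}$ bounds its absolute value by $\spannorm{h^{\pistar}}$.

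For the \emph{upper bound}, the plan is to write $V_\gamma^\star=\sup_\pi V_\gamma^\pi$ and bound $V_\gamma^\pi(s)-\tfrac{\rho^\star(s)}{1-\gamma}$ uniformly in $\pi$. Decompose $R_t-\rho^\star(s)=[R_t-\rho^\star(S_t)]+[\rho^\star(S_t)-\rho^\star(s)]$. The second bracket contributes non-positively: the first modified Bellman equation $MP\rho^\star=\rho^\star$ forces $t\mapsto\E_s^\pi[\rho^\star(S_t)]$ to be non-increasing from $\rho^\star(s)$, so this bracket can be dropped for an upper bound. For the first bracket, the unmodified Bellman equation for $h^{\pistar}$ gives $r(s,a)+P_{sa}h^{\pistar}\leq\rho^\star(s)+h^{\pistar}(s)$ whenever $a$ is gain-preserving from $s$, whereas for gain-dropping $a$ I only use the crude bound $r(s,a)+P_{sa}h^{\pistar}-\rho^\star(s)-h^{\pistar}(s)\leq 1+\spannorm{h^{\pistar}}$ (from $r\in[0,1]$ and $\rho^\star\geq 0$). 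Combining these via an indicator of gain-dropping actions and summing against $\gamma^t$ produces the telescoping ``bias'' piece controlled by $\spannorm{h^{\pistar}}$ exactly as in the lower bound, plus an extra penalty $(1+\spannorm{h^{\pistar}})\sum_t\gamma^t\E_s^\pi[\ind(P_{S_t A_t}\rho^\star<\rho^\star(S_t))]$; the latter sum is at most $\Tdrop$ by Lemma~\ref{lem:etr_connection_bound}.

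The main obstacle is producing an extra penalty that scales as $\Tdrop(1+\spannorm{h^{\pistar}})$ rather than as $1/\mingap$ or $\infnorm{h^{\pistar}}$. The two key ingredients are (i) absorbing all gain-dropping excess into an indicator whose expected discounted (indeed, undiscounted) sum is exactly the quantity defining $\Tdrop$, and (ii) exploiting shift invariance of $h^{\pistar}-(1-\gamma)e_s^\top(I-\gamma P)^{-1}Ph^{\pistar}$ under $h^{\pistar}\mapsto h^{\pistar}+c\one$ to replace $\infnorm{h^{\pistar}}$ by $\spannorm{h^{\pistar}}$ in both bounds.
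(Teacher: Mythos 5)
Your proof is correct and follows essentially the same route as the paper's: the upper bound rests on the same case split between gain-preserving and gain-dropping actions via the unmodified Bellman equation, with a $(1+\spannorm{h^{\pistar}})$ penalty per gain-dropping step whose accumulation is bounded by $\Tdrop$, and the same span-invariant telescoping for the bias term. The only cosmetic differences are that you bound $V_\gamma^\pi$ uniformly over all policies in trajectory form and control the discounted indicator sum directly by its undiscounted counterpart (Lemma \ref{lem:etr_connection_bound}), whereas the paper works with the deterministic discounted-optimal policy $\pistar_\gamma$ in matrix form and reaches the same $\Tdrop$ bound by letting $\gamma'\uparrow 1$ in $(I-\gamma' P_{\pistar_\gamma})^{-1}e_Y$; the lower bound, which you re-derive, is simply cited in the paper.
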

\begin{proof}
    By \citet[Lemma 20]{zurek_span-based_2025}, $\infnorm{V_\gamma^{\pistar} - \frac{1}{1-\gamma}\rho^\star} \leq \spannorm{h^{\pistar}}$, which implies
    \begin{align*}
        V_\gamma^\star \geq V_\gamma^{\pistar} \geq \frac{1}{1-\gamma}\rho^\star - \spannorm{h^{\pistar}}.
    \end{align*}
    Thus it remains to upper-bound $V_\gamma^\star$. Let $\pistar_\gamma$ be a deterministic discounted optimal policy (such that $V_\gamma^\star = V_\gamma^{\pistar_\gamma}$), which is guaranteed to exist \cite{puterman_markov_1994}. From the unmodified Bellman equation~\eqref{eq:unmod_bellman_2} (with $\rho = \rho^\star$), if $s \in \S$ satisfies $e_s^\top P_{\pistar_\gamma}\rho^\star = \rho^\star(s)$, then
    \begin{align*}
        \rho^\star(s) + h^{\pistar}(s) = \max_{a \in \A :  P_{sa} \rho= \rho(s)} r(s,a)+P_{sa}h^{\pistar}  \geq r(s,\pistar_\gamma(s))+P_{s\pistar_\gamma(s)}h^{\pistar} = r_{\pistar_\gamma}(s) + e_s^\top P_{\pistar_\gamma} h^{\pistar}.
    \end{align*}
    If instead we have that $e_s^\top P_{\pistar_\gamma}\rho^\star < \rho^\star(s)$, then instead we can bound
    \begin{align*}
        r_{\pistar_\gamma}(s)+ e_s^\top P_{\pistar_\gamma} h^{\pistar} - r_{\pistar}(s) - e_s^\top P_{\pistar} h^{\pistar} \leq \spannorm{r} + e_s^\top (P_{\pistar_\gamma} - P_{\pistar}) h^{\pistar} \leq 1 + \spannorm{h^{\pistar}},
    \end{align*}
    and since $r_{\pistar}(s) + e_s^\top P_{\pistar} h^{\pistar} = \rho^\star(s) + h^{\pistar}(s)$, we have that
    \begin{align*}
        \rho^\star(s) + h^{\pistar}(s) + 1 + \spannorm{h^{\pistar}} \geq r_{\pistar_\gamma}(s)+ e_s^\top P_{\pistar_\gamma} h^{\pistar}.
    \end{align*}
    Thus, similarly to what we have done in above proofs, letting $Y$ be the set of states $s$ such that $e_s^{\top} P_{\pistar_\gamma} \rho^\star < \rho^\star(s)$, and letting $e_Y$ be the indicator vector for this set, combining the two cases we obtain the elementwise inequality
    \begin{align}
        r_{\pistar_\gamma}+  P_{\pistar_\gamma} h^{\pistar} \leq \rho^\star + h^{\pistar} + \left( 1 + \spannorm{h^{\pistar}} \right) e_Y. \label{eq:tdrop_vstar_bound_elementwise_etr_bd}
    \end{align}
    Thus using the fact that all entries of $(I - \gamma P_{\pistar_\gamma})^{-1}$ are nonnegative, we have that
    \begin{align}
        V_\gamma^\star &= (I - \gamma P_{\pistar_\gamma})^{-1} r_{\pistar_\gamma} \nonumber\\
        & \leq (I - \gamma P_{\pistar_\gamma})^{-1} \rho^\star + (I - \gamma P_{\pistar_\gamma})^{-1} (I - P_{\pistar_\gamma})h^{\pistar} + \left( 1 + \spannorm{h^{\pistar}} \right) (I - \gamma P_{\pistar_\gamma})^{-1} e_Y. \label{eq:vstar_upper_bd}
    \end{align}
    Now we bound all the terms in~\eqref{eq:vstar_upper_bd}. 
    First, since $P_{\pistar_\gamma} \rho^\star \leq \rho^\star$, we can derive that
    \begin{align*}
        P_{\pistar_\gamma}^k \rho^\star = P_{\pistar_\gamma}^{k-1} P_{\pistar_\gamma} \rho^\star \leq P_{\pistar_\gamma}^{k-1}  \rho^\star \leq \cdots \leq \rho^\star
    \end{align*}
    for any integer $k \geq 0$, and thus
    \begin{align*}
        (I - \gamma P_{\pistar_\gamma})^{-1} \rho^\star = \sum_{k=0}^\infty \gamma^k P_{\pistar_\gamma}^k \rho^\star \leq \sum_{k=0}^\infty \gamma^k  \rho^\star = \frac{1}{1-\gamma}\rho^\star.
    \end{align*}
    Next, by Lemma \ref{lem:value_to_gain_bias_span_bound} we have that $(I - \gamma P_{\pistar_\gamma})^{-1} (I - P_{\pistar_\gamma})h^{\pistar} \leq \spannorm{h^{\pistar}}\one$.
    Finally, as shown in Lemma \ref{lem:finiteness_of_tdrop}, $e_Y$ is only nonzero on states which are transient under the Markov chain $P_{\pistar_\gamma}$, which implies $P_\pi^\infty e_Y = 0$. Therefore for any $\gamma'$ we have that
    \begin{align*}
        (I - \gamma' P_{\pistar_\gamma})^{-1} e_Y = (I - \gamma' P_{\pistar_\gamma})^{-1} e_Y - \frac{1}{1-\gamma'} P_\pi^\infty e_Y
    \end{align*}
    and so by \citet[Exercise 38.9]{lattimore_bandit_2020} we have that
    \begin{align*}
        \lim_{\gamma' \uparrow 1} (I - \gamma' P_{\pistar_\gamma})^{-1} e_Y = \lim_{\gamma' \uparrow 1} (I - \gamma' P_{\pistar_\gamma})^{-1} e_Y - \frac{1}{1-\gamma'} P_\pi^\infty e_Y = H_{P_{\pistar_\gamma}} e_Y = \Tdrop^{\pistar_\gamma}
    \end{align*}
    where the final equality is due to Lemma \ref{lem:interpreting_Hpi}. But we also have that $(I - \gamma' P_{\pistar_\gamma})^{-1} e_Y$ is (elementwise) non-decreasing as $\gamma' \uparrow 1$, so this implies that
    \begin{align*}
        (I - \gamma P_{\pistar_\gamma})^{-1} e_Y \leq \lim_{\gamma' \uparrow 1} (I - \gamma' P_{\pistar_\gamma})^{-1} e_Y = \Tdrop^{\pistar_\gamma}.
    \end{align*}

    Finally using these three bounds in~\eqref{eq:vstar_upper_bd}, we obtain that
    \begin{align*}
        V_\gamma^\star \leq \frac{1}{1-\gamma}\rho^\star + \spannorm{h^{\pistar}} + \left( 1 + \spannorm{h^{\pistar}}\right) \Tdrop^{\pistar_\gamma} \leq \frac{1}{1-\gamma}\rho^\star + \spannorm{h^{\pistar}} + \left( 1 + \spannorm{h^{\pistar}}\right) \Tdrop.
    \end{align*}
\end{proof}

\begin{lem}
\label{lem:vstar_rhostar_mod_bellman}
    For any $\gamma \in (0,1)$, letting $h$ be any solution to both the modified and unmodified Bellman equations~\eqref{eq:mod_bellman_2} and~\eqref{eq:unmod_bellman_2}, we have that
    \begin{align*}
        \infnorm{V_\gamma^\star - \frac{1}{1-\gamma}\rho^\star} \leq \spannorm{h}.
    \end{align*}
\end{lem}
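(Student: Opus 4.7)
The goal is to sandwich $V_\gamma^\star$ between $\frac{1}{1-\gamma}\rho^\star - \spannorm{h}\one$ and $\frac{1}{1-\gamma}\rho^\star + \spannorm{h}\one$. The essential tool is Lemma \ref{lem:simultaneous_argmax_equiv_unmod_soln}, which since $h$ satisfies both the modified and unmodified Bellman equations yields a deterministic policy $\pi$ that simultaneously attains both maxima, so $P_\pi \rho^\star = \rho^\star$ and $r_\pi + P_\pi h = h + \rho^\star$, i.e.\ $r_\pi = \rho^\star + (I - P_\pi)h$.

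\textbf{Lower bound.} Using $V_\gamma^\star \geq V_\gamma^\pi = (I - \gamma P_\pi)^{-1}r_\pi$ and substituting the identity above, together with the fact that $P_\pi \rho^\star = \rho^\star$ implies $(I - \gamma P_\pi)^{-1}\rho^\star = \frac{1}{1-\gamma}\rho^\star$, I would obtain the clean identity
\[V_\gamma^\pi = \frac{1}{1-\gamma}\rho^\star + (I - \gamma P_\pi)^{-1}(I - P_\pi)h.\]

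\textbf{Upper bound.} For any deterministic $\pi'$, the modified Bellman equation gives the elementwise bound $r_{\pi'} + P_{\pi'} h \leq h + \rho^\star$, i.e.\ $r_{\pi'} \leq \rho^\star + (I - P_{\pi'})h$. Applying the nonnegative operator $(I - \gamma P_{\pi'})^{-1}$ and using $P_{\pi'}\rho^\star \leq \rho^\star$ (so $(I - \gamma P_{\pi'})^{-1}\rho^\star \leq \frac{1}{1-\gamma}\rho^\star$ by the Neumann series), I get
\[V_\gamma^{\pi'} \leq \frac{1}{1-\gamma}\rho^\star + (I - \gamma P_{\pi'})^{-1}(I - P_{\pi'})h,\]
and then maximize over $\pi' \in \PiMD$, which is enough since a deterministic optimal policy exists for the discounted problem.

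\textbf{Bounding the residual term.} Both bounds reduce to showing $\infnorm{(I - \gamma P)^{-1}(I - P)h} \leq \spannorm{h}$ for any stochastic matrix $P$. I would use the algebraic identity
\[(I - \gamma P)^{-1}(I - P) = I - (1-\gamma)(I - \gamma P)^{-1} P,\]
verified by expanding $I - P = (I - \gamma P) - (1-\gamma)P$. Since the expression is span-invariant (as $P\one = \one$), I can replace $h$ by $\hm := h - \min_s h(s)\one$, which satisfies $0 \leq \hm \leq \spannorm{h}\one$. Then both $\hm$ and $(1-\gamma)(I - \gamma P)^{-1}P\hm$ lie in $[0, \spannorm{h}\one]$ elementwise (for the latter, use that $(I-\gamma P)^{-1}$ is nonnegative with row sums $\frac{1}{1-\gamma}$), so their difference has infinity norm at most $\spannorm{h}$. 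Combining the two one-sided bounds finishes the proof.

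No step looks particularly hard here; the only place requiring care is the algebraic identity for $(I-\gamma P)^{-1}(I-P)$ and the span-invariance argument that lets one pass from a vector with possibly negative entries to a nonnegative one. The main conceptual point is that the restricted commutativity encoded in Lemma \ref{lem:simultaneous_argmax_equiv_unmod_soln} is exactly what makes the $\rho^\star$ and $h$ pieces decouple cleanly in the discounted value, something that is not available for a generic unmodified-Bellman-equation solution and is why the weaker Lemma \ref{lem:vstar_rhostar_unmod_bellman} had to pay the extra $\Tdrop$ factors.
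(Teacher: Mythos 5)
Your proposal is correct and follows essentially the same route as the paper's proof: the simultaneously argmaxing policy from Lemma \ref{lem:simultaneous_argmax_equiv_unmod_soln} for the lower bound, the modified Bellman inequality applied to a discounted-optimal deterministic policy for the upper bound, and the bound $\infnorm{(I-\gamma P)^{-1}(I-P)h}\leq\spannorm{h}$, which the paper isolates as Lemma \ref{lem:value_to_gain_bias_span_bound} and you re-derive inline via the same difference-of-stochastic-matrices identity.
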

\begin{proof}
By Lemma \ref{lem:simultaneous_argmax_equiv_unmod_soln}, there exists a policy $\pi$ attain both maximums in the modified Bellman equations simultaneously in the sense of~\eqref{eq:mod_bellman_1_vec_simul_argmax} and~\eqref{eq:mod_bellman_2_vec_simul_argmax}, that is, $P_\pi \rho^\star = \rho^\star$ and $r_\pi + P_\pi h = \T^\pi(h) = \T(h) = \rho^\star + h$.
    Rearranging, this implies that $r_\pi = \rho^\star + (I - P_\pi)h$.
    Then we have
    \begin{align*}
        V_\gamma^\star & \geq V_\gamma^\pi \\
        &= (I - \gamma P_\pi)^{-1} r_\pi \\
        &= (I - \gamma P_\pi)^{-1} \left( \rho^\star + (I - P_\pi)h \right) \\
        &= (I - \gamma P_\pi)^{-1} \rho^\star + (I - \gamma P_\pi)^{-1} (I - P_\pi)h.
    \end{align*}
    Since $P_\pi \rho^\star = \rho^\star$, this implies that $P_\pi^t \rho^\star = \rho^\star$ for all $t \geq 0$, so we have that 
    \begin{align*}
        (I - \gamma P_\pi)^{-1} \rho^\star = \sum_{t=0}^\infty \gamma^t P_\pi^t \rho^\star = \sum_{t=0}^\infty \gamma^t \rho^\star = \frac{1}{1-\gamma}\rho^\star.
    \end{align*}
    Also $(I - \gamma P_\pi)^{-1} (I - P_\pi)h \geq -\spannorm{h}\one$ by using Lemma \ref{lem:value_to_gain_bias_span_bound}.
    Therefore we have shown that $V_\gamma^\star \geq \frac{1}{1-\gamma}\rho^\star - \spannorm{h}\one$.

    By the second modified Bellman equation~\eqref{eq:mod_bellman_2} we have that
    \begin{align*}
        \rho^\star + h = M(r + Ph) \geq M^{\pistar_\gamma}(r + Ph) = r_{\pistar_\gamma} + P_{\pistar_\gamma}h.
    \end{align*}
    Rearranging we have that $r_{\pistar_\gamma} \leq \rho^\star + (I - P_{\pistar_\gamma})h$. Then by monotonicity of $(I - \gamma P_{P_{\pistar_\gamma}})^{-1}$ we have that
    \begin{align*}
        V_\gamma^\star &= (I - \gamma P_{{\pistar_\gamma}})^{-1}r_{\pistar_\gamma}\\ &\leq (I - \gamma {P_{\pistar_\gamma}})^{-1}\left( \rho^\star + (I - P_{\pistar_\gamma})h\right)\\
        &= (I - \gamma {P_{\pistar_\gamma}})^{-1} \rho^\star + (I - \gamma {P_{\pistar_\gamma}})^{-1}(I - P_{\pistar_\gamma})h.
    \end{align*}
    Since $P_{\pistar_\gamma} \rho^\star \leq \rho^\star$, this implies that $P_{\pistar_\gamma}^t \rho^\star \leq \rho^\star$ for all $t \geq 0$, and we have that
    \begin{align*}
        (I - \gamma {P_{\pistar_\gamma}})^{-1} \rho^\star = \sum_{t=0}^\infty \gamma^t P_{\pistar_\gamma}^t \rho^\star \leq \sum_{t=0}^\infty \gamma^t  \rho^\star = \frac{1}{1-\gamma}\rho^\star.
    \end{align*}
    Again using Lemma \ref{lem:value_to_gain_bias_span_bound} we have that $(I - \gamma {P_{\pistar_\gamma}})^{-1}(I - P_{\pistar_\gamma})h \leq \spannorm{h}\one$.
    Thus we have checked the other direction that $V_\gamma^\star \leq \frac{1}{1-\gamma}\rho^\star + \spannorm{h}\one$, so combining with the above lower-bound and rearranging, we have that
    \begin{align*}
        \infnorm{V_\gamma^\star - \frac{1}{1-\gamma}\rho^\star} \leq \spannorm{h}.
    \end{align*}
\end{proof}

\subsection{Discounted reduction results}
In this subsection we relate the terms appearing in Corollary \ref{cor:AMDP_performance_diff_cor} to those which are directly controlled by solving discounted MDPs, that is, the discounted fixed-point error $\infnorm{\T_\gamma(V) - V}$ and the value error $\infnorm{V - V_\gamma^\star}$.

\begin{lem}
\label{lem:DMDP_red_fp_err_bound}
    Suppose for some $V \in \R^{\S}$ that some policy $\pi$ is $\varepsilon$-greedy with respect to $r + \gamma P V$, that is, $\T_\gamma^\pi(V) \geq \T_\gamma(V) - \varepsilon \one$. Then
    \begin{align*}
        \infnorm{\T(V) - V - \rho^\star} \leq \infnorm{P_\pi \rho^\star - \rho^\star} + \infnorm{\T_\gamma(V) - V} + (1-\gamma) \infnorm{V - \frac{1}{1-\gamma}\rho^\star} + \varepsilon.
    \end{align*}
    In particular if $\pi$ is greedy with respect to $r + \gamma P V$ ($\varepsilon = 0$) then
    \begin{align*}
        \infnorm{\T(V) - V - \rho^\star} \leq \infnorm{P_\pi \rho^\star - \rho^\star} + \infnorm{\T_\gamma(V) - V} + (1-\gamma) \infnorm{V - \frac{1}{1-\gamma}\rho^\star}.
    \end{align*}
\end{lem}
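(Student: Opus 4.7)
The plan is to split the target quantity $\T(V) - V - \rho^\star$ using the elementary identity $r + PV = (r + \gamma PV) + (1-\gamma)PV$, and to bound the two sides $\T(V) - V - \rho^\star \leq \cdots$ and $\T(V) - V - \rho^\star \geq \cdots$ separately, elementwise. For the upper direction I will pick an arbitrary maximizer $\tilde\pi$ of $\T(V)$, so that
\begin{align*}
\T(V) - V - \rho^\star = \T_\gamma^{\tilde\pi}(V) - V + (1-\gamma)P_{\tilde\pi}V - \rho^\star \leq \T_\gamma(V) - V + (1-\gamma)P_{\tilde\pi}V - \rho^\star.
\end{align*}
For the lower direction I will use $\T(V) \geq \T^\pi(V)$ and the $\varepsilon$-greedy hypothesis $\T_\gamma^\pi(V) \geq \T_\gamma(V) - \varepsilon\one$ to obtain
\begin{align*}
\T(V) - V - \rho^\star \geq \T_\gamma(V) - V - \varepsilon\one + (1-\gamma)P_\pi V - \rho^\star.
\end{align*}

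The key identity for handling the leftover $(1-\gamma)P_\bullet V - \rho^\star$ on both sides is to absorb a discounted copy of $\rho^\star$, writing
\begin{align*}
(1-\gamma)P_\bullet V - \rho^\star = (1-\gamma)P_\bullet\!\left(V - \tfrac{1}{1-\gamma}\rho^\star\right) + (P_\bullet \rho^\star - \rho^\star).
\end{align*}
Since $\rho^\star$ satisfies $M(P\rho^\star) = \rho^\star$, we have $P_\bullet \rho^\star \leq \rho^\star$ for every policy, so for the upper bound the term $P_{\tilde\pi}\rho^\star - \rho^\star$ is $\leq 0$ and can simply be discarded. For the lower bound we instead use $P_\pi \rho^\star - \rho^\star \geq -\infnorm{P_\pi\rho^\star - \rho^\star}\one$, which is exactly where the first term of the stated bound appears. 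The term $(1-\gamma)P_\bullet(V - \frac{1}{1-\gamma}\rho^\star)$ is bounded in absolute value by $(1-\gamma)\infnorm{V - \frac{1}{1-\gamma}\rho^\star}$ since $P_\bullet$ is stochastic.

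Combining the two directions and taking $\infnorm{\cdot}$ yields the stated bound; the upper direction actually gives a slightly better bound (without $\infnorm{P_\pi \rho^\star - \rho^\star}$ and without $\varepsilon$), but the lower direction is the binding one and produces exactly the four terms on the right-hand side. There is no real obstacle here — the proof is a careful bookkeeping exercise — but the one step to handle with care is that the maximizer $\tilde\pi$ used in the upper bound is \emph{not} the same as $\pi$, so the $\infnorm{P_\pi \rho^\star - \rho^\star}$ factor only enters through the lower direction, while monotonicity $P_\bullet\rho^\star \leq \rho^\star$ (a consequence of the modified Bellman equation for $\rho^\star$) is what makes the corresponding term in the upper direction drop out for free.
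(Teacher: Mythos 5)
Your proposal is correct and follows essentially the same route as the paper: both sandwich $\T(V) - V - \rho^\star$ via the split $PV = \gamma PV + (1-\gamma)PV$, absorb a copy of $\rho^\star$ through $(1-\gamma)P_\bullet V = (1-\gamma)P_\bullet(V - \tfrac{1}{1-\gamma}\rho^\star) + P_\bullet\rho^\star$, discard $P\rho^\star \le \rho^\star$ in the upper direction, and pay $\infnorm{P_\pi\rho^\star - \rho^\star} + \varepsilon$ only in the lower direction. The only cosmetic difference is that you extract explicit policies ($\tilde\pi$ and $\pi$) while the paper manipulates the max operator $M$ directly via monotonicity and subadditivity; the resulting bounds are identical.
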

\begin{proof}
We just need to prove the first statement, from which the second follows immediately.
    We can lower-bound $\T(V)$ as
    \begin{align*}
        \T(V) &= M(r + PV) = M\left(r + \gamma PV + (1-\gamma)PV\right) \\
        &\geq M\left(r + \gamma PV + (1-\gamma)P\frac{1}{1-\gamma}\rho^\star  - (1-\gamma) P \infnorm{V - \frac{1}{1-\gamma}\rho^\star}\one \right) \\
        &= M\left(r + \gamma PV + P\rho^\star  - (1-\gamma)  \infnorm{V - \frac{1}{1-\gamma}\rho^\star}\one \right) \\
        &= M\left(r + \gamma PV + P\rho^\star \right) - (1-\gamma)  \infnorm{V - \frac{1}{1-\gamma}\rho^\star}\one  \\
        &\geq  M^\pi\left(r + \gamma PV + P\rho^\star \right) - (1-\gamma)  \infnorm{V - \frac{1}{1-\gamma}\rho^\star}\one  \\
        &= M^\pi\left(r + \gamma PV \right)+ P_{\pi}\rho^\star  - (1-\gamma)  \infnorm{V - \frac{1}{1-\gamma}\rho^\star}\one \\
        &= \T^\pi_\gamma(V)+ P_{\pi}\rho^\star  - (1-\gamma)  \infnorm{V - \frac{1}{1-\gamma}\rho^\star}\one \\
        &= \T_\gamma(V) - \varepsilon \one + P_{\pi}\rho^\star  - (1-\gamma)  \infnorm{V - \frac{1}{1-\gamma}\rho^\star}\one.
    \end{align*}
    We upper-bound $\T(V)$ as
    \begin{align*}
        \T(V) &= M(r + PV) = M\left(r + \gamma PV + (1-\gamma)PV\right) \\
        & \leq M\left(r + \gamma PV + (1-\gamma)P\frac{1}{1-\gamma}\rho^\star  + (1-\gamma) P \infnorm{V - \frac{1}{1-\gamma}\rho^\star}\one \right) \\
        &= M\left(r + \gamma PV + P\rho^\star  + (1-\gamma)  \infnorm{V - \frac{1}{1-\gamma}\rho^\star}\one \right) \\
        &= M\left(r + \gamma PV + P\rho^\star \right) + (1-\gamma)  \infnorm{V - \frac{1}{1-\gamma}\rho^\star}\one  \\
        &\leq  M\left(r + \gamma PV\right) + M\left(P\rho^\star \right) + (1-\gamma)  \infnorm{V - \frac{1}{1-\gamma}\rho^\star}\one  \\
        &= \T_\gamma(V)+ \rho^\star  + (1-\gamma)  \infnorm{V - \frac{1}{1-\gamma}\rho^\star}\one .
    \end{align*}
    Subtracting $V + \rho^\star$ from both inequalities and combining, we obtain
    \begin{multline*}
        \T_\gamma(V) - V -\varepsilon \one + P_\pi \rho^\star - \rho^\star - (1-\gamma) \infnorm{V - \frac{1}{1-\gamma}\rho^\star}\one \\
        \leq \T(V) - V - \rho^\star \\
        \leq \T_\gamma(V) - V + (1-\gamma) \infnorm{V - \frac{1}{1-\gamma}\rho^\star}\one.
    \end{multline*}
    Therefore
    \begin{align*}
        \infnorm{\T(V) - V - \rho^\star} \leq \infnorm{P_\pi \rho^\star - \rho^\star} + \infnorm{\T_\gamma(V) - V} + (1-\gamma) \infnorm{V - \frac{1}{1-\gamma}\rho^\star} + \varepsilon.
    \end{align*}
\end{proof}

\begin{lem}
\label{lem:DMDP_red_gain_pres_bound}
Suppose for some $V \in \R^{\S}$ that some policy $\pi$ is $\varepsilon$-greedy with respect to $r + \gamma P V$, that is, $\T_\gamma^\pi(V) \geq \T_\gamma(V) - \varepsilon \one$. Then
\begin{align*}
    \rho^\star - P_\pi \rho^\star &\leq  (1-\gamma) \frac{1 +\varepsilon+ \infnorm{\T_\gamma(V) - V}}{\gamma}\one + 2(1-\gamma)\infnorm{ V - \frac{1}{1-\gamma}\rho^\star}\one.
\end{align*}
\end{lem}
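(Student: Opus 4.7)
The plan is to use the $\varepsilon$-greedy assumption to compare $\pi$ against a Blackwell-optimal policy $\pistar$, exploiting the key fact that $P_{\pistar}\rho^\star = \rho^\star$. Since $\pi$ is $\varepsilon$-greedy and $\T_\gamma(V) \geq \T^{\pistar}_\gamma(V)$, we obtain the elementwise inequality
\begin{align*}
r_\pi + \gamma P_\pi V \;\geq\; r_{\pistar} + \gamma P_{\pistar} V - \varepsilon \one.
\end{align*}
The strategy is then to replace $V$ by its proxy $\tfrac{1}{1-\gamma}\rho^\star$ on both sides, which converts $\gamma P_\pi V$ and $\gamma P_{\pistar} V$ into expressions involving $P_\pi \rho^\star$ and $\rho^\star$ respectively.

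Concretely, using $V \leq \tfrac{1}{1-\gamma}\rho^\star + \infnorm{V - \tfrac{1}{1-\gamma}\rho^\star}\one$ and the fact that $P_\pi$ is row-stochastic, I would upper-bound the left side by $r_\pi + \tfrac{\gamma}{1-\gamma} P_\pi \rho^\star + \gamma \infnorm{V - \tfrac{1}{1-\gamma}\rho^\star}\one$. Dually, using $V \geq \tfrac{1}{1-\gamma}\rho^\star - \infnorm{V - \tfrac{1}{1-\gamma}\rho^\star}\one$ together with $P_{\pistar}\rho^\star = \rho^\star$, I would lower-bound the right side by $r_{\pistar} + \tfrac{\gamma}{1-\gamma}\rho^\star - \gamma \infnorm{V - \tfrac{1}{1-\gamma}\rho^\star}\one - \varepsilon\one$. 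Combining and rearranging gives
\begin{align*}
\frac{\gamma}{1-\gamma}\bigl(\rho^\star - P_\pi \rho^\star\bigr) \;\leq\; r_\pi - r_{\pistar} + 2\gamma \infnorm{V - \tfrac{1}{1-\gamma}\rho^\star}\one + \varepsilon \one.
\end{align*}

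Finally I would bound $r_\pi - r_{\pistar} \leq \one$ elementwise (from $r_\pi \leq \one$ and $r_{\pistar} \geq \zero$) and multiply through by $\tfrac{1-\gamma}{\gamma}$ to get
\begin{align*}
\rho^\star - P_\pi \rho^\star \;\leq\; \frac{1-\gamma}{\gamma}(1 + \varepsilon)\one + 2(1-\gamma)\infnorm{V - \tfrac{1}{1-\gamma}\rho^\star}\one,
\end{align*}
which is in fact slightly tighter than the stated bound; the form in the lemma follows by the trivial slackening $1 + \varepsilon \leq 1 + \varepsilon + \infnorm{\T_\gamma(V) - V}$. There is no real conceptual obstacle here: the argument is essentially careful sign tracking when applying elementwise bounds to $P_\pi V$ versus $P_{\pistar} V$, and the crucial ingredient is that the Blackwell-optimal policy $\pistar$ (which always exists) provides the identity $P_{\pistar}\rho^\star = \rho^\star$ that allows the factor $\tfrac{1}{1-\gamma}$ to cancel after rearrangement.
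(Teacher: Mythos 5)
Your proof is correct, and it takes a genuinely different route from the paper's. The paper never invokes a Blackwell-optimal policy: it instead rewrites $P_\pi V = \frac{V - r_\pi}{\gamma} + \frac{\T_\gamma^\pi(V) - V}{\gamma}$ to deduce $V - P_\pi V \leq \frac{1+\varepsilon+\infnorm{\T_\gamma(V)-V}}{\gamma}\one$ from the $\varepsilon$-greedy property alone, and then converts $\rho^\star - P_\pi\rho^\star$ into $(1-\gamma)(V - P_\pi V)$ plus two proxy-error terms $(1-\gamma)\infnorm{V - \frac{1}{1-\gamma}\rho^\star}$. You instead compare $\pi$ directly against $\pistar$ inside the greedy inequality and use $P_{\pistar}\rho^\star = \rho^\star$ so that the $\frac{1}{1-\gamma}$ factors cancel upon rearrangement; every step (the two one-sided substitutions of $\frac{1}{1-\gamma}\rho^\star$ for $V$ with the correct signs under the stochastic matrices $P_\pi$ and $P_{\pistar}$, the bound $r_\pi - r_{\pistar}\leq\one$, and the final rescaling by $\frac{1-\gamma}{\gamma}$) checks out. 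Your version even drops the $\infnorm{\T_\gamma(V)-V}$ term, which is a genuine (if minor) sharpening of the stated lemma; it makes no difference downstream in Lemma \ref{lem:DMDP_red_main}, where that term re-enters through the other bounds, but it is a clean observation. What the paper's self-contained argument buys in exchange is that it needs no structural fact about $\rho^\star$ beyond $V$ being a good proxy for $\frac{1}{1-\gamma}\rho^\star$, whereas yours leans on the existence of $\pistar$ with $P_{\pistar}\rho^\star=\rho^\star$ (which is guaranteed in this finite setting, so this costs nothing here).
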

\begin{proof}
    First we calculate that
    \begin{align*}
        P_\pi V& = \frac{1}{\gamma} V + \left(P_\pi V - \frac{1}{\gamma} V \right) = \frac{1}{\gamma} V + \frac{1}{\gamma}\left(\gamma P_\pi V -  V \right) \\
        &= \frac{V - r_\pi}{\gamma} + \frac{r_\pi + \gamma P_\pi V -  V}{\gamma} = \frac{V - r_\pi}{\gamma} + \frac{\T^\pi_\gamma(V) - V}{\gamma} \\
        & \geq \frac{V - r_\pi}{\gamma} + \frac{\T_\gamma(V) - V}{\gamma} - \frac{\varepsilon}{\gamma}
    \end{align*}
    which implies
    \begin{align*}
        V - P_\pi V \leq V - \frac{V}{\gamma} + \frac{\infnorm{r_\pi}+\varepsilon}{\gamma}\one + \frac{\infnorm{\T_\gamma(V) - V}}{\gamma} \leq \frac{1 + \varepsilon + \infnorm{\T_\gamma(V) - V}}{\gamma}\one.
    \end{align*}
    Using this key bound, we have that
    \begin{align}
        \rho^\star - P_\pi \rho^\star &\leq (1-\gamma)V + (1-\gamma)\infnorm{\frac{1}{1-\gamma}\rho^\star - V} \one - P_\pi \rho^\star \nonumber\\
        & \leq (1-\gamma)V + (1-\gamma)\infnorm{\frac{1}{1-\gamma}\rho^\star - V} \one - (1-\gamma) P_\pi V + P_\pi (1-\gamma)\infnorm{\frac{1}{1-\gamma}\rho^\star - V}\one \nonumber\\
        &=  (1-\gamma) \left(V - P_\pi V \right) + 2(1-\gamma)\infnorm{\frac{1}{1-\gamma}\rho^\star - V}\one \nonumber\\
        &\leq   (1-\gamma) \frac{1 +\varepsilon+ \infnorm{\T_\gamma(V) - V}}{\gamma}\one + 2(1-\gamma)\infnorm{\frac{1}{1-\gamma}\rho^\star - V}\one. \nonumber
    \end{align}
\end{proof}

\begin{lem}
\label{lem:DMDP_red_pe_fp_err_bound}
    Suppose for some $V \in \R^{\S}$ that some policy $\pi$ is $\varepsilon$-greedy with respect to $r + \gamma P V$, that is, $\T_\gamma^\pi(V) \geq \T_\gamma(V) - \varepsilon \one$. Then
    \begin{align*}
        \infnorm{\T^{\pi}(V) - V - \rho^\star} \leq \infnorm{\T(V) - V - \rho^\star} + \infnorm{P_\pi \rho^\star - \rho^\star} + 2(1-\gamma)\infnorm{V - \frac{1}{1-\gamma}\rho^\star} + \varepsilon.
    \end{align*}
\end{lem}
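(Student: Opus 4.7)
My plan is to reduce the claim to bounding $\infnorm{\T^\pi(V)-\T(V)}$, since the triangle inequality immediately gives
\[
\infnorm{\T^\pi(V)-V-\rho^\star} \;\leq\; \infnorm{\T^\pi(V)-\T(V)} + \infnorm{\T(V)-V-\rho^\star},
\]
and the second term is already the first summand in the target bound. So the whole job is to show that
\[
\infnorm{\T(V)-\T^\pi(V)} \;\leq\; \infnorm{P_\pi\rho^\star-\rho^\star} + 2(1-\gamma)\infnorm{V-\tfrac{1}{1-\gamma}\rho^\star} + \varepsilon.
\]

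For the lower side, $\T^\pi(V)\leq\T(V)$ holds trivially because $\T$ is the elementwise maximum of all $\T^{\pi'}$, giving $\T(V)-\T^\pi(V)\geq\zero$. For the upper side I will copy the decomposition used in Lemma \ref{lem:DMDP_red_fp_err_bound}: write
\[
\T^\pi(V) \;=\; \T_\gamma^\pi(V) + (1-\gamma)P_\pi V \;=\; \T_\gamma^\pi(V) + P_\pi\rho^\star + (1-\gamma)P_\pi\!\left(V-\tfrac{1}{1-\gamma}\rho^\star\right),
\]
and from the upper-bound half of Lemma \ref{lem:DMDP_red_fp_err_bound}'s proof,
\[
\T(V) \;\leq\; \T_\gamma(V) + \rho^\star + (1-\gamma)\infnorm{V-\tfrac{1}{1-\gamma}\rho^\star}\one.
\]
Subtracting and using the $\varepsilon$-greedy hypothesis $\T_\gamma(V)-\T_\gamma^\pi(V)\leq\varepsilon\one$ together with the trivial bound $(1-\gamma)P_\pi(V-\tfrac{1}{1-\gamma}\rho^\star)\geq -(1-\gamma)\infnorm{V-\tfrac{1}{1-\gamma}\rho^\star}\one$ yields
\[
\T(V)-\T^\pi(V) \;\leq\; \varepsilon\one + (\rho^\star-P_\pi\rho^\star) + 2(1-\gamma)\infnorm{V-\tfrac{1}{1-\gamma}\rho^\star}\one.
\]

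Since also $\T(V)-\T^\pi(V)\geq\zero$ and each entry of $\rho^\star-P_\pi\rho^\star$ is nonnegative (as $M(P\rho^\star)=\rho^\star$ forces $P_{sa}\rho^\star\leq\rho^\star(s)$ for every $s,a$), I can pass to $\infnorm{\cdot}$ on both sides to conclude
\[
\infnorm{\T(V)-\T^\pi(V)} \;\leq\; \varepsilon + \infnorm{\rho^\star-P_\pi\rho^\star} + 2(1-\gamma)\infnorm{V-\tfrac{1}{1-\gamma}\rho^\star},
\]
and combining with the triangle inequality above gives the stated bound. The only step that requires any care is the upper bound on $\T(V)-\T^\pi(V)$: it cannot be obtained from $|\max x-\max y|\leq\max|x-y|$ applied to $\T$ and $\T_\gamma$ (which would cost $\infnorm{V}$ rather than $\infnorm{V-\tfrac{1}{1-\gamma}\rho^\star}$), which is why I need to route the comparison through the splitting $(1-\gamma)PV = P\rho^\star + (1-\gamma)P(V-\tfrac{1}{1-\gamma}\rho^\star)$ exactly as in Lemma \ref{lem:DMDP_red_fp_err_bound}.
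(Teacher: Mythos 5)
Your proof is correct. The core decomposition is identical to the paper's — both route the comparison through $\T_\gamma$ by splitting $(1-\gamma)PV$ as $P\rho^\star + (1-\gamma)P\bigl(V - \tfrac{1}{1-\gamma}\rho^\star\bigr)$, and both arrive at the same sandwich $\zero \leq \T(V)-\T^\pi(V) \leq \varepsilon\one + (\rho^\star - P_\pi\rho^\star) + 2(1-\gamma)\infnorm{V - \tfrac{1}{1-\gamma}\rho^\star}\one$ before taking $\infnorm{\cdot}$. The only organizational difference is in how the upper bound $\T(V) \leq \T_\gamma(V) + \rho^\star + (1-\gamma)\infnorm{V-\tfrac{1}{1-\gamma}\rho^\star}\one$ is produced: you reuse the inequality already established inside the proof of Lemma~\ref{lem:DMDP_red_fp_err_bound} (which rests on subadditivity of the max operator, $M(a+b)\leq M(a)+M(b)$), whereas the paper instead introduces a deterministic greedy policy $\pit$ with $\T(V)=\T^{\pit}(V)$ and rewrites $M^{\pit}(r+\gamma PV)=M^{\pit}(r+PV)-(1-\gamma)P_{\pit}V$, eventually using $P_{\pit}\rho^\star\leq\rho^\star$. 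Your version is marginally shorter and avoids introducing $\pit$, but both hinge on the same fact $P\rho^\star\leq\rho^\star\otimes\one_{\A}$ and carry the same constant $2$ in front of the value-error term.
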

\begin{proof}
Note that since $\T^\pi(V) \leq \T(V)$, we immediately have that
\begin{align*}
    \T^{\pi}(V) - V - \rho^\star \leq \T(V) - V - \rho^\star \leq \infnorm{\T(V) - V - \rho^\star}\one.
\end{align*}
Thus it remains to lower-bound $\T^{\pi}(V) - V - \rho^\star $.
Let $\pit$ be some policy such that $M(r + PV) = M^{\pit}(r + PV)$. Now we calculate that
\begin{align*}
    \T^\pi(V) &= M^\pi(r + PV) = M^\pi(r + \gamma PV) + (1-\gamma) P_\pi V \\
    &\geq M(r + \gamma PV) -\varepsilon \one + (1-\gamma) P_\pi V \\
    & \geq M^{\pit} (r + \gamma PV) -\varepsilon \one + (1-\gamma) P_\pi V \\
    & = M^{\pit} (r +  PV) -(1-\gamma)P_{\pit} V -\varepsilon \one + (1-\gamma) P_\pi V \\
    &= M(r + PV) - \varepsilon \one - (1-\gamma)\left(P_{\pit} - P_\pi\right)V \\
    &= \T(V) - \varepsilon \one - \left(P_{\pit} - P_\pi\right)\rho^\star - (1-\gamma)\left(P_{\pit} - P_\pi\right)\left(V - \frac{1}{1-\gamma}\rho^\star \right) \\
    &\geq  \T(V) - \varepsilon \one - \left(P_{\pit} - P_\pi\right)\rho^\star - 2(1-\gamma)\infnorm{V - \frac{1}{1-\gamma}\rho^\star } \one  \\
    &\geq  \T(V) - \varepsilon \one - (\rho^\star - P_\pi \rho^\star) - 2(1-\gamma)\infnorm{V - \frac{1}{1-\gamma}\rho^\star } \one
\end{align*}
where in the final inequality we used that $P_{\pit} \rho^\star \leq \rho^\star$. Subtracting $V + \rho^\star$ from both inequalities we obtain that
\begin{align*}
    \T^\pi(V) - V - \rho^\star \geq \T(V) - V - \rho^\star - \varepsilon \one - (\rho^\star - P_\pi \rho^\star) - 2(1-\gamma)\infnorm{V - \frac{1}{1-\gamma}\rho^\star } \one.
\end{align*}
Combining this with the upper bound, taking $\infnorm{\cdot}$, and using the triangle inequality, we obtain the desired statement.
\end{proof}

\begin{lem}
\label{lem:DMDP_red_main}
    Suppose for some $V \in \R^{\S}$ that some policy $\pi$ is $\varepsilon$-greedy with respect to $r + \gamma P V$, that is, $\T_\gamma^\pi(V) \geq \T_\gamma(V) - \varepsilon \one$. Then
    \begin{align*}
        \infnorm{\rho^\pi - \rho^\star} \leq \left(\Tdrop^\pi + 1 \right)\left( 7(1-\gamma)\infnorm{V - \frac{1}{1-\gamma}\rho^\star} + \frac{2-\gamma}{\gamma}\infnorm{\T_\gamma(V) - V} + 2\frac{1-\gamma}{\gamma} + \frac{2}{\gamma}\varepsilon\right).
    \end{align*}
    Furthermore, letting $h$ satisfy the modified Bellman equations~\eqref{eq:mod_bellman_2_vec}, we have
    \begin{align*}
        \infnorm{\rho^\pi - \rho^\star} 
        &\leq \left(\Tdrop^\pi + 1 \right) \Bigg(7(1-\gamma)\min\left\{ \spannorm{h}, \spannorm{h^{\pistar}} + \Tdrop + \spannorm{h^{\pistar}}\Tdrop\right\} + \frac{2+6\gamma}{\gamma} \infnorm{\T_\gamma(V) - V} \\& \qquad\qquad\qquad + 2\frac{1-\gamma}{\gamma} + \frac{2}{\gamma}\varepsilon \Bigg).
    \end{align*}
    Additionally assuming $\gamma \geq \frac{1}{2}$, we have
    \begin{align*}
        \infnorm{\rho^\pi - \rho^\star} 
        &\leq \left(\Tdrop^\pi + 1 \right) \Bigg((1-\gamma)\left(4 + 7\min\left\{ \spannorm{h}, \spannorm{h^{\pistar}} + \Tdrop + \spannorm{h^{\pistar}}\Tdrop\right\}\right) \\
        & \qquad\qquad\qquad + 16 \infnorm{\T_\gamma(V) - V}  + 4 \varepsilon \Bigg).
    \end{align*}
\end{lem}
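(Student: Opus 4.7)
The approach is to combine Corollary \ref{cor:AMDP_performance_diff_cor} (specialized to $h = V$) with Lemmas \ref{lem:DMDP_red_fp_err_bound}, \ref{lem:DMDP_red_gain_pres_bound}, and \ref{lem:DMDP_red_pe_fp_err_bound} to convert the two-term suboptimality decomposition into one involving only discounted quantities, then simplify by absorbing additive $\Tdrop^\pi$-free terms into the factor $(\Tdrop^\pi+1)$. For bookkeeping, abbreviate $A = \infnorm{P_\pi \rho^\star - \rho^\star}$, $B = \infnorm{\T_\gamma(V) - V}$, and $C = (1-\gamma)\infnorm{V - \frac{1}{1-\gamma}\rho^\star}$.

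First, Corollary \ref{cor:AMDP_performance_diff_cor} with $h = V$ gives $\infnorm{\rho^\pi - \rho^\star} \leq \Tdrop^\pi A + \infnorm{\T^\pi(V) - V - \rho^\star}$. Lemma \ref{lem:DMDP_red_pe_fp_err_bound} bounds the second term by $\infnorm{\T(V) - V - \rho^\star} + A + 2C + \varepsilon$, and Lemma \ref{lem:DMDP_red_fp_err_bound} bounds $\infnorm{\T(V) - V - \rho^\star} \leq A + B + C + \varepsilon$. Chaining these yields
$$\infnorm{\rho^\pi - \rho^\star} \leq (\Tdrop^\pi + 2) A + B + 3C + 2\varepsilon.$$
Now substitute the bound $A \leq \frac{(1-\gamma)(1 + \varepsilon + B)}{\gamma} + 2C$ from Lemma \ref{lem:DMDP_red_gain_pres_bound}, and group by $B$, $C$, $\varepsilon$, and the $(1-\gamma)/\gamma$-constant term. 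The first stated inequality then follows by checking, term by term, that each combined coefficient is dominated by the corresponding coefficient in the target. Each check reduces to a one-line calculation using only $\Tdrop^\pi \geq 0$ and $0 < \gamma \leq 1$; for instance, the $B$-coefficient check reduces to $\Tdrop^\pi + 2 \leq 2\Tdrop^\pi + 2$, and the $\varepsilon$-coefficient check reduces to $\Tdrop^\pi(1-\gamma) \leq 2\Tdrop^\pi$.

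For the second inequality, the only new ingredient is a bound on $C$. By the triangle inequality together with the standard discounted contraction fact $\infnorm{V - V_\gamma^\star} \leq \infnorm{\T_\gamma(V) - V}/(1-\gamma)$,
$$C \leq (1-\gamma)\infnorm{V - V_\gamma^\star} + (1-\gamma)\infnorm{V_\gamma^\star - \tfrac{1}{1-\gamma}\rho^\star} \leq B + (1-\gamma)M,$$
where Lemmas \ref{lem:vstar_rhostar_unmod_bellman} and \ref{lem:vstar_rhostar_mod_bellman} give $\infnorm{V_\gamma^\star - \frac{1}{1-\gamma}\rho^\star} \leq M := \min\{\spannorm{h}, \spannorm{h^{\pistar}} + \Tdrop + \spannorm{h^{\pistar}}\Tdrop\}$. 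Substituting $7C \leq 7B + 7(1-\gamma)M$ into the first inequality and combining with the existing $\frac{2-\gamma}{\gamma}B$ term gives the merged $B$-coefficient $7 + \frac{2-\gamma}{\gamma} = \frac{2 + 6\gamma}{\gamma}$, yielding the second bound. Finally, the third inequality is obtained by applying $1/\gamma \leq 2$ (valid since $\gamma \geq 1/2$) to each of the $\frac{2 + 6\gamma}{\gamma}$, $\frac{2(1-\gamma)}{\gamma}$, and $\frac{2\varepsilon}{\gamma}$ factors to produce the constants $16$, $4$, and $4$.

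The main work is purely algebraic and the only potential pitfall is tracking constants while repeatedly using the inequality $(\Tdrop^\pi + k_1)X \leq (\Tdrop^\pi + 1) k_2 X$ for appropriate constants $k_1 \leq k_2$ to move terms inside the $(\Tdrop^\pi + 1)$ factor; no new conceptual tool beyond the lemmas already established in this subsection is needed.
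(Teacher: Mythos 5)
Your proposal is correct and takes essentially the same route as the paper: both invoke Corollary \ref{cor:AMDP_performance_diff_cor} with $h = V$, chain Lemmas \ref{lem:DMDP_red_pe_fp_err_bound}, \ref{lem:DMDP_red_fp_err_bound}, and \ref{lem:DMDP_red_gain_pres_bound} to eliminate $\infnorm{P_\pi\rho^\star - \rho^\star}$, and then compare coefficients to fold everything into the factor $(\Tdrop^\pi + 1)$. The only nit is a mislabeling in your example: the check $\Tdrop^\pi + 2 \leq 2\Tdrop^\pi + 2$ actually corresponds to the constant $\frac{1-\gamma}{\gamma}$ term (the $B$-coefficient check reduces to $\Tdrop^\pi(1-\gamma) \leq \Tdrop^\pi(2-\gamma)$), but all the underlying inequalities hold, so the argument goes through.
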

\begin{proof}
    By Corollary \ref{cor:AMDP_performance_diff_cor} we have
    \begin{align}
        \infnorm{\rho^\pi - \rho^\star} \leq \Tdrop^\pi \infnorm{P_\pi \rho^\star - \rho^\star} + \infnorm{\T^\pi(V) - V - \rho^\star} \label{eq:DMDP_red_main_-1}
    \end{align}
    so it remains to control the terms $\infnorm{P_\pi \rho^\star - \rho^\star} $ and $ \infnorm{\T^\pi(V) - V - \rho^\star}$. By Lemma \ref{lem:DMDP_red_gain_pres_bound}
    we have
    \begin{align}
        \infnorm{\rho^\star - P_\pi \rho^\star} &\leq  (1-\gamma) \frac{1 +\varepsilon+ \infnorm{\T_\gamma(V) - V}}{\gamma} + 2(1-\gamma)\infnorm{ V - \frac{1}{1-\gamma}\rho^\star}. \label{eq:DMDP_red_main_0}
    \end{align}
    Starting with Lemma \ref{lem:DMDP_red_pe_fp_err_bound}, we have
    \begin{align}
        \infnorm{\T^{\pi}(V) - V - \rho^\star} &\leq \infnorm{\T(V) - V - \rho^\star} + \infnorm{P_\pi \rho^\star - \rho^\star} + 2(1-\gamma)\infnorm{V - \frac{1}{1-\gamma}\rho^\star} + \varepsilon \nonumber\\
        & \stackrel{(i)}{\leq } \left( \infnorm{P_\pi \rho^\star - \rho^\star} + \infnorm{\T_\gamma(V) - V} + (1-\gamma) \infnorm{V - \frac{1}{1-\gamma}\rho^\star} + \varepsilon \right) \nonumber\\
        & \qquad + \infnorm{P_\pi \rho^\star - \rho^\star} + 2(1-\gamma)\infnorm{V - \frac{1}{1-\gamma}\rho^\star} + \varepsilon \nonumber\\
        &= 2 \infnorm{P_\pi \rho^\star - \rho^\star}+ 3(1-\gamma)\infnorm{V - \frac{1}{1-\gamma}\rho^\star}  + \infnorm{\T_\gamma(V) - V} + 2\varepsilon \nonumber\\
        &\stackrel{(ii)}{\leq } 2 \left( (1-\gamma) \frac{1 +\varepsilon+ \infnorm{\T_\gamma(V) - V}}{\gamma} + 2(1-\gamma)\infnorm{ V - \frac{1}{1-\gamma}\rho^\star} \right) \nonumber\\
        & \qquad + 3(1-\gamma)\infnorm{V - \frac{1}{1-\gamma}\rho^\star}  + \infnorm{\T_\gamma(V) - V} + 2\varepsilon \nonumber\\
        &= 7(1-\gamma)\infnorm{V - \frac{1}{1-\gamma}\rho^\star} + \left(2\frac{1-\gamma}{\gamma} + 1 \right)\infnorm{\T_\gamma(V) - V} \nonumber\\
        &\qquad + 2\frac{1-\gamma}{\gamma} + 2\left(\frac{1-\gamma}{\gamma} + 1\right) \varepsilon \nonumber \\
        &= 7(1-\gamma)\infnorm{V - \frac{1}{1-\gamma}\rho^\star} + \frac{2-\gamma}{\gamma}\infnorm{\T_\gamma(V) - V} + 2\frac{1-\gamma}{\gamma} + \frac{2}{\gamma}\varepsilon \label{eq:DMDP_red_main_pe_fpe_bound}
    \end{align}
    where we used Lemma \ref{lem:DMDP_red_fp_err_bound} in $(i)$ to bound $\infnorm{\T(V) - V - \rho^\star}$, and then we used~\eqref{eq:DMDP_red_main_0} in $(ii)$ to bound $\infnorm{P_\pi \rho^\star - \rho^\star}$.

    Now using this bound~\eqref{eq:DMDP_red_main_pe_fpe_bound} on $\infnorm{\T^\pi(V) - V - \rho^\star}$ and the bound~\eqref{eq:DMDP_red_main_0} on $\infnorm{P_\pi \rho^\star - \rho^\star}$ in~\eqref{eq:DMDP_red_main_-1}, we have that
    \begin{align}
        \infnorm{\rho^\pi - \rho^\star} &\leq \Tdrop^\pi \left( (1-\gamma) \frac{1 +\varepsilon+ \infnorm{\T_\gamma(V) - V}}{\gamma} + 2(1-\gamma)\infnorm{ V - \frac{1}{1-\gamma}\rho^\star} \right) \nonumber \\
        & \qquad + \left( 7(1-\gamma)\infnorm{V - \frac{1}{1-\gamma}\rho^\star} + \frac{2-\gamma}{\gamma}\infnorm{\T_\gamma(V) - V} + 2\frac{1-\gamma}{\gamma} + \frac{2}{\gamma}\varepsilon\right) \nonumber \\
        &\leq \left(\Tdrop^\pi + 1 \right)\left( 7(1-\gamma)\infnorm{V - \frac{1}{1-\gamma}\rho^\star} + \frac{2-\gamma}{\gamma}\infnorm{\T_\gamma(V) - V} + 2\frac{1-\gamma}{\gamma} + \frac{2}{\gamma}\varepsilon\right). \label{eq:DMDP_red_main_1}
    \end{align}
    This is the first statement of the lemma.

    For the second statement of the lemma, we have by the triangle inequality and Lemmas \ref{lem:vstar_rhostar_unmod_bellman} and \ref{lem:vstar_rhostar_mod_bellman} that
    \begin{align}
        \infnorm{V - \frac{1}{1-\gamma}\rho^\star} &\leq \infnorm{V - V^\star_\gamma} + \infnorm{V^\star_\gamma - \frac{1}{1-\gamma}\rho^\star} \nonumber\\
        & \leq \infnorm{V - V^\star_\gamma} + \min\left\{ \spannorm{h}, \spannorm{h^{\pistar}} + \Tdrop + \spannorm{h^{\pistar}}\Tdrop\right\} \nonumber\\
        & \leq \frac{1}{1-\gamma}\infnorm{\T_\gamma(V) - V}+ \min\left\{ \spannorm{h}, \spannorm{h^{\pistar}} + \Tdrop + \spannorm{h^{\pistar}}\Tdrop\right\} \label{eq:DMDP_red_main_2}
    \end{align}
    where for the final inequality we used the standard fact that
    $\infnorm{V - V^\star_\gamma} \leq \frac{1}{1-\gamma}\infnorm{\T_\gamma(V) - V}$ (which follows from
    \begin{align*}
        \infnorm{V - V^\star_\gamma} \leq \infnorm{V - \T_\gamma(V)} + \infnorm{\T_\gamma(V) - \T_\gamma(V^\star_\gamma)} \leq \infnorm{V - \T_\gamma(V)} + \gamma \infnorm{V - V^\star_\gamma}
    \end{align*}
    and then rearranging).
    Combining~\eqref{eq:DMDP_red_main_2} with~\eqref{eq:DMDP_red_main_1}, we have that
    \begin{align*}
        \infnorm{\rho^\pi - \rho^\star} 
        &\leq \left(\Tdrop^\pi + 1 \right)\left( 7(1-\gamma)\infnorm{V - \frac{1}{1-\gamma}\rho^\star} + \frac{2-\gamma}{\gamma}\infnorm{\T_\gamma(V) - V} + 2\frac{1-\gamma}{\gamma} + \frac{2}{\gamma}\varepsilon\right) \\
        & \leq \left(\Tdrop^\pi + 1 \right)\Bigg( 7(1-\gamma)\left( \frac{1}{1-\gamma}\infnorm{\T_\gamma(V) - V}+ \min\left\{ \spannorm{h}, \spannorm{h^{\pistar}} + \Tdrop + \spannorm{h^{\pistar}}\Tdrop\right\} \right) \\
        & \qquad \qquad\qquad + \frac{2-\gamma}{\gamma}\infnorm{\T_\gamma(V) - V} + 2\frac{1-\gamma}{\gamma} + \frac{2}{\gamma}\varepsilon\Bigg) \\
        & =  \left(\Tdrop^\pi + 1 \right) \Bigg(7(1-\gamma)\min\left\{ \spannorm{h}, \spannorm{h^{\pistar}} + \Tdrop + \spannorm{h^{\pistar}}\Tdrop\right\} + \frac{2+6\gamma}{\gamma} \infnorm{\T_\gamma(V) - V} \\& \qquad\qquad\qquad + 2\frac{1-\gamma}{\gamma} + \frac{2}{\gamma}\varepsilon \Bigg)
    \end{align*}
    as desired.

    The final statement follows immediately from the facts that $\gamma \leq 1$ and $\frac{1}{\gamma} \leq 2$.
\end{proof}

\subsection{Faster value error convergence}
In this subsection we present results which relate the output of undiscounted Picard iterations to the discounted value function $V_\gamma^\star$, with the objective of proving Lemma \ref{lem:initialization_proc}.

The following lemma basically follows from \cite[Theorem 9.4.1]{puterman_markov_1994}, although as discussed in Appendix \ref{sec:mod_bellman_issues}, there are some ambiguities in its requirements on $h$. For this reason we provide a complete proof.
\begin{lem}
\label{lem:picard_gain_bound_mod}
    Suppose that $(\rho^\star, h)$ satisfies both the modified and unmodified Bellman equations. Then for any $n \geq 0$, we have
    \begin{align*}
        \infnorm{\T^{(n)}(\zero) - n\rho^\star } \leq \spannorm{h}.
    \end{align*}
\end{lem}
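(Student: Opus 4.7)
The plan is to reduce this to Lemma \ref{lem:rho_estimation_lemma_specific} (the estimation bound for $\T^{(n)}$), applied with $h_0 = \zero$ and a carefully shifted version of $h$. Direct application with $h_0 = \zero$ gives only $\infnorm{\T^{(n)}(\zero) - n\rho^\star - h} \leq \infnorm{h}$, which combined with the triangle inequality would yield $\infnorm{\T^{(n)}(\zero) - n\rho^\star} \leq 2\infnorm{h}$ — strictly weaker than $\spannorm{h}$. The key observation is that $h$ is defined only up to an additive scalar multiple of $\one$, which we can exploit to tighten this.

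First I would verify the translation invariance: if $(\rho^\star, h)$ satisfies both the modified~\eqref{eq:mod_bellman_2} and unmodified~\eqref{eq:unmod_bellman_2} Bellman equations, then so does $(\rho^\star, h + c\one)$ for every $c \in \R$. This follows because $\T(h + c\one) = \T(h) + c\one$ (since $\T$ commutes with translation by $\one$), and the same for the restricted maximum in~\eqref{eq:unmod_bellman_2}. Hence for every $c \in \R$, Lemma \ref{lem:rho_estimation_lemma_specific} applied with initialization $\zero$ and solution $h + c\one$ gives
\begin{align*}
    \infnorm{\T^{(n)}(\zero) - n\rho^\star - (h + c\one)} \leq \infnorm{\zero - (h + c\one)} = \infnorm{h + c\one}.
\end{align*}

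Then by the triangle inequality,
\begin{align*}
    \infnorm{\T^{(n)}(\zero) - n\rho^\star} \leq \infnorm{\T^{(n)}(\zero) - n\rho^\star - (h + c\one)} + \infnorm{h + c\one} \leq 2\infnorm{h + c\one}.
\end{align*}
Since this holds for every $c$, I can take the infimum on the right, and the standard identity $\inf_{c \in \R} \infnorm{h + c\one} = \tfrac{1}{2}\spannorm{h}$ (attained at $c = -\tfrac{1}{2}(\max_s h(s) + \min_s h(s))$) yields the desired bound $\spannorm{h}$.

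No step looks technically hard: the content of the proof is entirely the translation-invariance observation plus optimizing the additive constant. The only subtle point worth stating explicitly is why $h + c\one$ remains a joint solution of both Bellman equations, which is immediate but important because Lemma \ref{lem:rho_estimation_lemma_specific} requires this joint property for its restricted-commutativity step.
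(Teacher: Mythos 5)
Your proof is correct, and it takes a genuinely different route from the paper's. The paper uses the monotonicity of $\T$: starting from the elementwise sandwich $h - (\max_s h(s))\one \leq \zero \leq h - (\min_s h(s))\one$, it applies $\T$ $n$ times to each side, and then uses the restricted commutativity identity $\T^{(n)}(h + c\one) = n\rho^\star + h + c\one$ to evaluate the outer terms exactly. This yields the two-sided pointwise bound $h - (\max_s h(s))\one \leq \T^{(n)}(\zero) - n\rho^\star \leq h - (\min_s h(s))\one$, from which the $\ell_\infty$ bound by $\tspannorm{h}$ falls out. You instead invoke only nonexpansiveness (via Lemma~\ref{lem:rho_estimation_lemma_specific}) plus the triangle inequality, which gives the weaker-looking bound $2\infnorm{h + c\one}$, and then recover the span by optimizing the free constant $c$ through the identity $\inf_{c}\infnorm{h + c\one} = \frac{1}{2}\tspannorm{h}$. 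Both are valid and of comparable length. Your approach is more modular (it factors cleanly through an already-proven lemma and a translation-invariance observation) but only delivers the $\ell_\infty$ conclusion; the paper's monotonicity argument additionally gives the stronger elementwise sandwich for $\T^{(n)}(\zero) - n\rho^\star$, which is not needed for this lemma but is the kind of structural information that can be useful elsewhere. One minor point worth noting: both proofs ultimately rest on the same restricted commutativity property (Lemma~\ref{lem:restricted_commutativity_property}), yours indirectly through the nonexpansiveness lemma, the paper's directly.
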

\begin{proof}
    First we note that elementwise
    \begin{align*}
        h - (\max_{s \in \S}h(s)) \one \leq \zero \leq h - (\min_{s \in \S}h(s)) \one.
    \end{align*}
    By monotonicity of $\T$ (applied $n$ times for each inequality), we have that
    \begin{align}
        \T^{(n)}\left( h - (\max_{s \in \S}h(s)) \one \right) \leq \T^{(n)}(\zero) \leq \T^{(n)}\left(h - (\min_{s \in \S}h(s)) \one\right) . \label{eq:picard_gain_bound_mod_1}
    \end{align}
    Using the constant shift property of $\T$ (that $\T(x + c \one) = \T(x) + c \one$ for any $x \in \R^\S$ and any $c \in \R$), as well as Lemma \ref{lem:restricted_commutativity_property} which guarantees that $\T(h + c \rho^\star) = h + (c + 1)\rho^\star$, we have that
    \begin{align*}
         \T^{(n)}\left( h + c\one \right) &= \T^{(n-1)}\left( \T\left( h + c\one \right) \right) = \T^{(n-1)}\left( h + \rho^\star + c\one \right) \\
         &= \T^{(n-2)}\left(\T\left( h + \rho^\star + c\one \right)\right) = \T^{(n-2)}\left(2 \rho^\star + c \one\right) \\
         & \vdots \\
         &= \T^{(0)}\left(n \rho^\star + c \one\right) = n \rho^\star + c \one.
    \end{align*}
    Therefore using this calculation to evaluate the left- and right-hand sides of~\eqref{eq:picard_gain_bound_mod_1}, we have that
    \begin{align*}
         n \rho^\star + h - (\max_{s \in \S}h(s)) \one  \leq \T^{(n)}(\zero) \leq n \rho^\star + h - (\min_{s \in \S}h(s)) \one .
    \end{align*}
    Rearranging we have
    \begin{align*}
        h - (\max_{s \in \S}h(s)) \one  \leq  \T^{(n)}(\zero) - n \rho^\star \leq h - (\min_{s \in \S}h(s)) \one
    \end{align*}
    which implies
    \begin{align*}
        \infnorm{\T^{(n)}(\zero) - n \rho^\star} \leq \max\left\{ \max_{s' \in \S} h(s') - (\min_{s \in \S}h(s)), - \left(\min_{s' \in \S}h(s') - (\max_{s \in \S}h(s)) \right)\right\} = \spannorm{h}.
    \end{align*}
\end{proof}

\begin{lem}
\label{lem:Picard_gain_bound_tdrop}
For any $n \geq 0$, we have
    \begin{align*}
        \infnorm{\T^{(n)}(\zero) - n \rho^\star} \leq \spannorm{h^{\pistar}} + \Tdrop + \spannorm{h^{\pistar}} \Tdrop.
    \end{align*}
\end{lem}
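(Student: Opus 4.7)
The plan is to establish matching upper and lower bounds on $V_n := \T^{(n)}(\zero)$ relative to $n\rho^\star$, both expressed via a bias $h^{\pistar}$ of the Blackwell-optimal policy. For the lower bound, I would use monotonicity of $\T$ together with $\T(V) \geq \T^{\pistar}(V) = r_{\pistar} + P_{\pistar}V$, so that $V_n \geq \sum_{k=0}^{n-1} P_{\pistar}^k r_{\pistar}$. Since $(\rho^\star, h^{\pistar})$ solves the unmodified Bellman equations, $\pistar$ satisfies $P_{\pistar}\rho^\star = \rho^\star$ and $r_{\pistar} = \rho^\star + (I - P_{\pistar})h^{\pistar}$; the sum then telescopes to $n\rho^\star + (I - P_{\pistar}^n)h^{\pistar}$, yielding $V_n - n\rho^\star \geq -\spannorm{h^{\pistar}}\one$.

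For the upper bound, which is the main content, I would track $U_k := V_k - k\rho^\star$ along greedy policies $\sigma_k$ satisfying $V_{k+1} = r_{\sigma_k} + P_{\sigma_k}V_k$. A short calculation gives the recursion $U_{k+1} = (r_{\sigma_k} - \rho^\star) + P_{\sigma_k}U_k - k(\rho^\star - P_{\sigma_k}\rho^\star)$, in which the last term is (elementwise) nonpositive and can be dropped. The key ingredient is a pointwise consequence of the second unmodified Bellman equation~\eqref{eq:unmod_bellman_2}: if $P_{sa}\rho^\star = \rho^\star(s)$ then $r(s,a)+P_{sa}h^{\pistar} \leq \rho^\star(s)+h^{\pistar}(s)$, while in general this inequality holds with at most an extra $1+\spannorm{h^{\pistar}}$ slack. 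Vectorizing yields
\begin{align*}
r_{\sigma_k} - \rho^\star \leq (I - P_{\sigma_k})h^{\pistar} + (1+\spannorm{h^{\pistar}})\,\overline{r}_{\sigma_k},
\end{align*}
where $\overline{r}_{\sigma_k}(s) = \ind(P_{s\sigma_k(s)}\rho^\star < \rho^\star(s))$. Setting $W_k := U_k - h^{\pistar}$ so that $W_0 = -h^{\pistar}$, the recursion reduces to $W_{k+1} \leq P_{\sigma_k}W_k + (1+\spannorm{h^{\pistar}})\overline{r}_{\sigma_k}$; unrolling gives
\begin{align*}
V_n - n\rho^\star \leq (I - P_{\sigma_{n-1}}\cdots P_{\sigma_0})h^{\pistar} + (1+\spannorm{h^{\pistar}})\sum_{k=0}^{n-1}P_{\sigma_{n-1}}\cdots P_{\sigma_{k+1}}\overline{r}_{\sigma_k}.
\end{align*}
The first term's $\infnorm{\cdot}$ is at most $\spannorm{h^{\pistar}}$, since $(I - M)v$ has entries in $[-\spannorm{v},\spannorm{v}]$ for any row-stochastic $M$.

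The main obstacle is bounding the ``gain-dropping'' sum. Here I would reinterpret the time-reversed product $P_{\sigma_{n-1}}\cdots P_{\sigma_{k+1}}$ as the forward-time dynamics under the nonstationary policy $(\pi_t)_{t\geq 0} := (\sigma_{n-1-t})_{t\geq 0}$, so that $(P_{\sigma_{n-1}}\cdots P_{\sigma_{k+1}}\overline{r}_{\sigma_k})(s) = \E_s^{(\pi_t)}[\overline{r}(S_{n-1-k},A_{n-1-k})]$. Summing over $k$ produces the finite-horizon expected total $\overline{r}$-reward under $(\pi_t)$, which, after extending $(\pi_t)$ arbitrarily to infinity, is bounded by $\Tdrop$ via Lemma~\ref{lem:etr_connection_bound}. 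Combining the two bounds then gives $\infnorm{V_n - n\rho^\star} \leq \spannorm{h^{\pistar}} + \Tdrop + \spannorm{h^{\pistar}}\Tdrop$, as claimed.
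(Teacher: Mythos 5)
Your proposal is correct and follows essentially the same route as the paper's proof: the same key pointwise consequence of the unmodified Bellman equation giving $r_{\sigma_k} - \rho^\star \leq (I-P_{\sigma_k})h^{\pistar} + (1+\spannorm{h^{\pistar}})\overline{r}_{\sigma_k}$, the same telescoping of the bias terms against a product of stochastic matrices, and the same reinterpretation of the gain-dropping sum as the expected total $\overline{r}$-reward of a nonstationary policy, bounded by $\Tdrop$ via Lemma~\ref{lem:etr_connection_bound}. The only cosmetic difference is that you organize the upper bound as a recursion on $V_k - k\rho^\star$ (dropping the nonpositive $-k(\rho^\star - P_{\sigma_k}\rho^\star)$ term) rather than fully expanding $\T^{(n)}(\zero)$ as a sum of products, which is equivalent.
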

\begin{proof}
    Let $\pi_k$ be some policy attaining the maximum during the $k$th application of $\T$, that is
    \begin{align*}
        \T^{(k)}(\zero) = \T^{\pi_k} \left(\T^{(k-1)}(\zero)  \right).
    \end{align*}
    Then it is straightforward to compute that
    \begin{align}
        \T^{(n)}(\zero) &= r_{\pi_n} + P_{\pi_n}r_{\pi_{n-1}} + P_{\pi_n}P_{\pi_{n-1}}r_{\pi_{n-2}} + \cdots + \left(P_{\pi_n}P_{\pi_{n-1}} \cdots P_{\pi_2}\right)r_{\pi_{1}} \nonumber \\
        &= \sum_{t=1}^n \left(\prod_{k=n}^{t+1} P_{\pi_{k}}\right) r_{\pi_t} \label{eq:Tn_expression}
    \end{align}
    where we take the product $\prod_{k=n}^{t+1} P_{\pi_{k}}$ to be equal to the identity matrix if it is empty (if $n < t+1$, which happens only for $t = n$).

    Using steps identical to those used to obtain~\eqref{eq:tdrop_vstar_bound_elementwise_etr_bd} but replacing $\pistar_\gamma$ by $\pi_t$, we obtain
    for any $t$ that
    \begin{align*}
        r_{\pi_t} \leq \rho^\star + (I -P_{\pi_t})h^{\pistar} + \left(1 + \spannorm{h^{\pistar}} \right) e_{\S_t}
    \end{align*}
    where $e_{\S_t}$ is the indicator function for the set of states $\S_t = \{s : e_s^\top P_{\pi_t} \rho^\star < \rho^\star(s)\}$. Recalling the reward function $\overline{r}(s,a) = \ind\{P_{sa}\rho^\star < \rho^\star(s)\}$ defined in Section \ref{sec:sensitivity_analysis}, this is equivalent to $e_{\S_t} = M^{\pi_t}\overline{r} = \overline{r}_{\pi_t}$.
    Combining with~\eqref{eq:Tn_expression} (and using monotonicity of $P_\pi$ for any $\pi$) we have that
    \begin{align}
        \T^{(n)}(\zero)
        &\leq \sum_{t=1}^n \left(\prod_{k=n}^{t+1} P_{\pi_{k}}\right) \left( \rho^\star + (I -P_{\pi_t})h^{\pistar} + \left(1 + \spannorm{h^{\pistar}} \right) \overline{r}_{\pi_t}\right) \label{eq:Picard_gain_bound_2}
    \end{align}
    and now we upper bound each term in~\eqref{eq:Picard_gain_bound_2}. For any policy $\pi$ we have $P_\pi \rho^\star = M^\pi P \rho^\star \leq M P \rho^\star = \rho^\star$, and so applying this fact many times we have that
    \begin{align*}
        \sum_{t=1}^n \left(\prod_{k=n}^{t+1} P_{\pi_{k}}\right) \rho^\star \leq \sum_{t=1}^n \rho^\star = n \rho^\star.
    \end{align*}
    Next, we have that
    \begin{align*}
        \sum_{t=1}^n \left(\prod_{k=n}^{t+1} P_{\pi_{k}}\right)(I -P_{\pi_t})h^{\pistar} &= \sum_{t=1}^n \left(\prod_{k=n}^{t+1} P_{\pi_{k}}\right)h^{\pistar} - \sum_{t=1}^n \left(\prod_{k=n}^{t} P_{\pi_{k}}\right)h^{\pistar} \\
        &= h^{\pistar} - \left(\prod_{k=n}^{1} P_{\pi_{k}}\right)h^{\pistar}.
    \end{align*}
    Since $\left(\prod_{k=n}^{1} P_{\pi_{k}}\right) = P_{\pi_n} \cdots P_{\pi_1}$ is a stochastic matrix, we have that $h^{\pistar} - \left(\prod_{k=n}^{1} P_{\pi_{k}}\right)h^{\pistar} \leq \spannorm{h^{\pistar}} \one$.
    Finally, we have
    \begin{align*}
        \sum_{t=1}^n \left(\prod_{k=n}^{t+1} P_{\pi_{k}}\right) \overline{r}_{\pi_t} \leq \Tdrop \one
    \end{align*}
    using Lemma \ref{lem:etr_connection_bound}, since each coordinate of the left-hand side is bounded by the expected total reward value function of the nonstationary policy $(\pi_1, \pi_2, \dots)$ (with terms after time $n$ dropped).
    Combining these three bounds with~\eqref{eq:Picard_gain_bound_2}, we obtain that
    \begin{align*}
        \T^{(n)}(\zero) \leq n \rho^\star + \spannorm{h^{\pistar}} \one + \left(1 + \spannorm{h^{\pistar}} \right) \Tdrop \one.
    \end{align*}
    
    Finally it remains to lower-bound $\T^{(n)}(\zero)$. Since we have $\zero \geq h^{\pistar} - \max_{s \in \S}h^{\pistar}(s) \one =: x$, by monotonicity of $\T$ we have that
    \begin{align*}
        \T^{(n)}(\zero) \geq \T^{(n)}(x).
    \end{align*}
    Furthermore since $\T(h)\geq \T^{\pistar}(h)$ for any $h$ we have that
    \begin{align*}
        \T^{(n)}(x) \geq \left(\T^{\pistar}\right)^{(n)}(x).
    \end{align*}
    Furthermore we have
    \begin{align*}
        \T^{\pistar}(x) &= r_{\pistar} + P_{\pistar} (h^{\pistar} - \max_{s \in \S}h^{\pistar}(s) \one) = r_{\pistar} + P_{\pistar} h^{\pistar} - \max_{s \in \S}h^{\pistar}(s) \one \\
        &= \rho^\star + h^{\pistar} - \max_{s \in \S}h^{\pistar}(s) \one = \rho^\star + x.
    \end{align*}
    Repeating this fact $n$ times we have
    \begin{align*}
        \left(\T^{\pistar}\right)^{(n)}(x) = n\rho^\star + x \geq n \rho^\star - \spannorm{h^{\pistar}}\one.
    \end{align*}
    Therefore
    \begin{align*}
        \T^{(n)}(\zero) \geq n \rho^\star - \spannorm{h^{\pistar}}\one,
    \end{align*}
    and so combining with the upper bound on $\T^{(n)}(\zero)$ we conclude that
    \begin{align*}
        \infnorm{\T^{(n)}(\zero) - n \rho^\star} \leq \spannorm{h^{\pistar}} + \left(1 + \spannorm{h^{\pistar}} \right) \Tdrop .
    \end{align*}
\end{proof}

Now we can combine both of these lemmas along with Lemmas \ref{lem:vstar_rhostar_mod_bellman} and \ref{lem:vstar_rhostar_unmod_bellman} to prove Lemma \ref{lem:initialization_proc}. 
\begin{proof}[Proof of Lemma \ref{lem:initialization_proc}]
    By triangle inequality we have
    \begin{align}
         \infnorm{\frac{1}{t(1-\gamma)}\T^{(t)}(\zero) - V_\gamma^\star} & \leq \infnorm{\frac{1}{t(1-\gamma)}\T^{(t)}(\zero) - \frac{1}{1-\gamma}\rho^\star} + \infnorm{\frac{1}{1-\gamma}\rho^\star - V_\gamma^\star} \nonumber \\
         & = \frac{1}{t(1-\gamma)}\infnorm{\T^{(t)}(\zero) - t\rho^\star} + \infnorm{\frac{1}{1-\gamma}\rho^\star - V_\gamma^\star}. \label{eq:initialization_proc_1}
    \end{align}
    Using Lemma \ref{lem:picard_gain_bound_mod} to bound the first term and Lemma \ref{lem:vstar_rhostar_mod_bellman} on the second term, we obtain that
    \begin{align*}
        \infnorm{\frac{1}{t(1-\gamma)}\T^{(t)}(\zero) - V_\gamma^\star} & \leq  \frac{1}{t(1-\gamma)} \spannorm{h} + \spannorm{h} \\
        & \leq \frac{2}{t(1-\gamma)} \spannorm{h}
    \end{align*}
    using the fact that $ t \leq \frac{1}{1-\gamma}$ (so $\frac{1}{t(1-\gamma)} \geq 1$) in the second inequality.

    Using identical steps but instead bounding the first and second terms of~\eqref{eq:initialization_proc_1} with Lemmas \ref{lem:Picard_gain_bound_tdrop} and \ref{lem:vstar_rhostar_unmod_bellman}, respectively, we obtain that
    \begin{align*}
        \infnorm{\frac{1}{t(1-\gamma)}\T^{(t)}(\zero) - V_\gamma^\star} \leq \frac{2}{t(1-\gamma)}\left( \spannorm{h^{\pistar}} +  \Tdrop +  \spannorm{h^{\pistar}} \Tdrop \right).
    \end{align*}
    We conclude by taking the minimum of these two bounds on $\infnorm{\frac{1}{t(1-\gamma)}\T^{(t)}(\zero) - V_\gamma^\star}$.
\end{proof}

\subsection{Discounted VI results}

\begin{proof}[Proof of Theorem \ref{thm:warm_start_halpern_then_picard}]
    Let $M = \min\left\{\tspannorm{h}, \tspannorm{h^{\pistar}} +  \Tdrop +  \tspannorm{h^{\pistar}} \Tdrop \right\}$. 
    Since $E' = \left \lfloor \frac{1}{1-\gamma} \right \rfloor \geq  \frac{1}{1-\gamma}$, we can apply Lemma \ref{lem:initialization_proc} to obtain that
    \begin{align*}
        \infnorm{x_E - V_\gamma^\star} &= \infnorm{\T^{(E)}(\zero) - V_\gamma^\star} \\
        &\leq \frac{2}{E (1-\gamma)} M \\
        &\leq \frac{2}{\left(\frac{1}{1-\gamma} - 1 \right)(1-\gamma)}M \\
        &= \frac{2}{\gamma}M.
    \end{align*}
    We can immediately combine this with Theorem \ref{thm:optimal_contractive_fixed_point_iter} to obtain that
    \begin{align*}
        \infnorm{\T_\gamma(V) - V} \leq~ \frac{16e}{\gamma} \frac{\gamma^{n-E'}}{\sum_{i=0}^{n-E'} \gamma^i}M.
    \end{align*}
    The second statement of the theorem would follow if we could show the bound
    \begin{align*}
        \frac{\gamma^{n-E'}}{\sum_{i=0}^{n-E'} \gamma^i}  \leq \left(1+\frac{e}{\gamma}\right) \frac{\gamma^{n}}{\sum_{i=0}^{n} \gamma^i}
    \end{align*}
    under the condition that $n \geq 2 E' - 1$. By rearranging, this is equivalent to showing that
    \begin{align}
        \sum_{i=0}^n \gamma^i \leq (1+e)\gamma^{E'} \sum_{i=0}^{n-E'} \gamma^i = \left(1+\frac{e}{\gamma}\right) \sum_{i=E'}^n \gamma^i. \label{eq:key_thm_44_0}
    \end{align}
    Writing $\sum_{i=0}^n \gamma^i =\sum_{i=0}^{E'-1} \gamma^i +  \sum_{i=E'}^n \gamma^i$,~\eqref{eq:key_thm_44_0} would follow from showing that
    \begin{align}
        \sum_{i=0}^{E'-1} \gamma^i \leq e \sum_{i=E'}^n \gamma^i. \label{eq:key_thm_44}
    \end{align}
    Under the assumption that $n \geq 2 E' - 1$, we have that
    \begin{align*}
        \sum_{i=E'}^n \gamma^i = \gamma^{E'} \sum_{i=0}^{n-E'}\gamma^i \geq \gamma^{E'} \sum_{i=0}^{E'-1}\gamma^i
    \end{align*}
    (since $n -E' \geq E'-1$). By rearranging and using the bound $ \gamma^{-\frac{1}{1-\gamma}+1} \leq e$ from~\eqref{eq:gamma_eff_hzn_ineq}, 
    we have that $\gamma^{-E'} \leq \gamma^{-\frac{1}{1-\gamma}} \leq \frac{e}{\gamma}$, and so
    \begin{align*}
        \sum_{i=0}^{E'-1}\gamma^i \leq \gamma^{-E'}\sum_{i=E'}^n \gamma^i \leq \frac{e}{\gamma}\sum_{i=E'}^n \gamma^i
    \end{align*}
    showing~\eqref{eq:key_thm_44} as desired.
\end{proof}

\begin{proof}[Proof of Theorem \ref{thm:DMDP_based_result_main}]
     Let $M = \min\left\{\tspannorm{h}, \tspannorm{h^{\pistar}} +  \Tdrop +  \tspannorm{h^{\pistar}} \Tdrop \right\}$.
     By Lemma \ref{lem:initialization_proc} we have
    \begin{align*}
        \infnorm{\T^{(n)}(\zero) - V_\gamma^\star}\leq \frac{2M}{n (1-\gamma)}= \frac{2M}{n \frac{1}{n}} = 2M.
    \end{align*}
    By applying~\eqref{eq:integer_eff_hzn_bound} from the proof of Theorem \ref{thm:optimal_contractive_fixed_point_iter} with $\L =\T_\gamma$, $x_0 = \T^{(n)}(\zero)$, $x^\star = V_\gamma^\star$, and $t = n$, since $\lfloor \frac{1}{1-\gamma}\rfloor - 1 = n-1$ we obtain
    that it outputs $V$ such that
    \begin{align*}
        \infnorm{\T_\gamma(V) - V} &\leq 4(1-\gamma)\gamma^{n - (n-1)}\infnorm{\T^{(n)}(\zero) - V_\gamma^\star} \\
        &\leq \frac{4}{n + 1}\infnorm{\T^{(n)}(\zero) - V_\gamma^\star} \\
        & \leq \frac{4}{n+1}2M
    \end{align*}
    where for the second inequality we used that $(1-\gamma)\gamma = \frac{1-\frac{1}{n}}{n} \leq \frac{1}{n+1}$, since $\frac{1-\frac{1}{n}}{n} = \frac{1}{n}\frac{(n+1)(1-\frac{1}{n})}{n+1} = \frac{1}{n}\frac{n+1-1-\frac{1}{n}}{n+1}$.
    
    Now combining this bound with Lemma \ref{lem:DMDP_red_main}, we have that the policy $\pihat$ which is greedy with respect to $r + \gamma P V$ satisfies
    \begin{align*}
        \infnorm{\rho^{\pihat} - \rho^\star} 
        &\leq \left(\Tdrop^{\pihat} + 1 \right) \left(7(1-\gamma)M + \frac{2+6\gamma}{\gamma} \infnorm{\T_\gamma(V) - V}  + 2\frac{1-\gamma}{\gamma}\right) \\
        & \leq \left(\Tdrop + 1 \right) \Bigg(\frac{7}{n}M + \frac{2+6}{1-\frac{1}{n}} \frac{8M}{n+1}    + 2\frac{\frac{1}{n}}{1-\frac{1}{n}}\Bigg) \\
        & =  \left(\Tdrop + 1 \right) \left(\frac{7}{n}M + \frac{64M}{n-\frac{1}{n}} + \frac{2}{n-1}\right) \\
        & \leq \left(\Tdrop + 1 \right) \left(\frac{71M + 2}{n-1} \right).
    \end{align*}
\end{proof}

Now we demonstrate the effects of using more iterations from the ``third phase'' of Algorithm \ref{alg:warm_start_halpern_then_picard}. Note that the below theorem involves $(2+k)n$ iterations, so to compare its convergence rate to that of Theorem \ref{thm:DMDP_based_result_main}, it should be multiplied by $(2+k)$, and the guarantee of Theorem \ref{thm:DMDP_based_result_main} should be multiplied by $2$ (since it uses $2n$ iterations).
\begin{thm}
\label{thm:DMDP_result_better_constant}
    Fix an integer $n \geq2$ and a number $k \geq 0$ such that $kn$ is an integer. Set $\gamma = 1-\frac{1}{n}$ and run Algorithm \ref{thm:warm_start_halpern_then_picard} with inputs $\gamma, (2+k)n$. Then
    \begin{align*}
        \infnorm{\rho^{\pihat} - \rho^\star}
        & \leq \left(\Tdrop + 1 \right) \left(\frac{(7 + e^{-k}64)M + 2}{n-1} \right).
    \end{align*}
\end{thm}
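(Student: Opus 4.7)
The plan is to mirror the proof of Theorem \ref{thm:DMDP_based_result_main}, but exploit the extra $kn$ iterations to sharpen the fixed-point error bound by an exponential factor $e^{-k}$.

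First I would analyze the three phases of Algorithm \ref{alg:warm_start_halpern_then_picard} with inputs $\gamma = 1 - 1/n$ and $(2+k)n$. As in the proof of Theorem \ref{thm:DMDP_based_result_main}, with $E' = \lfloor 1/(1-\gamma)\rfloor = n$, Lemma \ref{lem:initialization_proc} gives
\[
\infnorm{x_{E'} - V_\gamma^\star} = \infnorm{\T^{(n)}(\zero) - V_\gamma^\star} \leq \frac{2M}{n(1-\gamma)} = 2M,
\]
where $M = \min\{\tspannorm{h}, \tspannorm{h^{\pistar}} + \Tdrop + \tspannorm{h^{\pistar}} \Tdrop\}$. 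Algorithm \ref{alg:halpern_then_picard} is then called with initial point $x_{E'}$ for $n_{\text{HP}} = (2+k)n - n = (1+k)n$ iterations. Since $\lfloor 1/(1-\gamma)\rfloor - 1 = n - 1$, inside that sub-call the Halpern phase consists of $E = n-1$ steps and the remaining $(1+k)n - (n-1) = kn + 1$ iterations are Picard.

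Second, I would apply the Picard branch of bound~\eqref{eq:integer_eff_hzn_bound} from the proof of Theorem \ref{thm:optimal_contractive_fixed_point_iter} with $t = (1+k)n$, $E = n-1$, and initial value error bounded by $2M$:
\[
\infnorm{\T_\gamma(V) - V} \leq 4(1-\gamma) \gamma^{t-E} \cdot 2M = \frac{8 \gamma^{kn+1} M}{n}.
\]
The key estimate is $\gamma^{kn} = (1 - 1/n)^{kn} \leq e^{-k}$, which together with $\gamma/n \leq 1/(n+1)$ (verified by $(n-1)(n+1) \leq n^2$) gives
\[
\infnorm{\T_\gamma(V) - V} \leq \frac{8 e^{-k} M}{n+1}.
\]

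Third, I would apply the second bound of Lemma \ref{lem:DMDP_red_main} with $\varepsilon = 0$: since $\gamma = 1 - 1/n \geq 1/2$ when $n \geq 2$, and $(2+6\gamma)/\gamma \leq 8/\gamma = 8n/(n-1)$, we get
\[
\infnorm{\rho^{\pihat} - \rho^\star}
\leq (\Tdrop + 1)\left[\frac{7M}{n} + \frac{8n}{n-1}\cdot \frac{8 e^{-k} M}{n+1} + \frac{2}{n-1}\right]
\leq (\Tdrop + 1)\left[\frac{7M + 64 e^{-k} M + 2}{n-1}\right],
\]
where the final inequality uses $\frac{1}{n} \leq \frac{1}{n-1}$ and $\frac{n}{(n-1)(n+1)} \leq \frac{1}{n-1}$, giving exactly the claimed bound.

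There is no real obstacle here beyond bookkeeping: the only quantitative content is the estimate $(1-1/n)^{kn} \leq e^{-k}$, and the plug-and-chug is already templated by the proof of Theorem \ref{thm:DMDP_based_result_main}. The only subtlety is that we must apply~\eqref{eq:integer_eff_hzn_bound} in its Picard regime ($t > E$) rather than the Halpern regime used implicitly in the $k=0$ case, but this is automatic once $k \geq 0$ because the Halpern phase length $E = n-1$ is fixed by the choice of $\gamma$ while the total iteration count grows with $k$.
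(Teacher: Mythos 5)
Your proof is correct and follows essentially the same approach as the paper's: warm-start via undiscounted Picard to bound the value error by $2M$ (Lemma \ref{lem:initialization_proc}), apply the Picard branch of~\eqref{eq:integer_eff_hzn_bound} to get a fixed-point error with the extra factor $\gamma^{kn} \leq e^{-k}$, and then plug into the second bound of Lemma \ref{lem:DMDP_red_main}. One minor inaccuracy in your closing remark: the $k=0$ proof (Theorem \ref{thm:DMDP_based_result_main}) also sits in the Picard regime, since there $t = n > n-1 = E$; this has no bearing on the validity of your argument.
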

\begin{proof}
    Let $M = \min\left\{\tspannorm{h}, \tspannorm{h^{\pistar}} +  \Tdrop +  \tspannorm{h^{\pistar}} \Tdrop \right\}$. By identical steps to the previous proof, we have that the vector $V$ output by Algorithm \ref{thm:warm_start_halpern_then_picard} satisfies
    \begin{align*}
        \infnorm{\T_\gamma(V) - V} &\leq 4(1-\gamma)\gamma^{(k+1)n - (n-1)}\infnorm{\T^{(n)}(\zero) - V_\gamma^\star} \\
        & \leq 4(1-\gamma)\gamma^{(k+1)n - (n-1)}2M \\
        & \leq \frac{8}{n}\gamma^{kn} M.
    \end{align*}
    Furthermore we have $\gamma^{kn} = \left(1-\frac{1}{n} \right)^{kn}  \leq e^{-k}$. 
    Combining with Lemma \ref{lem:DMDP_red_main}, we have that the policy $\pihat$ which is greedy with respect to $r + \gamma P V$ satisfies
    \begin{align*}
        \infnorm{\rho^{\pihat} - \rho^\star} 
        &\leq \left(\Tdrop^{\pihat} + 1 \right) \left(7(1-\gamma)M + \frac{2+6\gamma}{\gamma} \infnorm{\T_\gamma(V) - V}  + 2\frac{1-\gamma}{\gamma}\right) \\
        & \leq \left(\Tdrop + 1 \right) \Bigg(\frac{7}{n}M + \frac{2+6}{1-\frac{1}{n}} e^{-k}\frac{8M}{n}    + 2\frac{\frac{1}{n}}{1-\frac{1}{n}}\Bigg) \\
        & =  \left(\Tdrop + 1 \right) \left(\frac{7}{n}M + e^{-k}\frac{64M}{n-1} + \frac{2}{n-1}\right) \\
        & \leq \left(\Tdrop + 1 \right) \left(\frac{(7 + e^{-k}64)M + 2}{n-1} \right).
    \end{align*}
\end{proof}

We remark that the function $(2+k)(7 + 64 e^{-k})$ is minimized at $k \approx 3.78$ with value $\approx 48.9$, which is a factor of approximately $142/48.9 \approx 2.9$ smaller than the value at $k=0$.

\section{Auxiliary lemmas}

The following calculation, which we use several times and hence include for completeness, is essentially identical to one within the proof of \citet[Lemma 20]{zurek_span-based_2025}.
\begin{lem}
    \label{lem:value_to_gain_bias_span_bound}
    For any policy $\pi$ and any $h \in \R^\S$, we have that
    \begin{align*}
        \infnorm{(I - \gamma P_\pi)^{-1} (I - P_\pi)h} \leq \spannorm{h}.
    \end{align*}
\end{lem}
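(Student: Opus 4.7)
The plan is to first massage the operator $(I-\gamma P_\pi)^{-1}(I-P_\pi)$ into a form that makes it transparent that it kills constants and has small operator norm. Since $P_\pi$ commutes with itself, one checks directly that
\begin{align*}
(I-\gamma P_\pi)\bigl(I - (1-\gamma)P_\pi(I-\gamma P_\pi)^{-1}\bigr)
= (I-\gamma P_\pi) - (1-\gamma)P_\pi = I - P_\pi,
\end{align*}
which yields the identity
\begin{align*}
(I-\gamma P_\pi)^{-1}(I-P_\pi) = I - (1-\gamma)P_\pi(I-\gamma P_\pi)^{-1}.
\end{align*}

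Next I would exploit the fact that this operator annihilates constant vectors: indeed $(I-\gamma P_\pi)^{-1}\one = \tfrac{1}{1-\gamma}\one$ since $P_\pi\one = \one$, so applying the identity to $\one$ gives $\one - (1-\gamma)P_\pi \cdot \tfrac{1}{1-\gamma}\one = 0$. Therefore, setting $c = \tfrac{1}{2}(\max_s h(s) + \min_s h(s))$ and $h' = h - c\one$, we have
\begin{align*}
(I-\gamma P_\pi)^{-1}(I-P_\pi)h = (I-\gamma P_\pi)^{-1}(I-P_\pi)h',\qquad \infnorm{h'} = \tfrac{1}{2}\spannorm{h}.
\end{align*}

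Finally, I would bound the $\infnorm{\cdot}\to\infnorm{\cdot}$ operator norm of $I - (1-\gamma)P_\pi(I-\gamma P_\pi)^{-1}$ by $2$ via the triangle inequality: $\infinfnorm{I} = 1$, while $P_\pi$ is row-stochastic and hence $\infinfnorm{P_\pi} \leq 1$, and $(I-\gamma P_\pi)^{-1} = \sum_{k=0}^\infty \gamma^k P_\pi^k$ has nonnegative entries with every row summing to $\tfrac{1}{1-\gamma}$, so $\infinfnorm{(I-\gamma P_\pi)^{-1}} = \tfrac{1}{1-\gamma}$, giving $\infinfnorm{(1-\gamma)P_\pi(I-\gamma P_\pi)^{-1}} \leq 1$. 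Combining,
\begin{align*}
\infnorm{(I-\gamma P_\pi)^{-1}(I-P_\pi)h} \leq 2 \cdot \infnorm{h'} = \spannorm{h}.
\end{align*}

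There is no real obstacle here; the only thing one has to be careful about is the commutation step that produces the identity, and the fact that we must center $h$ (not just bound $\infnorm{h}$) in order to get $\spannorm{h}$ rather than $2\infnorm{h}$ on the right-hand side.
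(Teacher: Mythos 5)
Your proof is correct and follows essentially the same route as the paper's: both establish the identity $(I-\gamma P_\pi)^{-1}(I-P_\pi) = I - (1-\gamma)P_\pi(I-\gamma P_\pi)^{-1}$ (the paper via the Neumann series, you by direct multiplication) and then exploit the stochasticity of $(1-\gamma)P_\pi(I-\gamma P_\pi)^{-1}$. The only cosmetic difference is in the last step, where you center $h$ and use an operator-norm bound of $2$, while the paper sandwiches $Qh$ elementwise between $\min_s h(s)\one$ and $\max_s h(s)\one$ to read off the span bound directly; the two arguments are interchangeable.
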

\begin{proof}
    Using the Neumann series to expand $(I - \gamma P_\pi)^{-1}$, we have
    \begin{align*}
        (I - \gamma P_\pi)^{-1} (I - P_\pi) &= \sum_{t=0}^\infty \gamma^t P_\pi^t - \sum_{t=0}^\infty \gamma^t P_\pi^{t+1} \\
        &= I + \gamma P_\pi \sum_{t=0}^\infty \gamma^t P_\pi^t - P_\pi \sum_{t=0}^\infty \gamma^t P_\pi^{t} \\
        &=I + (\gamma - 1) P_\pi \sum_{t=0}^\infty \gamma^t P_\pi^t \\
        &= I - P_\pi (1-\gamma) (I - \gamma P_\pi)^{-1}.
    \end{align*}
$I, P_\pi$, and $(1-\gamma) (I - \gamma P_\pi)^{-1}$ are all stochastic matrices (all entries are non-negative and all rows sum to $1$), and hence $P_\pi (1-\gamma) (I - \gamma P_\pi)^{-1}$ is also a stochastic matrix. Then it is immediate for any stochastic matrix $Q$ that elementwise we have $\min_{s \in \S} h(s)\one \leq Qh \leq \max_{s \in \S} h(s)\one$, which implies
\begin{align*}
    \infnorm{(I - \gamma P_\pi)^{-1} (I - P_\pi)h} = \infnorm{I h - P_\pi (1-\gamma) (I - \gamma P_\pi)^{-1} h} \leq \spannorm{h}
\end{align*}
as desired.
\end{proof}

\section{Experiments}
While this paper focuses on theoretical complexity analysis, we provide a few preliminary experimental examples. 

First we describe, for parameters $k, T \geq 1$ and $\varepsilon > 0$, the parameterized MDP $\mathcal{M}(k, T)$ on which we run our experiments. $\mathcal{M}(k, T)$ has $k+1$ states, where state $0$ is absorbing (has only one action, which leads back to state $0$). The remaining $k$ states each have two actions, titled ``good'' and ``bad''. The ``good'' actions all lead deterministically to another of the states $\{1, \dots, k\}$ such that if the ``good'' action is taken in all such states then it forms a cycle of length $k$. For convenience we let $\pi_c$ denote this policy which takes the ``good'' action in all states in $\{1, \dots, k\}$. The reward for the ``good'' action is chosen randomly to be $0$ or $0.5$ with equal probability. The ``bad'' action in state $s$ for any $s \in \{1, \dots, k\}$ has reward $1$, and leads to state $0$ with probability $1/T$ and back to the given state $s$ with probability $1-1/T$. Finally, we define the reward of the only action in the absorbing state $0$ to be $\rho^{\pi_c}(1) - \varepsilon$ (that is, we compute the reward of the cycle, and then subtract $\varepsilon$). Hence the optimal policy is $\pi_c$ (considered to take the only available action in state $0$), which will have $\rho^{\pi_c}(s) = \rho^{\pi_c}(1)$ for all $s \in \{1, \dots, k\}$ and $\rho^{\pi_c}(0) = \rho^{\pi_c}(1) - \varepsilon$. Note all other policies have gains equal to $(\rho^{\pi_c}(1) - \varepsilon)\one$.
It is straightforward to compute that $\Delta = \frac{\varepsilon}{T}$ and $\Tdrop = T$.

Below we plot the fixed point error $\infnorm{\T(h_t) - h_t - \rho^\star}$ for the $t$th iterate as generated by Algorithm \ref{alg:app_shifted_halpern}, standard value iteration (VI), and \cite[Theorem 2]{lee_optimal_2024} (LR). We intentionally focus on the case where the number of iterations $n$ is smaller than $k$ (the total number of states). We plot both $\varepsilon=1/2$ and $\varepsilon=1/20$, keeping $T = 10$, $k=300$. All experiments were run on a single consumer laptop in less than a minute.

\begin{figure}[H]
    \centering
    \includegraphics[width=\textwidth]{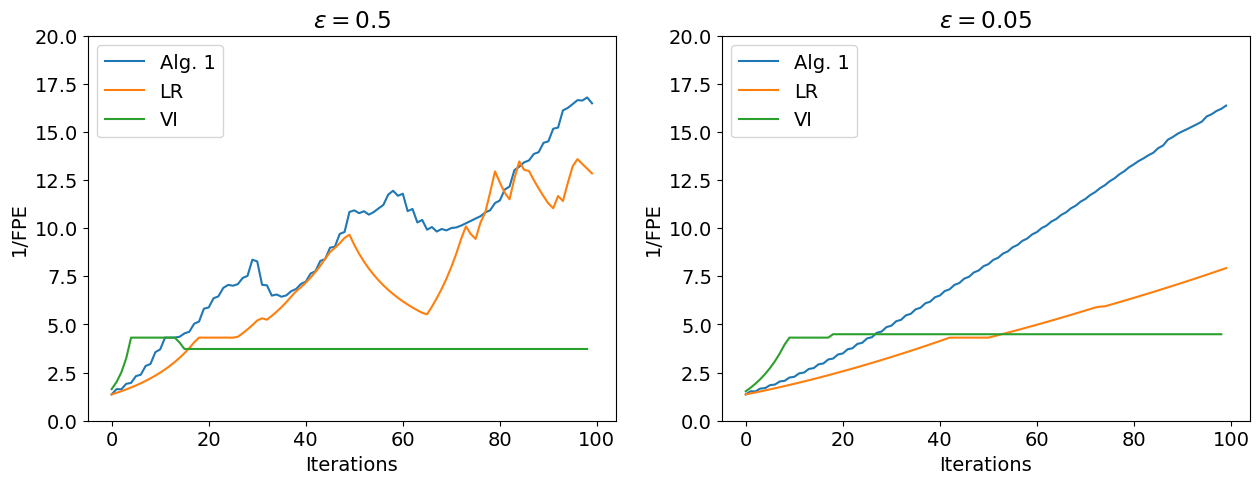}
    \label{fig:fpe_experiment}
\end{figure}

We note that due to periodicity, VI does not converge. In this experiment, decreasing $\varepsilon$ seems not to affect the convergence of Algorithm \ref{alg:app_shifted_halpern}, whereas LR seems to converge more slowly. We emphasize that these experiments only consider the fixed-point error, whereas our sensitivity analysis reveals that it is essential to additionally control $\infnorm{P_\pi \rho^\star - \rho^\star}$ in order to obtain suboptimality bounds. Overall, much more thorough experimental study on more domains and metrics is needed to better understand the practical performance of our algorithms.

\end{document}